\numberwithin{equation}{section}
\newtheorem{theorem}{Theorem}
\newtheorem{lemma}{Lemma}
\newtheorem{proposition}{Proposition}
\theoremstyle{definition}
\newtheorem{example}[theorem]{Example}
\theoremstyle{remark}
\begin{document}

\title[Decay rates of non--hyperbolic unbounded delay FDEs] 
{Exact and memory--dependent decay rates to the non--hyperbolic equilibrium of differential equations with unbounded delay and maximum
functional}

\author{John A. D. Appleby}
\address{School of Mathematical
Sciences, Dublin City University, Glasnevin, Dublin 9, Ireland}
\email{john.appleby@dcu.ie} 

\thanks{John Appleby gratefully acknowledges the support of a SQuaRE activity entitled ``Stochastic stabilisation of limit-cycle dynamics in ecology and neuroscience'' funded by the American Institute of Mathematics. 
} 
\subjclass{39K20}
\keywords{functional differential equation; unbounded delay; asymptotic stability; Halanay's
inequality; equations with maxima.}
\date{1 July 2016}

\begin{abstract}
In this paper, we obtain the exact rates of decay to the non--hyperbolic equilibrium of the solution of a functional differential equation
with maxima and unbounded delay. We study the convergence rates for both locally and globally stable solutions.
We also give examples showing how the rate of growth of decay of solutions depends on the rate of growth of the unbounded delay
as well as the nonlinearity local to the equilibrium.
\end{abstract}

\maketitle

\section{Introduction}
A large literature has developed in the past decades concerning the rate of decay to equilibrium in delay differential equations with unbounded delay. Some representative papers include Krisztin~\cite{Kris85,Kris90}, Kato~\cite{Kato}, Diblik~\cite{Diblik},
Cermak~\cite{Cermak:04a}, and Haddock and Krisztin~\cite{HaddKris84, HaddKris86}. In the last two papers in particular, the rate of convergence is considered for equations in which the leading order space behaviour at the equilibrium is of smaller than linear order. The results can also be applied to stochastic delay equations in both the nonlinear (Appleby and Rodkina~\cite{jaar}) and linear case (Appleby~\cite{3}).  

An especially interesting equation which has received much
attention is one with proportional delay, called the pantograph
equation: fundamental work on the asymptotic behaviour dates back
to Kato and McLeod~\cite{KatoMcL}, Fox et.~al.~\cite{Foxetal},
Ockendon and Tayler~\cite{OckTay}. Complex--valued and finite
dimensional treatments were considered by Carr and
Dyson~\cite{CarrD,CarrD2}, while more recent treatments and
generalisations include Iserles~\cite{Iser0} and
Makay and Terjeki~\cite{MakTer}.

Another category of functional differential equation are those
with maximum functionals on the righthand side. Inspiration for
the study of these equations may be traced to work of
Halanay~\cite{Halanay:66}. The asymptotic behaviour of solutions
of such Halanay type inequalities is considered by e.g., Baker and
Tang~\cite{BakerTang96}, Mohamad and Gopalsamy~\cite{MohGop}, Liz and Trofimchuk~\cite{LizTrom}, 
Ivanov, Liz and Trofimcuk~\cite{LizTromIv} and Liz, Ivanov and Ferreiro~\cite{LizIvFerr} for equations with
finite memory.


In this paper, we consider non--hyperbolic equations (as in e.g.,~\cite{HaddKris86}) with unbounded delay (as in e.g.,~\cite{KatoMcL}) 
as well as equations with max--type functionals (as in e.g.,~\cite{LizTrom}). In particular, we give a complete characterisation of the rate of convergence of the solution $x$ of
the delay differential equation
\begin{equation} \label{eq.introdde}
x'(t)=-ag(x(t))+bg(x(t-\tau(t))), \quad t>0; \quad x(t)=\psi(t), \quad t\leq 0
\end{equation}
and the functional differential equation
\begin{equation}  \label{eq.intromaxfde}
x'(t)=-ag(x(t))+b\max_{t-\tau(t)\leq s\leq t} g(x(s)), \quad t>0;, \quad x(t)=\psi(t), \quad t\leq 0
\end{equation}
to zero as $t\to\infty$. We are interested in equations in which $g(0)=0$ but $g'(0)=0$, so that the equilibrium
solution $x(t)=0$ for all $t\geq 0$ which arises from the initial condition $\psi(t)=0$ for all $t\leq 0$ is non--hyperbolic.
In order to confine attention to a class of equations, we assume that $g$ is regularly varying with index $\beta>1$.
Of course we ask that $g$ is increasing and in $C^1$ on an interval $(0,\delta)$. The condition $a>b>0$ is natural if
we require solutions to be positive and for (at least) solutions with small initial conditions $\psi$ to obey $x(t)\to 0$
as $t\to\infty$.

Granted that a solution obeys $x(t)\to 0$ as $t\to\infty$, we are able to determine the convergence rate of both equation \eqref{eq.introdde} and \eqref{eq.intromaxfde}. This rate can be related to the rate of decay to zero of the solution of the related ordinary differential equation
\begin{equation}  \label{eq.introode}
y'(t)=-(a-b)g(y(t)), \quad t>0; \quad y(0)>0.
\end{equation}
By unifying hypotheses used to prove results under slightly different conditions on the nonlinear function $g$, our subsidiary results can be consolidated to give the following main theorem.
\begin{itemize}
\item[(a)] If $\tau(t)/t\to 0$ as $t\to\infty$ and $y$ is the solution of \eqref{eq.introode} then
$x(t)/y(t)\to 1$ as $t\to\infty$
\item[(b)] If $\tau(t)/t\to q\in(0,1)$ with $a>b(1-q)^{-\beta/(\beta-1)}$, and $y$ is the solution of \eqref{eq.introode}, then
\[
1<\liminf_{t\to\infty} \frac{x(t)}{y(t)}\leq \limsup_{t\to\infty} \frac{x(t)}{y(t)}<+\infty,
\]
We conjecture that $x(t)/y(t)\to \Lambda>1$ as $t\to\infty$.
\item[(c)] If $\tau(t)/t\to q\in(0,1)$ with $a<b(1-q)^{-\beta/(\beta-1)}$, and $y$ is the solution of \eqref{eq.introode},
then $x(t)/y(t)\to \infty$ as $t\to\infty$ and moreover
\[
\lim_{t\to\infty} \frac{\log x(t)}{\log t}=-\frac{1}{\beta}\frac{1}{\log(1/(1-q))}\log\left(\frac{a}{b}\right)<0.
\]
\item[(d)] If there exists an auxiliary function $\sigma$ such that $\sigma(t)/t\to\infty$ (which implies $\tau(t)/t\to 1$ as $t\to\infty$),
\[
\int_0^t \frac{1}{\sigma(s)} \,ds\to\infty, \quad \int_{t-\tau(t)}^t \frac{1}{\sigma(s)}\,ds \to 1 \quad\text{as $t\to\infty$}
\]
then $x$ obeys
\[
\lim_{t\to\infty} \frac{\log x(t)}{\int_0^t \frac{1}{\sigma(s)}\,ds} = -\frac{1}{\beta}\log\left(\frac{a}{b}\right).
\]
\end{itemize}
The result in (d) generalises to the nonlinear setting results in Krisztin, \v{C}ermak etc., adapting the approach in \cite{AppBuck:07a} 
used to obtain sharp asymptotic estimates for linear equations. 
Since the equation does not have infinite memory (so we cannot have $\tau(t)/t\to q>1$ as $t\to\infty$),
the results (a)--(d) can reasonably be said to provide a quite complete picture of the relationship between the rate of
convergence, the strength of the nonlinearity $g$, and the rate of growth of the unbounded delay $\tau$ for the class of
nonlinearities considered.

The results show that the rate of convergence is dependent on the delay: while the rate of growth of the delay $\tau$ is less than some critical
rate, the solution inherits the rate of decay of \eqref{eq.introode} exactly. Once the delay grows more rapidly than a critical rate, the
solution no longer inherits the rate of convergence of solutions of \eqref{eq.introode}. We are able not only to identify the critical growth
rate of the delay at which this happens, but also to determine the exact convergence rate of the solution of e.g., \eqref{eq.introdde} whether
there is ``slowly'' growing or ``rapidly'' growing delay. As a by--product, the results also cover the case of bounded delay.
We use comparison--type arguments inspired especially by Appleby and Patterson~\cite{appleby_pattersonEJQTDE,appleby_patterson} which deal with non--hyperbolic ordinary 
and stochastic differential equations, and Appleby and Buckwar~\cite{AppBuck:07a} which 
deals with equations of the form \eqref{eq.introdde} and \eqref{eq.intromaxfde} with $g(x)=x$. 

\section{Notation and Statement of the Problem}

\subsection{Notation}
We recall that a function $f$ is \emph{regularly varying at infinity with exponent
$\alpha\in\mathbb{R}$} if
\[
\lim_{t\to\infty} \frac{f(\lambda t)}{f(t)} =
\lambda^\alpha, \quad \text{for all $\lambda>0$}.
\]
For such a function, we write $f\in \text{RV}_{\infty}(\alpha)$.

A function $f$ is \emph{regularly varying at zero with exponent $\alpha\in\mathbb{R}$} if
\[
\lim_{t\to 0^+} \frac{f(\lambda t)}{f(t)} =
\lambda^\alpha, \quad \text{for all $\lambda>0$}.
\]
For such a function, we write $f\in \text{RV}_{0}(\alpha)$. The exploitation of properties of regularly varying functions in studying asymptotic properties of ordinary and functional differential equations is an active field of research. 
Recent research themes in this direction are recorded in monographs  such as \cite{maric2000regular} and \cite{rehakrv} and all properties of regularly varying functions employed in this paper can be found in the classic text \cite{BGT}. A highly selective list of the properties of regular variation that we have found useful appear in the introduction of~\cite{appleby_pattersonEJQTDE}, a work which concerns ordinary differential equations.  

\subsection{Statement of the problem}
Let $\psi\in C([-\bar{\tau},0];\mathbb{R}^+)$. In this paper we consider the asymptotic behaviour of the solutions of
\begin{equation}\label{T1.c}
  \begin{array}{rcl}
    x'(t) & = & - a g(x(t)) + bg(x(t-\tau(t)),\quad t\geq 0 \\
    x(t) & = & \psi(t),\quad t \in [-\bar{\tau},0]
  \end{array}
\end{equation}
and
\begin{equation}\label{T1.d}
  \begin{array}{rcl}
    x'(t) & = & - a g(x(t)) + b\sup_{t-\tau(t)\leq s\leq t} g(x(s)),\quad t\geq 0 \\
    x(t) & = & \psi(t),\quad t \in [-\bar{\tau},0].
  \end{array}
\end{equation}
We assume that
\begin{equation}\label{eq:agtb}
a > b> 0.
\end{equation}
We assume that the function $g$ obeys
\begin{subequations}\label{eq:g}
\begin{gather}
\label{eq:g1}g(0)=0; \\
\label{eq:g2}\text{$g:[0,\infty)\to[0,\infty)$ is increasing}\\
\label{eq:g3}\text{$g\in C^1((0,\infty);(0,\infty))$};\\
\label{eq:g4}
\text{There exists $\beta>1$ such that $g\in \text{RV}_0(\beta)$}.
\end{gather}
\end{subequations}
The last hypothesis implies that $g'(0)=0$. Define the function $G$ by
\begin{equation} \label{def.G}
G(x)=\int_x^1 \frac{1}{g(u)}\,du, \quad x>0.
\end{equation}
Then $G$ is in $C^1(0,\infty)$ (by \eqref{eq:g3}), is decreasing on $(0,\infty)$ (by \eqref{eq:g2}) and by virtue of the fact that $g'(0)=0$
we have
\begin{equation} \label{eq.Gtoinfty}
\lim_{x\to 0^+} G(x)=+\infty.
\end{equation}
$\tau$ is assumed to satisfy
\begin{subequations}\label{eq:tau}
\begin{gather}\label{T1.0}
  \tau \quad \mbox{is a continuous non-negative function on }
  [0,\infty),\\
\label{T1.a}
\text{There exists a finite $\bar{\tau}> 0$  such that $-\bar{\tau} = \inf_{t\geq 0} t-\tau(t)$.}
\end{gather}
\end{subequations}
Under these conditions there is a unique continuous solution $x$ of \eqref{T1.c} and of \eqref{T1.d} on $[-\bar{\tau},\infty)$. These solutions are
moreover guaranteed to be positive on $[-\bar{\tau},\infty)$.
\section{Discussion of Main Results}
In this section we state, motivate, and discuss results giving the exact rate of decay to zero of solutions
of \eqref{T1.c} and \eqref{T1.d}. The main tool employed is a type of comparison argument.

In the case when the rate of growth of the delay is ``fast'', and the equation has long memory, an important auxiliary function $\sigma$ is introduced which enables the asymptotic behaviour to be determined. Motivation for the method of proof, and the role of the auxiliary function $\sigma$ is given in the following section, along with easily applicable corollaries of the main results. The ease of applicability of these results relies upon being able to determine the appropriate auxiliary function $\sigma$, often as a function asymptotic to $\tau$.

In the case when the rate of growth of the delay is ``slow'' (or the equation has bounded delay), the function $\sigma$ is not required. In this case
we show that the solution of the delay differential equation converges to zero at exactly the same rate as the ordinary differential equation
$x'(t)=-(a-b)g(x(t))$ for $t>0$.

The results of the following theorems are given in Section~\ref{sec.proofs}. In this section we concentrate on stating the main general results, and discuss the role and necessity of the hypotheses on $g$, $\tau$ and the auxiliary function $\sigma$. The implications of the conclusions of the general results are also explored here.

First, we have that solutions of \eqref{T1.c} and of \eqref{T1.d} are uniformly bounded. This is used in later theorems to
show that solutions of \eqref{T1.c} and \eqref{T1.d} tend to zero as $t\to\infty$ and to determine the rate of convergence.
\begin{theorem}\label{thm3.0}
Let $\tau$ be a continuous and non-negative function such that $-\overline{\tau}=\inf_{t\geq 0} t-\tau(t)$.
Let $a>b>0$ and $g$ satisfy \eqref{eq:g1},  \eqref{eq:g3} and suppose $\psi\in
C([-\overline{\tau},0];(0,\infty))$.
\begin{itemize}
\item[(a)]
The solution of
\eqref{T1.c} viz.,
\begin{align*}
    x'(t) &=  - ag(x(t)) + bg(x(t-\tau(t)),\quad t\geq 0 \\
    x(t) &=  \psi(t),\quad t \in [-\bar{\tau},0]
\end{align*}
obeys
\begin{equation}  \label{eq.xbounded}
0<x(t)\leq \max_{-\bar{\tau}\leq s\leq 0}\psi(s), \quad t\geq -\bar{\tau}.
\end{equation}
If moreover $g$ obeys \eqref{eq:g2}, then $x(t)\to 0$ as $t\to\infty$.
\item[(b)] The solution of \eqref{T1.d} viz.,
\begin{align*}
    x'(t) &= -ag(x(t))+b\max_{t-\tau(t)\leq s\leq t} g(x(s)) ,\quad t\geq 0 \\
    x(t) &=  \psi(t),\quad t \in [-\bar{\tau},0]
\end{align*}
obeys \eqref{eq.xbounded}. If moreover $g$ obeys \eqref{eq:g2}, then $x(t)\to 0$ as $t\to\infty$.
\end{itemize}
\end{theorem}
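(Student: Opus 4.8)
The plan is to derive every assertion from comparison with scalar \emph{autonomous} ODEs obtained by ``freezing'' the non-local term, using only \eqref{eq:agtb} and the monotonicity of $g$. I would argue in detail for \eqref{T1.c} and observe that the argument for \eqref{T1.d} is verbatim the same: the only property of the non-local term used is that $\sup_{t-\tau(t)\le s\le t}g(x(s))$ is nonnegative and is $\le g(c)$ whenever $x\le c$ on $[t-\tau(t),t]$ (here \eqref{eq:g2} enters), so it obeys the same two-sided bounds as $g(x(t-\tau(t)))$. To get the upper bound in \eqref{eq.xbounded} I would set $M=\max_{-\bar\tau\le s\le 0}\psi(s)$ and suppose for contradiction that $x$ exceeds $M$; at the first time $t^\ast$ this happens one has $x(t^\ast)=M$, $x\le M$ on $[-\bar\tau,t^\ast]$ and $x'(t^\ast)\ge 0$, yet $x(t^\ast-\tau(t^\ast))\le M$ forces $g(x(t^\ast-\tau(t^\ast)))\le g(M)$ and hence $x'(t^\ast)\le (b-a)g(M)<0$ (with $g(M)>0$ by \eqref{eq:g3}), a contradiction.

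For strict positivity I would let $[-\bar\tau,S)$ be the maximal interval on which $x>0$ (so $S>0$ since $x=\psi>0$ on $[-\bar\tau,0]$) and note that on $[0,S)$ we have $t-\tau(t)\in[-\bar\tau,S)$, whence $x(t-\tau(t))>0$ and $x'(t)\ge -ag(x(t))$. Since $g$ is locally Lipschitz on $(0,\infty)$ by \eqref{eq:g3}, a comparison argument gives $x\ge z$ on $[0,S)$, where $z'=-ag(z)$, $z(0)=\psi(0)$; and because $\lim_{u\to0^+}G(u)=+\infty$ by \eqref{eq.Gtoinfty}, the time $\frac1a\int_0^{\psi(0)}\frac{du}{g(u)}$ for $z$ to reach $0$ is infinite, so $z>0$ on $[0,\infty)$. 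Hence $S=+\infty$ (otherwise $x(S)\ge z(S)>0$ would contradict maximality of $S$), completing \eqref{eq.xbounded}.

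For the convergence I would assume in addition \eqref{eq:g2} together with $t-\tau(t)\to\infty$ as $t\to\infty$ (which holds in all settings treated later, in particular whenever $\tau(t)=o(t)$). Writing $\ell=\limsup_{t\to\infty}x(t)\in[0,M]$ and supposing $\ell>0$, for each $\epsilon>0$ there is $T_1$ with $x(t)\le\ell+\epsilon$ and $x(t-\tau(t))\le\ell+\epsilon$ for $t\ge T_1$, so monotonicity of $g$ gives $x'(t)\le -ag(x(t))+bg(\ell+\epsilon)$ there; comparison with $w'=-ag(w)+bg(\ell+\epsilon)$, $w(T_1)=x(T_1)$, yields $\ell=\limsup_{t\to\infty}x(t)\le\lim_{t\to\infty}w(t)=w_\epsilon^\ast$, where $0\le w_\epsilon^\ast<\ell+\epsilon$ is the (globally attracting) equilibrium with $g(w_\epsilon^\ast)=(b/a)g(\ell+\epsilon)$. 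Thus $g(\ell)\le g(w_\epsilon^\ast)=(b/a)g(\ell+\epsilon)$, and letting $\epsilon\to0^+$ (using continuity of $g$) forces $(1-b/a)g(\ell)\le 0$, hence $g(\ell)=0$ and $\ell=0$; so $x(t)\to0$.

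The step I expect to be the main obstacle is the strict positivity. As $x$ approaches a putative first zero $t_1$, the term $ag(x(t))$ tends to $0$, and if $\tau(t_1)=0$ so does $bg(x(t-\tau(t)))$, so the right-hand side of \eqref{T1.c} ceases to control the sign of $x'$; what rules out a finite-time zero is precisely the divergence $\lim_{u\to0^+}G(u)=+\infty$ (equivalently, non-integrability of $1/g$ at the origin) together with the comparison ODE $z'=-ag(z)$. A subordinate point, visible in the two arguments above, is that monotonicity of $g$ is genuinely used: without \eqref{eq:g2} the estimates $g(x(t^\ast-\tau(t^\ast)))\le g(M)$ and its $\limsup$-analogue in the convergence step can fail.
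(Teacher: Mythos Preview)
Your proof is correct and follows essentially the same route as the paper: a constant upper barrier for \eqref{eq.xbounded}, and a $\limsup$ argument with comparison against an autonomous ODE $w'=-ag(w)+\text{const.}$ for the convergence (the paper, like you, tacitly uses \eqref{eq.tminustautoinfty} at that step). The only cosmetic difference is that the paper writes the barrier as the strict constant supersolution $M_\epsilon=M+\epsilon$ and then lets $\epsilon\to0^+$, rather than running the first-crossing argument directly at level $M$; your remark that this step already needs \eqref{eq:g2} applies equally to the paper's appeal to the comparison principle, and your explicit treatment of positivity via $z'=-ag(z)$ and \eqref{eq.Gtoinfty} fills in a point the paper leaves implicit.
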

We are interested in solutions of \eqref{T1.c} which tend to zero as $t\to\infty$. In order to guarantee this we assume that
\begin{equation} \label{eq.tminustautoinfty}
\lim_{t\to\infty} \{t-\tau(t)\}=\infty.
\end{equation}
An assumption of this type is reasonable; indeed if $x(t)\to 0$ as $t\to\infty$, we require that
\begin{equation} \label{eq.tminustautoinftysup}
\limsup_{t\to\infty} \{t-\tau(t)\}=\infty.
\end{equation}
To see this, suppose to the contrary that $\limsup_{t\to\infty} t-\tau(t)= \tau_1<+\infty$. Therefore as $t\mapsto t-\tau(t)$ is continuous, there exists $\tau^\ast$ such that $t-\tau(t)\leq \tau^\ast$ for all $t\geq 0$. By \eqref{T1.a} we have $-\bar{\tau}\leq t-\tau(t)\leq \tau^\ast$ for all
$t\geq 0$. Then for all $t\geq 0$ we have
\[
0<x_1:=\min_{s\in[-\bar{\tau},\tau^\ast]} x(s)\leq x(t-\tau(t))\leq \max_{s\in[-\bar{\tau},\tau^\ast]} x(s) =:x_2<+\infty.
\]
Next define $G(t):=bg(x(t-\tau(t)))$ for $t\geq 0$. Define
\[
0<g_1:=\min_{x\in[x_1,x_2]} g(x)\leq \max_{x\in[x_1,x_2]} g(x)=:g_2<+\infty.
\]
Then $bg_1\leq G(t)\leq bg_2$ for all $t\geq 0$. Therefore, as
$x'(t)=-ag(x(t))+G(t)$ for $t>0$ and $x(t)\to 0$ as $t\to\infty$, we have
\[
\liminf_{t\to\infty} x'(t)=\liminf_{t\to\infty} -ag(x(t))+G(t) = \liminf_{t\to\infty} G(t) \geq bg_1>0.
\]
Therefore $x(t)\to\infty$ as $t\to\infty$, a contradiction, and so \eqref{eq.tminustautoinftysup} must hold.

We notice that if $\psi(t)=\psi(0)>0$ for $t\in[-\bar{\tau},0]$ and $a=b$, then the solution of \eqref{T1.c} is $x(t)=\psi(0)>0$
for all $t\geq -\bar{\tau}$. Similarly, if $\psi(t)=\psi(0)>0$ for $t\in[-\bar{\tau},0]$ and $a=b$, then the solution of \eqref{T1.d} is $x(t)=\psi(0)>0$ for all $t\geq -\bar{\tau}$. These examples shows that the assumption $a>b$ cannot be relaxed if solutions of both \eqref{T1.c} and \eqref{T1.d} are to tend to zero for all initial conditions.
\subsection{General results}
We start by making some assumptions on $g$:
\begin{subequations}\label{eq:galt4}
\begin{gather}
\label{eq:g1alt4}g(0)=0; \\
\label{eq:g2alt4}
g(x)>0 \quad x>0; \\
\label{eq:g3alt4}\text{There is $\delta_1>0$ such that $g$ is increasing on $(0,\delta_1)$};\\
\label{eq:g4alt4}
\text{There is $\gamma\geq 1$ such that $g\circ G^{-1}\in \text{RV}_\infty(-\gamma)$}.
\end{gather}
\end{subequations}
We now state our main result for slowly growing (or bounded) delay.
\begin{theorem} \label{thm.odedecaygen}
Let $\tau$ be a continuous and non--negative function such that $-\overline{\tau}=\inf_{t\geq 0} t-\tau(t)$ and which obeys
\eqref{eq.tminustautoinfty}. Suppose also that there is $q\in[0,1)$ such that
\begin{equation}\label{eq.tauasytoqt}
\lim_{t\to\infty} \frac{\tau(t)}{t}=q.
\end{equation}
Suppose that $a>b>0$ in such a manner that
\begin{equation} \label{eq.abqsmallgamma}
a>b\left(\frac{1}{1-q}\right)^{\gamma}>0.
\end{equation}
Let $g$ satisfy \eqref{eq:galt4} and suppose $\psi\in
C([-\overline{\tau},0];(0,\infty))$. If the solution of
\eqref{T1.c} viz.,
\begin{align*}
    x'(t) &=  - ag(x(t)) + bg(x(t-\tau(t)),\quad t\geq 0 \\
    x(t) &=  \psi(t),\quad t \in [-\bar{\tau},0]
\end{align*}
obeys $x(t)\to 0$ as $t\to\infty$, and $G$ is defined by \eqref{def.G} then
\begin{equation}  \label{eq.odedecaygen}
0<\liminf_{t\to\infty} \frac{G(x(t))}{t} \leq \limsup_{t\to\infty} \frac{G(x(t))}{t}<+\infty.
\end{equation}
\end{theorem}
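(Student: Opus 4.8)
The plan is to study $z(t):=G(x(t))$, with $G$ as in \eqref{def.G}. Since $x$ solves \eqref{T1.c} and is positive, $z\in C^{1}(0,\infty)$, and as $G'(u)=-1/g(u)$,
\[
z'(t)=a-b\,\frac{g(x(t-\tau(t)))}{g(x(t))},\qquad t>0.
\]
Under the standing hypotheses $G^{-1}$ is defined near $+\infty$, so $G(u)\to\infty$ as $u\to0^{+}$, and $x(t)\to0$ forces $z(t)\to\infty$; in this notation \eqref{eq.odedecaygen} reads $0<\liminf_{t\to\infty}z(t)/t\le\limsup_{t\to\infty}z(t)/t<\infty$, so matching linear bounds on $z$ are what is needed. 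The upper bound comes for free: the quotient above is positive, hence $z'(t)\le a$ for large $t$, and so $\limsup_{t\to\infty}z(t)/t\le a$.

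The lower bound is the substantive part, and I would obtain it by an iterative, Halanay--type comparison that builds a decreasing staircase envelope for $x$. By Theorem~\ref{thm3.0} $x$ is bounded, and $x(t)\to0$ together with $t-\tau(t)\to\infty$ (from \eqref{eq.tminustautoinfty}) supplies $M_{0}\in(0,\delta_{1})$ and $S_{0}$ with $x(t)\le M_{0}$ for $t\ge S_{0}$. Inductively, suppose $x(t)\le M_{k}$ for $t\ge S_{k}$. For $t$ large enough that $t-\tau(t)\ge S_{k}$ one has $g(x(t-\tau(t)))\le g(M_{k})$ by monotonicity of $g$ near $0$, hence $x'(t)\le-a\,g(x(t))+b\,g(M_{k})$, and a scalar comparison with this autonomous equation yields $x(t)\le M_{k+1}$ for $t\ge S_{k+1}$, where $M_{k+1}$ is fixed by $g(M_{k+1})=(1+\eta)\tfrac{b}{a}g(M_{k})$ for a small $\eta>0$, so that $M_{k+1}<M_{k}$ and $M_{k+1}$ lies just above the equilibrium $g^{-1}(\tfrac{b}{a}g(M_{k}))$. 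Then $g(M_{k})=((1+\eta)\tfrac{b}{a})^{k}g(M_{0})$, and since the inverse of $g\circ G^{-1}$ is $G\circ g^{-1}\in\text{RV}_{0}(-1/\gamma)$ by \eqref{eq:g4alt4}, $G(M_{k})=(G\circ g^{-1})\big(((1+\eta)\tfrac{b}{a})^{k}g(M_{0})\big)$ grows geometrically with ratio $\big(\tfrac{a}{b(1+\eta)}\big)^{1/\gamma}$. The bookkeeping of the times is the delicate point: on $[M_{k+1},M_{k}]$ the right--hand side $a\,g(u)-b\,g(M_{k})$ is at least $\tfrac{\eta}{1+\eta}a\,g(u)$, so the relaxation time of the scalar equation is $\le C_{\eta}\,(G(M_{k+1})-G(M_{k}))$, while the delay needs a waiting time of order $S_{k}/(1-q-\varepsilon)$ to reach back past $S_{k}$ (using \eqref{eq.tauasytoqt}); thus $S_{k+1}\le S_{k}/(1-q-\varepsilon)+C_{\eta}\,(G(M_{k+1})-G(M_{k}))$.

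Writing $\rho:=(a/b)^{1/\gamma}$, hypothesis \eqref{eq.abqsmallgamma} is precisely $\rho>1/(1-q)$, because $(a/b)^{1/\gamma}>(1-q)^{-1}$ iff $a>b(1-q)^{-\gamma}$. Choosing $\eta,\varepsilon$ small enough that the ratio $\big(\tfrac{a}{b(1+\eta)}\big)^{1/\gamma}$ still exceeds $1/(1-q-\varepsilon)$, the term $G(M_{k+1})-G(M_{k})$ dominates the delay--waiting term, the recursion for $S_{k}$ closes with the same geometric ratio as $G(M_{k})$, and therefore $G(M_{k})/S_{k}$ stays bounded below by a positive constant; since $G(x(t))\ge G(M_{k})$ for $t\in[S_{k},S_{k+1}]$ and $S_{k+1}$ is comparable to $S_{k}$, this gives $\liminf_{t\to\infty}G(x(t))/t>0$, completing the estimate. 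The main obstacle is to run this last computation rigorously for a general function satisfying \eqref{eq:g4alt4} rather than for a pure power: the relaxation--time bound and the geometric growth of $G(M_{k})$ rest on Karamata/Potter estimates for $G\circ g^{-1}\in\text{RV}_{0}(-1/\gamma)$ (so the ``ratios'' hold only asymptotically and must be protected by a margin), and one must check that the strict inequality in \eqref{eq.abqsmallgamma} leaves enough slack to absorb the errors incurred by replacing the equilibrium by $M_{k+1}$ and $q$ by $q+\varepsilon$.
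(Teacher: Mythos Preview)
Your argument is essentially correct, but it takes a genuinely different route from the paper's. The upper bound $\limsup_{t\to\infty}G(x(t))/t\le a$ is obtained the same way in both approaches. For the lower bound, however, the paper does \emph{not} iterate a staircase of constant bounds: it chooses a single, precisely tuned supersolution of the full delay equation, namely $x_{U,\epsilon}(t)=G_0^{-1}(\lambda(\epsilon)t)$ for a small constant $\lambda(\epsilon)>0$, and verifies directly that
\[
x_{U,\epsilon}'(t)+ag(x_{U,\epsilon}(t))-bg(x_{U,\epsilon}(t-\tau(t)))
> g(x_{U,\epsilon}(t))\Big\{a-\lambda(\epsilon)-b\,\frac{\Gamma(\lambda(\epsilon)t(1-q-\epsilon))}{\Gamma(\lambda(\epsilon)t)}\Big\}>0,
\]
where $\Gamma=g\circ G_0^{-1}\in\text{RV}_\infty(-\gamma)$; the regular variation of $\Gamma$ and the hypothesis $a>b(1-q)^{-\gamma}$ make the brace positive. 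A standard comparison then gives $G_0(x(t))\ge\lambda(\epsilon)t$ in one stroke.

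Your staircase argument trades this one clever comparison function for a sequence of elementary ODE comparisons. Each step is simpler (scalar autonomous ODE with a constant forcing), but you must then control the recursion $S_{k+1}\le S_k/(1-q-\varepsilon)+C_\eta\big(G(M_{k+1})-G(M_k)\big)$ against the geometric growth of $G(M_k)$. Your observation that $G\circ g^{-1}\in\text{RV}_0(-1/\gamma)$, so $G(M_{k+1})/G(M_k)\to(a/(b(1+\eta)))^{1/\gamma}$, is exactly the dual of the paper's use of $\Gamma\in\text{RV}_\infty(-\gamma)$, and the pivotal inequality $(a/b)^{1/\gamma}>1/(1-q)$ is the same condition \eqref{eq.abqsmallgamma} in both arguments. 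The paper's approach is shorter and extends more transparently to the max--functional equation \eqref{T1.d} (the supersolution is monotone, so the max is attained at the left endpoint); your approach is more elementary step by step but, as you note, the Potter--bound bookkeeping needed to make the ratios rigorous for a general $g$ satisfying \eqref{eq:g4alt4} is where the real work lies.
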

Our next general result deals with the case when $q\in(0,1)$ is so large that it does not satisfy \eqref{eq.abqbigsuperhyp}.
We modify the hypotheses on $g$ slightly in this case:
\begin{subequations}\label{eq:galt6}
\begin{gather}
\label{eq:g1alt6}g(0)=0; \\
\label{eq:g2alt6}
g(x)>0 \quad x>0; \\
\label{eq:g3alt6}\text{There is $\delta_1>0$ such that $g\in C^1(0,\delta_1)$, with $g'(x)>0$ for $x\in(0,\delta_1)$};\\
\label{eq:g4alt6}
\text{There is $\gamma\geq 1$ such that $g'\circ g^{-1}\in \text{RV}_0(1/\gamma)$}.
\end{gather}
\end{subequations}
It turns out that the hypothesis \eqref{eq:g4alt6} often implies \eqref{eq:g4alt4}.
\begin{theorem}  \label{thm3.1superhypq}
Let $\tau$ be a continuous and non--negative function such that $-\overline{\tau}=\inf_{t\geq 0} t-\tau(t)$ and which obeys
\eqref{eq.tminustautoinfty}. Suppose also that $\tau$ obeys \eqref{eq.tauasytoqt} for some $q\in(0,1)$, that $a>b>0$ and moreover that
\begin{equation} \label{eq.abqbigsuperhyp}
a<b\left(\frac{1}{1-q}\right)^\gamma.
\end{equation}
Suppose $g$ satisfies \eqref{eq:galt6} and suppose $\psi\in
C([-\overline{\tau},0];(0,\infty))$. If the solution $x$ of
\eqref{T1.c} obeys $x(t)\to 0$ as $t\to\infty$ then
\begin{equation}  \label{eq.pantodecaysuperhyp}
\lim_{t\to\infty} \frac{\log g(x(t))}{\log t} = -\frac{1}{\log(1/(1-q))}\log\left(\frac{a}{b}\right).
\end{equation}
\end{theorem}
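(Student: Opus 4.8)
The plan is to convert Theorem~\ref{thm3.1superhypq} into a statement about the ordinary differential equation comparison that already appears implicitly in Theorem~\ref{thm.odedecaygen}, and then to feed the "fast delay" information through a renewal/iteration argument on dyadic-type time blocks determined by $q$. First I would record the key reduction: set $y(t) = G(x(t))$, so that $y'(t) = -x'(t)/g(x(t)) = a - b\,g(x(t-\tau(t)))/g(x(t))$. Writing $h = g\circ G^{-1}$, which by hypothesis \eqref{eq:g4alt6} (together with the remark that it implies \eqref{eq:g4alt4}) lies in $\mathrm{RV}_\infty(-\gamma)$, we get the scalar equation
\begin{equation}\label{eq.pp.yeqn}
y'(t) = a - b\,\frac{h(y(t-\tau(t)))}{h(y(t))}, \qquad t>0,
\end{equation}
with $y(t)\to\infty$ as $t\to\infty$. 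Since $\tau(t)/t\to q\in(0,1)$, we have $t-\tau(t)\sim(1-q)t$, and the engine of the proof is that $y$ is (asymptotically) sufficiently close to linear-times-slowly-varying behaviour that $y(t-\tau(t))/y(t)$ and hence, via regular variation, $h(y(t-\tau(t)))/h(y(t))$ are controlled.

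The core step is a bootstrap. From Theorem~\ref{thm.odedecaygen}-type reasoning (valid here because \eqref{eq.abqbigsuperhyp} still leaves $a>b$, so $y$ is eventually increasing and $y(t)/t$ stays in a bounded positive interval — one first proves $0<\liminf y(t)/t \le \limsup y(t)/t<\infty$ exactly as there), one knows $c_1 t \le y(t)\le c_2 t$ for large $t$. Plugging this into \eqref{eq.pp.yeqn} and using $h\in\mathrm{RV}_\infty(-\gamma)$, the ratio $h(y(t-\tau(t)))/h(y(t))$ is asymptotically pinned between $(1-q)^{-\gamma}$-type constants up to the fluctuation $y(t-\tau(t))/((1-q)y(t))$. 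The point of \eqref{eq.abqbigsuperhyp} is that the "input" term $b\,h(y(t-\tau(t)))/h(y(t))$ dominates $a$, forcing $y'(t)<0$ eventually along the relevant regime — i.e. $x(t)$ decays but $y(t)=G(x(t))$ must be re-examined; actually the right statement is that $y(t)/\log t$ is the correct scale, not $y(t)/t$. So I would instead guess the self-similar ansatz $y(t)\approx K\log t$, equivalently $g(x(t))\approx t^{-K'}$ (a pure power), substitute $x(t)=t^{-\rho+o(1)}$ into the original equation, use $g\in\mathrm{RV}_0(\beta)$ so $g(x(t))\approx t^{-\rho\beta}$, and balance the two terms on the right: $-a\,t^{-\rho\beta} + b\,(t-\tau(t))^{-\rho\beta} \approx -a t^{-\rho\beta}+b(1-q)^{-\rho\beta}t^{-\rho\beta}$. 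Matching this to $x'(t)\approx -\rho t^{-\rho-1}$, the dominant balance as $t\to\infty$ (since $-\rho\beta > -\rho-1$ fails/holds depending on $\rho$) forces the bracket $-a+b(1-q)^{-\rho\beta}$ to vanish to leading order, giving $(1-q)^{-\rho\beta}=a/b$, i.e. $\rho\beta\log(1/(1-q))=\log(a/b)$, hence
\begin{equation}\label{eq.pp.rate}
\lim_{t\to\infty}\frac{\log g(x(t))}{\log t} = -\rho\beta = -\frac{1}{\log(1/(1-q))}\log\left(\frac{a}{b}\right),
\end{equation}
which is exactly \eqref{eq.pantodecaysuperhyp}. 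Note $\gamma$ and $\beta$ are linked by $\gamma = \beta/(\beta-1)$ under \eqref{eq:g4} (consistent with part (c) of the introductory theorem), and condition \eqref{eq.abqbigsuperhyp} is precisely what makes $\rho\beta$ come out positive and the power law the genuinely dominant behaviour rather than the ODE rate.

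To make this rigorous rather than heuristic, I would run upper and lower comparison arguments separately on the logarithmic scale. For the upper bound on the decay rate (lower bound on $\log g(x)/\log t$): fix $\epsilon>0$, choose a slightly sub-critical exponent, build a supersolution of the form $\bar x(t)=C t^{-(\rho-\epsilon)}$ on $[T,\infty)$, verify using regular variation at $0$ and at $\infty$ and the $q$-asymptotics of $\tau$ that $\bar x' \ge -a g(\bar x)+b g(\bar x(\cdot-\tau(\cdot)))$ for $t$ large, and invoke a comparison principle for \eqref{T1.c} (monotonicity of $g$ gives the needed quasimonotone/order-preserving structure, as used to prove positivity and boundedness in Theorem~\ref{thm3.0}). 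For the lower bound, symmetrically construct a subsolution $\underline x(t)=c t^{-(\rho+\epsilon)}$. Because the delay is unbounded with $\tau(t)/t\to q$, the comparison must be set up on geometrically spaced intervals $t_n\sim(1-q)^{-n}$ and the inequalities propagated by induction on $n$, each step losing only $o(1)$ in the exponent, which is why the conclusion is at the crude $\log$/$\log$ level and not a sharp constant — the genuinely sharp constant being exactly the content of the conjectured $\Lambda$ in the hyperbolic-regime case, not available here.

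The main obstacle I anticipate is the two-sided control of the nonautonomous ratio $g(x(t-\tau(t)))/g(x(t))$ when all one initially knows is the weak two-sided linear bound $c_1 t\le G(x(t))\le c_2 t$: a priori this only pins $x(t-\tau(t))/x(t)$ within a multiplicative window, and regular variation then only pins the $g$-ratio within a window whose width does not a priori shrink. Closing this requires an iteration that improves the constants $c_1,c_2$ toward each other on the $\log$ scale — essentially showing that the Matkowski/Krisztin-type functional equation $\lim \log x(t)/\log t = \rho$ has $\rho$ as its unique admissible value under \eqref{eq.abqbigsuperhyp}, by ruling out both $\rho' <\rho$ (the input term would blow up the derivative and contradict decay) and $\rho'>\rho$ (the ODE term $-ag(x)$ would dominate and force the faster ODE rate, again contradicting consistency with $\tau(t)/t\to q$). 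Getting that uniqueness cleanly, with the unbounded delay forcing an inductive rather than one-shot argument, is where the real work lies; the regular-variation bookkeeping (Potter bounds, uniform convergence on compact $\lambda$-ranges) is routine once the scale is correctly identified.
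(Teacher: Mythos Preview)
Your proposal has the right target (power--law comparison functions) but two concrete problems, and it misses the clean mechanism that the paper actually uses.

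\textbf{First, a wrong preliminary claim.} You invoke Theorem~\ref{thm.odedecaygen}--type reasoning to get $c_1 t\le G(x(t))\le c_2 t$. The upper bound $G_0(x(t))\le at$ is always available from $x'\ge -ag(x)$, but the lower bound in Theorem~\ref{thm.odedecaygen} requires \eqref{eq.abqsmallgamma}, i.e.\ $a>b(1-q)^{-\gamma}$, which is the \emph{opposite} of the hypothesis \eqref{eq.abqbigsuperhyp} here. Under \eqref{eq.abqbigsuperhyp} one in fact has $G(x(t))/t\to 0$ (the solution decays slower than the ODE rate, so $x(t)\gg G^{-1}(t)$ and hence $G(x(t))\ll t$). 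You noticed this yourself mid--paragraph, but the subsequent ``bootstrap from linear bounds'' machinery is built on the wrong footing and should be discarded.

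\textbf{Second, the induction over geometric blocks is unnecessary.} The paper does not iterate on intervals $t_n\sim(1-q)^{-n}$. It makes a single global comparison. Introduce the auxiliary function $\sigma(t)=\lambda(t+\bar\tau+1)$ with $\lambda=\log(1/(1-q))$; then $\sigma$ obeys \eqref{eq.t1}--\eqref{eq.t3}. Take comparison functions of the form
\[
g(x_{U,\epsilon}(t))=x_2(\epsilon)\,e^{-c_2\int_0^t 1/\sigma(s)\,ds},\qquad
g(x_{L,\epsilon}(t))=x_1(\epsilon)\,e^{-c_1\int_0^t 1/\sigma(s)\,ds},
\]
so that for $t>T$,
\[
x_{U,\epsilon}'(t)+ag(x_{U,\epsilon}(t))-bg(x_{U,\epsilon}(t-\tau(t)))
= g(x_{U,\epsilon}(t))\left(-\frac{c_2}{g'(x_{U,\epsilon}(t))\sigma(t)}+a-be^{c_2\int_{t-\tau(t)}^t 1/\sigma}\right).
\]
For the upper bound one needs the bracket positive. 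The last term tends to $-be^{c_2}$ by \eqref{eq.t3}, and the crucial point is that $g'(x_{U,\epsilon}(t))\sigma(t)\to\infty$; this is exactly where \eqref{eq.abqbigsuperhyp} enters. Writing $\Gamma_1=g'\circ g^{-1}\in\text{RV}_0(1/\gamma)$ and using $\log g(x_{U,\epsilon}(t))\sim -c_2\lambda^{-1}\log t$, one finds $\log g'(x_{U,\epsilon}(t))\sim -(c_2/\gamma\lambda)\log t$, while $\log\sigma(t)\sim\log t$; the sum is positive provided $c_2<\gamma\lambda$, which is guaranteed for all $c_2<\log(a/b)$ by \eqref{eq.abqbigsuperhyp}. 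For the lower bound the derivative term has the helpful sign and is simply dropped, so only $a-be^{c_1(1-\epsilon)}<0$ is needed, i.e.\ $c_1>(1-\epsilon)^{-1}\log(a/b)$. Sending $c_2\uparrow\log(a/b)$, $c_1\downarrow\log(a/b)$, and using $\int_0^t 1/\sigma\sim\lambda^{-1}\log t$ gives \eqref{eq.pantodecaysuperhyp} directly.

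Your pure power--law ansatz $\bar x(t)=Ct^{-(\rho\pm\epsilon)}$ is essentially the same object (since $e^{-c\int_0^t 1/\sigma}\sim t^{-c/\lambda}$), but you have placed the power on $x$ rather than on $g(x)$; this forces you to assume $g\in\text{RV}_0(\beta)$ to translate back, whereas the paper's hypothesis \eqref{eq:g4alt6} is only on $g'\circ g^{-1}$ and covers non--regularly--varying $g$ such as those in Section~\ref{sec.nonrv}. Working in $g$--coordinates both matches the stated hypotheses and eliminates the need for any iteration.
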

To deal with rapidly growing delay, the following result is employed; it uses the same hypotheses on $g$ as Theorem~\ref{thm3.1superhypq}.
\begin{theorem}  \label{thm3.1gen}
Let $\tau$ be a continuous and non--negative function such that $-\overline{\tau}=\inf_{t\geq 0} t-\tau(t)$ and which obeys
\eqref{eq.tminustautoinfty}. Suppose also
\begin{gather}  \label{eq.t1}
\text{$\sigma$ is a non-negative, continuous function on
$[-\overline{\tau},\infty)$},\\
\label{eq.t2}
\lim_{t\to\infty} \int_0^t \frac{1}{\sigma(s)}\,ds =\infty,
\quad\lim_{t\to\infty}\sigma(t)=\infty, \\
\label{eq.t3}
\lim_{t\to\infty} \int_{t-\tau(t)}^t \frac{1}{\sigma(s)}\,ds = 1, \\
\label{eq.t4}
\lim_{t\to\infty} \frac{\sigma(t)}{t}=\infty.
\end{gather}
Let $a>b>0$ and $g$ satisfy \eqref{eq:galt6} and suppose $\psi\in
C([-\overline{\tau},0];(0,\infty))$. If the solution $x$ of \eqref{T1.c} obeys $x(t)\to 0$ as $t\to\infty$ , then
\begin{equation}  \label{Cgen}
\lim_{t\to\infty} \frac{\log g(x(t))}{\int_0^t
\frac{1}{\sigma(s)}\,ds} = -\log\left(\frac{a}{b}\right).
\end{equation}
\end{theorem}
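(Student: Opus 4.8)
The plan is to recast \eqref{T1.c} as a quasimonotone delay equation for $z(t):=g(x(t))$ and to sandwich $z$ between explicit comparison functions of the form $Ce^{-\lambda\Phi(t)}$, where $\Phi(t):=\int_0^t \frac{ds}{\sigma(s)}$. By hypothesis $x(t)\to 0$, and \eqref{eq.tminustautoinfty} gives $t-\tau(t)\to\infty$; fix $\delta_0\in(0,\delta_1]$ (so that $g$ is $C^1$ and strictly increasing on $(0,\delta_0)$ by \eqref{eq:galt6}), choose $S_0$ with $x(s)<\delta_0$ for $s\ge S_0$, and $T_1\ge S_0$ with $t-\tau(t)\ge S_0$ for $t\ge T_1$. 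For $t\ge T_1$ the chain rule turns \eqref{T1.c} into
\[
z'(t)=p(z(t))\bigl(-az(t)+bz(t-\tau(t))\bigr),\qquad p:=g'\circ g^{-1},
\]
with $g^{-1}$ the inverse of $g|_{(0,\delta_0)}$; by \eqref{eq:g4alt6}, $p\in\text{RV}_0(1/\gamma)$, and $p>0$ by \eqref{eq:galt6}, so the right-hand side is nondecreasing in the delayed argument and the equation admits the standard comparison principle for delay differential inequalities. Since $\Phi(t)\to\infty$ by \eqref{eq.t2}, the conclusion \eqref{Cgen} is equivalent to $\log z(t)/\Phi(t)\to-\log(a/b)$, which I prove by two one-sided estimates.

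\emph{Lower bound.} Fix $\varepsilon>0$, put $\lambda:=\log(a/b)+\varepsilon>0$ and $\underline z(t):=Ce^{-\lambda\Phi(t)}$. Since $\underline z(t-\tau(t))/\underline z(t)=\exp\bigl(\lambda\int_{t-\tau(t)}^t \frac{ds}{\sigma(s)}\bigr)\to e^{\lambda}=(a/b)e^{\varepsilon}$ by \eqref{eq.t3}, one has $-a\underline z(t)+b\underline z(t-\tau(t))=\bigl(a(e^{\varepsilon}-1)+o(1)\bigr)\underline z(t)$, which is positive for large $t$, whereas $\underline z'(t)=-\lambda\sigma(t)^{-1}\underline z(t)<0$; hence $\underline z$ is a subsolution of the transformed equation for large $t$. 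Taking $C>0$ small enough that $\underline z\le z$ on the relevant initial window and applying the comparison principle yields $z(t)\ge\underline z(t)$ eventually, so $\liminf_{t\to\infty}\log z(t)/\Phi(t)\ge-\lambda$; letting $\varepsilon\downarrow0$ gives $\liminf_{t\to\infty}\log z(t)/\Phi(t)\ge-\log(a/b)$. This half uses only \eqref{eq.t3} and $a>b$.

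\emph{Upper bound.} This is the substance of the proof. Fix $\varepsilon\in(0,\log(a/b))$, put $\lambda:=\log(a/b)-\varepsilon>0$ and $\bar z(t):=Ce^{-\lambda\Phi(t)}$. Now $-a\bar z(t)+b\bar z(t-\tau(t))=\bigl(-a(1-e^{-\varepsilon})+o(1)\bigr)\bar z(t)$ is negative for large $t$, so both $\bar z'(t)=-\lambda\sigma(t)^{-1}\bar z(t)$ and the right-hand side of the transformed equation are negative, and after dividing by $\bar z(t)>0$ the supersolution inequality reduces to
\[
\frac{\lambda}{\sigma(t)}\ \le\ p\bigl(Ce^{-\lambda\Phi(t)}\bigr)\,\bigl(a-be^{\lambda(1+o(1))}\bigr),\qquad t\to\infty .
\]
Since $a-be^{\lambda(1+o(1))}\to a(1-e^{-\varepsilon})>0$, it suffices to prove the key estimate $\sigma(t)\,p\bigl(Ce^{-\lambda\Phi(t)}\bigr)\to\infty$. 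For this, note that \eqref{eq.t4} forces $\Phi(t)=o(\log t)$: for each $N$ there is $s_N$ with $\sigma(s)\ge Ns$ for $s\ge s_N$, hence $\Phi(t)\le N^{-1}\log t+O(1)$, and $N$ is arbitrary. Consequently $e^{\lambda\Phi(t)/\gamma}=t^{o(1)}$. Writing $p(w)=w^{1/\gamma}\ell(w)$ with $\ell$ slowly varying at $0^+$, the standard bound $|\log\ell(w)|=o(\log(1/w))$ as $w\to0^+$ together with $|\log(Ce^{-\lambda\Phi(t)})|\sim\lambda\Phi(t)=o(\log t)$ gives $\ell\bigl(Ce^{-\lambda\Phi(t)}\bigr)=t^{o(1)}$; and $\sigma(t)\ge t$ eventually by \eqref{eq.t4}. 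Multiplying, $\sigma(t)\,p\bigl(Ce^{-\lambda\Phi(t)}\bigr)\ge C^{1/\gamma}t^{1-o(1)}\to\infty$. Hence $\bar z$ is a supersolution beyond some threshold; choosing $C$ large enough that $\bar z\ge z$ on the initial window (and still $\bar z(t)<g(\delta_0)$ past the threshold, which is possible since $e^{\lambda\Phi(t)}\to\infty$), the comparison principle gives $z(t)\le\bar z(t)$ eventually, so $\limsup_{t\to\infty}\log z(t)/\Phi(t)\le-\lambda$, and $\varepsilon\downarrow0$ finishes.

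Combining the two bounds proves \eqref{Cgen}. The main obstacle is the upper bound, and within it the key estimate $\sigma(t)\,p\bigl(Ce^{-\lambda\Phi(t)}\bigr)\to\infty$: this is exactly where the fast growth of the delay (hypothesis \eqref{eq.t4}, which makes $\Phi(t)=o(\log t)$) is weighed against the order of vanishing of $g$ at the equilibrium, encoded in the regular variation of $p=g'\circ g^{-1}$; notice only $\gamma<\infty$ is used here, consistent with the absence of $\gamma$ from \eqref{Cgen}. A secondary, routine issue is to select the constant $C$, the threshold time, and the basepoint $\delta_0$ in a mutually compatible order so that every composition above is defined: one uses that $z$ is bounded on $[S_0,\infty)$ by Theorem~\ref{thm3.0}, together with the fact that $C$ is bounded below by a fixed quantity (the corresponding supremum over $[S_0,T_1]$) and, if needed, a monotone equivalent of the regularly varying $p$, to fix the threshold before $C$. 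The same argument applies to \eqref{T1.d}: since $z$ is trapped between decreasing comparison functions, the maximum functional $\sup_{t-\tau(t)\le s\le t}g(x(s))$ obeys the same one-sided bounds against them as $g(x(t-\tau(t)))$ does.
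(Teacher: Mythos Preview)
Your proof is correct and follows essentially the same route as the paper's. Both arguments construct comparison functions with $g(\cdot)=Ce^{-\lambda\Phi(t)}$ (you call this $\bar z,\underline z$ after passing to $z=g(x)$; the paper writes $x_{U,\epsilon}=g^{-1}(x_2(\epsilon)e^{-c_2\Phi})$ and $x_{L,\epsilon}$ similarly), take $\lambda$ slightly below and above $\log(a/b)$ for the upper and lower bounds, and let $\epsilon\downarrow0$. The lower bound is handled identically by dropping the negative derivative term. For the upper bound, your key estimate $\sigma(t)\,p(Ce^{-\lambda\Phi(t)})\to\infty$ is exactly the paper's $g'(x_{U,\epsilon}(t))\sigma(t)>1/\epsilon$; your lemma $\Phi(t)=o(\log t)$ together with $\sigma(t)\ge t$ is equivalent to the paper's preliminary Lemma (that $\Phi(t)/\log\sigma(t)\to0$), and the subsequent logarithmic estimate using $\log p(w)/\log w\to 1/\gamma$ is the same as the paper's use of the bounds $B_1\le\log\Gamma_1(x)/\log x\le B_2$.

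The one place where the paper is more explicit is the circularity you flag as a ``secondary, routine issue'': the threshold past which the supersolution inequality holds could depend on $C$, while $C$ must dominate $z$ on the initial window up to that threshold. The paper resolves this by setting $x_2(\epsilon)=g(\delta/2)e^{c_2\Phi(T_4(\epsilon))}$ \emph{after} fixing $T_4(\epsilon)$, so that $g(x_{U,\epsilon}(t))=g(\delta/2)e^{-c_2(\Phi(t)-\Phi(T_4))}$; this makes the key logarithmic estimate independent of the constant, and a separate choice of $T_3(\epsilon)$ (forcing $g(x(t))\le g(\delta/2)e^{-2c_2}$) then guarantees $x<x_{U,\epsilon}$ on $[T_4,T_5]$. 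Your suggested fix via a monotone equivalent of $p$ also works, but the paper's device of tying the multiplicative constant to the threshold is cleaner and avoids invoking the monotone density theorem.
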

\section{Slowly Growing and Proportional Delay for Equations with Regularly Varying Coefficient}
\subsection{Slowly growing delay}
Apart from the positivity of $g$, which guarantees positive solutions, we require conditions on $g$ local to the equilibrium $0$ in order to determine the rate of convergence of solutions in the case when the delay grows sublinearly.

Theorem~\ref{thm.odedecaygen} can be applied to equations with coefficients in $\text{RV}_0(\beta)$; in the first instance we consider the case where $\tau(t)/t\to 0$ as $t\to\infty$. The hypotheses on $g$ become:
\begin{subequations}\label{eq:galt2}
\begin{gather}
\label{eq:g1alt2}g(0)=0; \\
\label{eq:g2alt2}
g(x)>0 \quad x>0; \\
\label{eq:g3alt2}\text{There is $\delta_1>0$ such that $g$ is increasing on $(0,\delta_1)$};\\
\label{eq:g4alt2}
\text{There is $\beta>1$ such that }
g\in \text{RV}_0(\beta). 
\end{gather}
\end{subequations}
We now state our main result for slowly growing (or bounded) delay.
\begin{theorem} \label{thm.odedecay}
Let $\tau$ be a continuous and non--negative function such that $-\overline{\tau}=\inf_{t\geq 0} t-\tau(t)$ and which obeys
\eqref{eq.tminustautoinfty}. Suppose also
\begin{equation}\label{eq.tauttto0}
\lim_{t\to\infty} \frac{\tau(t)}{t}=0.
\end{equation}
Let $a>b>0$ and $g$ satisfy \eqref{eq:galt2} and suppose $\psi\in
C([-\overline{\tau},0];(0,\infty))$. If the solution of
\eqref{T1.c} viz.,
\begin{align*}
    x'(t) &=  - ag(x(t)) + bg(x(t-\tau(t)),\quad t\geq 0 \\
    x(t) &=  \psi(t),\quad t \in [-\bar{\tau},0]
\end{align*}
obeys $x(t)\to 0$ as $t\to\infty$, and $G$ is defined by \eqref{def.G} then
\begin{equation}  \label{eq.odedecay}
\lim_{t\to\infty} \frac{G(x(t))}{t} = a-b.
\end{equation}
Moreover
\begin{equation} \label{eq.odedecayuserv}
\lim_{t\to\infty} \frac{x(t)}{G^{-1}(t)}=(a-b)^{-1/(\beta-1)}.
\end{equation}
\end{theorem}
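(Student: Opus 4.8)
The plan is to obtain \eqref{eq.odedecay} by applying Theorem~\ref{thm.odedecaygen} twice --- once to get the crude bounds \eqref{eq.odedecaygen}, and then a second, sharper argument exploiting the hypothesis $\tau(t)/t\to 0$ to upgrade the $\liminf$/$\limsup$ bounds to the exact limit $a-b$. First I would verify that the hypotheses \eqref{eq:galt2} on $g$ imply the hypotheses \eqref{eq:galt4} needed for Theorem~\ref{thm.odedecaygen}: conditions \eqref{eq:g1alt2}--\eqref{eq:g3alt2} are literally \eqref{eq:g1alt4}--\eqref{eq:g3alt4}, and the point is to check that $g\in\mathrm{RV}_0(\beta)$ with $\beta>1$ forces $g\circ G^{-1}\in\mathrm{RV}_\infty(-\gamma)$ for a suitable $\gamma\geq 1$. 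A standard computation with regularly varying functions shows that if $g\in\mathrm{RV}_0(\beta)$ then $G(x)=\int_x^1 du/g(u)\in\mathrm{RV}_0(1-\beta)$ (Karamata's theorem), hence $G^{-1}\in\mathrm{RV}_\infty(1/(1-\beta))=\mathrm{RV}_\infty(-1/(\beta-1))$, and therefore $g\circ G^{-1}\in\mathrm{RV}_\infty(\beta\cdot(-1/(\beta-1)))=\mathrm{RV}_\infty(-\beta/(\beta-1))$, so $\gamma=\beta/(\beta-1)>1$. Since $q=0$, condition \eqref{eq.abqsmallgamma} reads $a>b$, which holds. Thus \eqref{eq.odedecaygen} applies and we know $0<\liminf_{t\to\infty} G(x(t))/t\le \limsup_{t\to\infty} G(x(t))/t<+\infty$; in particular $G(x(t))\to\infty$, which reconfirms (and quantifies) that $x(t)\to 0$ with $G(x(t))$ of exact linear order.

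The heart of the proof is the sharpening. Integrating the equation and using $g$ increasing, for any $\epsilon>0$ and $t$ large one has bounds of the form $x'(t)\le -(a-b)g(x(t)) + b\left(g(x(t-\tau(t))) - g(x(t))\right)$, and the task is to show the correction term is negligible relative to $(a-b)g(x(t))$ as $t\to\infty$. Here the hypothesis $\tau(t)/t\to 0$ is decisive. From the crude estimate $G(x(t))\asymp t$ and $G^{-1}\in\mathrm{RV}_\infty(-1/(\beta-1))$ one deduces $x(t)$ is regularly varying--like of index $-1/(\beta-1)$ in $t$ (up to the $\liminf$/$\limsup$ gap), so for any fixed ratio $\lambda\in(0,1)$, $x(\lambda t)/x(t)$ is bounded; since $t-\tau(t) = t(1-\tau(t)/t)$ with $\tau(t)/t\to 0$, uniform convergence of regularly varying functions on compact $\lambda$-intervals gives $x(t-\tau(t))/x(t)\to 1$, and hence, by continuity of $g\circ G^{-1}$ type arguments, $g(x(t-\tau(t)))/g(x(t))\to 1$. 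This makes the correction term $o(g(x(t)))$. Dividing $x'(t)$ by $g(x(t))$ and recognising $-\frac{d}{dt}G(x(t)) = x'(t)/g(x(t))$, we get $\frac{d}{dt}G(x(t)) = (a-b) + o(1)$, and integrating and dividing by $t$ yields \eqref{eq.odedecay} by L'Hôpital/Cesàro. The final claim \eqref{eq.odedecayuserv} then follows by writing $x(t) = G^{-1}(G(x(t)))$, noting $G(x(t))/t\to a-b$ so $G(x(t)) \sim (a-b)t$, and applying $G^{-1}\in\mathrm{RV}_\infty(-1/(\beta-1))$: $x(t)/G^{-1}(t) = G^{-1}((a-b)t)/G^{-1}(t) \to (a-b)^{-1/(\beta-1)}$.

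The main obstacle I anticipate is making the step ``$x(t-\tau(t))/x(t)\to 1$'' fully rigorous, because at the stage we invoke it we only have $G(x(t))/t$ trapped between two positive constants, not converging --- so $x$ is not yet known to be regularly varying, only sandwiched between two regularly varying functions. The careful route is to run a bootstrapping/comparison argument: use the two-sided bound to get $x(t-\tau(t))/x(t)\in[m_\epsilon, M_\epsilon]$ with $m_\epsilon, M_\epsilon\to 1$ as the sandwich tightens (which it need not do) --- so instead one should feed the crude bound back into the differential inequality to show the $\limsup$ and $\liminf$ of $G(x(t))/t$ must each equal $a-b$. Concretely, set $L = \limsup G(x(t))/t$ and $\ell=\liminf$; along a sequence realising $L$, the delay term is controlled using that $t-\tau(t)\sim t$ so $G(x(t-\tau(t)))\le (L+\epsilon)(t-\tau(t)) \le (L+\epsilon)t$ eventually, which via monotonicity of $G$ and $g$ translates the integrated equation into an inequality forcing $L\le a-b$; symmetrically $\ell\ge a-b$. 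Combining gives $\ell=L=a-b$. Handling the interplay of $G$, $g$, and the delay in these inequalities --- keeping track of which direction each monotonicity pushes --- is the delicate bookkeeping, but it is routine once the roles of $\gamma=\beta/(\beta-1)$ and $q=0$ are pinned down as above.
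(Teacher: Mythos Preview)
Your high--level plan matches the paper's: first verify that $g\in\mathrm{RV}_0(\beta)$ yields $g\circ G^{-1}\in\mathrm{RV}_\infty(-\gamma)$ with $\gamma=\beta/(\beta-1)$, apply Theorem~\ref{thm.odedecaygen} (here $q=0$ so \eqref{eq.abqsmallgamma} is just $a>b$) to get the two--sided linear bound on $G(x(t))$, and then sharpen. Your derivation of \eqref{eq.odedecayuserv} from \eqref{eq.odedecay} via $G^{-1}\in\mathrm{RV}_\infty(-1/(\beta-1))$ is also exactly how the paper proceeds.

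The gap is in the sharpening step. Your $L$/$\ell$ argument does not close: from $G(x(t-\tau(t)))\leq (L+\epsilon)t$ and $G(x(t))\geq(\ell-\epsilon)t$ you only get $g(x(t-\tau(t)))/g(x(t))\geq (\ell/L)^\gamma+o(1)$, hence $\tfrac{d}{dt}G(x(t))\leq a-b(\ell/L)^\gamma$ and so $L\leq a-b(\ell/L)^\gamma$, not $L\leq a-b$. The symmetric bound gives $\ell\geq a-b(L/\ell)^\gamma$, and this pair of inequalities does \emph{not} force $L=\ell$ (e.g.\ $a=2$, $b=1$, $\gamma=2$, $L=3/2$, $\ell=1/2$ satisfies both). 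Arguing only along sequences realising $L$ or $\ell$ doesn't help either, since a pointwise bound on $\tfrac{d}{dt}G(x(t_n))$ along a subsequence says nothing about $\limsup_t G(x(t))/t$.

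The paper's fix is the device you are missing: integrate the crude inequality $x'(t)\geq -ag(x(t))$ \emph{over the delay interval} $[t-\tau(t),t]$ to obtain the pointwise bound $G_0(x(t))-G_0(x(t-\tau(t)))\leq a\tau(t)$. Writing $G_1=1/G_0$ this rearranges to
\[
\frac{G_1(x(t-\tau(t)))}{G_1(x(t))}\leq 1+a\,\frac{\tau(t)}{t}\cdot\frac{t}{t-\tau(t)}\cdot\frac{t-\tau(t)}{G_0(x(t-\tau(t)))},
\]
and the right--hand side tends to $1$ because $\tau(t)/t\to 0$ and $G_0(x(s))\geq\Lambda s$. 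Regular variation of $G_1^{-1}$ and then of $g$ converts this into $\limsup g(x(t-\tau(t)))/g(x(t))\leq 1$, whence $\limsup x'(t)/g(x(t))\leq -(a-b)$. Integrating \emph{this} improved bound over $[t-\tau(t),t]$ yields the matching $\liminf\geq 1$ for the $G_1$--ratio, and one more pass gives $x'(t)/g(x(t))\to-(a-b)$ pointwise; integrating from a fixed time to $t$ then gives \eqref{eq.odedecay}. The key point is that the delay--interval integration produces a uniform (in $t$) comparison between $G_0(x(t))$ and $G_0(x(t-\tau(t)))$, which your sequence/limsup reasoning cannot.
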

The limit in \eqref{eq.odedecayuserv} is a direct consequence of the fact that $G^{-1}\in \text{RV}_\infty(-1/(\beta-1))$ and 
\eqref{eq.odedecay}.

The result shows that when the delay $\tau$ grows sublinearly (or is bounded), converging solutions of \eqref{T1.c} have the same asymptotic
behaviour as the non--delay differential equation
\begin{equation} \label{eq.ode}
y'(t)=-(a-b)g(y(t)), \quad t>0; \quad y(0)=x_0>0,
\end{equation}
because $y(t)\to 0$ as $t\to\infty$ and the hypothesis $g\in\text{RV}_0(\beta)$ implies
\begin{equation} \label{eq.odeasymptotics}
\lim_{t\to\infty} \frac{y(t)}{G^{-1}(t)}=(a-b)^{-1/(\beta-1)}.
\end{equation}
Therefore, if $y$ is the solution of \eqref{eq.ode} we have
\[
\lim_{t\to\infty} \frac{x(t)}{y(t)}=1.
\]
\subsection{Proportional delay and asymptotic behaviour equivalent to non--delay case}
If the delay grows proportionately to $t$ in the sense that \eqref{eq.tauasytoqt} holds for some $q\in(0,1)$
the rate of decay of \eqref{T1.c} is not the same as \eqref{eq.ode}. We can prove the following result.

\begin{theorem}  \label{thm.tauproptnotlikeode}
Let $\tau$ be a continuous and non--negative function such that $-\overline{\tau}=\inf_{t\geq 0} t-\tau(t)$ and which obeys
\eqref{eq.tminustautoinfty}. Suppose also that $\tau$ obeys \eqref{eq.tauasytoqt}.
Let $a>b>0$ and $g$ satisfy \eqref{eq:g} and suppose $\psi\in
C([-\overline{\tau},0];(0,\infty))$. If the solution $x$ of
\eqref{T1.c} obeys $x(t)\to 0$ as $t\to\infty$, and $G$ is defined by \eqref{def.G}, then $x$ does not obey
\eqref{eq.odedecay}.
\end{theorem}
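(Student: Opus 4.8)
The plan is to argue by contradiction: suppose $x$ is a solution of \eqref{T1.c} with $x(t)\to 0$ and that it \emph{does} obey \eqref{eq.odedecay}, i.e. $G(x(t))/t\to a-b$ as $t\to\infty$. We will extract from this asymptotic a precise description of $x(t)$ and of the delayed term $g(x(t-\tau(t)))$, substitute back into the differential equation, and show the two sides are inconsistent. The key structural fact is that $G\in\text{RV}_\infty$ of a suitable index: since $g\in\text{RV}_0(\beta)$ with $\beta>1$, standard Karamata theory (as in \cite{BGT}) gives $G(x)\sim \frac{1}{\beta-1}x^{1-\beta}\ell(x)$ near $0$ for a slowly varying $\ell$, hence $G^{-1}\in\text{RV}_\infty(-1/(\beta-1))$, and therefore $g\circ G^{-1}\in\text{RV}_\infty(-\beta/(\beta-1))$. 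Writing $\gamma := \beta/(\beta-1)>1$, the supposed relation \eqref{eq.odedecay} combined with $x(t)=G^{-1}(G(x(t)))$ yields
\begin{equation}\label{eq.prop.asyx}
\lim_{t\to\infty}\frac{x(t)}{G^{-1}(t)}=(a-b)^{-1/(\beta-1)}, \qquad \lim_{t\to\infty}\frac{g(x(t))}{(g\circ G^{-1})(t)}=(a-b)^{-\gamma},
\end{equation}
by the uniform convergence theorem for regularly varying functions applied along the argument $G(x(t))/t\to a-b$.

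Next I would evaluate the delayed term. Since $\tau(t)/t\to q\in(0,1)$, we have $t-\tau(t)\sim(1-q)t$, so $g(x(t-\tau(t)))\sim (a-b)^{-\gamma}(g\circ G^{-1})((1-q)t)\sim (a-b)^{-\gamma}(1-q)^{-\gamma}(g\circ G^{-1})(t)$, again using regular variation of $g\circ G^{-1}$ with index $-\gamma$. Meanwhile the left-hand side of \eqref{T1.c}: differentiating the relation $G(x(t))\sim (a-b)t$ — more carefully, noting $\frac{d}{dt}G(x(t))=-x'(t)/g(x(t))$ — and using that $G(x(t))/t\to a-b$ together with the fact that $t\mapsto G(x(t))$ is monotone and asymptotically linear, one obtains $x'(t)/g(x(t))\to -(a-b)$, i.e. $x'(t)\sim -(a-b)g(x(t))\sim -(a-b)^{1-\gamma}(g\circ G^{-1})(t)$. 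Substituting all three asymptotics into $x'(t)=-ag(x(t))+bg(x(t-\tau(t)))$ and dividing through by $(g\circ G^{-1})(t)$ forces the scalar identity
\[
-(a-b)^{1-\gamma} \;=\; -a\,(a-b)^{-\gamma} + b\,(a-b)^{-\gamma}(1-q)^{-\gamma},
\]
that is, after multiplying by $(a-b)^{\gamma}$, $-(a-b) = -a + b(1-q)^{-\gamma}$, i.e. $b = b(1-q)^{-\gamma}$. Since $q\in(0,1)$ makes $(1-q)^{-\gamma}>1$ and $b>0$, this is impossible; the contradiction establishes the theorem.

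The main obstacle is the step that turns the integrated asymptotic $G(x(t))\sim(a-b)t$ into the differentiated statement $x'(t)/g(x(t))\to-(a-b)$: asymptotic equivalence of an antiderivative does not in general survive differentiation. I would handle this not by differentiating blindly but by integrating the equation over a window $[t,t+h]$ (or over $[\lambda t, t]$), dividing by the length of the window, and passing to the limit using \eqref{eq.prop.asyx} and the delayed-term estimate above to compute $\lim (G(x(t+h))-G(x(t)))/h$ consistently two ways; monotonicity of $G\circ x$ (valid once $x$ is eventually decreasing, which follows because $x'(t)<0$ for large $t$ by \eqref{eq:agtb} and $x(t)\to0$) upgrades the Cesàro-type limit to the genuine limit of $x'(t)/g(x(t))$. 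An alternative, perhaps cleaner, route avoids differentiation entirely: integrate \eqref{T1.c} directly, write $\int_0^t g(x(s))\,ds$ and $\int_0^t g(x(s-\tau(s)))\,ds$ in terms of $(g\circ G^{-1})$ via \eqref{eq.prop.asyx} and a change of variables $u = G(x(s))$, apply Karamata's integration theorem to each regularly varying integrand, and compare the resulting powers of $t$; the proportional delay again contributes the spurious factor $(1-q)^{-\gamma}$, contradicting the balance demanded by \eqref{eq.odedecay}. Either way, the arithmetic of the exponents is routine; the care lies entirely in justifying the interchange of the asymptotic relation with integration or differentiation.
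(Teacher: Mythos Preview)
Your strategy --- assume \eqref{eq.odedecay}, use regular variation to extract asymptotics of $g(x(t))$ and $g(x(t-\tau(t)))$, feed them back into \eqref{T1.c}, and derive a numerical contradiction --- is exactly the paper's. The difference lies in how the ``differentiation obstacle'' is handled, and here your proposal has a genuine gap.

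Your fix (a) relies on $x$ being eventually decreasing, justified by ``$x'(t)<0$ for large $t$ by \eqref{eq:agtb} and $x(t)\to0$''. This does not follow. Under the contradiction hypothesis, your own computation of the delayed term gives $g(x(t-\tau(t)))/g(x(t))\to(1-q)^{-\gamma}$, whence directly from \eqref{T1.c} one has $x'(t)/g(x(t))\to -a+b(1-q)^{-\gamma}$; this is negative only when $a>b(1-q)^{-\beta/(\beta-1)}$, i.e.\ when \eqref{eq.abqsmall} holds, which Theorem~\ref{thm.tauproptnotlikeode} does \emph{not} assume. So eventual monotonicity of $x$ is unavailable in general, and with it your upgrade from a Ces\`aro-type limit to a pointwise one.

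Fortunately monotonicity is unnecessary, and the line just displayed already closes the argument without ever differentiating the assumed asymptotic. You have the \emph{pointwise} limit $-x'(t)/g(x(t))\to a-b(1-q)^{-\gamma}$ straight from the right-hand side of \eqref{T1.c}; since $-x'(t)/g(x(t))=\frac{d}{dt}G(x(t))$, integrating from $0$ to $t$ and dividing by $t$ (the valid Ces\`aro direction) gives $G(x(t))/t\to a-b(1-q)^{-\gamma}$, contradicting the assumed limit $a-b$ because $q\in(0,1)$. The paper's proof does the same thing one level up: it computes $x'(t)/g(G^{-1}(t))$ from the right side of \eqref{T1.c}, observes that $g\circ G^{-1}\in\text{RV}_\infty(-\gamma)$ with $\gamma>1$ is integrable at infinity and that $\int_t^\infty g(G^{-1}(s))\,ds=G^{-1}(t)$, applies L'H\^opital's rule to $x(t)/G^{-1}(t)$ (both tending to zero), and compares the resulting limit with the hypothesis-derived value $(a-b)^{-1/(\beta-1)}$. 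Either route integrates rather than differentiates; your main argument and fix (a) as written try to go the wrong way, and fix (b) is too vague to assess.
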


However if $q$ is sufficiently small, it can be shown that the main asymptotic behaviour of the differential equation \eqref{eq.ode} is preserved, in the sense that $x(t)$ is bounded above and below by $G^{-1}(t)$ times a constant as $t\to\infty$.

\begin{theorem} \label{thm.odedecay2}
Let $\tau$ be a continuous and non--negative function such that $-\overline{\tau}=\inf_{t\geq 0} t-\tau(t)$ and which obeys
\eqref{eq.tminustautoinfty}. Suppose also that $\tau$ obeys \eqref{eq.tauasytoqt} for some $q\in(0,1)$, that $a>b>0$ and moreover that
$a$ and $b$ obey
\begin{equation} \label{eq.abqsmall}
a>b\left(\frac{1}{1-q}\right)^{\beta/(\beta-1)}>0.
\end{equation}
Define $\Lambda>0$ by
\begin{equation}  \label{def.Lambdalim}
\Lambda=a-b\left(\frac{1}{1-q}\right)^{-1/(\beta-1)}.
\end{equation}
Suppose $g$ satisfies \eqref{eq:galt2} and suppose $\psi\in
C([-\overline{\tau},0];(0,\infty))$. If the solution $x$ of
\eqref{T1.c} obeys $x(t)\to 0$ as $t\to\infty$, and $G$ is defined by \eqref{def.G} then there is $\Lambda_0>0$
such that
\begin{equation}  \label{eq.odedecay2}
0<\Lambda_0\leq \liminf_{t\to\infty} \frac{G(x(t))}{t} \leq \limsup_{t\to\infty} \frac{G(x(t))}{t} \leq \Lambda^{-(\beta-1)}.
\end{equation}
Moreover
\[
\Lambda\leq \liminf_{t\to\infty} \frac{x(t)}{G^{-1}(t)}\leq \limsup_{t\to\infty} \frac{x(t)}{G^{-1}(t)}<+\infty.
\]
\end{theorem}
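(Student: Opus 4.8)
The plan is to derive the decay rate of \eqref{T1.c} by sandwiching $x$ between two ordinary differential equations and exploiting the regular variation of $g$ near $0$ together with the asymptotic $\tau(t)\sim qt$.

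First I would establish the upper estimate. Since $x(t)\to 0$ and $\tau(t)/t\to q$, for any $\varepsilon>0$ there is $T$ such that $t-\tau(t)\geq (1-q-\varepsilon)t$ for $t\geq T$. On the interval $[T,\infty)$ the quantity $G(x(t))$ is, by \eqref{def.G}, a $C^1$ function with $\frac{d}{dt}G(x(t)) = -x'(t)/g(x(t)) = a - b\, g(x(t-\tau(t)))/g(x(t))$. The delayed argument $t-\tau(t)$ is eventually comparable to $(1-q)t$; if we already knew $G(x(t))/t\to L$ for some constant $L$, then $G(x(s))\approx Ls$ and hence, inverting, $x(s)\approx G^{-1}(Ls)$. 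Using $G^{-1}\in\mathrm{RV}_\infty(-1/(\beta-1))$ and $g\in\mathrm{RV}_0(\beta)$ one computes
\[
\frac{g(x(t-\tau(t)))}{g(x(t))} \longrightarrow \left(\frac{G^{-1}(L(1-q)t)}{G^{-1}(Lt)}\right)^{\beta} = \left(\frac{1}{1-q}\right)^{\beta/(\beta-1)}.
\]
So a self-consistent constant limit $L$ would have to satisfy $L = a - b(1-q)^{-\beta/(\beta-1)}\cdot(\text{correction})$; but since we only claim $\liminf$/$\limsup$ bounds, I would instead run this argument as a differential inequality: letting $\ell = \liminf G(x(t))/t$ and $\Lambda' = \limsup$, and plugging the crude two-sided bounds $\ell t \lesssim G(x(t)) \lesssim \Lambda' t$ back into the expression for $\frac{d}{dt}G(x(t))$, one gets $\liminf \frac{d}{dt}G(x(t)) \geq a - b(1-q)^{-\beta/(\beta-1)}(\Lambda'/\ell)^{\beta/(\beta-1)}$ and a matching $\limsup$ bound. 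Combined with the elementary fact that $\liminf_{t\to\infty} G(x(t))/t \geq \liminf_{t\to\infty}\frac{d}{dt}G(x(t))$ and similarly for $\limsup$, this yields a pair of inequalities in $\ell,\Lambda'$. The positivity of $\ell$ and finiteness of $\Lambda'$ are already guaranteed by Theorem~\ref{thm.odedecaygen}, since \eqref{eq.abqsmall} is exactly \eqref{eq.abqsmallgamma} with $\gamma=\beta/(\beta-1)$ (this is the link via hypothesis \eqref{eq:g4alt4}, which holds for $g\in\mathrm{RV}_0(\beta)$ because then $G\in\mathrm{RV}_\infty$ of the appropriate index and $g\circ G^{-1}\in\mathrm{RV}_\infty(-\beta/(\beta-1))$).

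For the clean upper bound $\limsup G(x(t))/t \leq \Lambda^{-(\beta-1)}$ with $\Lambda$ as in \eqref{def.Lambdalim}, I would use a comparison-function argument rather than the crude estimate: fix $\varepsilon>0$ and consider the solution $z_\varepsilon$ of $z' = -(\Lambda-\varepsilon)g(z)$, so that $G(z_\varepsilon(t))/t \to \Lambda-\varepsilon$ and $z_\varepsilon(t) \sim ((\Lambda-\varepsilon)t)^{-1/(\beta-1)}\cdot(\text{slowly varying})$. One checks that $z_\varepsilon$ is eventually a supersolution of \eqref{T1.c}: the point is that $-a g(z_\varepsilon) + b g(z_\varepsilon(t-\tau(t))) \leq -(\Lambda-\varepsilon)g(z_\varepsilon(t))$ eventually, which reduces to $b\,g(z_\varepsilon(t-\tau(t)))/g(z_\varepsilon(t)) \leq a - (\Lambda-\varepsilon)$, and the left side tends to $b(1-q)^{-\beta/(\beta-1)} = a - \Lambda$ (using the definition of $\Lambda$ and $g(z_\varepsilon(t-\tau(t)))/g(z_\varepsilon(t))\to (1/(1-q))^{\beta/(\beta-1)}$ since $z_\varepsilon(t-\tau(t))/z_\varepsilon(t) \to (1-q)^{-1/(\beta-1)}$). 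A standard comparison lemma then gives $x(t)\leq z_\varepsilon(t)$ eventually, hence $\liminf G(x(t))/t \geq \Lambda-\varepsilon$... wait — here I must be careful with the direction: $G$ is decreasing, so $x\leq z_\varepsilon$ gives $G(x(t))\geq G(z_\varepsilon(t))$, i.e.\ $\liminf G(x(t))/t \geq \Lambda-\varepsilon$, which controls the \emph{lower} bound. For the upper bound on $G(x(t))/t$ (equivalently a lower bound on $x$), I would symmetrically build a subsolution $w_\delta$ from $w' = -(\Lambda+\delta)g(w)$ — but this requires checking the inequality in the opposite sense, which needs the monotone dependence of the delayed term; the ratio $g(w_\delta(t-\tau(t)))/g(w_\delta(t))$ again tends to $(1/(1-q))^{\beta/(\beta-1)}$, so $-a g(w_\delta)+ bg(w_\delta(t-\tau(t))) \geq -(\Lambda+\delta)g(w_\delta)$ eventually, giving $x\geq w_\delta$, hence $\limsup G(x(t))/t \leq \Lambda+\delta$. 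Sending $\delta,\varepsilon\to 0$ would in fact give the stronger conclusion $G(x(t))/t\to\Lambda^{-(\beta-1)}$...

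The subtlety — and this is where I expect the real obstacle — is that the comparison arguments above are not quite legitimate as stated, because the delayed term in \eqref{T1.c} depends on the solution on a growing window and a naive comparison principle for delay equations requires the ordering to hold on the \emph{entire} history, not just from some time $T$ on; moreover the ratio limits like $z_\varepsilon(t-\tau(t))/z_\varepsilon(t)\to(1-q)^{-1/(\beta-1)}$ rely on $z_\varepsilon$ being regularly varying, which is true but needs the Uniform Convergence Theorem for $\mathrm{RV}$ functions to control $t-\tau(t)$ ranging over $[(1-q-\varepsilon)t,(1-q+\varepsilon)t]$ uniformly. This is presumably why the theorem only asserts $\liminf$/$\limsup$ bounds with an unspecified $\Lambda_0$ rather than a limit: propagating the comparison through the unbounded memory loses a constant. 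Concretely, the fix I would use is to iterate: one knows a priori (Theorem~\ref{thm.odedecaygen}) that $c_1 t \leq G(x(t)) \leq c_2 t$; feeding this into the differential inequality for $G(x(t))$ and taking $\liminf/\limsup$ gives improved constants $c_1', c_2'$ satisfying the coupled relations
\[
c_1' \geq a - b\Bigl(\tfrac{1}{1-q}\Bigr)^{\beta/(\beta-1)}\Bigl(\tfrac{c_2}{c_1}\Bigr)^{\beta/(\beta-1)}, \qquad c_2' \leq a - b\Bigl(\tfrac{1}{1-q}\Bigr)^{\beta/(\beta-1)}\Bigl(\tfrac{c_1}{c_2}\Bigr)^{\beta/(\beta-1)},
\]
and one takes $\Lambda_0$ to be the resulting (positive) lower bound, while $c_2' \leq a$ trivially; the refined upper bound $\Lambda^{-(\beta-1)}$ comes from the supersolution $z_\varepsilon$ argument, whose validity on the full history follows because any two positive solutions started from continuous data can be ordered on $[-\bar\tau,0]$ after scaling the supersolution's initial data up, and \eqref{eq:g2alt2} (monotonicity of $g$ near $0$) makes the comparison propagate forward. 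The last display of the theorem, the two-sided bound on $x(t)/G^{-1}(t)$, is then immediate from \eqref{eq.odedecay2} by applying $G^{-1}\in\mathrm{RV}_\infty(-1/(\beta-1))$ and its monotonicity to invert the inequalities.
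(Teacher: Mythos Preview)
Your invocation of Theorem~\ref{thm.odedecaygen} for the crude two--sided bound $0<\Lambda_0\leq G(x(t))/t\leq a$ is correct and is exactly how the paper obtains \eqref{eq.odedecay2} apart from the sharp constant on the $\limsup$. The substantive work is the lower bound $\liminf_{t\to\infty} x(t)/G^{-1}(t)\geq\Lambda$, and here your proposal has a genuine gap.

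You correctly diagnose the obstacle: a delay comparison starting at a large time $T$ requires ordering on the whole window $[T-\tau(T),T]$, whose length grows like $qT$, and you are trying to \emph{prove} lower bounds on $x$ on such windows, so the argument is circular. Your proposed repair --- iterating on a pair $(c_1,c_2)$ of bounds for $G(x(t))/t$ --- does not converge to anything sharp because the coupled relations you write down degrade at each step, and your separate claim that the ``supersolution $z_\varepsilon$'' gives the upper bound $\Lambda^{-(\beta-1)}$ has the direction backwards (a supersolution bounds $x$ above, hence $G(x)$ below). You also briefly claim that the two comparisons together would give the full limit $G(x(t))/t\to\Lambda^{-(\beta-1)}$; the paper explicitly states this as an open conjecture, so that step certainly cannot be as easy as sending $\varepsilon,\delta\to 0$.

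The device you are missing is this: rewrite \eqref{T1.c} as the \emph{ordinary} differential equation $x'(t)=-ag(x(t))+\varphi(t)$ with forcing $\varphi(t)=bg(x(t-\tau(t)))$, and use the current best lower bound on $x$ to bound $\varphi$ from below. Concretely, if one already knows $\liminf x(t)/G^{-1}(t)\geq\lambda_n$, then Lemma~\ref{lemma.liminfphiGinv} gives $\varphi(t)\gtrsim b\lambda_n^\beta(1-q)^{-\beta/(\beta-1)}g(G^{-1}(t))$. Now the subsolution $x_{L,\varepsilon}(t)=\lambda_{n+1}(1-\varepsilon)G^{-1}(t)$ needs only be compared against $x$ via a \emph{scalar} ODE comparison, which requires ordering at a single point; that point is supplied by the definition of $\limsup x/G^{-1}$. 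The output is $\liminf x/G^{-1}\geq\lambda_{n+1}$, where $\lambda_{n+1}$ is determined by $a\lambda_{n+1}^\beta=\lambda_{n+1}+b\lambda_n^\beta(1-q)^{-\beta/(\beta-1)}$; Lemma~\ref{lemma.existlambda} shows this sequence increases to $\Lambda$. The bootstrap is on a \emph{single} sequence, starting from $\lambda_1=a^{-1/(\beta-1)}$ (which follows from $G_0(x(t))\leq at$), and the reduction to an ODE comparison is what breaks the circularity you identified.
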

Recalling that the solution $y$ of the non--delay differential equation \eqref{eq.ode} obeys \eqref{eq.odeasymptotics}, Theorem~\ref{thm.odedecay2} shows that
\[
1<\liminf_{t\to\infty} \frac{x(t)}{y(t)}\leq \limsup_{t\to\infty}\frac{x(t)}{y(t)}<+\infty,
\]
as claimed.

We conjecture when $a$, $b$, $q$ and $\beta$ obey \eqref{eq.abqsmall}, and $\tau$ obeys \eqref{eq.tauasytoqt} that we can strengthen the conclusion of Theorem~\ref{thm.odedecay2} to obtain the limit
\begin{equation}  \label{eq.xpropdelayasy1}
\lim_{t\to\infty} \frac{x(t)}{G^{-1}(t)}= \left(a-b\left(\frac{1}{1-q}\right)^{\beta/(\beta-1)}\right)^{-1/(\beta-1)}=\Lambda,
\end{equation}
where $\Lambda$ is defined by \eqref{def.Lambdalim}.

In fact, by the methods of Theorem~\ref{thm.tauproptnotlikeode} it can be shown that if there is a $\lambda$ such that
\begin{equation} \label{eq.xpropdelayasy2}
\lim_{t\to\infty} \frac{x(t)}{G^{-1}(t)}=:\lambda\in(0,\infty),
\end{equation}
then we must have $\lambda=\Lambda$.

On the other hand, if $\tau$ obeys \eqref{eq.tauasytoqt} and $a$, $b$, $q$ and $\beta$ obey
\begin{equation} \label{eq.abqbigpre}
a<b\left(\frac{1}{1-q}\right)^{\beta/(\beta-1)},
\end{equation}
(with $a>b>0$) the method of proof of Theorem~\ref{thm.tauproptnotlikeode} shows that there is no $\lambda\in(0,\infty)$ such that
$x$ obeys \eqref{eq.xpropdelayasy2}. In the next section we investigate the case covered by \eqref{eq.abqbigpre} as well as the case
when the delay grows so quickly that $\tau(t)/t\to 1$ as $t\to\infty$.
\subsection{Proportional delay and asymptotic behaviour not equivalent to non--delay case}
Our next results demonstrates that once $\tau$ grows faster that $qt$ (where $q\in(0,1)$ is so large that it obeys \eqref{eq.abqbigpre}), the asymptotic behaviour of \eqref{T1.c} is no longer asymptotic to or bounded by the solution $y$ of the ordinary differential equation \eqref{eq.ode}. The exact rate of convergence can be determined in the case when $\tau$ obeys \eqref{eq.tauasytoqt} when $q\in(0,1)$ obeys \eqref{eq.abqbigpre}. Of course, the nonlinearity $g$ and the constants $a$ and $b$ still play an important role in determining
the asymptotic behaviour.

For these results, we place slightly different hypotheses on $g$ local to zero than the conditions \eqref{eq:galt2} imposed in Theorem~\ref{thm.odedecay} or \ref{thm.odedecay2}; now we require $g$ to not only be increasing, but to have a positive derivative
close to zero, and we ask that $g'$, rather than $g$, be regularly varying at $0$. The hypotheses are the following.
\begin{subequations}\label{eq:galt}
\begin{gather}
\label{eq:g1alt}g(0)=0; \\
\label{eq:g2alt}
g(x)>0 \quad x>0; \\
\label{eq:g3alt}\text{There is $\delta_1>0$ such that $g\in C^1(0,\delta_1)$, with $g'(x)>0$ for $x\in(0,\delta_1)$};\\
\label{eq:g4alt}
\text{There is $\beta>1$ such that }
g'\in \text{RV}_0(\beta-1). 
\end{gather}
\end{subequations}
We first deal with the case when $\tau$ obeys \eqref{eq.tauasytoqt} and $a$, $b$, $\beta$ and $q$ obey \eqref{eq.abqbig}.
In this case we can show that $x$ cannot be in $\text{RV}_\infty(-1/(\beta-1))$.
\begin{theorem}  \label{thm3.1q}
Let $\tau$ be a continuous and non--negative function such that $-\overline{\tau}=\inf_{t\geq 0} t-\tau(t)$ and which obeys
\eqref{eq.tminustautoinfty}. Suppose also that $\tau$ obeys \eqref{eq.tauasytoqt} for some $q\in(0,1)$, that $a>b>0$ and moreover that
\begin{equation} \label{eq.abqbig}
a<b\left(\frac{1}{1-q}\right)^{\beta/(\beta-1)}.
\end{equation}
Suppose $g$ satisfies \eqref{eq:galt} and suppose $\psi\in
C([-\overline{\tau},0];(0,\infty))$. If the solution $x$ of
\eqref{T1.c} viz.,
\begin{align*}
    x'(t) &=  - ag(x(t)) + bg(x(t-\tau(t)),\quad t\geq 0 \\
    x(t) &=  \psi(t),\quad t \in [-\bar{\tau},0]
\end{align*}
obeys $x(t)\to 0$ as $t\to\infty$ then
\begin{equation}  \label{eq.pantodecay}
\lim_{t\to\infty} \frac{\log x(t)}{\log t} = -\frac{1}{\beta}\frac{1}{\log(1/(1-q))}\log\left(\frac{a}{b}\right).
\end{equation}
\end{theorem}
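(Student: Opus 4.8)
The plan is to deduce \eqref{eq.pantodecay} from Theorem~\ref{thm3.1superhypq}: I will verify that the hypotheses \eqref{eq:galt} on $g$ are a special case of the hypotheses \eqref{eq:galt6} with $\gamma=\beta/(\beta-1)$, apply Theorem~\ref{thm3.1superhypq} to obtain the rate of decay of $\log g(x(t))$, and then pass to the rate of decay of $\log x(t)$ using the regular variation of $g$ at $0$.

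The first step is the transfer of hypotheses. Assume $g$ obeys \eqref{eq:galt}. Since $g'\in\text{RV}_0(\beta-1)$ with $\beta-1>0$, the function $g'$ is bounded near $0$ and, by Karamata's theorem for regular variation at $0$ (see \cite{BGT}), $g(x)=\int_0^x g'(u)\,du\in\text{RV}_0(\beta)$; here I use $g(0)=0$ together with the fact that $g$ has derivative $g'$ on $(0,\delta_1)$. In particular $g(x)\to0$ as $x\to0^+$, so the inverse $g^{-1}$, which exists near $0$ because $g$ is continuous and strictly increasing there by \eqref{eq:g3alt}, satisfies $g^{-1}\in\text{RV}_0(1/\beta)$ with $g^{-1}(x)\to0$ as $x\to0^+$. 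Composing, $g'\circ g^{-1}\in\text{RV}_0((\beta-1)/\beta)$, so \eqref{eq:g4alt6} holds with $\gamma=\beta/(\beta-1)$; since $\beta>1$ this $\gamma$ satisfies $\gamma>1\geq1$, and hence all of \eqref{eq:galt6} holds. With this value of $\gamma$, the assumption \eqref{eq.abqbig} reads $a<b(1/(1-q))^{\gamma}$, which is exactly \eqref{eq.abqbigsuperhyp}. Since the remaining hypotheses of Theorem~\ref{thm3.1superhypq} hold by assumption (in particular \eqref{eq.tauasytoqt}, $a>b>0$, and $x(t)\to0$ as $t\to\infty$), that theorem applies and yields
\[
\lim_{t\to\infty}\frac{\log g(x(t))}{\log t}=-\frac{1}{\log(1/(1-q))}\log\left(\frac{a}{b}\right).
\]

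The second step converts this into a statement about $x$ itself. Because $g\in\text{RV}_0(\beta)$, write $g(x)=x^{\beta}\ell(x)$ with $\ell$ slowly varying at $0$; the elementary bounds $x^{\varepsilon}<\ell(x)<x^{-\varepsilon}$, valid for every fixed $\varepsilon>0$ and all sufficiently small $x>0$ (see \cite{BGT}), give $|\log\ell(x)/\log x|<\varepsilon$ for small $x$, so $\log g(x)/\log x\to\beta$ as $x\to0^+$. Since $x(t)\to0$, for all large $t$ we have $x(t)\in(0,1)$, $g(x(t))\in(0,1)$ and $t>1$, so all three logarithms are nonzero and
\[
\frac{\log x(t)}{\log t}=\frac{\log x(t)}{\log g(x(t))}\cdot\frac{\log g(x(t))}{\log t}.
\]
The first factor is the reciprocal of $\log g(x(t))/\log x(t)$, hence tends to $1/\beta$; the second tends to $-\frac{1}{\log(1/(1-q))}\log(a/b)$ by the previous step. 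Multiplying the limits gives \eqref{eq.pantodecay}.

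The analytic substance of the theorem lies entirely in Theorem~\ref{thm3.1superhypq} (and, through it, in Theorem~\ref{thm3.1gen} with a suitable auxiliary function $\sigma$ and the accompanying comparison arguments); the work specific to Theorem~\ref{thm3.1q} is mainly bookkeeping with regular-variation indices. The point that requires care is the computation in the first step that differentiating $g\in\text{RV}_0(\beta)$ down to $g'\in\text{RV}_0(\beta-1)$ forces exactly $\gamma=\beta/(\beta-1)$, so that the critical relation \eqref{eq.abqbig} coincides with \eqref{eq.abqbigsuperhyp} and Theorem~\ref{thm3.1superhypq} is applicable without a gap; the remaining ingredient, the elementary fact $\log g(x)/\log x\to\beta$ as $x\to0^+$, is precisely what produces the extra factor $1/\beta$ that distinguishes \eqref{eq.pantodecay} from \eqref{eq.pantodecaysuperhyp}.
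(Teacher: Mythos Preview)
Your proof is correct and follows essentially the same approach as the paper: verify that \eqref{eq:galt} implies \eqref{eq:galt6} with $\gamma=\beta/(\beta-1)$ (via $g'\in\text{RV}_0(\beta-1)\Rightarrow g\in\text{RV}_0(\beta)\Rightarrow g^{-1}\in\text{RV}_0(1/\beta)\Rightarrow g'\circ g^{-1}\in\text{RV}_0((\beta-1)/\beta)$), observe that \eqref{eq.abqbig} then coincides with \eqref{eq.abqbigsuperhyp}, apply Theorem~\ref{thm3.1superhypq}, and finally use $\log g(x)/\log x\to\beta$ to pass from $\log g(x(t))$ to $\log x(t)$. The only cosmetic difference is that the paper phrases the intermediate step via the auxiliary function $\sigma(t)=\lambda(t+\bar\tau+1)$ with $\lambda=\log(1/(1-q))$ before converting to $\log t$, whereas you invoke the conclusion \eqref{eq.pantodecaysuperhyp} of Theorem~\ref{thm3.1superhypq} directly.
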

It is a direct consequence of \eqref{eq.pantodecay}, \eqref{eq.abqbig}, \eqref{eq.odeasymptotics} and the fact that $G^{-1}\in \text{RV}_\infty(-1/(\beta-1))$ that
\[
\lim_{t\to\infty} \frac{x(t)}{y(t)}=\infty,
\]
where $y$ is the solution of \eqref{eq.ode}. Therefore, by Theorem~\ref{thm.odedecay}, \ref{thm.odedecay2} and \ref{thm3.1q}, once the delay grows sufficiently quickly, with the critical rate being
\[
\lim_{t\to\infty}\frac{\tau(t)}{t}=1-\left(\frac{a}{b}\right)^{-\beta/(\beta-1)},
\]
the solution of \eqref{T1.c} decays to zero more slowly than the solution $y$ of the non--delay equation \eqref{eq.ode}.

\section{Slowly Growing Delay for Equations with Regularly Varying Coefficient}
We now attempt to determine the asymptotic behaviour of solutions when the delay grows according to $\tau(t)/t\to 1$ as $t\to\infty$. It transpires
that the following theorem enables us to achieve this, provided a related limiting functional equation involving $\tau$ can be solved 
which involves an auxiliary function $\sigma$. The result follows by an application of Theorem~\ref{thm3.1gen}.

\begin{theorem}  \label{thm3.1}
Let $\tau$ be a continuous and non--negative function such that $-\overline{\tau}=\inf_{t\geq 0} t-\tau(t)$ and which obeys
\eqref{eq.tminustautoinfty}. Suppose also that $\sigma$ and $\tau$ obey \eqref{eq.t1}--\eqref{eq.t4}.
Let $a>b>0$ and $g$ satisfy \eqref{eq:galt} and suppose $\psi\in
C([-\overline{\tau},0];(0,\infty))$. If the solution of
\eqref{T1.c} viz.,
\begin{align*}
    x'(t) &=  - ag(x(t)) + bg(x(t-\tau(t)),\quad t\geq 0 \\
    x(t) &=  \psi(t),\quad t \in [-\bar{\tau},0]
\end{align*}
obeys $x(t)\to 0$ as $t\to\infty$ , then
\begin{equation}  \label{C}
\lim_{t\to\infty} \frac{\log g(x(t))}{\int_0^t
\frac{1}{\sigma(s)}\,ds} = -\log\left(\frac{a}{b}\right).
\end{equation}
Moreover, \eqref{C} is equivalent to
\begin{equation} \label{eq.C1}
\lim_{t\to\infty} \frac{\log x(t)}{\int_0^t
\frac{1}{\sigma(s)}\,ds} = -\frac{1}{\beta}\log\left(\frac{a}{b}\right).
\end{equation}
\end{theorem}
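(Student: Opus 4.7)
The plan is to deduce Theorem~\ref{thm3.1} as a specialisation of Theorem~\ref{thm3.1gen} and then to translate the resulting statement for $\log g(x(t))$ into the statement \eqref{eq.C1} for $\log x(t)$ by using the regular variation of $g$ at zero.

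First I would verify that hypothesis \eqref{eq:galt} implies hypothesis \eqref{eq:galt6}. The conditions \eqref{eq:g1alt6}--\eqref{eq:g3alt6} coincide with \eqref{eq:g1alt}--\eqref{eq:g3alt}, so only \eqref{eq:g4alt6} needs attention. Since $g'\in \text{RV}_0(\beta-1)$ with $\beta-1>0$, Karamata's theorem for regular variation at zero gives
\[
g(x)=\int_0^x g'(u)\,du \in \text{RV}_0(\beta).
\]
Because $g$ is increasing on a neighbourhood of $0$, its local inverse satisfies $g^{-1}\in \text{RV}_0(1/\beta)$, and composing yields $g'\circ g^{-1}\in \text{RV}_0((\beta-1)/\beta)$. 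Taking $\gamma=\beta/(\beta-1)>1$ establishes \eqref{eq:g4alt6}. The remaining hypotheses of Theorem~\ref{thm3.1gen}---the conditions \eqref{eq.t1}--\eqref{eq.t4} on $\sigma$ and $\tau$, positivity of $\psi$, $a>b>0$, and $x(t)\to 0$---are all already assumed in Theorem~\ref{thm3.1}, so Theorem~\ref{thm3.1gen} applies and delivers \eqref{C} directly.

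For the equivalence of \eqref{C} and \eqref{eq.C1}, I would write $g(x)=x^\beta L(x)$ with $L$ slowly varying at zero, as is possible since $g\in \text{RV}_0(\beta)$. The representation theorem for slowly varying functions yields $\log L(x)=o(\log x)$ as $x\to 0^+$, so that
\[
\log g(x)=\beta \log x + \log L(x) = \beta \log x \cdot (1+o(1)) \quad \text{as } x\to 0^+.
\]
Since $x(t)>0$ with $x(t)\to 0^+$ we have $\log x(t)\to -\infty$, and therefore $\log g(x(t))\sim \beta \log x(t)$ as $t\to\infty$. Dividing both sides by $\int_0^t (1/\sigma(s))\,ds$, which tends to $+\infty$ by \eqref{eq.t2}, shows that \eqref{C} holds if and only if \eqref{eq.C1} does, with the two limits differing by the factor $\beta$.

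The only nontrivial content is the hypothesis translation in the first step; once \eqref{eq:galt6} is verified the theorem is a direct corollary of Theorem~\ref{thm3.1gen} together with a routine slow-variation estimate for $\log g$, and I would not expect further obstacles.
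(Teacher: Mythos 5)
Your proposal is correct and follows essentially the same route as the paper: the paper's own proof also verifies that $g'\in\text{RV}_0(\beta-1)$ and $g(0)=0$ give $g\in\text{RV}_0(\beta)$, $g^{-1}\in\text{RV}_0(1/\beta)$ and hence $g'\circ g^{-1}\in\text{RV}_0((\beta-1)/\beta)$, so that Theorem~\ref{thm3.1gen} applies with $\gamma=\beta/(\beta-1)$. The equivalence of \eqref{C} and \eqref{eq.C1} is likewise handled in the paper exactly as you do, via $\log g(x)/\log x\to\beta$ as $x\to 0^+$.
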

\eqref{eq.C1} is a direct consequence of \eqref{eq:g4} and \eqref{C}. \eqref{eq:g4} implies
\[
\lim_{x\to 0^+} \frac{\log g(x)}{\log x}=\beta.
\]
Then
\[
\lim_{t\to\infty} \frac{\log x(t)}{\int_0^t \frac{1}{\sigma(s)}\,ds}
=
\lim_{t\to\infty} \frac{\log g(x(t))}{\int_0^t
\frac{1}{\sigma(s)}\,ds}
\cdot \frac{\log x(t)}{\log g(x(t))}
= -\frac{1}{\beta}\log\left(\frac{a}{b}\right).
\]
The hypotheses on the auxiliary function $\sigma$ under which Theorem~\ref{thm3.1} holds will be explored and motivated in greater depth in the next section. Note however, that the conditions on the size of $\sigma$ and $\tau$ are asymptotic: the short run behaviour of $\tau$ and $\sigma$ is seen not to be important in being able to determine the rate of convergence. Neither are differentiability or monotonicity conditions required on $\tau$.
This feature of Theorem~\ref{thm3.1}, allow analysis to be extended to delay--differential equations with relatively badly behaved 
$\tau$. All that turns out to be important is the asymptotic rate of growth of $\tau$.

The presence of unbounded delay has just been mentioned, but it is not explicitly present in the statement of Theorem~\ref{thm3.1}.
However, the conditions \eqref{eq.t2} and \eqref{eq.t3} on $\sigma$, together with \eqref{eq.tminustautoinfty}, force
\begin{equation} \label{eq.tautoinfty}
\lim_{t\to\infty} \tau(t)=+\infty.
\end{equation}
Therefore, by also assuming \eqref{eq.tminustautoinfty} in Theorem~\ref{thm3.1}, the delay will be unbounded even though this is not explicitly stated. We have already noted that \eqref{eq.tminustautoinfty} is a reasonable assumption if we want $x(t)\to 0$ as $t\to\infty$.

To show that \eqref{eq.tautoinfty} must hold, first note that as \eqref{eq.t3} holds, there exists $T_1>0$ such that for all $t>T_1$ we have
\[
\int_{t-\tau(t)}^t \frac{1}{\sigma(s)}\,ds > \frac{1}{2}.
\]
Since $\sigma(t)\to\infty$ as $t\to\infty$, for every $M>0$ there exists $T_2(M)>0$ such that $\sigma(t)>M$ for all $t>T_2(M)$. Also
as $t-\tau(t)\to\infty$ as $t\to\infty$, there exists $T_3(M)>T_2(M)$ such that $t-\tau(t)>T_2(M)$ for all $t>T_3(M)$. Finally, let $T(M)=\max(T_1,T_2(M),T_3(M))$. Then for $t>T(M)$ we have
\[
\frac{1}{2}<\int_{t-\tau(t)}^t \frac{1}{\sigma(s)}\,ds <\int_{t-\tau(t)}^t \frac{1}{M}\,ds = \frac{\tau(t)}{M}.
\]
Hence $\tau(t)>M/2$ for $t>T(M)$. Since $M>0$ is arbitrary we have \eqref{eq.tautoinfty}.

Therefore, by the hypotheses in Theorem~\ref{thm3.1}, the delay is unbounded even though this is not explicitly stated. We have already noted that \eqref{eq.tminustautoinfty} is a reasonable assumption if we want $x(t)\to 0$ as $t\to\infty$.

We next show that Theorem~\ref{thm3.1} covers precisely the rapidly growing delay which is not covered by Theorems~\ref{thm.odedecay}, \ref{thm.odedecay2} and \ref{thm3.1q} which cover the case when $\tau(t)/t\to q\in[0,1)$ as $t\to\infty$. The question now is: how does the
condition \eqref{eq.t4} relate to the case not already covered by the results to date, namely the case when $\tau(t)/t\to1$ as $t\to\infty$.
Roughly speaking, we will now show that if the delay grows like $t$, then solutions grow at the rate determined by \eqref{eq.C1}. To do this,
we state an auxiliary result which shows how the linear or sublinear growth of $\sigma$ implies linear of sublinear growth in $\tau$.
\begin{lemma}  \label{lemma.sigtauasy}
Suppose $\tau$ is a non--negative continuous function which obeys \eqref{eq.tminustautoinfty}. Suppose $\sigma$ obeys \eqref{eq.t3} and
\begin{equation}  \label{eq.sigmatlambda}
\lim_{t\to\infty} \frac{\sigma(t)}{t}=\lambda\in[0,\infty].
\end{equation}
Then
\begin{equation} \label{eq.tauasytot}
\lim_{t\to\infty} \frac{\tau(t)}{t}=1-e^{-\lambda}.
\end{equation}
\end{lemma}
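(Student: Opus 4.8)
The plan is to compare the integral in \eqref{eq.t3} with the explicit integral $\int_{t-\tau(t)}^t s^{-1}\,ds = \log\bigl(t/(t-\tau(t))\bigr)$ and thereby reduce the whole statement to the single assertion $\log\bigl(t/(t-\tau(t))\bigr)\to\lambda$ (with the convention $e^{-\infty}=0$). By \eqref{eq.tminustautoinfty} we have $t-\tau(t)\to\infty$, so $0<t-\tau(t)\le t$ for all large $t$; hence $\tau(t)/t = 1-(t-\tau(t))/t$, and $\tau(t)/t\to 1-e^{-\lambda}$ holds if and only if $(t-\tau(t))/t\to e^{-\lambda}$, that is, if and only if $\log\bigl(t/(t-\tau(t))\bigr)\to\lambda$. (For \eqref{eq.t3} to be meaningful the integrand must be locally integrable along the intervals $[t-\tau(t),t]$, so $\sigma>0$ on $[T_0,\infty)$ for some $T_0$; since the left endpoints tend to infinity, all comparison integrals below are over intervals on which $\sigma$ is positive.)

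First I would treat the case $\lambda\in(0,\infty)$. Fix $\epsilon\in(0,\lambda)$ and, by \eqref{eq.sigmatlambda}, choose $T\ge T_0$ with $(\lambda-\epsilon)s\le\sigma(s)\le(\lambda+\epsilon)s$ for all $s\ge T$. Since $t-\tau(t)\to\infty$, for all large $t$ the interval $[t-\tau(t),t]$ lies in $[T,\infty)$, and integrating the reciprocal bounds over it gives
\[
\frac{1}{\lambda+\epsilon}\log\frac{t}{t-\tau(t)} \;\le\; \int_{t-\tau(t)}^t\frac{ds}{\sigma(s)} \;\le\; \frac{1}{\lambda-\epsilon}\log\frac{t}{t-\tau(t)}.
\]
Letting $t\to\infty$ and using \eqref{eq.t3} yields $\lambda-\epsilon\le\liminf_{t\to\infty}\log\bigl(t/(t-\tau(t))\bigr)\le\limsup_{t\to\infty}\log\bigl(t/(t-\tau(t))\bigr)\le\lambda+\epsilon$; since $\epsilon\in(0,\lambda)$ is arbitrary, $\log\bigl(t/(t-\tau(t))\bigr)\to\lambda$, which is the desired reduction.

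The extreme cases I would handle by contradiction using the same comparison. If $\lambda=\infty$ but $\log\bigl(t/(t-\tau(t))\bigr)\not\to\infty$, there are $t_n\to\infty$ and $K<\infty$ with $\log\bigl(t_n/(t_n-\tau(t_n))\bigr)\le K$ for all $n$; given $M>0$, pick $T$ with $\sigma(s)\ge Ms$ for $s\ge T$, so for large $n$ we get $\int_{t_n-\tau(t_n)}^{t_n}\sigma(s)^{-1}\,ds\le M^{-1}\log\bigl(t_n/(t_n-\tau(t_n))\bigr)\le K/M$, and letting $n\to\infty$ and then $M\to\infty$ contradicts \eqref{eq.t3}. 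If $\lambda=0$ but $\tau(t)/t\not\to 0$, there are $t_n\to\infty$ and $c\in(0,1)$ with $\tau(t_n)/t_n\ge c$; given $\epsilon>0$, pick $T$ with $\sigma(s)\le\epsilon s$ for $s\ge T$, so for large $n$ we get $\int_{t_n-\tau(t_n)}^{t_n}\sigma(s)^{-1}\,ds\ge\epsilon^{-1}\log\bigl(t_n/(t_n-\tau(t_n))\bigr)\ge\epsilon^{-1}\log\bigl(1/(1-c)\bigr)$, and letting $n\to\infty$ and then $\epsilon\to 0$ again contradicts \eqref{eq.t3}. Hence $\log\bigl(t/(t-\tau(t))\bigr)\to\lambda$ also when $\lambda\in\{0,\infty\}$.

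In all cases, exponentiating gives $(t-\tau(t))/t\to e^{-\lambda}$, hence $\tau(t)/t\to 1-e^{-\lambda}$, which is \eqref{eq.tauasytot}. The one point needing care is the transfer of the pointwise bounds on $\sigma(s)/s$ to bounds on the integrals over $[t-\tau(t),t]$: this is legitimate precisely because \eqref{eq.tminustautoinfty} forces the left endpoint $t-\tau(t)$ eventually into the region where those bounds (and the positivity of $\sigma$) hold, so this is the step I would be most careful to justify.
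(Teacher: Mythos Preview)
Your proof is correct and follows essentially the same approach as the paper: both reduce to showing $\log\bigl(t/(t-\tau(t))\bigr)\to\lambda$ by sandwiching $\int_{t-\tau(t)}^t \sigma(s)^{-1}\,ds$ between multiples of $\int_{t-\tau(t)}^t s^{-1}\,ds$ once $t-\tau(t)$ has entered the region where the pointwise bounds on $\sigma(s)/s$ hold. The only cosmetic difference is that the paper handles the extreme cases $\lambda=0$ and $\lambda=\infty$ directly via $\limsup$/$\liminf$ inequalities rather than by extracting a subsequence and arguing by contradiction, but the underlying estimates are identical.
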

Therefore condition \eqref{eq.t4} implies that $\tau(t)/t\to 1$ as $t\to\infty$. Also the condition that $\sigma(t)/t\to 0$
as $t\to\infty$ implies that $\tau(t)/t\to 0$ as $t\to\infty$ and if $\sigma(t)/t$ tends to a finite limit as $t\to\infty$, then
$\tau(t)/t\to q$ as $t\to\infty$ for some $q\in(0,1)$.

\subsection{Concrete examples of $\sigma$ obeying \eqref{eq.t1}--\eqref{eq.t4}}
We now state general results which enable to explicitly construct $\sigma$ obeying \eqref{eq.t1}--\eqref{eq.t4} while at the same 
time only making assumptions concerning the asymptotic behaviour of $\tau$. 
\begin{proposition} \label{prop.tminustauinRVgamma}
Let $\tau$ be a continuous and non--negative function such that $-\overline{\tau}=\inf_{t\geq 0} t-\tau(t)$ and for which
\begin{equation}  \label{eq.tauminustasytgamma}
\text{There exists $\beta\in(0,1)$ such that }
\lim_{t\to\infty} \frac{\log(t-\tau(t))}{\log t}=\beta.
\end{equation}
Then there is a function $\sigma$ which obeys \eqref{eq.t1}, \eqref{eq.t2} and \eqref{eq.t3} such that
\begin{equation} 
\label{eq.int1sigasylogtgamma}
\lim_{t\to\infty} \frac{\int_0^t \frac{1}{\sigma(s)}\,ds}{\log_2 t} = \frac{1}{\log(1/\beta)}.
\end{equation}
\end{proposition}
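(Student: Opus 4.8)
The plan is to exhibit $\sigma$ explicitly. Motivated by the fact that the rate $\log_2 t = \log\log t$ in \eqref{eq.int1sigasylogtgamma} is precisely what one gets by integrating $1/(s\log s)$, I would fix $s_0>1$ (so that $\log s>0$ on $[s_0,\infty)$) and set
\[
\sigma(s) = \log(1/\beta)\, s\log s, \qquad s\ge s_0,
\]
extending $\sigma$ to $[-\overline{\tau},s_0]$ by the constant value $\sigma(s_0)=\log(1/\beta)\,s_0\log s_0$. Since $\beta\in(0,1)$ we have $\log(1/\beta)>0$, so this $\sigma$ is continuous and strictly positive on $[-\overline{\tau},\infty)$, which is \eqref{eq.t1}; moreover $\int_0^{s_0}\frac{ds}{\sigma(s)}=s_0/\sigma(s_0)<\infty$, and an antiderivative of $1/\sigma$ on $[s_0,\infty)$ is $\frac{1}{\log(1/\beta)}\log\log s$.

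Before the verifications I would extract the elementary consequences of \eqref{eq.tauminustasytgamma}. Write $\log(t-\tau(t))=(\beta+\epsilon(t))\log t$ with $\epsilon(t)\to 0$. Since $\beta>0$ this forces $\log(t-\tau(t))\to+\infty$, hence $t-\tau(t)\to+\infty$, so there is $T_0$ with $t-\tau(t)\ge s_0$ for all $t\ge T_0$. Applying the logarithm once more, $\log\log(t-\tau(t))=\log\!\big((\beta+\epsilon(t))\log t\big)=\log\log t+\log(\beta+\epsilon(t))$, and $\log(\beta+\epsilon(t))\to\log\beta$; thus
\[
\log\log t-\log\log(t-\tau(t))\longrightarrow -\log\beta=\log(1/\beta)\qquad\text{as }t\to\infty.
\]

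Now the verification is immediate. First, $\sigma(t)=\log(1/\beta)\,t\log t\to\infty$, and for $t\ge s_0$,
\[
\int_0^t\frac{ds}{\sigma(s)}=\frac{s_0}{\sigma(s_0)}+\frac{1}{\log(1/\beta)}\big(\log\log t-\log\log s_0\big)\longrightarrow\infty,
\]
which gives \eqref{eq.t2}. Second, for $t\ge T_0$ both endpoints of $[t-\tau(t),t]$ lie in $[s_0,\infty)$, so
\[
\int_{t-\tau(t)}^{t}\frac{ds}{\sigma(s)}=\frac{1}{\log(1/\beta)}\big(\log\log t-\log\log(t-\tau(t))\big)\longrightarrow\frac{\log(1/\beta)}{\log(1/\beta)}=1
\]
by the displayed limit above, which is \eqref{eq.t3}. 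Finally, dividing the formula for $\int_0^t\frac{ds}{\sigma(s)}$ by $\log_2 t=\log\log t$ and letting $t\to\infty$, the bounded additive terms are negligible and the quotient tends to $\frac{1}{\log(1/\beta)}$, which is \eqref{eq.int1sigasylogtgamma}.

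All the computations here are routine; the single point that deserves care is that \eqref{eq.tauminustasytgamma} pins down $t-\tau(t)$ only up to a factor $t^{o(1)}$ — it does \emph{not} assert $t-\tau(t)\sim t^\beta$. What makes the argument work regardless is that the relevant quantity is $\log\log(t-\tau(t))$: a multiplicative error $\beta+o(1)$ inside $\log(t-\tau(t))/\log t$ becomes, after a second logarithm, only the additive error $\log(\beta+o(1))=\log\beta+o(1)$, and that $o(1)$ is harmless in the limit. So the ``hard part'' is essentially just isolating which asymptotics survive an iterated logarithm; once that is recognized, the explicit $\sigma$ above does the rest.
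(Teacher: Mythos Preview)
Your proof is correct and matches the paper's approach: although the paper does not include a formal proof of this proposition, in the justification of the examples (part (d) of Example~\ref{example.asygeminuspoly}) it explicitly uses $\sigma(t)=\log(1/\gamma)(t+2\bar{\tau}+1)\log(t+2\bar{\tau}+1)$, which is exactly your construction up to a harmless shift of the argument in place of your piecewise extension. Your observation that the hypothesis \eqref{eq.tauminustasytgamma} determines $t-\tau(t)$ only up to a factor $t^{o(1)}$, and that this is precisely absorbed by the second logarithm, is the key point and you have handled it cleanly.
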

\begin{proposition} \label{prop.tminustauinRV1}
Let $\tau$ be a continuous and non--negative function such that $-\overline{\tau}=\inf_{t\geq 0} t-\tau(t)$.
Suppose that $\varphi\in C[0,\infty);\mathbb{R})$ is such that
\begin{subequations}  \label{eq.phiconds}
\begin{gather}  \label{eq.phiinctoinfty}
\text{$\varphi$ is increasing on $[0,\infty)$ and $\lim_{t\to\infty} \varphi(t)=\infty$;}\\
\label{eq.phirv}
\varphi \in \text{RV}_\infty(0); \\
\label{eq.logexpphirv}
x\mapsto \log \varphi(e^x)\in \text{RV}_\infty(0),
\end{gather}
\end{subequations}
and
\begin{equation}  \label{eq.tauminustasytphi}
\lim_{t\to\infty} \frac{t-\tau(t)}{t/\varphi(t)}=1.
\end{equation}
Then there is a function $\sigma$ which obeys \eqref{eq.t1}, \eqref{eq.t2} and \eqref{eq.t3} such that
\begin{gather} \label{eq.sigasytlogphi}
\lim_{t\to\infty} \frac{\sigma(t)}{t\log \varphi(t)}=1,\\
\label{eq.int1sigasyphi}
\lim_{t\to\infty} \frac{\int_0^t \frac{1}{\sigma(s)}\,ds}{\log t/\log \varphi(t)} = 1.
\end{gather}
\end{proposition}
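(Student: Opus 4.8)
The plan is to take $\sigma$ asymptotic to $t\log\varphi(t)$ and to reduce the conditions on $\sigma$ to Karamata-type facts about the slowly varying function $h(u):=\log\varphi(e^u)$, whose slow variation at infinity is exactly the content of \eqref{eq.logexpphirv}. Concretely, since $\varphi$ is increasing with $\varphi(t)\to\infty$, I would fix $T_0>0$ so that $\varphi(t)\geq e$ (hence $\log\varphi(t)\geq 1$) for $t\geq T_0$, and set $\sigma(t)=t\log\varphi(t)$ for $t\geq T_0$ and $\sigma(t)=T_0\log\varphi(T_0)$ for $-\overline{\tau}\leq t\leq T_0$. This $\sigma$ is continuous and strictly positive on $[-\overline{\tau},\infty)$, giving \eqref{eq.t1}; it satisfies $\sigma(t)\to\infty$, the second half of \eqref{eq.t2}; and \eqref{eq.sigasytlogphi} is immediate.

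Next I would analyse the primitive of $1/\sigma$. Substituting $u=\log s$, for $t\geq T_0$,
\[
\int_{T_0}^{t}\frac{ds}{\sigma(s)}=\int_{\log T_0}^{\log t}\frac{du}{h(u)},\qquad h(u):=\log\varphi(e^u).
\]
By \eqref{eq.logexpphirv}, $h\in\text{RV}_\infty(0)$; moreover $h$ is continuous, increasing (because $\varphi$ is), positive on $[\log T_0,\infty)$, and $h(u)\to\infty$ (because $\varphi(t)\to\infty$). Hence $1/h\in\text{RV}_\infty(0)$, and Karamata's theorem gives $\int_{\log T_0}^{X}du/h(u)\sim X/h(X)$ as $X\to\infty$. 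Since the contribution of $[0,T_0]$ is bounded, taking $X=\log t$ yields \eqref{eq.int1sigasyphi}, namely $\int_0^t ds/\sigma(s)\sim \log t/\log\varphi(t)$. Because $h\in\text{RV}_\infty(0)$ with $h(u)\to\infty$ one has $h(u)=o(u)$, i.e.\ $\log\varphi(t)=o(\log t)$, so this quantity tends to $\infty$, completing \eqref{eq.t2}.

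For the windowed integral \eqref{eq.t3}, note that for $t$ large enough that $t-\tau(t)\geq T_0$ (possible by \eqref{eq.tminustautoinfty}) the same substitution gives $\int_{t-\tau(t)}^{t}ds/\sigma(s)=\int_{a_t}^{b_t}du/h(u)$ with $b_t=\log t$ and $a_t=\log(t-\tau(t))$. From \eqref{eq.tauminustasytphi},
\[
a_t=\log\frac{t}{\varphi(t)}+\log\frac{t-\tau(t)}{t/\varphi(t)}=b_t-h(b_t)+o(1),
\]
so $b_t-a_t=h(b_t)+o(1)$ and $a_t/b_t=1-h(b_t)/b_t+o(1/b_t)\to 1$ since $h(b_t)=o(b_t)$. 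Thus the rescaled window $[a_t/b_t,1]$ eventually lies in a fixed compact subset of $(0,\infty)$ (say $[1/2,1]$), and the uniform convergence theorem for slowly varying functions gives $\sup_{u\in[a_t,b_t]}|h(u)/h(b_t)-1|\to 0$. Therefore
\[
\int_{a_t}^{b_t}\frac{du}{h(u)}=(1+o(1))\,\frac{b_t-a_t}{h(b_t)}=(1+o(1))\,\frac{h(b_t)+o(1)}{h(b_t)}\longrightarrow 1,
\]
which is \eqref{eq.t3}.

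The delicate point, and the one I expect to require most care, is this last step: the window $[a_t,b_t]$ over which $1/h$ must be controlled has length $h(b_t)$, which itself diverges, so one cannot simply bound $h$ by its value at an endpoint up to a bounded error. What rescues the argument is that this length is $o(b_t)$, so after dividing by $b_t$ the window contracts to the single point $1$ and the uniform convergence theorem applies; the remaining work is purely bookkeeping of the $o(1)$ correction in $a_t$ and of the threshold $T_0$ beyond which all the regular-variation tools (positivity, local boundedness, monotonicity) are available.
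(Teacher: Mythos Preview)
The paper states Proposition~\ref{prop.tminustauinRV1} without proof, so there is nothing to compare your argument against directly. Your approach---defining $\sigma(t)\sim t\log\varphi(t)$, passing to the variable $u=\log s$ so that the integrand becomes $1/h(u)$ with $h(u)=\log\varphi(e^u)$, and then invoking Karamata's theorem for \eqref{eq.int1sigasyphi} and the uniform convergence theorem for \eqref{eq.t3}---is correct and is exactly the natural route suggested by the hypothesis \eqref{eq.logexpphirv}, whose sole purpose is to make $h$ slowly varying.

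One small correction: you justify ``$t-\tau(t)\geq T_0$ for $t$ large'' by citing \eqref{eq.tminustautoinfty}, but that condition is not among the hypotheses of the proposition. The fact you need, $t-\tau(t)\to\infty$, does follow from the stated hypotheses: since $\varphi\in\text{RV}_\infty(0)$ we have $\varphi(t)/t\to 0$, hence $t/\varphi(t)\to\infty$, and then \eqref{eq.tauminustasytphi} forces $t-\tau(t)\to\infty$. With that remark your argument is complete.
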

\subsection{Factors influencing the rate of decay of $x$}
We note that the relationship between the rate of growth of the unbounded delay $\tau$ and the rate of decay of the solution $x$ of \eqref{T1.c} to $0$ as $t\to\infty$ (which depends on $\sigma$) is embodied in the condition \eqref{eq.t3}. The limit  \eqref{eq.t3} relates the asymptotic behaviour of $\sigma$ to that of $\tau$. We see that the faster that $\sigma(t)\to\infty$ as $t\to\infty$, the faster that $1/\sigma(t)\to 0$, so in order for \eqref{eq.t3} to hold, $\tau(t)$ must tend to infinity faster as $t\to\infty$ to compensate for the rapid decay of $1/\sigma(t)$. Also, the faster that $\sigma$ tends to infinity, the slower that $\int_0^t 1/\sigma(s)\,ds$ tends so infinity as $t\to\infty$, and so by \eqref{C}, the slower that $x(t)\to 0$ as $t\to\infty$. Therefore, we see that the faster that $\tau(t)\to\infty$ as $t\to\infty$, the slower that $x(t)\to\infty$ as $t\to\infty$. This makes intuitive sense, as the longer the ``memory'' of the equation, the slower the convergence of asymptotically stable solutions to the equilibrium should be.

The limit \eqref{C} reveals that the rate of decay of $x(t)\to 0$ increases as $a$ increases and decreases as $b$ increases,
as should be expected; the greater the negative instantaneous feedback and the less the positive delayed feedback of the delayed term, the more
rapidly solutions of \eqref{T1.c} should converge to zero.

The limit \eqref{C} also reveals that the stronger the nonlinearity $g$ local to zero, the faster the rate of convergence of $x(t)\to 0$ as $t\to\infty$. Consider the solutions
$x_1$ and $x_2$ of \eqref{T1.c} in the case when $g=g_1$ and $g=g_2$ respectively. By \eqref{C}, we have
\[
\lim_{t\to\infty}\frac{\log g_1(x_1(t))}{\int_0^t
\frac{1}{\sigma(s)}\,ds}= -\log\left(\frac{a}{b}\right), \quad
\lim_{t\to\infty}\frac{\log g_2(x_2(t))}{\int_0^t
\frac{1}{\sigma(s)}\,ds}= -\log\left(\frac{a}{b}\right).
\]
Therefore
\begin{equation} \label{eq.gxtnosigma}
\lim_{t\to\infty} \frac{\log g_1(x_1(t))}{\log g_2(x_2(t))}=1.
\end{equation}
This limit is interesting in itself as it shows the impact of different nonlinearities on the rate of convergence of solutions, even when the auxiliary function $\sigma$ is not known.

To give a concrete example where we get different convergence rates arising from different nonlinearities, suppose that $g_1\in\text{RV}_0(\beta_1)$ and $g_2\in\text{RV}_\infty(\beta_2)$ where $\beta_2>\beta_1>1$. Then
\begin{equation} \label{eq.logggamma}
\lim_{x\to 0^+}
\frac{\log g_1(x)}{\log x}=\beta_1, \quad
\lim_{x\to 0^+}
\frac{\log g_2(x)}{\log x}=\beta_2.
\end{equation}
Then
\[
\lim_{x\to 0} \log\frac{g_2(x)}{g_1(x)}=\lim_{x\to 0^+} \log x\left(\frac{\log g_2(x)}{\log x}-\frac{\log g_1(x)}{\log x}\right)=
-\infty,
\]
so $g_2(x)/g_1(x)\to 0$ as $x\to 0^+$ and therefore $g_1$ dominates $g_2$ local to zero. We should therefore expect that $x_1$ tends to
zero more rapidly than $x_2$. By \eqref{eq.logggamma} and \eqref{eq.gxtnosigma} we have
\[
\lim_{t\to\infty} \frac{\log x_1(t)}{\log x_2(t)}
=
\lim_{t\to\infty} \frac{\log x_1(t)}{\log g_1(x_1(t))}\cdot \frac{\log g_1(x_1(t))}{\log g_2(x_2(t))} \cdot\frac{\log g_2(x_2(t))}{\log x_2(t)}
=\frac{\beta_2}{\beta_1}>1.
\]
Hence as $t\to\infty$, we have
\[
\log(x_1(t)/x_2(t))=\log x_1(t)-\log x_2(t)=\log x_2(t)\left(\frac{\log x_1(t)}{\log x_2(t)} -1 \right)\to -\infty,
\]
so $x_1(t)/x_2(t)\to 0$ as $t\to\infty$. Thus $x_1$ converges to zero more quickly to zero than $x_2$ as $t\to\infty$, as we anticipated.

\subsection{Motivation For Results}
In rough terms, Theorems~\ref{thm3.1} and~\ref{thm5.1} are proven by constructing one--parameter families of
functions $x_{L,\epsilon}$ and $x_{U,\epsilon}$ such that
\begin{subequations} \label{eq.motivatexuxl}
\begin{align}
g(x_{L,\epsilon}(t))&=x_1(\epsilon)\exp\left(-C_1(\epsilon)\int_0^t \frac{1}{\sigma(s)}\,ds\right),\\
g(x_{U,\epsilon}(t))&=x_2(\epsilon)\exp\left(-C_2(\epsilon)\int_0^t \frac{1}{\sigma(s)}\,ds\right).
\end{align}
\end{subequations}
where the monotonicity of $g$ ensures that the functions $x_{L,\epsilon}$ and $x_{U,\epsilon}$ are well--defined. These functions are constructed so that they are
upper and lower solutions of the solution $x$ of e.g., \eqref{T1.c}. This is achieved because $x_1(\epsilon)$, $x_2(\epsilon)$, $C_1(\epsilon)$
and $C_2(\epsilon)$ can be chosen so that there are $T_1(\epsilon), T_2(\epsilon)>0$ such that
\begin{align*}
x_{L,\epsilon}'(t)&<-ag(x_{L,\epsilon}(t))+bg(x_{L,\epsilon}(t-\tau(t))), \quad t>T_1(\epsilon), \\
x_{U,\epsilon}'(t)&>-ag(x_{U,\epsilon}(t))+bg(x_{U,\epsilon}(t-\tau(t))), \quad t>T_2(\epsilon).
\end{align*}
We choose $x_1(\epsilon)$ so small and $x_2(\epsilon)$ so large so that $x_{L,\epsilon}(t)<x(t)$ for $t\leq T_1(\epsilon)$ and $x_{U,\epsilon}(t)>x(t)$ for $t\leq T_2(\epsilon)$. The values of $x_1(\epsilon)$ and $x_2(\epsilon)$ play no role in the differential inequality. The parameters $C_1(\epsilon)$ and $C_2(\epsilon)$ are chosen so that the differential inequalities are satisfied on $[T_1(\epsilon),\infty)$ and $[T_2(\epsilon),\infty)$ respectively. The values of $T_1(\epsilon)$ and $T_2(\epsilon)$ are chosen so as to use asymptotic information about $\sigma$ and $\tau$ that is present in \eqref{eq.t2} and in \eqref{eq.t3} especially: this information is mainly used to satisfy the differential inequalities. The comparison principle now implies that $x_{L,\epsilon}(t)<x(t)<x_{U,\epsilon}(t)$ for all $t\geq 0$, and hence that $g(x_{L,\epsilon}(t))< g(x(t))<g(x_{U,\epsilon}(t))$ for all $t\geq 0$. The upper and lower estimates on $g(x(t))$ are known explicitly by the construction \eqref{eq.motivatexuxl}. Finally, we send the parameter $\epsilon\to 0^+$: the exact asymptotic limit \eqref{C} is obtained because $C_1(\epsilon)$ and $C_2(\epsilon)$ have been designed so that both tend to the same limit as $\epsilon\to 0^+$. Roughly speaking for each estimate, we need \textit{two} adjustable constants $x_i$ and $C_i$ to satisfy \textit{two} inequalities: one for the differential inequality on $(T_i(\epsilon),\infty)$ and one for the ``initial condition'' on $[-\bar{\tau},T_i(\epsilon)]$. The free parameter $\epsilon$ is used at the end of the proof to match exactly the upper and lower estimates. In fact, to give sufficient flexibility in the construction of the upper and lower estimates, we sometimes have additional free parameters $C_1$ and $C_2$ which can be sent to $C_1(\epsilon)$ and $C_2(\epsilon)$ in advance of taking the limit as $\epsilon\to 0^+$.

These are the broad guidelines followed in constructing the upper and lower estimates, and do not cover all the subtleties
encountered: sometimes the objectives are in conflict and the construction can become quite delicate and require some iteration. As a general rule, it
is more difficult to construct a very good upper estimate as there is some interaction between all three terms in the differential inequality for
$x_U$. For the lower estimate, the presence of the derivative term can generally be ignored by using the fact that the estimates constructed
are decreasing functions; therefore the relative size of the two terms on the righthand side of the differential inequality for $x_L$ is all that matters. The monotonicity of the estimates also allows the analysis to be extended easily to the equation \eqref{T1.d} with a
maximum functional, and simplifies the choice of estimates that must be taken in order to satisfy constraints on the ``initial conditions''.

We motivate now the functional forms of $x_L$ and $x_U$ and the hypotheses required in Theorems~\ref{thm3.1} and \ref{thm5.1}.
If a function $y$ is written in the form
\begin{equation} \label{eq.gyrep}
g(y(t))=g(y(0))\exp\left(-C\int_0^t \frac{1}{\sigma(s)}\,ds\right),
\end{equation}
as $g$ is in $C^1$, it is easily seen that
\begin{multline}\label{eq:heuristic}
y'(t) + ag(y(t)) -
bg(y(t-\tau(t))) \\
=y(0)e^{-C\int_0^t
\frac{1}{\sigma(s)}\,ds}\biggl\{
-\frac{1}{g'(y(t))}\frac{C}{\sigma(t)}+a
-be^{C\int_{t-\tau(t)}^t
\frac{1}{\sigma(s)}\,ds}\biggr\}
\end{multline}
with a similar equality holding when $y'(t) + ag(y(t)) -
b\sup_{t-\tau(t)\leq s\leq t}g(y(t-\tau(t)))$ is on the lefthand side. Therefore analysis of the righthand side of
\eqref{eq:heuristic} is the same whether we consider equation \eqref{T1.c} or \eqref{T1.d}.
A reasonable objective now is to
ensure that the term in curly braces in \eqref{eq:heuristic} is negligible (at least as $t\to\infty$) so that $y$ can be close to a solution of
\eqref{T1.c}.
In order that $y(t)$ tends to zero, we need $\int_0^t 1/\sigma(s)\,ds\to\infty$ as $t\to\infty$, while a
condition of the form $\sigma(t)\to\infty$ as $t\to\infty$ will preclude exponential decay. Moreover, as mentioned in the discussion after
Theorem~\ref{thm3.1}, the assumption that $\sigma(t)\to\infty$ as $t\to\infty$ is consistent with $\tau(t)\to\infty$ as $t\to\infty$.
This explains the rationale behind the conditions \eqref{eq.t2}. Moreover, if $y(t)\to0$ as $t\to\infty$,
then $g'(y(t))$ tends to a nontrivial limit by \eqref{eq:g4}, so as we suppose that $\sigma(t)\to\infty$ as $t\to\infty$,
the first term in the curly brackets in \eqref{eq:heuristic} tends to zero as $t\to\infty$. Therefore, in order for $y$ to be
in some sense ``close to'' a solution of \eqref{T1.c}, we need
a condition of the form
\[
\lim_{t\to\infty} a
-be^{C\int_{t-\tau(t)}^t
\frac{1}{\sigma(s)}\,ds}=0,
\]
which makes sense of the hypothesis \eqref{eq.t3} on $\sigma$, if we choose $C=\log(a/b)$. Now, if we take logarithms across \eqref{eq.gyrep}
and use $C=\log(a/b)$ and \eqref{eq.t3} we have
\[
\frac{\log g(y(t))}{\int_0^t \frac{1}{\sigma(s)}\,ds} =\frac{\log g(y(0))}{\int_0^t \frac{1}{\sigma(s)}\,ds}-\log\left(\frac{a}{b}\right).
\]
Taking limits as $t\to\infty$ gives
\[
\lim_{t\to\infty} \frac{\log g(y(t))}{\int_0^t \frac{1}{\sigma(s)}\,ds} = -\log\left(\frac{a}{b}\right).
\]
Since $y$ should be close to the solution $x$ of \eqref{T1.c}, this motivates the claimed result \eqref{C}, and therefore the construction of
$x_L$ and $x_U$ in \eqref{eq.motivatexuxl}.

Of course, this argument is a long way from being a rigorous proof; it however motivates the choice of conjecture, and an identity of
the form \eqref{eq:heuristic} in fact plays an important role in the proof of Theorems~\ref{thm3.1} and \ref{thm5.1}.

In some sense, our calculation leaves the functional form of $\sigma$ undetermined: it is left as an open question whether a function $\sigma$ exists which obeys the conditions \eqref{eq.t2} and \eqref{eq.t3} required in order to approximately fit $y$ as a solution. This leaves the question of
how to find such a function open. 
However, examples of equations whose asymptotic behaviour is determined by finding an appropriate $\sigma$ are given at the end of the section. More about the role of the function $\sigma$, and its connections with the solution of a class of functional equations (called Schr\"oder equations~\cite{Kuc}) is written in~\cite{AppBuck:07a}.

\section{Summary of Main Results and Examples in Regularly Varying Case}
\subsection{Unifying the main results}
Since the condition that $\sigma(t)/t\to 0$ as $t\to\infty$ implies that $\tau(t)/t\to 0$ as $t\to\infty$,
$\sigma(t)/t$ tends to a finite limit as $t\to\infty$ implies $\tau(t)/t\to q$ as $t\to\infty$ for some $q\in(0,1)$, and
\eqref{eq.t4} implies that $\tau(t)/t\to 1$ as $t\to\infty$, we can unify Theorems~\ref{thm.odedecay}, ~\ref{thm.odedecay2}, \ref{thm3.1q} and and~\ref{thm3.1} by means of the parameter $\lambda$ in \eqref{eq.sigmatlambda}.

In order to state a result which does this, we first unify the hypotheses \eqref{eq:galt} and \eqref{eq:galt2} on $g$ to give
\begin{subequations}\label{eq:galt3}
\begin{gather}
\label{eq:g1alt3}g(0)=0; \\
\label{eq:g2alt3}
g(x)>0 \quad x>0; \\
\label{eq:g3alt3}\text{There is $\delta_1>0$ such that $g\in C^1(0,\delta_1)$ with $g'(x)>0$ for $x\in (0,\delta_1)$};\\
\label{eq:g4alt3}
\text{There is $\beta>1$ such that }
g'\in \text{RV}_0(\beta-1). 
\end{gather}
\end{subequations}
\begin{theorem} \label{thm.unify}
Let $\tau$ be a continuous and non--negative function which obeys \eqref{eq.tminustautoinfty} such that $-\overline{\tau}=\inf_{t\geq 0} t-\tau(t)$.
Suppose that $\sigma$ and $\tau$ obey \eqref{eq.t1}--\eqref{eq.t3}, and that there exists
$\lambda\in[0,\infty]$ such that $\sigma$ obeys \eqref{eq.sigmatlambda}.
Let $a>b>0$ and $g$ satisfy \eqref{eq:galt3}, and suppose $\psi\in
C([-\overline{\tau},0];(0,\infty))$. Suppose also that the solution $x$ of
\eqref{T1.c} obeys $x(t)\to 0$ as $t\to\infty$.
\begin{itemize}
\item[(i)] If $\lambda=0$, and $G$ is defined by \eqref{def.G}, then $x$ obeys
\[
\lim_{t\to\infty} \frac{x(t)}{G^{-1}(t)}=(a-b)^{-1/(\beta-1)}.
\]
\item[(ii)] If $0<\lambda<\frac{\beta-1}{\beta}\log(a/b)$, $\Lambda$ is defined by \eqref{def.Lambdalim}, and $G$ is defined by \eqref{def.G}, then $x$ obeys
\[
\Lambda\leq \liminf_{t\to\infty} \frac{x(t)}{G^{-1}(t)}\leq \limsup_{t\to\infty} \frac{x(t)}{G^{-1}(t)}<+\infty.
\]
\item[(iii)] If $\frac{\beta-1}{\beta}\log(a/b)<\lambda<+\infty$, then $x$ obeys
\[
\lim_{t\to\infty} \frac{\log x(t)}{\log t} = -\frac{1}{\beta}\frac{1}{\lambda}\log\left(\frac{a}{b}\right).
\]
\item[(iv)] If $\lambda=\infty$, then $x$ obeys
\begin{equation*}
\lim_{t\to\infty} \frac{\log x(t)}{\int_0^t
\frac{1}{\sigma(s)}\,ds} = -\frac{1}{\beta}\log\left(\frac{a}{b}\right).
\end{equation*}
\end{itemize}
\end{theorem}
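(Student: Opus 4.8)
The plan is to obtain each of the four conclusions by reducing to the corresponding result proved earlier: Theorem~\ref{thm.odedecay} for (i), Theorem~\ref{thm.odedecay2} for (ii), Theorem~\ref{thm3.1q} for (iii), and Theorem~\ref{thm3.1} for (iv). The device that makes this possible is Lemma~\ref{lemma.sigtauasy}, which converts the hypothesis \eqref{eq.sigmatlambda} on $\lambda=\lim_{t\to\infty}\sigma(t)/t$ into the statement $\lim_{t\to\infty}\tau(t)/t=1-e^{-\lambda}=:q$ needed to invoke Theorems~\ref{thm.odedecay}--\ref{thm3.1q}.

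Before splitting into cases I would clear up the hypotheses on $g$. Since \eqref{eq:g4alt3} assumes $g'\in\text{RV}_0(\beta-1)$ with $\beta-1>0$, Karamata's theorem on integrals of regularly varying functions gives $g\in\text{RV}_0(\beta)$; hence \eqref{eq:galt3} implies \eqref{eq:galt} verbatim and also implies \eqref{eq:galt2}. Thus the $g$-hypotheses required by all four cited theorems hold, and the remaining standing hypotheses --- $a>b>0$, $\psi\in C([-\bar\tau,0];(0,\infty))$, \eqref{eq.tminustautoinfty}, and $x(t)\to 0$ --- are common to all of them. It therefore only remains, in each case, to check the condition that ties $\tau$ (or $\sigma$) to $a$, $b$ and $\beta$, and to match the conclusions.

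For (i): since \eqref{eq.t3} and \eqref{eq.sigmatlambda} hold, Lemma~\ref{lemma.sigtauasy} gives $\tau(t)/t\to 1-e^0=0$, which is \eqref{eq.tauttto0}, so Theorem~\ref{thm.odedecay} applies and its conclusion \eqref{eq.odedecayuserv} is exactly the assertion of (i). For (ii) and (iii): Lemma~\ref{lemma.sigtauasy} gives $\tau(t)/t\to q:=1-e^{-\lambda}\in(0,1)$, so \eqref{eq.tauasytoqt} holds; the arithmetic hinge is the identity $1/(1-q)=e^{\lambda}$. With it, the hypothesis $\lambda<\frac{\beta-1}{\beta}\log(a/b)$ of (ii) is literally $a>b(1/(1-q))^{\beta/(\beta-1)}$, i.e. \eqref{eq.abqsmall}, so Theorem~\ref{thm.odedecay2} applies and delivers the two-sided bound on $x(t)/G^{-1}(t)$ in (ii) --- and one notes that $\Lambda$ from \eqref{def.Lambdalim} equals $a-b\,e^{-\lambda/(\beta-1)}$; while the hypothesis $\lambda>\frac{\beta-1}{\beta}\log(a/b)$ of (iii) is literally $a<b(1/(1-q))^{\beta/(\beta-1)}$, i.e. \eqref{eq.abqbig}, so Theorem~\ref{thm3.1q} applies, and since $\log(1/(1-q))=\lambda$ its conclusion \eqref{eq.pantodecay} is precisely the limit in (iii). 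For (iv): $\lambda=\infty$ is exactly condition \eqref{eq.t4}, so $\sigma,\tau$ satisfy \eqref{eq.t1}--\eqref{eq.t4} and Theorem~\ref{thm3.1} applies directly, its conclusion \eqref{eq.C1} being the assertion of (iv).

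I do not expect a genuine obstacle in this argument --- all of the analytic work has already been done in the four cited theorems and in Lemma~\ref{lemma.sigtauasy}. The only two items that need care are (a) the reduction $g'\in\text{RV}_0(\beta-1)\Rightarrow g\in\text{RV}_0(\beta)$, without which Theorems~\ref{thm.odedecay} and \ref{thm.odedecay2} are not literally applicable under \eqref{eq:galt3}, and (b) the bookkeeping around $1/(1-q)=e^{\lambda}$, which must be carried out to confirm that the threshold conditions \eqref{eq.abqsmall}, \eqref{eq.abqbig} and \eqref{eq.t4} partition the $\lambda$-line exactly as in the statement, with the single borderline value $\lambda=\frac{\beta-1}{\beta}\log(a/b)$ (deliberately omitted from the statement) being the common endpoint of the $\lambda$-ranges in (ii) and (iii).
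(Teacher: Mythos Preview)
Your proposal is correct and follows exactly the route the paper itself takes: the discussion preceding the theorem explains that Theorem~\ref{thm.unify} is obtained by combining Theorems~\ref{thm.odedecay}, \ref{thm.odedecay2}, \ref{thm3.1q} and \ref{thm3.1} via Lemma~\ref{lemma.sigtauasy}, after unifying the hypotheses on $g$ into \eqref{eq:galt3}. Your observations that $g'\in\text{RV}_0(\beta-1)$ forces $g\in\text{RV}_0(\beta)$ and that $1/(1-q)=e^{\lambda}$ converts the thresholds \eqref{eq.abqsmall}, \eqref{eq.abqbig} into the $\lambda$-ranges of (ii) and (iii) are precisely the checks needed, and the paper uses the first of these facts verbatim in its proof of Theorem~\ref{thm3.1}.
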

\subsection{Examples}  \label{sec.examp}
In the following section, we show the versatility of the results
in the last section, by considering a selection of examples with
with different rates of growth in the delay $\tau$. We state the
results for each example in turn.

In order to do this, we must determine how the auxiliary function $\sigma$ can be chosen for a given problem.
The role of the function $\sigma$ is explained further in \cite{AppBuck:07a}.
In the problems here to which Theorem~\ref{thm3.1} or \ref{thm5.1} could be applied, we do not have that
$\tau$ is asymptotic to $\sigma$, as scrutiny of the calculations involved in
Examples~\ref{example:examp1} and~\ref{example:examp2} reveal.
The relationship between $\sigma$ and $\tau$ is nontrivial and must be determined for each problem by analysis of \eqref{eq.t3},  thereby justifying general theorems such as Theorem~\ref{thm3.1} and~\ref{thm5.1}. The introduction of the function $\sigma$ also enables us to remove monotonicity and
differentiability conditions on $\tau$ often required in the study of differential equations with delay. Moreover, the asymptotic form of the condition \eqref{eq.t3} shows that it is the behaviour of $\tau(t)$ as $t\to\infty$ that determines the asymptotic behaviour of solutions of \eqref{T1.c} and \eqref{T1.d}; the behaviour of $\tau(t)$ on any compact interval $[0,T]$ is not material.

In each case the common hypotheses are that the solution $x$ of
equation \eqref{T1.c} is studied, with $\tau$ a continuous and
non--negative function, where $-\overline{\tau}=\inf_{t\geq 0}
t-\tau(t)$ is finite and $t-\tau(t)\to\infty$ as $t\to\infty$. We have $a>b>0$, the initial function
$\psi\in C([-\overline{\tau},0];(0,\infty))$. We suppose that $x(t)\to 0$ as $t\to\infty$. All results stated here for
solutions of \eqref{T1.c} apply equally to the max--type equation \eqref{T1.d}.

We consider for concreteness two functions for $g$: let
$g_1:[0,\infty)\to[0,\infty):x\mapsto g_1(x)=x^\beta$
for $\beta>1$. Such a function obeys all the conditions in \eqref{eq:galt3}.
We can also consider the non--polynomial function $g_2$ such that $g_2:[0,\delta)\to[0,\infty)$ obeys $g_2(x)=x^\beta\log(1/x)$ for $0<x\leq \delta<1$
and $g_2(0)=0$. Such a function $g_2$ also obeys all hypotheses in \eqref{eq:galt3}.
Therefore we can determine the asymptotic behaviour of $x$ using Theorem~\ref{thm3.1} or Theorem~\ref{thm.odedecay} if $g=g_1$ or $g=g_2$.

In order to apply Theorem~\ref{thm.odedecay} it is first necessary to determine the asymptotic behaviour of $G_i^{-1}(x)$ as $x\to\infty$ for $i=1,2$.
To do this we compute $G_1$ and $G_2$. First we have that
\[
G_1(x)=\int_x^\delta \frac{1}{g_1(u)}\,du \sim \frac{1}{\beta-1}\frac{x}{g(x)}=\frac{1}{\beta-1} x^{1-\beta}, \quad x\to 0^+.
\]
Therefore
\[
\lim_{x\to \infty} \frac{G_1^{-1}(x)}{x^{1/(1-\beta)}}=\left(\frac{1}{\beta-1}\right)^{1/(\beta-1)}.
\]

For $G_2$ we see that
\[
G_2(x)=\int_{x}^{\delta} \frac{1}{u^\beta\log(1/u)}\,du
\sim \frac{1}{\beta-1} \frac{1}{x^{\beta-1}\log(1/x)} \quad \text{as $x\to 0^+$}.
\]
Hence
\[
\lim_{y\to\infty} y G_2^{-1}(y)^{\beta-1}\log(1/G^{-1}(y))
=
\lim_{x\to 0^+} \frac{G_2(x)}{\frac{1}{x^{\beta-1}\log(1/x)}}=\frac{1}{\beta-1}.
\]
From this it can be shown that $G_2^{-1}(x)\sim x^{-1/(\beta-1)} (\log x)^{-1/(\beta-1)}$ as $x\to\infty$.

We should note that it is not necessary that $g$ assume exactly the form of $g_1$ or $g_2$ above in order for us to determine asymptotic results.
Suppose merely that
\[
\lim_{x\to 0^+} \frac{g(x)}{g_i(x)}=1
\]
where $i=1,2$. Then $g\in \text{RV}_0(\beta)$, and moreover we can show that
\[
\lim_{x\to\infty} \frac{G^{-1}(x)}{G_i^{-1}(x)}=1.
\]
Therefore, if $g$ is a function which is positive on $(0,\infty)$, is increasing and continuously differentiable on an interval $(0,\delta_1)$, and obeys $g(0)=0$, we can still apply Theorem~\ref{thm.odedecay}.
\begin{example} \label{example:exampminus2}
Suppose that $\tau$ is bounded, or that $\tau(t)\to\infty$ but $\tau(t)/t\to 0$ as $t\to\infty$.
\begin{itemize}
\item[(i)] If
\[
\lim_{x\to 0^+} \frac{g(x)}{x^{\beta}}=1
\]
and $x(t)\to 0$ as $t\to\infty$ then
\[
\lim_{t\to\infty} t^{\frac{1}{\beta-1}} x(t) = \left(\frac{1}{(a-b)(\beta-1)}\right)^{1/\beta-1}.
\]
\item[(ii)] If
\[
\lim_{x\to 0^+} \frac{g(x)}{x^{\beta}\log(1/x)}=1
\]
and $x(t)\to 0$ as $t\to\infty$, then
\[
\lim_{t\to\infty} \frac{x(t)}{t^{-1/(\beta-1)} (\log t)^{-1/(\beta-1)}}=\left(\frac{1}{a-b}\right)^{1/\beta-1}.
\]
\end{itemize}
\end{example}
\begin{example} \label{example:exampminus1}
Suppose  $\tau(t)/t\to q\in(0,1)$ as $t\to\infty$ with $a>b(1-q)^{-\beta/(\beta-1)}$.
\begin{itemize}
\item[(i)] If
\[
\lim_{x\to 0^+} \frac{g(x)}{x^{\beta}}=1
\]
and $x(t)\to 0$ as $t\to\infty$ then
\[
0<\liminf_{t\to\infty} t^{\frac{1}{\beta-1}} x(t) \leq \limsup_{t\to\infty} t^{\frac{1}{\beta-1}} x(t) <+\infty.
\]
\item[(ii)] If
\[
\lim_{x\to 0^+} \frac{g(x)}{x^{\beta}\log(1/x)}=1
\]
and $x(t)\to 0$ as $t\to\infty$, then
\[
0<
\liminf_{t\to\infty} \frac{x(t)}{t^{-1/(\beta-1)} (\log t)^{-1/(\beta-1)}}
\leq
\limsup_{t\to\infty} \frac{x(t)}{t^{-1/(\beta-1)} (\log t)^{-1/(\beta-1)}} <+\infty.
\]
\end{itemize}
\end{example}
\begin{example} \label{example:examp3}
Suppose $\tau(t)/t\to q\in(0,1)$ as $t\to\infty$ with $a<b(1-q)^{-\beta/(\beta-1)}$. If $x(t)\to 0$ as $t\to\infty$, then
\[
\lim_{t\to\infty} \frac{\log x(t)}{\log t} =
-\frac{1}{\beta}\frac{1}{\log(1/(1-q))}\log\left(\frac{a}{b}\right).
\]
\end{example}
\begin{example}  \label{example:examp2}
Let $C>0$, $\gamma>0$,
\[
\lim_{t\to\infty} \frac{t-\tau(t)}{t/\log^\gamma t} = C.
\]
If $g(x)\sim g_1(x)$ or $g(x)\sim g_2(x)$ as $x\to 0^+$ and $x(t)\to 0$ as $t\to\infty$ then
\[
\lim_{t\to\infty} \frac{\log x(t)}{\log t/\log\log t} =
-\frac{1}{\beta}\frac{1}{\gamma}\log\left(\frac{a}{b}\right).
\]
\end{example}
\begin{example}  \label{example:examp1}
Let $C>0$, $\gamma\in(0,1)$,
\[
\lim_{t\to\infty} \frac{t-\tau(t)}{t^\gamma}=C.
\]
If $g(x)\sim g_1(x)$ or $g(x)\sim g_2(x)$ as $x\to 0^+$ and $x(t)\to 0$ as $t\to\infty$ then
\[
\lim_{t\to\infty} \frac{\log x(t)}{\log \log t} =
-\frac{1}{\beta}\frac{1}{\log(1/\gamma)}\log\left(\frac{a}{b}\right).
\]
\end{example}
%
%
%
As we move down through this list of examples, the rate of growth
of the delay becomes faster; the rate of growth of the auxiliary
function $\sigma$ will also be shown to be faster; and the rate of
decay of the solutions becomes slower. 
Example~\ref{example:examp3} deals with an equation with
(approximately) proportional delay. In this case our results are
consistent with more precise results which have been determined
for the so-called pantograph equation when $\beta=1$.
\section{Equations With Non--Regularly Varying Coefficients} \label{sec.nonrv}
We can apply theorem~\ref{thm.odedecaygen} and \ref{thm3.1superhypq} to equations where the function $g$ is not in $\text{RV}_0(\beta)$ for some
$\beta>1$. The results are as follows. We first consider hypotheses on $g$ which enable us to consider equations with slowly growing delay $\tau$:
\begin{subequations}\label{eq:galt5}
\begin{gather}
\label{eq:g1alt5}g(0)=0; \\
\label{eq:g2alt5}
g(x)>0 \quad x>0; \\
\label{eq:g3alt5}\text{There is $\delta_1>0$ such that $g$ is increasing on $(0,\delta_1)$};\\
\label{eq:g4alt5}
g\circ G^{-1}\in \text{RV}_\infty(-1), \quad G^{-1}\in \text{RV}_\infty(0).
\end{gather}
\end{subequations}
The hypothesis \eqref{eq:g4alt5} represents a strengthening of hypothesis \eqref{eq:g4alt4} in the case $\gamma=1$.
We may now apply Theorem~\ref{thm.odedecaygen} directly to the equation \eqref{T1.c}.
\begin{theorem} \label{thm.odedecaysuperhypsmallg}
Let $\tau$ be a continuous and non--negative function such that $-\overline{\tau}=\inf_{t\geq 0} t-\tau(t)$ and which obeys
\eqref{eq.tminustautoinfty}. Suppose also that there is $q\in[0,1)$ such that $\tau$ obeys \eqref{eq.tauasytoqt}.
Suppose that $a>b>0$ in such a manner that
\begin{equation} \label{eq.abqsmallsuperhyp}
a>b\frac{1}{1-q}>0.
\end{equation}
Let $g$ satisfy \eqref{eq:galt5} and suppose $\psi\in
C([-\overline{\tau},0];(0,\infty))$. If the solution $x$ of
\eqref{T1.c} obeys $x(t)\to 0$ as $t\to\infty$, and $G$ is defined by \eqref{def.G} then
\begin{equation}  \label{eq.odedecaysuperhyp}
\lim_{t\to\infty} \frac{x(t)}{G^{-1}(t)} =1.
\end{equation}
\end{theorem}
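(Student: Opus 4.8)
The plan is to deduce this result from the general Theorem~\ref{thm.odedecaygen} by first checking that its hypotheses are met, and then to upgrade the two-sided bound \eqref{eq.odedecaygen} to the exact limit \eqref{eq.odedecaysuperhyp} by a sharper comparison argument exploiting the stronger hypothesis \eqref{eq:g4alt5} (the case $\gamma=1$). First I would verify that \eqref{eq:galt5} implies \eqref{eq:galt4} with $\gamma=1$: conditions \eqref{eq:g1alt4}--\eqref{eq:g3alt4} are identical, and \eqref{eq:g4alt4} with $\gamma=1$ is precisely the first half of \eqref{eq:g4alt5}. Likewise \eqref{eq.abqsmallsuperhyp} is \eqref{eq.abqsmallgamma} with $\gamma=1$. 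Hence Theorem~\ref{thm.odedecaygen} applies and gives $0<\liminf_{t\to\infty} G(x(t))/t \le \limsup_{t\to\infty} G(x(t))/t<\infty$, so in particular $G(x(t))/t$ is bounded above and away from zero.

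Next I would pin down the exact value of the limit. The key new ingredient is that $G^{-1}\in\text{RV}_\infty(0)$, i.e. $G^{-1}$ is slowly varying, so that $x(t)=G^{-1}(G(x(t)))$ and the bounds $c_1 t \le G(x(t)) \le c_2 t$ together with slow variation of $G^{-1}$ force $x(t)/G^{-1}(t)$ to be asymptotically trapped near $1$ up to the slowly-varying ambiguity; to get the clean limit $=1$ one argues at the level of $G(x(t))/t$ directly. The strategy is the upper/lower solution method sketched in the Motivation section: construct $x_{U,\epsilon}$ and $x_{L,\epsilon}$ with $G(x_{U,\epsilon}(t)) = (a-b)^{-1}(1\mp\epsilon)^{?}\,t$-type behaviour (more precisely, choose them so $G(x_{\pm})$ grows linearly with slope $(a-b)(1\pm\epsilon)^{-1}$), check the differential inequalities $x_{U,\epsilon}' > -ag(x_{U,\epsilon})+bg(x_{U,\epsilon}(t-\tau(t)))$ and the reverse for $x_{L,\epsilon}$ for $t$ large, using $\tau(t)/t\to q$ together with $g\circ G^{-1}\in\text{RV}_\infty(-1)$ to control the delayed term: since $G(x(t-\tau(t)))\approx (1-q)\,G(x(t))$ and $g\circ G^{-1}\in\text{RV}_\infty(-1)$, one gets $g(x(t-\tau(t)))/g(x(t)) \to 1/(1-q)$, and then the net effective coefficient is $a - b/(1-q)$, which is $>0$ exactly by \eqref{eq.abqsmallsuperhyp}. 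This is what produces the ODE-like limit: the delayed feedback acts like an extra $-b/(1-q)\,g(x)$ term... wait, that would give slope $a - b/(1-q)$, not $a-b$. I would reconcile this by noting that in the \emph{regularly varying with index $0$} regime $G^{-1}$ is slowly varying, so $G(x(t-\tau(t)))/G(x(t))\to 1$ (not $1-q$) because $G(x(t))\to\infty$ and $t-\tau(t)\sim(1-q)t$ gives $G(x(t-\tau(t)))\sim G(x((1-q)t))$, and slow variation of $G\circ(\text{linear-decay profile})$... — the correct bookkeeping is that $g\circ G^{-1}\in\text{RV}_\infty(-1)$ makes $g(x(t-\tau(t)))\sim g(x(t))$, so the delayed term contributes $+b\,g(x(t))$, giving net slope $a-b$ as in \eqref{eq.odedecay}.

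Concretely, the ordered steps I would carry out: (1) reduce hypotheses and invoke Theorem~\ref{thm.odedecaygen} to get the a priori two-sided bound on $G(x(t))/t$; (2) from that bound plus $G^{-1}\in\text{RV}_\infty(0)$, deduce $\log x(t)\sim o(t)$ and that $t-\tau(t)\sim(1-q)t$ yields $G(x(t-\tau(t)))/G(x(t))\to 1$; (3) use $g\circ G^{-1}\in\text{RV}_\infty(-1)$ to convert this into $g(x(t-\tau(t)))/g(x(t))\to 1$; (4) insert into the identity $\frac{d}{dt}G(x(t)) = -x'(t)/g(x(t)) = a - b\,g(x(t-\tau(t)))/g(x(t))$, which then tends to $a-b$; (5) integrate (or use L'Hôpital / Cesàro) to conclude $G(x(t))/t\to a-b$, i.e. \eqref{eq.odedecay}; (6) apply $G^{-1}\in\text{RV}_\infty(0)$ once more: $x(t)/G^{-1}(t) = G^{-1}((a-b)(1+o(1))t)/G^{-1}(t)\to 1$ by slow variation, giving \eqref{eq.odedecaysuperhyp}. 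The main obstacle is step (3)--(4): making rigorous the passage from $G(x(t-\tau(t)))/G(x(t))\to 1$ to $g(x(t-\tau(t)))/g(x(t))\to 1$ requires the uniform convergence theorem for regularly varying functions applied to $g\circ G^{-1}$, and one must be careful that $G(x(t))\to\infty$ genuinely (which follows from $x(t)\to0$ and \eqref{eq.Gtoinfty}) and that the ratio $G(x(t-\tau(t)))/G(x(t))$ stays in a compact subinterval of $(0,\infty)$, which uses the a priori bound from step (1) together with $t-\tau(t)\sim(1-q)t$; the rigorous version of this is presumably packaged in the upper/lower-solution construction rather than this heuristic differentiation of $G(x(t))$, since $x$ need not be smooth enough to differentiate $G(x(t))$ naively beyond what the FDE gives.
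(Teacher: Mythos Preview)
Your step (1) and the core idea of step (6) are exactly right, and together they already prove the theorem; the paper treats this as a direct corollary of Theorem~\ref{thm.odedecaygen}. From \eqref{eq.odedecaygen} (or more concretely from the pointwise bounds $\Lambda t\le G_0(x(t))\le at$ in its proof) you have, for large $t$, constants $0<c_1<c_2<\infty$ with $c_1 t\le G(x(t))\le c_2 t$. Since $G^{-1}$ is decreasing this gives $G^{-1}(c_2 t)\le x(t)\le G^{-1}(c_1 t)$, and slow variation of $G^{-1}$ (the second half of \eqref{eq:g4alt5}) yields
\[
\frac{x(t)}{G^{-1}(t)}\in\left[\frac{G^{-1}(c_2 t)}{G^{-1}(t)},\,\frac{G^{-1}(c_1 t)}{G^{-1}(t)}\right]\longrightarrow[1,1].
\]
There is no ``slowly-varying ambiguity'' left over: slow variation is precisely the statement that the limit is $1$ regardless of the multiplicative constant in the argument.

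Your steps (2)--(5) are therefore unnecessary, and in fact contain errors. The claim in (2) that $G(x(t-\tau(t)))/G(x(t))\to 1$ is not justified and is generally false when $q>0$: from the two-sided bound one only gets that this ratio is trapped in a compact subset of $(0,\infty)$, and if a limit existed it would be $1-q$, not $1$. Carrying this through, step (3) with $g\circ G^{-1}\in\text{RV}_\infty(-1)$ would give $g(x(t-\tau(t)))/g(x(t))\sim G(x(t))/G(x(t-\tau(t)))$, which points toward $1/(1-q)$, not $1$; and then (4)--(5) would suggest $G(x(t))/t\to a-b/(1-q)$ rather than $a-b$. So your target limit in (5) is wrong --- but it does not matter, because step (6) only needs $G(x(t))/t$ to be bounded away from $0$ and $\infty$, which is exactly what Theorem~\ref{thm.odedecaygen} supplies. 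Drop (2)--(5), keep (1), and apply slow variation directly.
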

Therefore if $y$ is the solution of the corresponding non--delay equation \eqref{eq.ode}, we have that
\[
\lim_{t\to\infty} \frac{x(t)}{y(t)}=1.
\]
Our next result deals with the case when $q\in(0,1)$ is so large that it does not satisfy \eqref{eq.abqbigsuperhyp}.
The hypotheses on $g$ in this case are a special case of the hypotheses \eqref{eq:galt6}
\begin{subequations}\label{eq:galt7}
\begin{gather}
\label{eq:g1alt7}g(0)=0; \\
\label{eq:g2alt7}
g(x)>0 \quad x>0; \\
\label{eq:g3alt7}\text{There is $\delta_1>0$ such that $g\in C^1(0,\delta_1)$, with $g'(x)>0$ for $x\in(0,\delta_1)$};\\
\label{eq:g4alt7}
g'\circ g^{-1}\in \text{RV}_0(1).
\end{gather}
\end{subequations}
It turns out that the hypothesis \eqref{eq:g4alt7} often implies \eqref{eq:g4alt4}. The following is therefore a
simple application of Theorem~\ref{thm3.1superhypq}.
\begin{theorem}  \label{thm3.1superhypqgamma1}
Let $\tau$ be a continuous and non--negative function such that $-\overline{\tau}=\inf_{t\geq 0} t-\tau(t)$ and which obeys
\eqref{eq.tminustautoinfty}. Suppose also that $\tau$ obeys \eqref{eq.tauasytoqt} for some $q\in(0,1)$, that $a>b>0$ and moreover that
\begin{equation} \label{eq.abqbigsuperhypgamma1}
a<b\frac{1}{1-q}.
\end{equation}
Suppose $g$ satisfies \eqref{eq:galt7} and suppose $\psi\in
C([-\overline{\tau},0];(0,\infty))$. If the solution $x$ of
\eqref{T1.c} obeys $x(t)\to 0$ as $t\to\infty$ then
\begin{equation}  \label{eq.pantodecaysuperhypgamma1}
\lim_{t\to\infty} \frac{\log g(x(t))}{\log t} = -\frac{1}{\log(1/(1-q))}\log\left(\frac{a}{b}\right).
\end{equation}
\end{theorem}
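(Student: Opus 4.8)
The plan is to derive Theorem~\ref{thm3.1superhypqgamma1} as a direct corollary of Theorem~\ref{thm3.1superhypq}, by verifying that the hypotheses \eqref{eq:galt7} imply the hypotheses \eqref{eq:galt6} with $\gamma=1$, and that \eqref{eq.abqbigsuperhypgamma1} is precisely \eqref{eq.abqbigsuperhyp} when $\gamma=1$. The conditions \eqref{eq:g1alt7}--\eqref{eq:g3alt7} are literally \eqref{eq:g1alt6}--\eqref{eq:g3alt6}, so the only substantive verification is that \eqref{eq:g4alt7}, namely $g'\circ g^{-1}\in\text{RV}_\infty(1)$... wait, it is stated as $\text{RV}_0(1)$: the condition reads $g'\circ g^{-1}\in \text{RV}_0(1)$, which is exactly \eqref{eq:g4alt6} with $\gamma=1$ since $1/\gamma = 1$. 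So in fact \eqref{eq:galt7} is just the $\gamma=1$ specialization of \eqref{eq:galt6}, and \eqref{eq.abqbigsuperhypgamma1} is the $\gamma=1$ specialization of \eqref{eq.abqbigsuperhyp}. The conclusion \eqref{eq.pantodecaysuperhypgamma1} is then \eqref{eq.pantodecaysuperhyp} with $\gamma=1$.

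Given this, the proof is essentially: \emph{``Apply Theorem~\ref{thm3.1superhypq} with $\gamma=1$.''} The one remark the authors themselves flag—``it turns out that the hypothesis \eqref{eq:g4alt7} often implies \eqref{eq:g4alt4}''—suggests that for completeness one should also note the link between the two families of hypotheses on $g$, but since Theorem~\ref{thm3.1superhypq} is stated using \eqref{eq:galt6} rather than \eqref{eq:galt4}, this implication is not strictly needed for the deduction; it is a parenthetical sanity check. So the first step is to observe the hypotheses match, the second step is to invoke Theorem~\ref{thm3.1superhypq}, and we are done.

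The main (and only real) obstacle is to be certain there is no hidden gap: one must confirm that Theorem~\ref{thm3.1superhypq} genuinely only requires $\gamma\geq 1$ (it does, it says ``There is $\gamma\geq 1$''), so the boundary value $\gamma=1$ is admissible, and that \eqref{eq:galt7} really is \eqref{eq:galt6} verbatim at $\gamma=1$—which it is, term by term. Since the solution $x$ is already assumed to satisfy $x(t)\to 0$ as $t\to\infty$, $\psi\in C([-\overline\tau,0];(0,\infty))$, $\tau$ continuous nonnegative with $-\overline\tau = \inf_{t\geq 0}(t-\tau(t))$ and $t-\tau(t)\to\infty$, $\tau(t)/t\to q\in(0,1)$, $a>b>0$, and $a<b(1-q)^{-1} = b(1-q)^{-\gamma}|_{\gamma=1}$, all hypotheses of Theorem~\ref{thm3.1superhypq} hold, and its conclusion \eqref{eq.pantodecaysuperhyp} reads
\[
\lim_{t\to\infty} \frac{\log g(x(t))}{\log t} = -\frac{1}{\log(1/(1-q))}\log\left(\frac{a}{b}\right),
\]
which is exactly \eqref{eq.pantodecaysuperhypgamma1}. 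No further estimation is required.
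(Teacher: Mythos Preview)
Your proposal is correct and matches the paper's own approach exactly: the paper introduces Theorem~\ref{thm3.1superhypqgamma1} with the sentence ``The following is therefore a simple application of Theorem~\ref{thm3.1superhypq},'' and gives no further proof. Your verification that \eqref{eq:galt7} is \eqref{eq:galt6} with $\gamma=1$, that \eqref{eq.abqbigsuperhypgamma1} is \eqref{eq.abqbigsuperhyp} with $\gamma=1$, and that the conclusion \eqref{eq.pantodecaysuperhyp} (which has no $\gamma$ in it) then reads as \eqref{eq.pantodecaysuperhypgamma1}, is exactly what is needed.
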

Finally, we can apply Theorem~\ref{thm3.1gen} to get the following result in the case when the delay grows rapidly.
\begin{theorem}  \label{thm3.1nonrv}
Let $\tau$ be a continuous and non--negative function such that $-\overline{\tau}=\inf_{t\geq 0} t-\tau(t)$ and which obeys
\eqref{eq.tminustautoinfty}. Suppose also that $\sigma$ and $\tau$ obey \eqref{eq.t1}--\eqref{eq.t4}.
Let $a>b>0$ and $g$ satisfy \eqref{eq:galt7} and suppose $\psi\in
C([-\overline{\tau},0];(0,\infty))$. If the solution $x$ of
\eqref{T1.c} obeys $x(t)\to 0$ as $t\to\infty$ , then
\begin{equation}  \label{Cnonrv}
\lim_{t\to\infty} \frac{\log g(x(t))}{\int_0^t
\frac{1}{\sigma(s)}\,ds} = -\log\left(\frac{a}{b}\right).
\end{equation}
\end{theorem}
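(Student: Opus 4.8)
The plan is to obtain Theorem~\ref{thm3.1nonrv} as an immediate corollary of Theorem~\ref{thm3.1gen}: the entire task reduces to checking that the hypotheses of the latter hold under the present (superficially weaker) hypothesis \eqref{eq:galt7} on $g$. Since the conclusion \eqref{Cnonrv} is word for word the conclusion \eqref{Cgen} of Theorem~\ref{thm3.1gen}, once the hypotheses are matched there is nothing more to do.

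First I would observe that the conditions imposed here on $\tau$ and on the auxiliary function $\sigma$ --- namely \eqref{eq.t1}--\eqref{eq.t4}, together with $-\bar{\tau}=\inf_{t\geq 0}(t-\tau(t))$ and \eqref{eq.tminustautoinfty} --- are exactly those required by Theorem~\ref{thm3.1gen}; likewise $a>b>0$, $\psi\in C([-\bar{\tau},0];(0,\infty))$, and the standing assumption that the solution $x$ of \eqref{T1.c} tends to $0$, are all unchanged. Hence the only point needing verification is the implication \eqref{eq:galt7} $\Longrightarrow$ \eqref{eq:galt6}. Comparing the two hypothesis blocks termwise, \eqref{eq:g1alt7}, \eqref{eq:g2alt7} and \eqref{eq:g3alt7} are literally \eqref{eq:g1alt6}, \eqref{eq:g2alt6} and \eqref{eq:g3alt6}; and \eqref{eq:g4alt7}, which asks that $g'\circ g^{-1}\in\text{RV}_0(1)$, is precisely the instance $\gamma=1$ of \eqref{eq:g4alt6}, which asks that $g'\circ g^{-1}\in\text{RV}_0(1/\gamma)$ for some $\gamma\geq 1$ --- and $\gamma=1$ is admissible since $1\geq 1$. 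Thus \eqref{eq:galt6} holds with $\gamma=1$, Theorem~\ref{thm3.1gen} applies, and its conclusion is exactly \eqref{Cnonrv}.

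There is essentially no genuine obstacle here: the result is a specialisation of Theorem~\ref{thm3.1gen}, recorded separately only because the nonlinearities described by \eqref{eq:galt7} (equivalently, $g'\circ g^{-1}$ slowly varying) form the natural ``$\gamma=1$'' analogue of the regularly varying setting of the earlier theorems and deserve an explicit statement. If one preferred a self-contained argument rather than a citation, the work would lie in reconstructing the one-parameter families of upper and lower comparison functions of the form \eqref{eq.motivatexuxl}, verifying the differential inequalities via the identity \eqref{eq:heuristic} with the constant $C$ chosen near $\log(a/b)$, and letting $\epsilon\to 0^+$; but this labour has already been carried out in the proof of Theorem~\ref{thm3.1gen}, so invoking it directly is the efficient route. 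I would close with a remark that, as in the earlier theorems, the monotonicity of the comparison functions lets the same argument cover the max-type equation \eqref{T1.d} with no change, since the right-hand side of \eqref{eq:heuristic} is the same in both cases.
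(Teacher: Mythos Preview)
Your proposal is correct and matches the paper's approach exactly: the paper introduces Theorem~\ref{thm3.1nonrv} with the remark that one ``can apply Theorem~\ref{thm3.1gen} to get the following result,'' and gives no further proof. Your explicit verification that \eqref{eq:galt7} is the $\gamma=1$ instance of \eqref{eq:galt6} is precisely the (trivial) check needed to justify that invocation.
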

\subsection{Examples}
The difficult conditions to verify are \eqref{eq:g4alt5} (for Theorem~\ref{thm.odedecaysuperhypsmallg}) and
\eqref{eq:g4alt7} (for Theorem~\ref{thm3.1superhypqgamma1}). We now consider two examples of $g$ that are so flat at $0$ that 
the above results apply. However, this necessitates finding the asymptotic behaviour of rather complicated functions such as 
$g'\circ g^{-1}$ and $g\circ G^{-1}$. We record our findings in the following lemmata. 
\begin{lemma} \label{lemma.ex1GinvGammaasy}
Suppose that $\alpha>0$ and that $g$ obeys
\begin{equation} \label{eq.ex1nonrvgeeasy}
\lim_{x\to 0^+} \frac{g(x)}{e^{-1/x^\alpha}}=1.
\end{equation}
Then
\begin{gather} \label{eq.ex1nonrvGinvasy}
\lim_{y\to\infty} \frac{G^{-1}(y)}{(\log y)^{-1/\alpha}}=1, \quad G^{-1}\in \text{RV}_\infty(0),\\
\label{eq.ex1nonrvGammaasy}
\lim_{y\to\infty} \frac{\Gamma(y)}{\log^{-(\alpha+1)/\alpha}(y)/y}=\frac{1}{\alpha}, \quad \Gamma=g\circ G^{-1} \in \text{RV}_\infty(-1).
\end{gather}
\end{lemma}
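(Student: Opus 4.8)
The plan is to compute the two asymptotic relations in turn, starting from the definition $G(x)=\int_x^1 1/g(u)\,du$ with $g(x)\sim e^{-1/x^\alpha}$ as $x\to 0^+$. First I would establish a sharp asymptotic for $G(x)$ as $x\to 0^+$. Since $1/g(u)\sim e^{1/u^\alpha}$, the integrand blows up extremely rapidly at $0$, so the integral $\int_x^1 e^{1/u^\alpha}\,du$ is dominated by the contribution near its lower endpoint $u=x$. The standard device is to substitute $v=1/u^\alpha$ (so $u=v^{-1/\alpha}$, $du=-\tfrac1\alpha v^{-1/\alpha-1}\,dv$) and apply Watson's lemma / integration by parts to get $\int_x^1 e^{1/u^\alpha}\,du \sim \tfrac1\alpha x^{\alpha+1} e^{1/x^\alpha}$ as $x\to0^+$; equivalently $G(x)\sim \tfrac{1}{\alpha}\,\dfrac{x^{\alpha+1}}{g(x)}$. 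Taking logarithms, $\log G(x) \sim 1/x^\alpha$, hence $x \sim (\log G(x))^{-1/\alpha}$ as $x\to0^+$. Writing $y=G(x)$, so $x=G^{-1}(y)$ and $y\to\infty$ as $x\to0^+$ by \eqref{eq.Gtoinfty}, this inverts to $G^{-1}(y)\sim(\log y)^{-1/\alpha}$, which is the first limit in \eqref{eq.ex1nonrvGinvasy}. Since $y\mapsto(\log y)^{-1/\alpha}$ is (slowly varying, being a power of a logarithm) in $\mathrm{RV}_\infty(0)$, and asymptotic equivalence preserves membership of $\mathrm{RV}_\infty(0)$, we get $G^{-1}\in\mathrm{RV}_\infty(0)$.

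For the second relation, with $\Gamma=g\circ G^{-1}$, note that the identity $G'(x)=-1/g(x)$ gives, after differentiating $G(G^{-1}(y))=y$, the relation $(G^{-1})'(y)=-g(G^{-1}(y))=-\Gamma(y)$. Thus $\Gamma(y)=-\tfrac{d}{dy}G^{-1}(y)$. Having $G^{-1}(y)\sim(\log y)^{-1/\alpha}$ I would then differentiate: formally $\tfrac{d}{dy}(\log y)^{-1/\alpha}=-\tfrac1\alpha(\log y)^{-1/\alpha-1}\cdot\tfrac1y$, so the candidate is $\Gamma(y)\sim\tfrac1\alpha\,\dfrac{(\log y)^{-(\alpha+1)/\alpha}}{y}$, matching \eqref{eq.ex1nonrvGammaasy}. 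The subtlety here is that one cannot in general differentiate an asymptotic relation, so I would instead work directly: from $G(x)\sim\tfrac1\alpha x^{\alpha+1}/g(x)$ and $\log G(x)\sim x^{-\alpha}$ one has $\Gamma(G(x))=g(x)$ and wants $g(x)\sim\tfrac1\alpha (\log G(x))^{-(\alpha+1)/\alpha}/G(x)$, i.e.\ $G(x)g(x)\sim\tfrac1\alpha(\log G(x))^{-(\alpha+1)/\alpha}$; substituting $G(x)\sim\tfrac1\alpha x^{\alpha+1}/g(x)$ on the left gives $\tfrac1\alpha x^{\alpha+1}\sim\tfrac1\alpha(\log G(x))^{-(\alpha+1)/\alpha}$, which is exactly $x\sim(\log G(x))^{-1/\alpha}$, already proven. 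So the second limit follows algebraically from the first together with the refined estimate $G(x)\sim\tfrac1\alpha x^{\alpha+1}/g(x)$. Finally $\Gamma\in\mathrm{RV}_\infty(-1)$ because $y\mapsto (\log y)^{-(\alpha+1)/\alpha}/y$ is regularly varying at infinity with index $-1$ (a power $y^{-1}$ times a slowly varying factor), and asymptotic equivalence again preserves the class.

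The main obstacle, and the step I would be most careful with, is the passage $G(x)\sim\tfrac1\alpha x^{\alpha+1}/g(x)$, i.e.\ the tail asymptotics of $\int_x^1 e^{1/u^\alpha}\,du$. One must show the integral really is concentrated near $u=x$: given $g(x)\sim e^{-1/x^\alpha}$ only asymptotically (not exactly), I would fix $\epsilon>0$, choose $\delta$ small so that $(1-\epsilon)e^{-1/u^\alpha}\le g(u)\le(1+\epsilon)e^{-1/u^\alpha}$ for $u\in(0,\delta)$, split $G(x)=\int_x^\delta+\int_\delta^1$, note the second piece is a bounded constant, and on the first piece use the monotonicity of $e^{1/u^\alpha}$ together with the substitution $v=1/u^\alpha$ and one integration by parts to sandwich $\int_x^\delta$ between $(1\mp\epsilon)\tfrac1\alpha x^{\alpha+1}e^{1/x^\alpha}(1+o(1))$. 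Because $x^{\alpha+1}e^{1/x^\alpha}\to\infty$ faster than any constant, the bounded tail $\int_\delta^1$ and the $o(1)$ corrections are absorbed, and letting $\epsilon\downarrow0$ at the end yields the claimed equivalence. Everything else is bookkeeping with logarithms and the stability of $\mathrm{RV}_\infty(\rho)$ under asymptotic equivalence.
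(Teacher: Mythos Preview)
Your proposal is correct and follows essentially the same route as the paper: establish the key asymptotic $G(x)\sim\tfrac1\alpha x^{\alpha+1}e^{1/x^\alpha}$ as $x\to0^+$, invert via logarithms to get $G^{-1}(y)\sim(\log y)^{-1/\alpha}$, and then combine $\Gamma(G(x))=g(x)$ with the $G$-asymptotic to obtain $\Gamma(y)\sim\tfrac1\alpha G^{-1}(y)^{\alpha+1}/y$. The only cosmetic difference is that the paper obtains the integral asymptotic by the substitution $v=1/u$ followed by L'H\^opital's rule rather than your substitution $v=1/u^\alpha$ with integration by parts, but the structure of the argument is identical.
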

\begin{lemma} \label{lemma.ex1GinvGamma1asy}
Suppose that $\alpha>0$ and that $g$ obeys
\begin{equation} \label{eq.ex1nonrvgeeprasy}
\lim_{x\to 0^+} \frac{g'(x)}{\alpha x^{-\alpha-1} e^{-1/x^\alpha}}=1.
\end{equation}
Then $\Gamma_1=g'\circ g^{-1}$ obeys
\begin{equation} \label{eq.ex1nonrvGamma1asy}
\lim_{y\to 0^+} \frac{\Gamma_1(y)}{\alpha \log(1/y)^{(\alpha+1)/\alpha}  y}=1, \quad \Gamma_1\in \text{RV}_0(1).
\end{equation}
\end{lemma}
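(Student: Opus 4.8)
The plan is to reduce everything to the two asymptotic relations
\[
g(x)\sim e^{-1/x^\alpha} \ \ (x\to 0^+), \qquad g^{-1}(y)\sim \left(\log(1/y)\right)^{-1/\alpha} \ \ (y\to 0^+),
\]
and then to compose them with the hypothesis on $g'$. The first relation is obtained by ``integrating'' \eqref{eq.ex1nonrvgeeprasy}: since $\frac{d}{dx}\, e^{-1/x^\alpha}=\alpha x^{-\alpha-1}e^{-1/x^\alpha}$, the hypothesis \eqref{eq.ex1nonrvgeeprasy} states precisely that $g'(x)$ is asymptotic, as $x\to 0^+$, to the derivative of $x\mapsto e^{-1/x^\alpha}$. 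Both $g'$ and $s\mapsto\alpha s^{-\alpha-1}e^{-1/s^\alpha}$ are strictly positive on a one-sided neighbourhood $(0,\delta)$ of $0$ (positivity of $g'$ near $0$ is part of \eqref{eq:g3alt7}) and integrable at $0$ (the factor $e^{-1/s^\alpha}$ suppresses the polynomial singularity), while $g(0)=0$ by \eqref{eq:g1alt7}. Writing $g(x)=\int_0^x g'(s)\,ds$ and $e^{-1/x^\alpha}=\int_0^x \alpha s^{-\alpha-1}e^{-1/s^\alpha}\,ds$ and comparing integrands on $(0,\delta)$ then gives $g(x)/e^{-1/x^\alpha}\to 1$, using only the elementary fact that if $f(s)/\phi(s)\to L$ with $\phi>0$ and $\int_0^x\phi$ finite, then the ratio of $\int_0^x f$ to $\int_0^x\phi$ tends to $L$. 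In particular $g$ satisfies hypothesis \eqref{eq.ex1nonrvgeeasy} of Lemma~\ref{lemma.ex1GinvGammaasy}, although the conclusions of that lemma will not be needed.

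Taking logarithms in $g(x)\sim e^{-1/x^\alpha}$ gives $\log g(x)=-x^{-\alpha}+o(1)$, hence $-\log g(x)\sim x^{-\alpha}$, so $x^\alpha\sim(\log(1/g(x)))^{-1}$ and therefore $g^{-1}(y)\sim(\log(1/y))^{-1/\alpha}$ as $y\to 0^+$ (the inverse exists and tends to $0$ since $g$ is continuous, increasing near $0$, and vanishes at $0$). Next, combining $g(x)\sim e^{-1/x^\alpha}$ with \eqref{eq.ex1nonrvgeeprasy} yields $g'(x)\sim\alpha x^{-\alpha-1}g(x)$ as $x\to 0^+$; substituting $x=g^{-1}(y)$, so that $g(x)=y$, gives
\[
\Gamma_1(y)=g'(g^{-1}(y))\sim \alpha\,(g^{-1}(y))^{-\alpha-1}\,y, \qquad y\to 0^+ .
\]
Feeding in $(g^{-1}(y))^{-\alpha-1}\sim(\log(1/y))^{(\alpha+1)/\alpha}$ produces exactly the limit claimed in \eqref{eq.ex1nonrvGamma1asy}. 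Finally, $y\mapsto \alpha y\,(\log(1/y))^{(\alpha+1)/\alpha}$ is $y$ times a function slowly varying at $0$ (because $\log(1/\lambda y)/\log(1/y)\to 1$ as $y\to 0^+$ for each fixed $\lambda>0$), so it lies in $\text{RV}_0(1)$; since $\Gamma_1$ is asymptotically equivalent to it, $\Gamma_1\in\text{RV}_0(1)$ as well, consistent with \eqref{eq:g4alt7}.

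The only step requiring genuine care is the first one, the passage from the asymptotics of $g'$ to those of $g$, where one must verify the positivity and integrability at $0$ of both integrands and invoke $g(0)=0$; thereafter the argument is a chain of elementary asymptotic substitutions, using throughout that regular variation is preserved under asymptotic equivalence. I do not expect any obstacle beyond bookkeeping, and in fact the computation of $g^{-1}$ here mirrors and slightly simplifies the computation of $G^{-1}$ performed in the proof of Lemma~\ref{lemma.ex1GinvGammaasy}.
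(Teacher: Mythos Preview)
Your proof is correct and follows essentially the same route as the paper: integrate the hypothesis on $g'$ (using $g(0)=0$) to obtain $g(x)\sim e^{-1/x^\alpha}$, invert via logarithms to get $g^{-1}(y)\sim(\log(1/y))^{-1/\alpha}$, rewrite the hypothesis as $g'(x)\sim\alpha x^{-\alpha-1}g(x)$, substitute $x=g^{-1}(y)$, and read off the asymptotic for $\Gamma_1$. The only cosmetic difference is that the paper verifies $\Gamma_1\in\text{RV}_0(1)$ by computing $\Gamma_1(\mu y)/\Gamma_1(y)\to\mu$ directly, whereas you argue via asymptotic equivalence to a function visibly in $\text{RV}_0(1)$; both are immediate.
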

\begin{lemma}  \label{lemma.ex2GinvGammaasy}
Suppose that $g$ obeys
\begin{equation} \label{eq.ex2nonrvgeeasy}
\lim_{x\to 0^+} \frac{g(x)}{\exp(-e^{1/x})}=1.
\end{equation}
Then
\begin{gather} \label{eq.ex2nonrvGinvasy}
\lim_{y\to\infty} \frac{G^{-1}(y)}{1/\log_2 y}=1, \quad G^{-1}\in \text{RV}_\infty(0),\\
\label{eq.ex2nonrvGammaasy}
\lim_{y\to\infty} \frac{\Gamma(y)}{1/(y\log y(\log_2 y)^2)}=1, \quad \Gamma=g\circ G^{-1} \in \text{RV}_\infty(-1).
\end{gather}
\end{lemma}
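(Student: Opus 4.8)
The plan is to reduce everything to the behaviour of $G(x)=\int_x^1 g(u)^{-1}\,du$ as $x\to0^+$ and then invert, following the same scheme as in the proof of Lemma~\ref{lemma.ex1GinvGammaasy}. \emph{Step 1 (asymptotics of $G$).} From \eqref{eq.ex2nonrvgeeasy} we have $g(u)^{-1}\big/\exp(e^{1/u})\to1$ as $u\to0^+$, and since $\int_0^1\exp(e^{1/u})\,du=+\infty$, the asymptotic equivalence of nonnegative integrands with divergent integral passes to the integrals, so $G(x)\sim\int_x^1\exp(e^{1/u})\,du$ as $x\to0^+$. To evaluate the right-hand side I would apply L'Hôpital's rule to the $\infty/\infty$ quotient of $\int_x^1\exp(e^{1/u})\,du$ and $x^2\exp(e^{1/x}-1/x)$: using $\frac{d}{dx}(e^{1/x}-1/x)=(1-e^{1/x})/x^2$, the ratio of derivatives collapses to $-e^{1/x}/(2x+1-e^{1/x})$, which tends to $1$. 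Hence
\[
G(x)\sim x^{2}\exp\!\bigl(e^{1/x}-1/x\bigr),\qquad x\to0^+,
\]
equivalently $\log G(x)=e^{1/x}-1/x+2\log x+o(1)$ as $x\to0^+$.

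\emph{Step 2 (inversion).} Put $y=G(x)$, so $x=G^{-1}(y)\to0^+$ as $y\to\infty$, and write $w=1/x$. Step~1 gives $e^{w}=\log y+w+2\log w+o(1)$. Since $e^{w}$ dominates both $w$ and $\log w$, a first pass gives $w\sim\log_2 y$; taking logarithms in the displayed relation and feeding this back sharpens it to $w=\log_2 y+o(1)$, whence $G^{-1}(y)=1/w\sim 1/\log_2 y$. Substituting $w=\log_2 y+o(1)$ and $2\log w=2\log_3 y+o(1)$ (with $\log_3 y:=\log\log_2 y$) back into $e^{w}=\log y+w+2\log w+o(1)$ yields the refined expansion
\[
e^{1/x}=\log y+\log_2 y+2\log_3 y+o(1),\qquad y\to\infty.
\]

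\emph{Step 3 (asymptotics of $\Gamma$ and regular variation).} By \eqref{eq.ex2nonrvgeeasy} once more, $\log g(x)=-e^{1/x}+o(1)$ as $x\to0^+$, so with $x=G^{-1}(y)$ and the expansion from Step~2,
\[
\log\Gamma(y)=\log g\bigl(G^{-1}(y)\bigr)=-e^{1/x}+o(1)=-\log y-\log_2 y-2\log_3 y+o(1),
\]
and exponentiating gives $\Gamma(y)\sim 1/\bigl(y\log y(\log_2 y)^2\bigr)$ as $y\to\infty$. Since $1/\log_2 y$ is slowly varying and $1/\bigl(y\log y(\log_2 y)^2\bigr)$ is regularly varying of index $-1$, and asymptotic equivalence preserves regular variation and its index, we conclude $G^{-1}\in\text{RV}_\infty(0)$ and $\Gamma\in\text{RV}_\infty(-1)$, which together with the two displayed limits is the assertion of the lemma.

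\emph{Main obstacle.} The delicate part is Step~2: the inversion must be carried out to precision $o(1)$ on the logarithmic scale for the $\log y$ and $(\log_2 y)^2$ factors in $\Gamma$ to appear, which forces a careful two-stage bootstrap together with control of the error terms inherited from the integral estimate of Step~1 and from replacing $w$ and $\log w$ by $\log_2 y$ and $\log_3 y$. Verifying the hypotheses of L'Hôpital in Step~1 (that the denominator $x^{2}\exp(e^{1/x}-1/x)$ and its derivative are eventually of one sign and tend to $+\infty$) is routine.
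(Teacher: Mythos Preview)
Your proof is correct and follows the same overall architecture as the paper's: establish $G(x)\sim x^{2}e^{-1/x}\exp(e^{1/x})$ via L'H\^opital, invert to obtain $G^{-1}(y)\sim 1/\log_2 y$ and $e^{1/G^{-1}(y)}\sim\log y$, and then read off $\Gamma(y)$. The one genuine difference is in how you extract the asymptotics of $\Gamma$. You work additively on the logarithmic scale, which forces you to refine the inversion to $e^{1/x}=\log y+\log_2 y+2\log_3 y+o(1)$ so that all three factors survive exponentiation; this is exactly the ``main obstacle'' you flag, and your two-stage bootstrap handles it correctly. The paper instead recycles the Step~1 asymptotic multiplicatively: from $\Gamma(G(x))=g(x)\sim\exp(-e^{1/x})$ and $\exp(e^{1/x})\sim e^{1/x}x^{-2}G(x)$ it gets $\Gamma(y)\,e^{1/G^{-1}(y)}\,G^{-1}(y)^{-2}\,y\to 1$, and then only needs the crude first-order facts $e^{1/G^{-1}(y)}\sim\log y$ and $G^{-1}(y)\sim 1/\log_2 y$ to finish. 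So the paper's route is slightly slicker in that it avoids the second bootstrap pass, while yours is more transparent about where each of the three factors $y$, $\log y$, $(\log_2 y)^2$ comes from.
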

\begin{lemma} \label{lemma.ex2GinvGamma1asy}
Suppose that $g$ obeys
\begin{equation} \label{eq.ex2nonrvgeeprasy}
\lim_{x\to 0^+} \frac{g'(x)}{x^{-2}e^{1/x}\exp(-e^{1/x})}=1.
\end{equation}
Then $\Gamma_1=g'\circ g^{-1}$ obeys
\begin{equation} \label{eq.ex2nonrvGamma1asy}
\lim_{y\to 0^+} \frac{\Gamma_1(y)}{\log(1/y)\log_2(1/y)^2 y}=1, \quad \Gamma_1\in \text{RV}_0(1).
\end{equation}
\end{lemma}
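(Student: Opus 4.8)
The plan is to route everything through an asymptotic equivalence for $g$ itself and then invert. Concretely: (i) from the hypothesis on $g'$ recover $g(x)\sim\exp(-e^{1/x})$ as $x\to 0^+$; (ii) invert this to get $g^{-1}(y)\sim 1/\log_2(1/y)$ as $y\to 0^+$; (iii) substitute $x=g^{-1}(y)$ into the prescribed form of $g'(x)$ and simplify the three resulting factors; and (iv) read off $\Gamma_1\in\text{RV}_0(1)$ from the explicit equivalent.

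For (i), note the identity $\frac{d}{dx}\exp(-e^{1/x})=x^{-2}e^{1/x}\exp(-e^{1/x})$, so \eqref{eq.ex2nonrvgeeprasy} says exactly that $g'(x)\sim\phi'(x)$ as $x\to 0^+$, where $\phi(x):=\exp(-e^{1/x})$. In particular $g'>0$ near $0$, so $g$ is strictly increasing and invertible there, and $g(0^+)=\phi(0^+)=0$. Writing $g(x)=\int_0^x g'(s)\,ds$ and $\phi(x)=\int_0^x\phi'(s)\,ds$ and applying L'H\^{o}pital's rule to the indeterminate quotient $\int_0^x g'/\int_0^x\phi'$ yields $g(x)\sim\phi(x)=\exp(-e^{1/x})$. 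Taking logarithms twice then gives $\log(1/g(x))\sim e^{1/x}$ and hence $\log_2(1/g(x))\sim 1/x$, i.e.\ $x\sim 1/\log_2(1/g(x))$; putting $y=g(x)$ gives step (ii): $g^{-1}(y)\sim 1/\log_2(1/y)$ as $y\to 0^+$.

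For (iii), since $g^{-1}(y)\to 0^+$ as $y\to 0^+$ we may substitute $x=g^{-1}(y)$ into \eqref{eq.ex2nonrvgeeprasy}, obtaining
\[
\Gamma_1(y)=g'(g^{-1}(y))\sim \bigl(g^{-1}(y)\bigr)^{-2}\,e^{1/g^{-1}(y)}\,\exp\!\left(-e^{1/g^{-1}(y)}\right).
\]
The first factor is $\sim(\log_2(1/y))^2$ by (ii). For the other two, the naive move of inserting the leading term $1/\log_2(1/y)$ for $g^{-1}(y)$ inside $e^{1/x}$ is illegitimate, because the relative error in $1/g^{-1}(y)$ is exponentiated; instead we use the exact identity $g(g^{-1}(y))=y$ together with $g\sim\phi$ to get $\exp(-e^{1/g^{-1}(y)})=\phi(g^{-1}(y))\sim y$, and then take $-\log$ (which contracts relative error) to get $e^{1/g^{-1}(y)}\sim\log(1/y)$. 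Multiplying the three equivalents gives $\Gamma_1(y)\sim y\,\log(1/y)\,(\log_2(1/y))^2$ as $y\to 0^+$, which is the stated limit. Finally, $y\mapsto y\log(1/y)(\log_2(1/y))^2$ is $y$ times a function slowly varying at $0$, hence lies in $\text{RV}_0(1)$, and since regular variation is preserved under asymptotic equivalence, $\Gamma_1\in\text{RV}_0(1)$.

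The main obstacle is step (iii): controlling the composition through the iterated exponential $\exp(-e^{1/\,\cdot\,})$. The resolution is not to estimate $e^{1/g^{-1}(y)}$ directly, but to route through the identity $g(g^{-1}(y))=y$ and the equivalence $g(x)\sim\exp(-e^{1/x})$ from step (i), after which only logarithmically-sized, hence harmless, errors remain.
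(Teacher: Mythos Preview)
Your proof is correct and follows essentially the same route as the paper: integrate the hypothesis to get $g(x)\sim\exp(-e^{1/x})$, invert to obtain $e^{1/g^{-1}(y)}\sim\log(1/y)$ and $g^{-1}(y)\sim 1/\log_2(1/y)$, then substitute $x=g^{-1}(y)$ into the asymptotic for $g'$. The only cosmetic difference is that the paper first replaces $\exp(-e^{1/x})$ by $g(x)$ in the denominator (so that after the substitution $y=g(x)$ the third factor is exactly $y$), whereas you substitute first and then invoke $\phi(g^{-1}(y))\sim y$; the content is identical.
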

With these asymptotic results to hand, we can obtain precise rates of convergence to $0$ of the solution of \eqref{T1.c} for the 
functions $g$ above. 
\begin{example} \label{example.asygeminuspoly}
Suppose that $a>b>0$, $\tau(t)/t\to q\in[0,1)$, that $g(0)=0$, $g$ is positive, $g$ is increasing on $(0,\delta_1)$, and $x(t)\to 0$ as $t\to\infty$
\begin{itemize}
\item[(a)] Suppose that $a>b(1-q)^{-1}$. If there is $\alpha>0$ such that
\[
\lim_{x\to 0^+} \frac{g(x)}{e^{-1/x^\alpha}}=1
\]
Then
\[
\lim_{t\to\infty} x(t)(\log t)^{1/\alpha}=1.
\]
\item[(b)] Suppose that $a<b(1-q)^{-1}$. If there is $\alpha>0$ such that
\begin{equation} \label{eq.exampgprnonrvasypoly}
\lim_{x\to 0^+} \frac{g'(x)}{\alpha x^{-(\alpha+1)}e^{-1/x^\alpha}}=1
\end{equation}
Then
\[
\lim_{t\to\infty}  x(t)(\log t)^{1/\alpha} = \left(\frac{\log(1/(1-q))}{\log(a/b)}\right)^{1/\alpha}.
\]
\item[(c)] Suppose that $q=1$ and that there is $C>0$ and $\gamma>0$ such that
\[
\lim_{t\to\infty} \frac{t-\tau(t)}{t/(\log t)^\gamma}=C.
\]
If $g$ obeys \eqref{eq.exampgprnonrvasypoly}, then
\[
\lim_{t\to\infty} x(t) (\log t/\log_2 t)^{1/\alpha} = \left(\frac{\gamma}{\log(a/b)}\right)^{1/\alpha}.
\]
\item[(d)] Suppose that $q=1$ and that there is $C>0$ and $\gamma\in(0,1)$ such that
\[
\lim_{t\to\infty} \frac{t-\tau(t)}{t^{\gamma}}=C.
\]
If $g$ obeys \eqref{eq.exampgprnonrvasypoly}, then
\[
\lim_{t\to\infty} x(t) (\log_2 t)^{1/\alpha} = \left(\frac{\log(1/\gamma)}{\log(a/b)}\right)^{1/\alpha}.
\]
\end{itemize}
\end{example}
\begin{example} \label{example.asygeminusexp}
Suppose that $a>b>0$, $\tau(t)/t\to q\in[0,1)$, that $g(0)=0$, $g$ is positive, $g$ is increasing on $(0,\delta_1)$, and $x(t)\to 0$ as $t\to\infty$
\begin{itemize}
\item[(a)] Suppose that $a>b(1-q)^{-1}$. If
\[
\lim_{x\to 0^+} \frac{g(x)}{\exp(-e^{1/x})}=1,
\]
then
\[
\lim_{t\to\infty} x(t)\log_2 t=1.
\]
\item[(b)] Suppose that $a<b(1-q)^{-1}$. If
\begin{equation} \label{eq.exampgprnonrvasyexp}
\lim_{x\to 0^+} \frac{g'(x)}{x^{-2}e^{1/x}\exp(-e^{1/x})}=1,
\end{equation}
then
\[
\lim_{t\to\infty}  x(t)\log_2 t = 1.
\]
\item[(c)] Suppose that $q=1$ and that there is $C>0$ and $\gamma>0$ such that
\[
\lim_{t\to\infty} \frac{t-\tau(t)}{t/(\log t)^\gamma}=C.
\]
If $g$ obeys \eqref{eq.exampgprnonrvasyexp}, then
\[
\lim_{t\to\infty} x(t) \log_2 t  = 1.
\]
\item[(d)] Suppose that $q=1$ and that there is $C>0$ and $\gamma\in(0,1)$ such that
\[
\lim_{t\to\infty} \frac{t-\tau(t)}{t^{\gamma}}=C.
\]
If $g$ obeys \eqref{eq.exampgprnonrvasyexp}, then
\[
\lim_{t\to\infty} x(t) \log_3 t  = 1.
\]
\end{itemize}
\end{example}

\section{Equations with Maximum Functionals} \label{sec.max}
We may also consider the asymptotic behaviour of solutions of the equation \eqref{T1.d}.
The proofs of the asymptotic results for this equation are the same as those for \eqref{T1.c} except at one stage of the proof. ,
This is because the functions used as upper and lower solutions in Theorem~\ref{thm.odedecay} and \ref{thm3.1} are also employed for \eqref{T1.d},
in the sense that the functional dependence of the comparison functions on the solution of the underlying functional differential equation and data are the same in both proofs (they are not the same functions because the solutions of \eqref{T1.c} and \eqref{T1.d} are not the same, in general).
These comparison functions are monotone decreasing and small on their domain of definition, and $g$ is assumed to be increasing
in some neighbourhood to the right of zero, so we may write
\[
\max_{t-\tau(t)\leq s\leq t} g(x_U(s))=g(x_U(t-\tau(t))), \quad t>T
\]
where $x_U$ is the upper comparison function, and $T>0$ is sufficiently large. The same identity holds for lower comparison functions.
Therefore the upper comparison function which satisfies the differential inequality
\[
x_U'(t)>-ag(x_U(t))+bg(x_{U}(t-\tau(t))), \quad t>T
\]
also satisfies
\[
x_U'(t)>-ag(x_U(t))+b\max_{t-\tau(t)\leq s\leq t}g(x_{U}(s)), \quad t>T,
\]
with an analogous pair of inequalities holding for the lower comparison functions.
The comparison principle now shows that these upper and lower comparison functions bound the solution above and below. Therefore we have a
direct analogue of Theorems~\ref{thm3.1}. If an analogue of Theorem~\ref{thm.odedecay} can be shown, then the analogue of 
Theorem~\ref{thm.unify}
follows directly. The relevant results are now stated.
\begin{theorem} \label{thm.odedecaymax}
Let $\tau$ be a continuous and non--negative function such that $-\overline{\tau}=\inf_{t\geq 0} t-\tau(t)$ and which obeys
\eqref{eq.tminustautoinfty}. Suppose also $\tau$ obeys \eqref{eq.tauttto0}.
Let $a>b>0$ and $g$ satisfy \eqref{eq:galt2} and suppose $\psi\in
C([-\overline{\tau},0];(0,\infty))$. If the solution of
\eqref{T1.d} viz.,
\begin{align*}
    x'(t) &=  - ag(x(t)) + b\max_{t-\tau(t)\leq s\leq t}g(x(s)),\quad t\geq 0 \\
    x(t) &=  \psi(t),\quad t \in [-\bar{\tau},0]
\end{align*}
obeys $x(t)\to 0$ as $t\to\infty$, and $G$ is defined by \eqref{def.G} then
\begin{equation}  \label{eq.odedecaymax}
\lim_{t\to\infty} \frac{G(x(t))}{t} = a-b.
\end{equation}
Moreover
\begin{equation*} 
\lim_{t\to\infty} \frac{x(t)}{G^{-1}(t)}=(a-b)^{-1/(\beta-1)}.
\end{equation*}
\end{theorem}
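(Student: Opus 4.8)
The plan is to transfer the proof of Theorem~\ref{thm.odedecay} to the maximum equation \eqref{T1.d} by re-using the \emph{same} upper and lower comparison functions and exploiting their monotonicity, along the lines already sketched at the start of Section~\ref{sec.max}. Recall that the proof of Theorem~\ref{thm.odedecay} produces, for each small $\epsilon>0$, functions $x_{L,\epsilon}$ and $x_{U,\epsilon}$ — defined implicitly through $g$ — which (i) are eventually decreasing, (ii) are eventually valued in $(0,\delta_1)$, (iii) satisfy, for $t$ beyond some $T(\epsilon)$, the strict differential inequalities
\[
x_{L,\epsilon}'(t) < -ag(x_{L,\epsilon}(t)) + bg(x_{L,\epsilon}(t-\tau(t))), \qquad x_{U,\epsilon}'(t) > -ag(x_{U,\epsilon}(t)) + bg(x_{U,\epsilon}(t-\tau(t))),
\]
(iv) can be arranged, by taking the multiplicative constants $x_1(\epsilon)$ small and $x_2(\epsilon)$ large, to obey $x_{L,\epsilon}(t)<x(t)<x_{U,\epsilon}(t)$ on $[-\bar{\tau},T(\epsilon)]$, and for which $G(x_{L,\epsilon}(t))/t$ and $G(x_{U,\epsilon}(t))/t$ converge to numbers $a-b+\eta_1(\epsilon)$ and $a-b-\eta_2(\epsilon)$ with $\eta_i(\epsilon)\to 0$ as $\epsilon\to 0^+$ (the crude two-sided bound underlying this coming from Theorem~\ref{thm.odedecaygen}). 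The first task is to observe that each of these properties is insensitive to whether the delayed term is $g(x(t-\tau(t)))$ or $\max_{t-\tau(t)\le s\le t} g(x(s))$.

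The key point is that, since \eqref{eq.tminustautoinfty} holds, $t-\tau(t)\to\infty$, so for all sufficiently large $t$ the entire window $[t-\tau(t),t]$ lies in the range where $x_{U,\epsilon}$ is decreasing and takes values in $(0,\delta_1)$; as $g$ is increasing there and $x_{U,\epsilon}$ is decreasing, the map $s\mapsto g(x_{U,\epsilon}(s))$ is decreasing on the window, whence
\[
\max_{t-\tau(t)\le s\le t} g(x_{U,\epsilon}(s)) = g(x_{U,\epsilon}(t-\tau(t))).
\]
Hence, enlarging $T(\epsilon)$ if necessary, $x_{U,\epsilon}$ satisfies $x_{U,\epsilon}'(t) > -ag(x_{U,\epsilon}(t)) + b\max_{t-\tau(t)\le s\le t} g(x_{U,\epsilon}(s))$ for $t>T(\epsilon)$, i.e. it is a strict supersolution of \eqref{T1.d}; the identical argument makes $x_{L,\epsilon}$ a strict subsolution. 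One then invokes the comparison principle for \eqref{T1.d} — which rests only on the monotonicity of $v\mapsto \max_{t-\tau(t)\le s\le t} g(v(s))$ together with $g$ being increasing where $x$, $x_{U,\epsilon}$ and $x_{L,\epsilon}$ all lie for large $t$ — and is applied exactly as in the proof of Theorem~\ref{thm3.0}(b): at a putative first crossing time, the strict ordering of the derivatives contradicts the ordering of the values. This gives $x_{L,\epsilon}(t)\le x(t)\le x_{U,\epsilon}(t)$ for all $t\ge-\bar{\tau}$, and therefore, since $G$ is decreasing, $G(x_{U,\epsilon}(t)) \le G(x(t)) \le G(x_{L,\epsilon}(t))$.

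Dividing by $t$ and taking $\limsup$ and $\liminf$ yields
\[
a-b-\eta_2(\epsilon) \le \liminf_{t\to\infty}\frac{G(x(t))}{t} \le \limsup_{t\to\infty}\frac{G(x(t))}{t} \le a-b+\eta_1(\epsilon),
\]
and letting $\epsilon\to 0^+$ proves \eqref{eq.odedecaymax}. For the second assertion, since $g\in\text{RV}_0(\beta)$ with $\beta>1$ we have $G\in\text{RV}_0(1-\beta)$ near $0$ and $G^{-1}\in\text{RV}_\infty(-1/(\beta-1))$; writing $x(t)=G^{-1}(G(x(t)))$ with $G(x(t))=(a-b)t(1+o(1))$ and using the uniform convergence of regularly varying functions on compact ratio sets, $x(t)/G^{-1}(t)\to (a-b)^{-1/(\beta-1)}$, exactly as at the close of the proof of Theorem~\ref{thm.odedecay}.

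The only genuinely new work beyond Theorem~\ref{thm.odedecay} is the bookkeeping establishing that the comparison functions are eventually monotone and small enough that $g$ is increasing across the entire window $[t-\tau(t),t]$; after the substitution $\max_{t-\tau(t)\le s\le t} g(x_{U,\epsilon}(s))=g(x_{U,\epsilon}(t-\tau(t)))$ the differential inequalities for \eqref{T1.d} are literally those for \eqref{T1.c}. The main (and only minor) obstacle I anticipate is checking that the crude bound of Theorem~\ref{thm.odedecaygen} also holds for \eqref{T1.d}: that theorem is stated for \eqref{T1.c}, but its proof too is a comparison argument with monotone decreasing comparison functions, so the same monotonicity trick applies and the bound transfers verbatim — one need only record this analogue, or observe that the $\epsilon$-families constructed in the proof of Theorem~\ref{thm.odedecay} already supply the two-sided estimate directly.
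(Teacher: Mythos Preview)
Your proposal rests on a mischaracterization of the proof of Theorem~\ref{thm.odedecay}. That proof does \emph{not} construct comparison functions $x_{L,\epsilon}$, $x_{U,\epsilon}$ for which $G(x_{U,\epsilon}(t))/t\to a-b-\eta_2(\epsilon)$ and $G(x_{L,\epsilon}(t))/t\to a-b+\eta_1(\epsilon)$ with $\eta_i(\epsilon)\to0$. The only comparison function in play is the one from Theorem~\ref{thm.odedecaygen}, which yields merely the crude two--sided estimate $\Lambda_1 t\le G_0(x(t))\le at$ for some fixed $\Lambda_1>0$ not close to $a-b$. The sharp limit $a-b$ is then obtained by a \emph{direct} argument that uses no comparison functions: from the crude bound and $\tau(t)/t\to 0$ one shows $G_1(x(t-\tau(t)))/G_1(x(t))\to 1$ (with $G_1=1/G_0$), deduces $g(x(t-\tau(t)))/g(x(t))\to 1$ via regular variation, hence $x'(t)/g(x(t))\to -(a-b)$, and integrates. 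So the objects you propose to ``re--use'' do not exist, and your plan as stated cannot be carried out.

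Your monotonicity trick is correct and is precisely what the paper uses to transfer the crude--bound comparison of Theorem~\ref{thm.odedecaygen} to \eqref{T1.d}. But it does nothing for the second step, since that step is not a comparison argument. For the max equation one must instead establish
\[
\lim_{t\to\infty}\frac{\max_{t-\tau(t)\le s\le t} g(x(s))}{g(x(t))}=1,
\]
and this requires an estimate of $G_0(x(t))-G_0(x(s))$ that is \emph{uniform} in $s\in[t-\tau(t),t]$, not merely at the endpoint $s=t-\tau(t)$. The paper obtains it by integrating $x'\ge -ag(x)$ from $s$ to $t$ to get $G_0(x(t))\le a(t-s)+G_0(x(s))\le a\tau(t)+G_0(x(s))$ for every such $s$, then taking the maximum over $s$ and invoking the crude bound $G_0(x(s))\ge\Lambda_1 s$ to conclude $\max_s G_1(x(s))/G_1(x(t))\to 1$; the reverse inequality is trivial since $\max_s g(x(s))\ge g(x(t))$. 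This uniform--in--$s$ estimate is the genuinely new ingredient for \eqref{T1.d}, and it is absent from your proposal.
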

The comparison approach gives that $x$ obeys
\[
G_0(x(t))\leq at, \quad t\geq 0; \quad G_0(x(t))\geq \Lambda_1 t, \quad t\geq 1
\]
for some $\Lambda_1>0$, where $G_0(x)=\int_x^{x(0)} 1/g(u)\,du$. The proof that this implies \eqref{eq.odedecaymax} is given
in the final section.
\begin{theorem} \label{thm.odedecay2max}
Let $\tau$ be a continuous and non--negative function such that $-\overline{\tau}=\inf_{t\geq 0} t-\tau(t)$ and which obeys
\eqref{eq.tminustautoinfty}. Suppose also that $\tau$ obeys \eqref{eq.tauasytoqt} for some $q\in(0,1)$, that $a>b>0$ and moreover that
$a$, $b$, $\beta$ and $q$ obey \eqref{eq.abqsmall}. Let $\Lambda$ obey \eqref{def.Lambdalim}. Suppose $g$ satisfies \eqref{eq:galt2} and suppose $\psi\in
C([-\overline{\tau},0];(0,\infty))$. If the solution $x$ of
\eqref{T1.d} obeys $x(t)\to 0$ as $t\to\infty$, and $G$ is defined by \eqref{def.G} then there is $\Lambda_0>0$
such that
\begin{equation*} 
0<\Lambda_0\leq \liminf_{t\to\infty} \frac{G(x(t))}{t} \leq \limsup_{t\to\infty} \frac{G(x(t))}{t} \leq \Lambda^{-(\beta-1)}.
\end{equation*}
Moreover
\[
\Lambda\leq \liminf_{t\to\infty} \frac{x(t)}{G^{-1}(t)}\leq \limsup_{t\to\infty} \frac{x(t)}{G^{-1}(t)}<+\infty.
\]
\end{theorem}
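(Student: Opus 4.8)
The plan is to derive Theorem~\ref{thm.odedecay2max} from its pointwise counterpart Theorem~\ref{thm.odedecay2} by reusing the latter's comparison functions, the key point being — as in the discussion opening Section~\ref{sec.max} — that these functions are monotone, so that the maximum functional in \eqref{T1.d} collapses to a single delayed evaluation. Concretely, recall from the proof of Theorem~\ref{thm.odedecay2} (given in Section~\ref{sec.proofs}) that for each small $\epsilon>0$ one constructs a lower solution $x_{L,\epsilon}$ and an upper solution $x_{U,\epsilon}$ of \eqref{T1.c} which are eventually monotone decreasing, take values in $(0,\delta_1)$, satisfy
\[
x_{L,\epsilon}'(t) < -ag(x_{L,\epsilon}(t)) + bg(x_{L,\epsilon}(t-\tau(t))), \qquad x_{U,\epsilon}'(t) > -ag(x_{U,\epsilon}(t)) + bg(x_{U,\epsilon}(t-\tau(t)))
\]
for $t>T(\epsilon)$, and bound $x$ on $[-\bar{\tau},T(\epsilon)]$ once the free prefactors are chosen appropriately. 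Since \eqref{eq.tauasytoqt} with $q\in(0,1)$ gives $t-\tau(t)\sim(1-q)t\to\infty$, for $t$ large the whole window $[t-\tau(t),t]$ lies in a half-line on which $x_{U,\epsilon}$ is decreasing and $g$ is increasing, so that $\max_{t-\tau(t)\le s\le t} g(x_{U,\epsilon}(s)) = g(x_{U,\epsilon}(t-\tau(t)))$, and likewise for $x_{L,\epsilon}$. Hence the displayed inequalities upgrade verbatim to the corresponding inequalities with the maximum functional on the right, so $x_{L,\epsilon}$ and $x_{U,\epsilon}$ are lower and upper solutions of \eqref{T1.d}.

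Next I would apply the comparison principle for \eqref{T1.d} — valid because $u\mapsto\max_{t-\tau(t)\le s\le t}g(u(s))$ is monotone, the mechanism already underlying Theorem~\ref{thm3.0}(b) — to conclude $x_{L,\epsilon}(t)\le x(t)\le x_{U,\epsilon}(t)$ for all $t\ge-\bar{\tau}$. Applying the decreasing function $G$ of \eqref{def.G} reverses these, $G(x_{U,\epsilon}(t))\le G(x(t))\le G(x_{L,\epsilon}(t))$; dividing by $t$, letting $t\to\infty$ using the explicit asymptotics of $x_{L,\epsilon}$ and $x_{U,\epsilon}$, and then sending $\epsilon\to0^+$ exactly as in the proof of Theorem~\ref{thm.odedecay2}, yields a $\Lambda_0>0$ with
\[
0 < \Lambda_0 \le \liminf_{t\to\infty} \frac{G(x(t))}{t} \le \limsup_{t\to\infty} \frac{G(x(t))}{t} \le \Lambda^{-(\beta-1)},
\]
where $\Lambda$ is given by \eqref{def.Lambdalim}; the positive lower bound comes from the fact that the super-solution $x_{U,\epsilon}$ genuinely forces at least linear growth of $G(x(t))$, while the explicit value $\Lambda^{-(\beta-1)}$ comes from the sub-solution $x_{L,\epsilon}$. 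Finally, since \eqref{eq:galt2} forces $G^{-1}\in\text{RV}_\infty(-1/(\beta-1))$, I would invert these bounds via the uniform convergence theorem for regularly varying functions, precisely as \eqref{eq.odedecayuserv} was deduced from \eqref{eq.odedecay}, now applied to inequalities, to obtain
\[
\Lambda \le \liminf_{t\to\infty} \frac{x(t)}{G^{-1}(t)} \le \limsup_{t\to\infty} \frac{x(t)}{G^{-1}(t)} < +\infty.
\]

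There is essentially no new analytic difficulty beyond what is met in Theorem~\ref{thm.odedecay2}; the step that genuinely needs verification — and the one I would flag as the crux — is that the comparison functions inherited from that proof are in fact monotone decreasing on each window $[t-\tau(t),t]$ for all large $t$, since it is exactly this that collapses the maximum to $g(x_{U,\epsilon}(t-\tau(t)))$ (resp.\ $g(x_{L,\epsilon}(t-\tau(t)))$) and lets the \eqref{T1.c}-construction transfer unchanged. The remaining bookkeeping — passing from the normalisation $G_0(x)=\int_x^{x(0)}1/g(u)\,du$ that arises naturally in the comparison argument to the $G$ of \eqref{def.G}, which differ by a finite additive constant negligible against $G(x(t))\to\infty$ — is the same routine reduction announced after Theorem~\ref{thm.odedecaymax} and carried out in Section~\ref{sec.proofs}.
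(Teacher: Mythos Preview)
Your high-level plan matches the paper's: as outlined at the start of Section~\ref{sec.max}, one reuses the comparison functions from the pointwise theorem, and monotonicity collapses the maximum functional. The transfer of the upper solution $x_{U,\epsilon}(t)=G_0^{-1}(\lambda(\epsilon)t)$ from Theorem~\ref{thm.odedecaygen} goes exactly as you describe, and this yields the bound $\liminf_{t\to\infty}G(x(t))/t\geq\Lambda_0>0$.

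There is one inaccuracy worth flagging. Your recollection of the proof of Theorem~\ref{thm.odedecay2} has the sharp bound $\liminf x(t)/G^{-1}(t)\geq\Lambda$ coming from an FDE sub-solution satisfying $x_{L,\epsilon}'(t)<-ag(x_{L,\epsilon}(t))+bg(x_{L,\epsilon}(t-\tau(t)))$. That is not how the paper obtains it: the sharp bound comes from Proposition~\ref{prop.liminfsharpboundpropdelay}, which is an iterated \emph{ODE} comparison. Lemma~\ref{lemma.liminfphiGinv} first bounds the forcing $\varphi(t)=bg(x(t-\tau(t)))$ from below by a multiple of $g(G^{-1}(t))$, and only then is $x_{L,\epsilon}(t)=\lambda_{n+1}(1-\epsilon)G^{-1}(t)$ compared to $x$ via a non-delayed inequality; the sequence $(\lambda_n)$ of Lemma~\ref{lemma.existlambda} is then pushed up to $\Lambda$. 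If you tried to verify the FDE sub-solution inequality for $x_{L,\epsilon}$ directly you would not succeed, which is precisely why the bootstrap is there.

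The upshot for \eqref{T1.d} is that this part transfers even more easily than you suggest, and not via monotonicity of $x_{L,\epsilon}$: since $\max_{t-\tau(t)\leq s\leq t}g(x(s))\geq g(x(t-\tau(t)))$ trivially, Lemma~\ref{lemma.liminfphiGinv} delivers the same lower bound on the forcing for the max equation, and the ODE comparison of Proposition~\ref{prop.liminfsharpboundpropdelay} runs verbatim. So your conclusion is correct; just be aware that the mechanism behind the $\Lambda$ bound is not the one you described.
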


\begin{theorem} \label{thm5.1q}
Let $\tau$ be a continuous and non--negative function such that $-\overline{\tau}=\inf_{t\geq 0} t-\tau(t)$ and which obeys
\eqref{eq.tminustautoinfty}. Suppose also that $\tau$ obeys \eqref{eq.tauasytoqt} for some $q\in(0,1)$, that $a>b>0$ and moreover that
$a$, $b$, $\beta$ and $q$ obey \eqref{eq.abqbig}.
Suppose $g$ satisfies \eqref{eq:galt} and suppose $\psi\in
C([-\overline{\tau},0];(0,\infty))$. If the solution $x$ of
\eqref{T1.d}
obeys $x(t)\to 0$ as $t\to\infty$ then
\begin{equation*}  
\lim_{t\to\infty} \frac{\log x(t)}{\log t} = -\frac{1}{\beta}\frac{1}{\log(1/(1-q))}\log\left(\frac{a}{b}\right).
\end{equation*}
\end{theorem}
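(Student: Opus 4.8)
The plan is to piggyback on the proof of Theorem~\ref{thm3.1q}, exploiting exactly the observation recorded at the start of Section~\ref{sec.max}: if one has comparison functions for the delay equation \eqref{T1.c} that are \emph{monotone decreasing} and small, then, since $g$ is increasing near zero by \eqref{eq:g3alt}, the composition $s\mapsto g(x_{\bullet,\epsilon}(s))$ is decreasing on the relevant range, so its supremum over $[t-\tau(t),t]$ is attained at the left endpoint $s=t-\tau(t)$. First I would recall from the proof of Theorem~\ref{thm3.1q} the one-parameter families $x_{U,\epsilon}$ and $x_{L,\epsilon}$ (of the type \eqref{eq.motivatexuxl}, with power-law profiles appropriate to proportional delay): they are eventually decreasing, eventually valued in $(0,\delta_1)$, they can be arranged via their free multiplicative constants to satisfy $x_{L,\epsilon}(t)<x(t)<x_{U,\epsilon}(t)$ on a matching interval $[-\bar\tau,T_i(\epsilon)]$, and they obey, for $t>T_i(\epsilon)$,
\[
x_{U,\epsilon}'(t) > -a g(x_{U,\epsilon}(t)) + b g(x_{U,\epsilon}(t-\tau(t))), \qquad x_{L,\epsilon}'(t) < -a g(x_{L,\epsilon}(t)) + b g(x_{L,\epsilon}(t-\tau(t))).
\]

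Next, because $t-\tau(t)\to\infty$ by \eqref{eq.tminustautoinfty}, there is $T_i^\ast(\epsilon)\geq T_i(\epsilon)$ such that for $t\geq T_i^\ast(\epsilon)$ the whole interval $[t-\tau(t),t]$ lies where $x_{U,\epsilon}$ (resp. $x_{L,\epsilon}$) is decreasing with values in $(0,\delta_1)$; there
\[
\max_{t-\tau(t)\le s\le t} g(x_{U,\epsilon}(s)) = g(x_{U,\epsilon}(t-\tau(t))), \qquad \max_{t-\tau(t)\le s\le t} g(x_{L,\epsilon}(s)) = g(x_{L,\epsilon}(t-\tau(t))).
\]
Substituting into the two inequalities above shows that $x_{U,\epsilon}$ (resp. $x_{L,\epsilon}$) is an upper (resp. lower) solution for the maximum-functional equation \eqref{T1.d} on $[T_i^\ast(\epsilon),\infty)$; this is precisely the phenomenon predicted by the heuristic identity \eqref{eq:heuristic}, which is the same whether the delayed term or the supremum term sits on the left-hand side. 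Enlarging the matching interval if needed so that the supremum functional for $t$ in a neighbourhood of $T_i^\ast(\epsilon)$ reaches back only into $[-\bar\tau,T_i^\ast(\epsilon)]$ (where the ordering with $x$ is already in force), the comparison principle for \eqref{T1.d} yields
\[
x_{L,\epsilon}(t) < x(t) < x_{U,\epsilon}(t), \quad t\geq -\bar\tau,
\]
hence $\log g(x_{L,\epsilon}(t)) \le \log g(x(t)) \le \log g(x_{U,\epsilon}(t))$, and, since $g\in\text{RV}_0(\beta)$ forces $\log g(x)/\log x\to\beta$, corresponding two-sided bounds on $\log x(t)/\log t$ in terms of $\epsilon$.

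Finally, dividing by $\log t$, letting $t\to\infty$, and then $\epsilon\to0^+$ collapses the $\liminf$ and $\limsup$ of $\log x(t)/\log t$ to the common value $-\tfrac{1}{\beta}\tfrac{1}{\log(1/(1-q))}\log(a/b)$, exactly as in the proof of Theorem~\ref{thm3.1q}; the constants $C_1(\epsilon)$, $C_2(\epsilon)$ built into $x_{L,\epsilon}$, $x_{U,\epsilon}$ were already designed to share this limit. The only genuinely new step relative to Theorem~\ref{thm3.1q} is the replacement of $g(x_{\bullet,\epsilon}(t-\tau(t)))$ by $\max_{t-\tau(t)\le s\le t} g(x_{\bullet,\epsilon}(s))$, so the main obstacle is bookkeeping rather than estimation: verifying that the comparison functions are monotone on a late enough interval (immediate from their explicit form in \eqref{eq.motivatexuxl} together with $g\in C^1$) and that the comparison principle is valid for the nonlocal equation \eqref{T1.d}. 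Once the supremum identity on $[t-\tau(t),t]$ is established, none of the differential-inequality computations from the proof of Theorem~\ref{thm3.1q} need to be repeated.
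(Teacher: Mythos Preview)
Your proposal is correct and follows essentially the same approach as the paper: the discussion at the start of Section~\ref{sec.max} says precisely that the comparison functions from the proof of Theorem~\ref{thm3.1q} are monotone decreasing and small, so $\max_{t-\tau(t)\le s\le t} g(x_{\bullet,\epsilon}(s))=g(x_{\bullet,\epsilon}(t-\tau(t)))$ for large $t$, and the differential inequalities and comparison argument then carry over verbatim to \eqref{T1.d}. The paper does not give a separate proof of Theorem~\ref{thm5.1q} beyond this observation, so your write--up is exactly what is intended.
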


\begin{theorem}  \label{thm5.1}
Let $\tau$ be a continuous and non--negative function such that $-\overline{\tau}=\inf_{t\geq 0} t-\tau(t)$ and which obeys
\eqref{eq.tminustautoinfty}. Suppose that $\sigma$ and $\tau$ obey \eqref{eq.t1}--\eqref{eq.t4}.
Let $a>b>0$ and $g$ satisfy \eqref{eq:galt} and suppose $\psi\in
C([-\overline{\tau},0];(0,\infty))$. If the solution of
\eqref{T1.d}
obeys $x(t)\to 0$ as $t\to\infty$ , then
\begin{equation}  \label{Cm}
\lim_{t\to\infty} \frac{\log g(x(t))}{\int_0^t
\frac{1}{\sigma(s)}\,ds} = -\log\left(\frac{a}{b}\right).
\end{equation}
Moreover, \eqref{Cm} is equivalent to
\begin{equation*} 
\lim_{t\to\infty} \frac{\log x(t)}{\int_0^t
\frac{1}{\sigma(s)}\,ds} = -\frac{1}{\beta}\log\left(\frac{a}{b}\right).
\end{equation*}
\end{theorem}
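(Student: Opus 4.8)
The plan is to establish Theorem~\ref{thm5.1} as the analogue of Theorem~\ref{thm3.1} for \eqref{T1.d}, reusing the comparison-function machinery underlying the proof of Theorem~\ref{thm3.1} (equivalently, of Theorem~\ref{thm3.1gen} applied to \eqref{T1.c}). For each $\epsilon>0$ I would take one-parameter families $x_{L,\epsilon}, x_{U,\epsilon}$ defined by
\[
g(x_{L,\epsilon}(t)) = x_1(\epsilon)\exp\!\left(-C_1(\epsilon)\int_0^t \frac{1}{\sigma(s)}\,ds\right), \qquad g(x_{U,\epsilon}(t)) = x_2(\epsilon)\exp\!\left(-C_2(\epsilon)\int_0^t \frac{1}{\sigma(s)}\,ds\right),
\]
with $C_i(\epsilon)>0$ and $C_i(\epsilon)\to\log(a/b)$ as $\epsilon\to0^+$, and where, via the identity \eqref{eq:heuristic} together with \eqref{eq.t2}, \eqref{eq.t3} and the local hypotheses on $g$, the parameters are chosen exactly as in the proof of Theorem~\ref{thm3.1gen} so that there are $T_1(\epsilon), T_2(\epsilon)>0$ with
\[
x_{L,\epsilon}'(t) < -ag(x_{L,\epsilon}(t)) + bg(x_{L,\epsilon}(t-\tau(t))), \qquad x_{U,\epsilon}'(t) > -ag(x_{U,\epsilon}(t)) + bg(x_{U,\epsilon}(t-\tau(t)))
\]
for $t>T_1(\epsilon)$ and $t>T_2(\epsilon)$ respectively, and, using that $x$ is positive, bounded (Theorem~\ref{thm3.0}(b)) and tends to $0$, with $x_1(\epsilon)$ small and $x_2(\epsilon)$ large so that $x_{L,\epsilon}<x<x_{U,\epsilon}$ on $[-\bar\tau, T_i(\epsilon)]$.

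The single new ingredient — the point flagged in the preamble to Section~\ref{sec.max} — is that the delayed term may be replaced by the maximum functional along the comparison functions. Since $\int_0^t 1/\sigma(s)\,ds$ is nondecreasing and $C_i(\epsilon)>0$, the maps $s\mapsto g(x_{U,\epsilon}(s))$ and $s\mapsto g(x_{L,\epsilon}(s))$ are strictly decreasing. Because $t-\tau(t)\to\infty$ by \eqref{eq.tminustautoinfty}, for all $t$ large enough that the whole window $[t-\tau(t),t]$ lies in the range where these formulas apply and where $x_{U,\epsilon}, x_{L,\epsilon}$ are $(0,\delta_1)$-valued, the supremum of the decreasing map $g(x_{U,\epsilon}(\cdot))$ over $[t-\tau(t),t]$ equals its left-endpoint value, so
\[
\max_{t-\tau(t)\leq s\leq t} g(x_{U,\epsilon}(s)) = g(x_{U,\epsilon}(t-\tau(t))),
\]
and likewise for $x_{L,\epsilon}$. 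Hence, after enlarging $T_i(\epsilon)$ if necessary, $x_{U,\epsilon}$ and $x_{L,\epsilon}$ are respectively strict upper and lower solutions of \eqref{T1.d} on $(T_i(\epsilon),\infty)$.

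Next I would invoke a comparison principle for \eqref{T1.d}: the functional $\varphi\mapsto\max_{t-\tau(t)\le s\le t}g(\varphi(s))$ is monotone in $\varphi$, so a first-crossing argument works as for \eqref{T1.c}. If $x$ first met $x_{U,\epsilon}$ at some $t^\ast>T_2(\epsilon)$ with $x\leq x_{U,\epsilon}$ on $[-\bar\tau,t^\ast]$, then, $g$ being increasing on the relevant range, $\max_{t^\ast-\tau(t^\ast)\le s\le t^\ast}g(x(s))\leq\max_{t^\ast-\tau(t^\ast)\le s\le t^\ast}g(x_{U,\epsilon}(s))$, whence $x'(t^\ast)\leq -ag(x_{U,\epsilon}(t^\ast))+b\max g(x_{U,\epsilon})<x_{U,\epsilon}'(t^\ast)$, contradicting that $x$ reaches $x_{U,\epsilon}$ from below; the lower bound is symmetric. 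This gives $x_{L,\epsilon}(t)<x(t)<x_{U,\epsilon}(t)$, hence $g(x_{L,\epsilon}(t))<g(x(t))<g(x_{U,\epsilon}(t))$, for all large $t$. Taking logarithms, dividing by $\int_0^t 1/\sigma(s)\,ds$, and using \eqref{eq.t2} to kill the $\log x_i(\epsilon)$ terms yields
\[
-C_2(\epsilon) \;\leq\; \liminf_{t\to\infty}\frac{\log g(x(t))}{\int_0^t \frac{1}{\sigma(s)}\,ds} \;\leq\; \limsup_{t\to\infty}\frac{\log g(x(t))}{\int_0^t \frac{1}{\sigma(s)}\,ds} \;\leq\; -C_1(\epsilon);
\]
letting $\epsilon\to0^+$ and using $C_i(\epsilon)\to\log(a/b)$ gives \eqref{Cm}. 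Finally, \eqref{eq:g4alt} forces $g\in\text{RV}_0(\beta)$, hence $\log g(x)/\log x\to\beta$ as $x\to0^+$, and multiplying through produces the equivalent statement for $\log x(t)$, exactly as in the discussion following Theorem~\ref{thm3.1}.

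I expect the main obstacle to be the careful bookkeeping around the monotonicity/left-endpoint reduction together with the comparison principle: one must ensure the threshold beyond which $x_{U,\epsilon}$ (and $x_{L,\epsilon}$) is well defined via $g^{-1}$, decreasing, and $(0,\delta_1)$-valued is reached before $t-\tau(t)$ enters the comparison region; control the behaviour on the compact part $[-\bar\tau,T_i(\epsilon)]$, where the data $\psi$ and the transient of $x$ live and where the supremum need not collapse; and verify that a first-crossing argument remains legitimate for the nonlocal max equation. All the quantitative estimates — the choice of $C_i(\epsilon)$ making the bracket in \eqref{eq:heuristic} have the correct sign, and the limit passage — are inherited essentially verbatim from the proof of Theorem~\ref{thm3.1gen}/\ref{thm3.1}.
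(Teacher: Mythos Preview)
Your proposal is correct and follows essentially the same route as the paper: the preamble to Section~\ref{sec.max} explains that the proof of Theorem~\ref{thm5.1} reuses the comparison functions $x_{L,\epsilon},x_{U,\epsilon}$ from the proof of Theorem~\ref{thm3.1gen}/\ref{thm3.1}, invokes their monotone decrease to collapse $\max_{t-\tau(t)\le s\le t} g(x_{U,\epsilon}(s))$ to $g(x_{U,\epsilon}(t-\tau(t)))$, and then applies the comparison principle exactly as you describe. One small bookkeeping slip: in your final displayed chain the roles of $C_1(\epsilon)$ and $C_2(\epsilon)$ are swapped (the lower solution with rate $C_1$ yields $\liminf\ge -C_1$, the upper with rate $C_2$ yields $\limsup\le -C_2$, and one needs $C_1(\epsilon)>\log(a/b)>C_2(\epsilon)$), but this does not affect the limit.
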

These results can be unified, just as we had for \eqref{T1.c} in Theorem~\ref{thm.unify}.
\begin{theorem} \label{thm.unifymax}
Let $\tau$ be a continuous and non--negative function which obeys \eqref{eq.tminustautoinfty} such that $-\overline{\tau}=\inf_{t\geq 0} t-\tau(t)$.
Suppose that $\sigma$ and $\tau$ obey \eqref{eq.t1}--\eqref{eq.t3}, and that there exists
$\lambda\in[0,\infty]$ such that $\sigma$ obeys \eqref{eq.sigmatlambda}.
Let $a>b>0$ and $g$ satisfy \eqref{eq:galt3}, and suppose $\psi\in
C([-\overline{\tau},0];(0,\infty))$. Suppose also that the solution $x$ of
\eqref{T1.d} obeys $x(t)\to 0$ as $t\to\infty$.
\begin{itemize}
\item[(i)] If $\lambda=0$, and $G$ is defined by \eqref{def.G}, then $x$ obeys
\[
\lim_{t\to\infty} \frac{x(t)}{G^{-1}(t)}=(a-b)^{-1/(\beta-1)}.
\]
\item[(ii)] If $0<\lambda<\frac{\beta-1}{\beta}\log(a/b)$, $\Lambda$ is defined by \eqref{def.Lambdalim}, and $G$ is defined by \eqref{def.G}, then $x$ obeys
\[
\Lambda\leq \liminf_{t\to\infty} \frac{x(t)}{G^{-1}(t)}\leq \limsup_{t\to\infty} \frac{x(t)}{G^{-1}(t)}<+\infty.
\]
\item[(iii)] If $\frac{\beta-1}{\beta}\log(a/b)<\lambda<+\infty$, then $x$ obeys
\[
\lim_{t\to\infty} \frac{\log x(t)}{\log t} = -\frac{1}{\beta}\frac{1}{\lambda}\log\left(\frac{a}{b}\right).
\]
\item[(iv)] If $\lambda=\infty$, then $x$ obeys
\begin{equation*}
\lim_{t\to\infty} \frac{\log x(t)}{\int_0^t
\frac{1}{\sigma(s)}\,ds} = -\frac{1}{\beta}\log\left(\frac{a}{b}\right).
\end{equation*}
\end{itemize}
\end{theorem}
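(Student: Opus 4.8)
The plan is to obtain Theorem~\ref{thm.unifymax} by reading off the conclusion from the four results already established for the max--type equation \eqref{T1.d}, namely Theorems~\ref{thm.odedecaymax}, \ref{thm.odedecay2max}, \ref{thm5.1q} and \ref{thm5.1}, exactly as Theorem~\ref{thm.unify} was assembled from the corresponding results for \eqref{T1.c}. The parameter $\lambda$ in \eqref{eq.sigmatlambda} interpolates between these four regimes through Lemma~\ref{lemma.sigtauasy}, which under \eqref{eq.t3} and \eqref{eq.sigmatlambda} forces $\tau(t)/t\to 1-e^{-\lambda}$ as $t\to\infty$. So the proof is essentially bookkeeping: I would (a) check that hypothesis \eqref{eq:galt3} on $g$ implies the hypotheses on $g$ under which each of the four theorems was proved; (b) translate the admissible range of $\lambda$ into the corresponding condition on $q=\lim_{t\to\infty}\tau(t)/t$ and on $a,b$; and (c) invoke the appropriate theorem.

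For step (a): the conditions \eqref{eq:g1alt3}--\eqref{eq:g3alt3} are literally \eqref{eq:g1alt}--\eqref{eq:g3alt}, so \eqref{eq:galt3} implies \eqref{eq:galt}, which is what Theorems~\ref{thm5.1q} and \ref{thm5.1} require. For Theorems~\ref{thm.odedecaymax} and \ref{thm.odedecay2max} I must show that \eqref{eq:galt3} implies \eqref{eq:galt2}, i.e.\ that $g'\in\text{RV}_0(\beta-1)$ with $\beta>1$ gives $g\in\text{RV}_0(\beta)$; since $\beta-1>0$, Karamata's theorem applied to $g(x)=\int_0^x g'(u)\,du$ yields $g(x)\sim xg'(x)/\beta$ as $x\to 0^+$, and $x\mapsto xg'(x)/\beta$ lies in $\text{RV}_0(\beta)$, while $g'>0$ on $(0,\delta_1)$ gives the required monotonicity there. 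This is the only place a short analytic argument is needed.

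For steps (b) and (c) I would split on $\lambda\in[0,\infty]$ and use $\tau(t)/t\to 1-e^{-\lambda}=:q$ from Lemma~\ref{lemma.sigtauasy}. If $\lambda=0$ then $q=0$, so \eqref{eq.tauttto0} holds and, since $a>b>0$ and $g$ obeys \eqref{eq:galt2}, Theorem~\ref{thm.odedecaymax} gives (i). If $0<\lambda<\frac{\beta-1}{\beta}\log(a/b)$ then $q\in(0,1)$ and, since $1/(1-q)=e^\lambda$ so that $(1/(1-q))^{\beta/(\beta-1)}=e^{\lambda\beta/(\beta-1)}$, the bound on $\lambda$ is equivalent to $b(1/(1-q))^{\beta/(\beta-1)}<a$, that is to \eqref{eq.abqsmall}; Theorem~\ref{thm.odedecay2max} then gives (ii), with $\Lambda$ as in \eqref{def.Lambdalim}. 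If $\frac{\beta-1}{\beta}\log(a/b)<\lambda<\infty$ then again $q\in(0,1)$ and the bound on $\lambda$ is equivalent to $a<b(1/(1-q))^{\beta/(\beta-1)}$, that is to \eqref{eq.abqbig}; Theorem~\ref{thm5.1q} applies, and since $\log(1/(1-q))=\lambda$ its conclusion reads $\lim_{t\to\infty}\log x(t)/\log t=-\frac{1}{\beta}\frac{1}{\lambda}\log(a/b)$, which is (iii). If $\lambda=\infty$ then \eqref{eq.sigmatlambda} coincides with \eqref{eq.t4}, so $\sigma$ and $\tau$ obey \eqref{eq.t1}--\eqref{eq.t4}, and Theorem~\ref{thm5.1} gives (iv) directly.

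I do not anticipate a genuine obstacle: the Karamata implication in step (a), and the elementary algebra relating the threshold $\frac{\beta-1}{\beta}\log(a/b)$ on $\lambda$ to the conditions \eqref{eq.abqsmall} and \eqref{eq.abqbig} on $a,b$ in step (b), are the only points that need verification, and both are routine. One should also note, as in the discussion following Theorem~\ref{thm3.1}, that in case (iv) the hypotheses force $\tau(t)\to\infty$, so the delay is genuinely unbounded there; but this plays no role in the argument. This exhausts all $\lambda\in[0,\infty]$, so the four cases are complete.
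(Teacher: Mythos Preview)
Your proposal is correct and matches the paper's own approach: the paper explicitly states that Theorem~\ref{thm.unifymax} ``follows directly'' from Theorems~\ref{thm.odedecaymax}, \ref{thm.odedecay2max}, \ref{thm5.1q} and \ref{thm5.1} in the same way that Theorem~\ref{thm.unify} was assembled, using Lemma~\ref{lemma.sigtauasy} to pass from $\lambda$ to $q=1-e^{-\lambda}$ and the unified hypothesis \eqref{eq:galt3} to cover both \eqref{eq:galt} and \eqref{eq:galt2}. Your Karamata step for \eqref{eq:galt3}$\Rightarrow$\eqref{eq:galt2} and your algebra converting the $\lambda$--thresholds into \eqref{eq.abqsmall} and \eqref{eq.abqbig} are exactly the verifications the paper leaves implicit.
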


\section{Proof of General Results}

\subsection{Proof of Theorem~\ref{thm3.1}}
The proof in part (b) is almost identical, so we prove part (a) only. 
To prove part (a), let $\epsilon>0$ and define for all $t\geq -\bar{\tau}$ the function $x_{U,\epsilon}$ by
$x_{U,\epsilon}(t)=\epsilon+\max_{-\bar{\tau}\leq s\leq 0} \psi(s)=:M_\epsilon>0$. Then $x_{U,\epsilon}(t)>x(t)$
for all $t\in[-\bar{\tau},0]$. For $t>0$ we have
\[
x_{U,\epsilon}'(t)+ag(x_{U,\epsilon}(t))-bg(x_{U,\epsilon}(t-\tau(t)))
=(a-b)g(M_\epsilon)>0.
\]
Hence
\begin{align*}
x_{U,\epsilon}'(t)&>-ag(x_{U,\epsilon}(t))+bg(x_{U,\epsilon}(t-\tau(t))), \quad t>0\\
x_{U,\epsilon}(t)&>x(t), \quad t\in[-\bar{\tau},0].
\end{align*}
Therefore $x(t)<x_{U,\epsilon}(t)=M_\epsilon=\epsilon+\max_{-\bar{\tau}\leq s\leq 0} \psi(s)$ for all $t\geq -\bar{\tau}$. Letting $\epsilon\to0^+$
gives \eqref{eq.xbounded}. 

It is a consequence of \eqref{eq.xbounded} that there is $x^\ast\in[0,\infty)$ such that 
\[
x^\ast=\limsup_{t\to\infty} x(t).
\]
We suppose that $x^\ast>0$ and show that this leads to a contradiction, proving that $x(t)\to 0$ as $t\to\infty$. 
Since $x^\ast>0$, \eqref{eq:g3} implies that 
$g(x^\ast)>0$. Let $\epsilon_0>0$ be so small that 
\begin{equation} \label{eq.epsg}
-ag(x^\ast(1+2\epsilon))+b(1+2\epsilon)g(x^\ast)<0, \quad -a+b(1+2\epsilon)<0, \quad 0<\epsilon<\epsilon_0.
\end{equation}
The existence of such an $\epsilon_0>0$ follows from the continuity of $g$ and the fact that $a>b>0$ and $g(x^\ast)>0$. Since $\tau$ obeys 
\eqref{eq.tminustautoinfty}, and $g$ is non--decreasing and continuous, we have 
\[
\limsup_{t\to\infty} g(x(t-\tau(t)))=g(x^\ast).
\] 
Therefore for every $\epsilon>0$ which obeys \eqref{eq.epsg}, there exists $T(\epsilon)>0$ such that 
\begin{equation*}
g(x(t-\tau(t)))\leq (1+\epsilon)g(x^\ast), \quad x(t)\leq (1+\epsilon)x^\ast, \quad t\geq T(\epsilon). 
\end{equation*} 
Therefore we have 
\[
x'(t)\leq -ag(x(t))+b(1+\epsilon)g(x^\ast), \quad t\geq T(\epsilon), \quad x(T(\epsilon))\in[0,(1+\epsilon)x^\ast].
\]
Define $x_{+,\epsilon}$ by 
\begin{equation*}
x_{+,\epsilon}'(t)=-ag(x_{+,\epsilon}(t))+b(1+2\epsilon)g(x^\ast), \quad t\geq T(\epsilon); \quad x_{+,\epsilon}(T(\epsilon))=x^\ast(1+2\epsilon). 
\end{equation*}
Therefore $x(t)<x_{+,\epsilon}(t)$ for $t\geq T(\epsilon)$. By \eqref{eq.epsg} we have $x_{+,\epsilon}'(T(\epsilon))<0$. 
Define $G_\epsilon(x)=-ag(x)+b(1+2\epsilon)g(x^\ast)$ for $x\in[0,x^\ast(1+2\epsilon)]$. By the second statement in \eqref{eq.epsg} 
we have $G_\epsilon(x^\ast)<0$ and as $g$ is non--decreasing, $G_\epsilon$ is non--increasing. Since $G_\epsilon(0)=b(1+2\epsilon)g(x^\ast)>0$
there is a maximal $x_\ast(\epsilon)\in(0,x^\ast)$ such that $G_\epsilon(x)<0$ for all $x>x_\ast(\epsilon)$ and $G_\epsilon(x_\ast(\epsilon))=0$.
We have $x_{+,\epsilon}'(t)=G_\epsilon(x_{+,\epsilon}(t))$ for $t\geq T(\epsilon)$ and $x_{+,\epsilon}(T(\epsilon))=x^\ast(1+2\epsilon)>x^\ast>x_\ast(\epsilon)$ and so
$x_{+,\epsilon}$ is decreasing on $[T(\epsilon),\infty)$ and attains the limit $\lim_{t\to\infty} x_{+,\epsilon}(t)=x_\ast(\epsilon)$. Therefore
\[
x^\ast=\limsup_{t\to\infty} x(t)\leq \lim_{t\to\infty} x_{+,\epsilon}(t)=x_\ast(\epsilon)<x^\ast,
\] 
a contradiction. Hence we must have $x^\ast=0$, proving that $x(t)\to 0$ as $t\to\infty$ as required.

\subsection{Proof of Theorem~\ref{thm.odedecaygen}}
We first need to prove that
\begin{equation}  \label{eq.GxgtLambdat}
G(x(t))\geq \Lambda t, \quad t\geq1.
\end{equation}

Define $G_0$ by
\begin{equation} \label{def.G0}
G_0(x)=\int_x^{x(0)} \frac{1}{g(u)}\,du, \quad x>0.
\end{equation}
Note that $G_0$ is decreasing on $(0,\infty)$ is therefore invertible.
Since $x(t)>0$ for all $t\geq 0$, $b>0$ and $g(x(t-\tau(t)))>0$ for all $t\geq 0$ we have
\[
x'(t)\geq -ag(x(t)), \quad t>0.
\]
Hence
\[
-G_0(x(t))=\int_{x(0)}^{x(t)} \frac{1}{g(u)}\,du=\int_0^t \frac{x'(s)}{g(x(s))}\,ds \geq -at,\quad t\geq 0.
\]
Therefore
\begin{equation} \label{eq.Gxleqat}
G_0(x(t))\leq at, \quad t\geq 0.
\end{equation}
Note that there exists $c\in[-\infty,0)$ such that $G_0:(0,\infty)\to (-c,\infty)$, because $G_0(0)=\infty$, $G_0(x(0))=0$ and $G_0$ is decreasing.
Therefore $G_0^{-1}:(-c,\infty)\to (0,\infty)$. Hence we may define $\Gamma:[0,\infty)\to (0,\infty)$ by
\begin{equation}\label{def.Gamma}
\Gamma(x)=g(G_0^{-1}(x)).
\end{equation}
Therefore $\Gamma\in \text{RV}_\infty(-\gamma)$.

%

Since $a$, $b$, $q$ and $\gamma$ obey \eqref{eq.abqsmallgamma}, we may fix $\epsilon\in(0,1)$ so small that
\begin{equation} \label{eq.condabepsa}
a>b\frac{1}{1-\epsilon}(1-q-\epsilon)^{-\gamma}, \quad 1-q-\epsilon>0.
\end{equation}
Define $\eta(\epsilon)\in(0,1)$ so that
\begin{equation} \label{def.etaeps}
\eta(\epsilon)<\frac{1}{a(1-q-\epsilon)}\left(a-b\frac{1}{1-\epsilon}(1-q-\epsilon)^{-\gamma}\right).
\end{equation}
Therefore for the same $\epsilon\in(0,1)$, since $\Gamma\in \text{RV}_\infty(-\gamma)$, we have
\[
\lim_{x\to\infty} \frac{\Gamma((1-q-\epsilon)x)}{\Gamma(x)}=(1-q-\epsilon)^{-\gamma}.
\]
Therefore there exists $x_1(\epsilon)>0$ such that
\begin{equation} \label{eq.gammarvepsa}
\frac{\Gamma(x(1-q-\epsilon))}{\Gamma(x)}\leq \frac{1}{1-\epsilon}\left(\frac{1}{1-q-\epsilon}\right)^{\gamma}, \quad x\geq x_1(\epsilon).
\end{equation}
Next, as $G_0(x)\to\infty$ as $x\to 0^+$, and $g$ is increasing on $(0,\delta_1)$, there is $\delta(\epsilon)>0$ such that
\begin{equation} \label{eq.epsGdelxa}
\eta(\epsilon) G_0(\delta(\epsilon))>x_1(\epsilon), \quad \eta(\epsilon)(1-q-\epsilon)G_0(\delta(\epsilon))>G_0(\delta_1).
\end{equation}
Since $x(t)\to 0$ as $t\to\infty$ and $\tau(t)/t\to q$ as $\to\infty$, there is a $T_1(\epsilon)>0$ such that
\begin{equation} \label{eq.T1defa}
x(t)\leq \delta(\epsilon), \quad \tau(t)< (q+\epsilon) t, \quad t>T_1(\epsilon).
\end{equation}
By \eqref{eq.Gxleqat}, \eqref{eq.T1defa}, \eqref{eq.epsGdelxa} and using the fact that $G_0$ is decreasing, we have
\begin{equation} \label{eq.aT1x1epsa}
aT_1(\epsilon)\geq G_0(x(T_1(\epsilon)))\geq G_0(\delta(\epsilon))>\frac{x_1(\epsilon)}{\eta(\epsilon)}.
\end{equation}
Next define
\begin{equation} \label{def.lambdaepsa}
0<\lambda(\epsilon)=\frac{\eta(\epsilon)(1-q-\epsilon)G_0(\delta(\epsilon))}{T_1(\epsilon)}.
\end{equation}
Then by \eqref{eq.aT1x1epsa}, as $aT_1(\epsilon)\geq G_0(\delta(\epsilon))$, we have
\[
\lambda(\epsilon)=\eta(\epsilon)(1-q-\epsilon)\frac{G_0(\delta(\epsilon))}{T_1(\epsilon)}\leq a\eta(\epsilon)(1-q-\epsilon).
\]
By \eqref{def.etaeps}, we therefore have
\begin{equation} \label{eq.lambdaltabepsa}
\lambda(\epsilon)<a-b\frac{1}{1-\epsilon}(1-q-\epsilon)^{-\beta/(\beta-1)}.
\end{equation}
Define $x_{U,\epsilon}$ by
\begin{equation} \label{def.xUodea}
x_{U,\epsilon}(t)=G_0^{-1}(\lambda(\epsilon)t),\quad t\geq T_1(\epsilon).
\end{equation}
Since $t-\tau(t)\to\infty$, there exists $T_+(\epsilon)>T_1(\epsilon)$ such that
$T_+(\epsilon)=\sup\{t>T_1(\epsilon)\,:\, t-\tau(t)=T_1(\epsilon)\}$. Then we have
$t-\tau(t)\geq T_1(\epsilon)$ for all $t\geq T_+(\epsilon)$ and $T_+(\epsilon)-\tau(T_+(\epsilon))=T_1(\epsilon)$. Since
$T_+(\epsilon)>T_1(\epsilon)$, we have $\tau(T_+(\epsilon))<(q+\epsilon) T_+(\epsilon)$. Hence with $T_2(\epsilon):=T_1(\epsilon)/(1-q-\epsilon)$,
we have $T_2(\epsilon)>T_1(\epsilon)$ and
\[
T_1(\epsilon)=T_+(\epsilon)-\tau(T_+(\epsilon))>T_+(\epsilon)-(q+\epsilon) T_+(\epsilon).
\]
Therefore $T_2(\epsilon)=T_1(\epsilon)/(1-q-\epsilon)>T_+(\epsilon)$. Thus for $t>T_2(\epsilon)>T_+(\epsilon)$ we have $t-\tau(t)\geq T_1(\epsilon)$.

For $t\in[T_1(\epsilon), T_2(\epsilon)]$ we have that $x_{U,\epsilon}$ is decreasing, so by \eqref{def.xUodea} and \eqref{def.lambdaepsa} we have
\begin{align*}
x_{U,\epsilon}(t)&\geq x_{U,\epsilon}(T_2(\epsilon))=x_{U,\epsilon}(T_1(\epsilon)/(1-q-\epsilon))\\
&=G_0^{-1}(\lambda(\epsilon)T_1(\epsilon)/(1-q-\epsilon))\\
&=G_0^{-1}(\eta(\epsilon) G_0(\delta(\epsilon))).
\end{align*}
By \eqref{def.etaeps}, $\eta(\epsilon)\in(0,1)$, so we have $G_0(\delta(\epsilon))>\eta(\epsilon) G_0(\delta(\epsilon))$. Since $G_0^{-1}$ is decreasing, $\delta(\epsilon)=G_0^{-1}(G_0(\delta(\epsilon)))<G_0^{-1}(\eta(\epsilon) G_0(\delta(\epsilon)))$. Therefore for $t\in[T_1(\epsilon),T_2(\epsilon)]$ we have $x_{U,\epsilon}(t)>\delta(\epsilon)$. By \eqref{eq.T1defa} we have
\begin{equation} \label{eq.xUicodea}
x_{U,\epsilon}(t)>\delta(\epsilon)\geq x(t), \quad t\in[T_1(\epsilon),T_2(\epsilon)].
\end{equation}

Next we show that for $t\geq T_1(\epsilon)$ we have $x_{U,\epsilon}(t)<\delta_1$.
To see this, note that $x_{U,\epsilon}$ is decreasing on $[T_1(\epsilon),\infty)$, we have
\begin{align*}
x_{U,\epsilon}(t)&\leq x_{U,\epsilon}(T_1(\epsilon))\\
&=G_0^{-1}(\lambda(\epsilon)T_1(\epsilon))\\
&=G_0^{-1}(\eta(\epsilon)(1-q-\epsilon) G_0(\delta(\epsilon))).
\end{align*}
Since $G_0^{-1}$ is decreasing, the second member of \eqref{eq.epsGdelxa} yields $G_0^{-1}(\eta(\epsilon)(1-q-\epsilon)G_0(\delta(\epsilon)))<\delta_1$. Hence
\begin{equation} \label{eq.xUinmonotoneregiona}
x_{U,\epsilon}(t)<\delta_1, \quad t\geq T_1(\epsilon).
\end{equation}

For $t>T_2(\epsilon)>T_1(\epsilon)$, we have $\tau(t)<(q+\epsilon) t$ by \eqref{eq.T1defa}. Thus $t-\tau(t)>(1-q-\epsilon)t$. Since $x_{U,\epsilon}$
is decreasing on $[T_1(\epsilon),\infty)$, and for $t\geq T_2(\epsilon)$ we have  $t-\tau(t)>(1-q-\epsilon)t \geq (1-q-\epsilon)T_2(\epsilon)=T_1(\epsilon)$, we have $x_{U,\epsilon}(t-\tau(t))<x_{U,\epsilon}(t(1-q-\epsilon))\leq x_{U,\epsilon}(T_1(\epsilon))$.
By \eqref{eq.xUinmonotoneregiona} we have $x_{U,\epsilon}(t-\tau(t))<x_{U,\epsilon}(t(1-q-\epsilon))<\delta_1$ for $t\geq T_2(\epsilon)$. Since
$g$ is increasing on $(0,\delta_1)$ we have
\begin{equation} \label{eq.xUlosetaua}
g(x_{U,\epsilon}(t-\tau(t)))<g(x_{U,\epsilon}(t(1-q-\epsilon))), \quad t>T_2(\epsilon).
\end{equation}
For $t\geq T_2(\epsilon)$, we have $G_0(x_{U,\epsilon}(t))=\lambda(\epsilon)t$, so $G_0'(x_{U,\epsilon}(t))x_{U,\epsilon}'(t)=\lambda(\epsilon)$. Thus
\begin{equation} \label{eq.xUpra}
x_{U,\epsilon}'(t)=-\lambda(\epsilon) g(x_{U,\epsilon}(t)), \quad t>T_2(\epsilon).
\end{equation}
Therefore for $t\geq T_2(\epsilon)$ using \eqref{eq.xUpra}, \eqref{eq.xUlosetaua}, \eqref{def.xUodea} and \eqref{def.Gamma} in turn we have
\begin{align*}
\lefteqn{x_{U,\epsilon}'(t)+ag(x_{U,\epsilon}(t))-bg(x_{U,\epsilon}(t-\tau(t)))}\\
&=-\lambda(\epsilon)g(x_{U,\epsilon}(t))+ag(x_{U,\epsilon}(t))-bg(x_{U,\epsilon}(t-\tau(t)))\\
&=g(x_{U,\epsilon}(t))\left\{a-\lambda(\epsilon)-b\frac{g(x_{U,\epsilon}(t-\tau(t)))}{g(x_{U,\epsilon}(t))}\right\}\\
&>g(x_{U,\epsilon}(t))\left\{a-\lambda(\epsilon)-b\frac{g(x_{U,\epsilon}(t(1-q-\epsilon)))}{g(x_{U,\epsilon}(t))}\right\}\\
&=g(x_{U,\epsilon}(t))\left\{a-\lambda(\epsilon)-b\frac{g(G_0^{-1}(\lambda(\epsilon)t(1-q-\epsilon)))}{g(G_0^{-1}(\lambda(\epsilon)t))}\right\}\\
&=g(x_{U,\epsilon}(t))\left\{a-\lambda(\epsilon)-b\frac{\Gamma(\lambda(\epsilon)t(1-q-\epsilon))}{\Gamma(\lambda(\epsilon)t)}\right\}.
\end{align*}
Therefore
\begin{multline} \label{eq.xUdiffineq1a}
x_{U,\epsilon}'(t)+ag(x_{U,\epsilon}(t))-bg(x_{U,\epsilon}(t-\tau(t)))
\\>g(x_{U,\epsilon}(t))\left\{a-\lambda(\epsilon)-b\frac{\Gamma(\lambda(\epsilon)t(1-q-\epsilon))}{\Gamma(\lambda(\epsilon)t)}\right\}, \quad t\geq T_2(\epsilon).
\end{multline}
For $t\geq T_2(\epsilon)=T_1(\epsilon)/(1-q-\epsilon)$, we have by \eqref{def.lambdaepsa}, \eqref{eq.epsGdelxa}
\[
\lambda(\epsilon)t \geq \lambda(\epsilon)\frac{T_1(\epsilon)}{1-q-\epsilon}=\eta(\epsilon) G_0(\delta(\epsilon))>x_1(\epsilon).
\]
Therefore by \eqref{eq.gammarvepsa} we have
\begin{equation} \label{eq.gammarvepsta}
\frac{\Gamma(\lambda(\epsilon)t(1-q-\epsilon))}{\Gamma(\lambda(\epsilon)t)} \leq \frac{1}{1-\epsilon}\left(\frac{1}{1-q-\epsilon}\right)^{\gamma},
\quad t\geq T_2(\epsilon).
\end{equation}
By \eqref{eq.xUdiffineq1a} and \eqref{eq.gammarvepsta} we have for $t\geq T_2(\epsilon)$
\[
x_{U,\epsilon}'(t)+ag(x_{U,\epsilon}(t))-bg(x_{U,\epsilon}(t-\tau(t)))
>g(x_{U,\epsilon}(t))\left\{a-\lambda(\epsilon)-b\frac{1}{1-\epsilon}\left(\frac{1}{1-q-\epsilon}\right)^{\gamma}
\right\}.
\]
By \eqref{eq.lambdaltabepsa} we have
\begin{equation} \label{eq.xUdiffineq2a}
x_{U,\epsilon}'(t)>-ag(x_{U,\epsilon}(t))+bg(x_{U,\epsilon}(t-\tau(t))), \quad t>T_2(\epsilon).
\end{equation}
By \eqref{eq.xUicodea} and \eqref{eq.xUdiffineq2a}, by the comparison principle we have $x(t)<x_{U,\epsilon}(t)$ for all $t\geq T_1(\epsilon)$.
Therefore $x(t)<G_0^{-1}(\lambda(\epsilon)t)$ for $t\geq T_1(\epsilon)$. Hence $G_0(x(t))>\lambda(\epsilon)t$ for $t\geq T_1(\epsilon)$.
Since $\epsilon\in(0,1)$ obeying \eqref{eq.condabepsa} is fixed we have \eqref{eq.GxgtLambdata}.
Therefore \eqref{eq.odedecay2} holds, once we remember that $G$ also obeys \eqref{eq.Gxleqat}, together with the fact that
$G_0(x)/G(x)\to 1$ as $x\to 0^+$.

\subsection{Proof of Theorem~\ref{thm3.1superhypq}}
Let $\lambda=\log(1/(1-q))>0$. Define $\sigma(t)=\lambda(t+\bar{\tau}+1)$ for $t\geq -\bar{\tau}$.
Since $\tau(t)/t\to q\in(0,1)$ as $t\to\infty$, we have that $\sigma$ obeys \eqref{eq.t1}, \eqref{eq.t2} and \eqref{eq.t3}.

We first get an upper bound. By \eqref{eq:g3}, we note that there is a $\delta_0>0$ such that $g'(x)>0$ for all $x\in(0,\delta_0)$. Hence $g^{-1}:[0,g(\delta_0)]\to [0,\delta_0]$. Clearly there is a $\delta_1>0$ such that $\delta_1<g(\delta_0)$, or $g^{-1}(\delta_1)<\delta_0$. Then for $x\in(0,\delta_1)$, we may define $\Gamma_1:(0,\delta_1)\to(0,\infty)$ by
\[
\Gamma_1(x)=g'(g^{-1}(x)), \quad x\in[0,\delta_1).
\]
By \eqref{eq:g4alt6}, we have $\Gamma_1\in \text{RV}_0(1/\gamma)$.

Since
\[
a<b\left(\frac{1}{1-q}\right)^{\gamma}, \quad q=1-e^{-\lambda},  
\]
we have
\[
\log(a/b)<\lambda \gamma.
\]
Therefore we can choose $\epsilon\in(0,1)$ so small that
\begin{equation}    \label{eq.epssmallq}
1-\frac{1}{\gamma}\log(a/b) \frac{1}{\lambda}(1+\epsilon)^{3/2}>0.
\end{equation}
Since $\Gamma_1\in \text{RV}_0(1/\gamma)$, there exists $0<\delta_2(\epsilon)<\delta_0$ such that $g(\delta_2(\epsilon)/2)<1$ and
\[
x^{1/\gamma\cdot (1+\epsilon)^2}\leq  \Gamma_1(x) \leq x^{1/\gamma \cdot (1-\epsilon)^2}, \quad x\in(0,g(\delta_2(\epsilon)/2)].
\]
Let $\delta_3>0$ be so small that $g(\delta_3/2)<1$. Now, let $\delta(\epsilon)=\min(\delta_0,\delta_1,\delta_2(\epsilon),\delta_3)$. Since
$\delta(\epsilon)\leq \delta_2$, we have $\delta(\epsilon)/2\leq \delta_2(\epsilon)/2<\delta_0/2<\delta_0$. Therefore $g(\delta(\epsilon)/2)\leq g(\delta_2(\epsilon)/2)$ and so we have
\begin{align}
\label{eq.estdelta2q1}
0<B_1(\epsilon)&:=\frac{1}{\gamma}(1-\epsilon)^2 \leq
\frac{\log \Gamma_1(x)}{\log x}
\\
&\qquad\qquad\leq \frac{1}{\gamma}(1+\epsilon)^2=:B_2(\epsilon),\quad x\in(0,g(\delta(\epsilon)/2)], \nonumber\\
\label{eq.estdelta3q}
g(\delta(\epsilon)/2)&<1, \quad g'(x)>0 \quad x\in(0,\delta(\epsilon)).
\end{align}

Since $x(t)\to 0$ as $t\to\infty$, there exists $T_0(\epsilon)>0$ such that $x(t)\leq \delta(\epsilon)/2$ for all $t\geq T_0(\epsilon)$.

By \eqref{eq.t1} and \eqref{eq.t3} for each $\epsilon\in(0,1)$, there is a
$T_1(\epsilon)>0$ such that
\begin{equation} \label{eq.defT1q}
\int_{t-\tau(t)}^t \frac{1}{\sigma(s)}\,ds \leq 1+\epsilon, \quad
t\geq T_1(\epsilon).
\end{equation}

Now let $0<c_2<c_2(\epsilon)$, where
$c_2(\epsilon)\in(0,\log(a/b))$ is the solution of
$g_\epsilon(c)=0$, where the function $g_\epsilon$ is defined by
\[
g_\epsilon(x) = -x\epsilon + a - be^{x(1+\epsilon)}, \quad x\geq
0.
\]
The existence and uniqueness of the solution are guaranteed by the
fact that $g_\epsilon(0)>0$, $g_{\epsilon}((1+\epsilon)^{-1}\log(a/b))<0$ and
$g_\epsilon$ is continuous and decreasing on $(0,\infty)$. Note
moreover that as $\epsilon\downarrow 0$, we have that
$c_2(\epsilon)\to \log(a/b)$. A further consequence of the construction is that
\[
c_2<\frac{1}{1+\epsilon}\log(a/b).
\]

Since $\sigma(t)/t\to\lambda$ as $t\to\infty$, we have
\[
\lim_{t\to\infty} \frac{\int_0^t \frac{1}{\sigma(s)}\,ds}{\log \sigma(t)}=\frac{1}{\lambda}.
\]
Therefore there exists $\theta_1(\epsilon)>0$ such that
\[
\sigma(t)\geq e, \quad
\frac{\int_0^t \frac{1}{\sigma(s)}\,ds}{\log \sigma(t)}\leq \frac{1}{\lambda}(1+\epsilon)^{1/2}, \quad t\geq \theta_1(\epsilon).
\]
Therefore for $t\geq \theta_1(\epsilon)$ we have
\begin{align*}
\lefteqn{\log \sigma(t) - \frac{1}{\gamma}\log(a/b)(1+\epsilon)\int_0^t \frac{1}{\sigma(s)}\,ds}\\
&\geq \frac{1}{1/\lambda (1+\epsilon)^{1/2}} \int_0^t \frac{1}{\sigma(s)}\,ds - \frac{1}{\gamma}\log(a/b)(1+\epsilon)\int_0^t \frac{1}{\sigma(s)}\,ds\\
&=\frac{\lambda}{(1+\epsilon)^{1/2}}\int_0^t \frac{1}{\sigma(s)}\,ds \left\{1-\frac{1}{\lambda}\frac{1}{\gamma}\log(a/b)(1+\epsilon)^{3/2}\right\}.
\end{align*}
The quantity in curly brackets is positive by \eqref{eq.epssmallq}. Therefore as $\int_0^t 1/\sigma(s)\,ds\to\infty$ as $t\to\infty$, for every
$\epsilon\in(0,1)$ obeying \eqref{eq.epssmallq}, there exists $T_2(\epsilon)>0$ such that
\begin{multline} \label{eq.defT2q}
\log \sigma(t)  -\frac{1}{\gamma}\log(a/b)(1+\epsilon)\int_0^t \frac{1}{\sigma(s)}\,ds + B_2(\epsilon)\log g(\delta(\epsilon)/2)\\
> \log(1/\epsilon), \quad t\geq T_2(\epsilon).
\end{multline}

Since $x(t)\to 0$ as $t\to\infty$, there exists a $T_3(\epsilon)>0$ such that
\begin{equation} \label{eq.defT3q}
g(x(t))\leq g(\delta(\epsilon)/2)e^{-2c_2}, \quad t\geq T_3(\epsilon). 
\end{equation}
Let $T_4(\epsilon)=\max(T_0(\epsilon),T_1(\epsilon),T_2(\epsilon),T_3(\epsilon))$.
Since $t-\tau(t)\to\infty$ as $t\to\infty$, there exists $T_5(\epsilon)>T_4(\epsilon)$ such that $T_5(\epsilon)=\sup\{t>T_4(\epsilon):t-\tau(t)=T_4(\epsilon)\}$. Also $t-\tau(t)\geq T_4(\epsilon)$ for $t\geq T_5(\epsilon)$.

Define $x_2(\epsilon)$ by
\begin{equation} \label{def.x2q}
x_2(\epsilon)= g(\delta(\epsilon)/2)e^{c_2\int_0^{T_4(\epsilon)} \frac{1}{\sigma(s)}\,ds}.
\end{equation}
Then for $t\geq T_4(\epsilon)$ we have
\[
0<x_2(\epsilon)e^{-c_2\int_0^t \frac{1}{\sigma(s)}\,ds}= g(\delta(\epsilon)/2)e^{c_2\int_0^{T_4(\epsilon)} \frac{1}{\sigma(s)}\,ds}e^{-c_2\int_0^t \frac{1}{\sigma(s)}\,ds} \leq g(\delta(\epsilon)/2)<1.
\]
Since $\delta_0\leq \delta(\epsilon)$, we have that $g^{-1}:[0,g(\delta(\epsilon))]\to[0,\delta(\epsilon)]$. Therefore, we can define
\begin{equation} \label{def.xUlongmemq}
x_{U,\epsilon}(t)=g^{-1}\left(x_2(\epsilon)e^{-c_2\int_0^t \frac{1}{\sigma(s)}\,ds}\right), \quad t\geq T_4(\epsilon).
\end{equation}
Therefore we have
\begin{equation*}
x_{U,\epsilon}(t)\leq \delta(\epsilon)/2, \quad t\geq T_4(\epsilon); \quad g(x_{U,\epsilon}(t))<1, \quad t\geq T_4(\epsilon).
\end{equation*}
Next for $t\in[T_4(\epsilon),T_5(\epsilon)]$, noting that $T_5(\epsilon)-\tau(T_5(\epsilon))=T_4(\epsilon)$ and $T_4(\epsilon)\geq T_3(\epsilon)$, we have by \eqref{eq.defT3q}, \eqref{def.x2q} that
\begin{align*}
g(x(t))e^{c_2\int_0^t \frac{1}{\sigma(u)}\,du}
&\leq  g(x(t))e^{c_2\int_0^{T_5(\epsilon)} \frac{1}{\sigma(u)}\,du}\\
&\leq g(\delta(\epsilon)/2)e^{-2c_2} e^{c_2\int_0^{T_5(\epsilon)} \frac{1}{\sigma(u)}\,du}\\
&=g(\delta(\epsilon)/2)e^{-2c_2} e^{c_2\int_{0}^{T_5(\epsilon)-\tau(T_5(\epsilon))}\frac{1}{\sigma(u)}\,du}\cdot e^{c_2\int_{T_5(\epsilon)-\tau(T_5(\epsilon))}^{T_5(\epsilon)} \frac{1}{\sigma(u)}\,du}\\
&=g(\delta(\epsilon)/2)e^{-2c_2} e^{c_2\int_{0}^{T_4(\epsilon)}\frac{1}{\sigma(u)}\,du}\cdot e^{c_2\int_{T_5(\epsilon)-\tau(T_5(\epsilon))}^{T_5(\epsilon)} \frac{1}{\sigma(u)}\,du}\\
&=x_2(\epsilon) e^{-2c_2} \cdot e^{c_2\int_{T_5(\epsilon)-\tau(T_5(\epsilon))}^{T_5(\epsilon)} \frac{1}{\sigma(u)}\,du}.
\end{align*}
Since $T_5(\epsilon)> T_4(\epsilon)\geq T_1(\epsilon)$, by \eqref{eq.defT1q} for $t\in[T_4(\epsilon),T_5(\epsilon)]$ we have
\[
g(x(t))e^{c_2\int_0^t \frac{1}{\sigma(u)}\,du}\leq
x_2(\epsilon) e^{-2c_2} \cdot e^{c_2(1+\epsilon)}= x_2(\epsilon) e^{-c_2(1-\epsilon)}<x_2(\epsilon).
\]
Hence for $t\in[T_4(\epsilon),T_5(\epsilon)]$ we have
\[
g(x(t)) < x_2(\epsilon)e^{-c_2\int_0^t \frac{1}{\sigma(u)}\,du}=g(x_{U,\epsilon}(t))\leq g(\delta(\epsilon)/2)<g(\delta(\epsilon))\leq g(\delta_0).
\]
Therefore we have
\begin{equation} \label{eq.xUlongmemicq}
x(t)<x_{U,\epsilon}(t), \quad t\in[T_4(\epsilon),T_5(\epsilon)].
\end{equation}

Since $g\in C^1(0,\delta_0)$, and $\delta_1<g(\delta_0)$,
we have that $g^{-1}\in C^1(0,\delta_1)$. Now for $t\geq T_4(\epsilon)$ we have
$g(x_{U,\epsilon}(t))\leq \delta(\epsilon)/2<\delta(\epsilon)\leq \delta_1$, so therefore $x_{U,\epsilon}\in C^1[T_5(\epsilon),\infty)$
and
\[
x_{U,\epsilon}'(t)=-\frac{c_2}{\sigma(t)g'(x_{U,\epsilon}(t))}x_2(\epsilon)e^{-c_2\int_0^t \frac{1}{\sigma(s)}\,ds}, \quad t\geq T_5(\epsilon).
\]
For $t>T_5(\varepsilon)$, we have that $t-\tau(t)\geq T_4(\epsilon)$. Therefore we get
\begin{multline} \label{eq.xUdiffeqlongmem1q}
x_{U,\epsilon}'(t) + ag(x_{U,\epsilon}(t)) -
bg(x_{U,\epsilon}(t-\tau(t)))
\\=x_2(\epsilon)e^{-c_2\int_{0}^t \frac{1}{\sigma(s)}\,ds}\left(
-\frac{c_2}{g'(x_{U,\epsilon}(t))\sigma(t)} + a - b e^{c_2\int^{t}_{t-\tau(t)}
\frac{1}{\sigma(s)}\,ds} \right), \quad t>T_5(\epsilon).
\end{multline}
Next we estimate $g'(x_{U,\epsilon}(t))\sigma(t)$ for $t\geq T_5(\epsilon)$. Since this quantity is positive, we may consider $\log(g'(x_{U,\epsilon}(t))\sigma(t))$.
Since \eqref{eq.estdelta2q1} holds 
and $g$ is increasing on $(0,\delta(\epsilon)/2)$ we have
\[
B_1(\epsilon)\leq
\frac{\log \Gamma_1(g(x))}{\log g(x)} \leq B_2(\epsilon), \quad x\in(0,\delta(\epsilon)/2].
\]
Since $t\geq T_5(\epsilon)>T_4(\epsilon)$, we have $x_{U,\epsilon}(t)\leq \delta(\epsilon)/2$. Therefore
\[
0<B_1(\epsilon)\leq
\frac{\log \Gamma_1(g(x_{U,\epsilon}(t)))}{\log g(x_{U,\epsilon}(t))} \leq B_2(\epsilon),\quad t\geq T_5(\epsilon).
\]
For $t\geq T_5(\epsilon)>T_4(\epsilon)$, we have that $g(x_{U,\epsilon}(t))\leq g(\delta/2)<1$, so $\log g(x_{U,\epsilon}(t))<0$.
Hence
\begin{equation} \label{eq.B2loggammaq}
B_1(\epsilon)\log g(x_{U,\epsilon}(t))\geq \frac{\log \Gamma_1(g(x_{U,\epsilon}(t)))}{\log g(x_{U,\epsilon}(t))}\log g(x_{U,\epsilon}(t))\geq B_2(\epsilon)\log g(x_{U,\epsilon}(t)).
\end{equation}
Hence for $t\geq T_5(\epsilon)>T_4(\epsilon)$, by using \eqref{eq.B2loggammaq}, \eqref{def.xUlongmemq}, and \eqref{def.x2q}  in turn
we get
\begin{align*}
\log(g'(x_{U,\epsilon}(t))\sigma(t))
&=\log \sigma(t) + \frac{\log \Gamma_1(g(x_{U,\epsilon}(t)))}{\log g(x_{U,\epsilon}(t))}\log g(x_{U,\epsilon}(t))\\
&\geq \log \sigma(t) + B_2(\epsilon)\log g(x_{U,\epsilon}(t))\\
&= \log \sigma(t) + B_2(\epsilon)\left( \log x_2(\epsilon) -c_2\int_0^t \frac{1}{\sigma(s)}\,ds\right)\\
&=\log \sigma(t) + B_2(\epsilon)\log g(\delta(\epsilon)/2) -B_2(\epsilon) c_2\int_{T_4(\epsilon)}^t \frac{1}{\sigma(s)}\,ds\\
&>\log \sigma(t) + B_2(\epsilon)\log g(\delta(\epsilon)/2) -B_2(\epsilon) c_2\int_0^t \frac{1}{\sigma(s)}\,ds\\
&=\log \sigma(t) + B_2(\epsilon)\log g(\delta(\epsilon)/2) \\
&\qquad\qquad-\frac{1}{\gamma}(1+\epsilon)^2 c_2\int_0^t \frac{1}{\sigma(s)}\,ds.
\end{align*}
Since $c_2<(1+\epsilon)^{-1}\log(a/b)$, we have
\[
\frac{1}{\gamma}(1+\epsilon)^2 c_2\int_0^t \frac{1}{\sigma(s)}\,ds
< \frac{1}{\gamma} (1+\epsilon) \log(a/b) \int_0^t \frac{1}{\sigma(s)}\,ds.
\]
Hence for $t\geq T_5(\epsilon)>T_4(\epsilon)$ we have
\[
\log(g'(x_{U,\epsilon}(t))\sigma(t))
>\log \sigma(t) + B_2(\epsilon)\log g(\delta(\epsilon)/2) -
\frac{1}{\gamma}(1+\epsilon) \log(a/b) \int_0^t \frac{1}{\sigma(s)}\,ds.
\]
Therefore, by \eqref{eq.defT2q}, we get
\begin{equation} \label{eq.gsigq}
g'(x_{U,\epsilon}(t))\sigma(t)>1/\epsilon, \quad t\geq T_5(\epsilon).
\end{equation}
Since $t>T_5(\epsilon)$ by inserting \eqref{eq.gsigq} and \eqref{eq.defT1q} into \eqref{eq.xUdiffeqlongmem1q}
\[
x_{U,\epsilon}'(t) + ag(x_{U,\epsilon}(t)) -
bg(x_{U,\epsilon}(t-\tau(t)))
>x_2(\epsilon)e^{-c_2\int_{0}^t \frac{1}{\sigma(s)}\,ds}\left(
-\frac{c_2}{1/\epsilon} + a - b e^{c_2(1+\epsilon)} \right).
\]
The quantity in brackets is nothing other than $g_\epsilon(c_2)>0$. Thus we have that
\begin{align*}
x_{U,\epsilon}'(t)&>-ag(x_{U,\epsilon}(t)) + bg(x_{U,\epsilon}(t-\tau(t))), \quad t>T_5(\epsilon), \\
x_{U,\epsilon}(t)&> x(t)>0, \quad
t\in[T_4(\epsilon),T_5(\epsilon)].
\end{align*}
Therefore $x(t)<x_{U,\epsilon}(t)$ for all $t\geq T_4(\epsilon)$. Hence
\[
g(x(t)) < x_2(\epsilon) e^{-c_2\int_0^t \frac{1}{\sigma(s)}\,ds},
\quad t\geq T_4(\epsilon).
\]
Thus
\[
\limsup_{t\to\infty} \frac{\log g(x(t))}{\int_0^t
\frac{1}{\sigma(s)}\,ds} \leq -c_2.
\]
Letting $c_2\uparrow c_2(\epsilon)$, we get
\[
\limsup_{t\to\infty} \frac{\log g(x(t))}{\int_0^t
\frac{1}{\sigma(s)}\,ds} \leq -c_2(\epsilon).
\]
Finally, because letting $\epsilon\downarrow0$ yields
$c_2(\epsilon)\to\log (a/b)$, by taking the limit as
$\epsilon\to0$, we get
\begin{equation} \label{eq.b3}
\limsup_{t\to\infty} \frac{\log g(x(t))}{\int_0^t
\frac{1}{\sigma(s)}\,ds} \leq -\log\left( \frac{a}{b}\right).
\end{equation}

We now determine a lower bound for $x$.
By \eqref{eq.t1} and \eqref{eq.t3}, for each $\epsilon\in(0,1)$ there is $T_6(\epsilon)>0$ such that
\[
\int_{t-\tau(t)}^t \frac{1}{\sigma(s)}\,ds  \geq 1-\epsilon, \quad
t\geq T_6(\epsilon).
\]
Also, let $c_1>c_1(\epsilon)=(1-\epsilon)^{-1} \log(a/b)>0$. Note that the definition of $T_0$ gives $x(t)\leq \delta(\epsilon)/2$ for all $t\geq T_0$. Define $T_7(\epsilon)=\max(T_0,T_6(\epsilon))$. Since $t-\tau(t)\to\infty$ as $t\to\infty$ we have that there is $T_8(\epsilon)>T_7(\epsilon)$
such that $T_8(\epsilon)=\sup\{t>T_7(\epsilon):t-\tau(t)=T_7(\epsilon)\}$. Then $t-\tau(t)\geq T_7(\epsilon)$ for all $t\geq T_8(\epsilon)$.
Next define
\[
x_1(\epsilon) = \frac{1}{2}\min\left\{\min_{T_7(\epsilon)\leq s\leq T_8(\epsilon)} g(x(s))e^{c_1\int_0^s\frac{1}{\sigma(u)}\,du},g(\delta(\epsilon)/2)e^{c_1\int_{0}^{T_7(\epsilon)} \frac{1}{\sigma(u)}\,du}\right\}
\]
so that
\[
x_1(\epsilon)<\min_{T_7(\epsilon)\leq s\leq T_8(\epsilon)} g(x(s))e^{c_1\int_0^s\frac{1}{\sigma(u)}\,du},
\quad x_1(\epsilon)<g(\delta(\epsilon)/2)e^{c_1\int_0^{T_7(\epsilon)} \frac{1}{\sigma(u)}\,du}.
\]
For $t\geq T_7(\epsilon)$ we have
\[
0<x_1(\epsilon)e^{-c_1\int_0^t \frac{1}{\sigma(s)}\,ds}\leq x_1(\epsilon)e^{-c_1\int_0^{T_7(\epsilon)} \frac{1}{\sigma(s)}\,ds}
<
g(\delta(\epsilon)/2)<g(\delta(\epsilon)).
\]
Since $g$ is increasing on $(0,\delta(\epsilon))$, we may define $g^{-1}:[0,g(\delta(\epsilon))]\to[0,\delta(\epsilon)]$ and therefore the function $x_{L,\epsilon}$ given by
\[
x_{L,\epsilon}(t) = g^{-1}\left(x_1(\epsilon)e^{-c_1\int_0^t
\frac{1}{\sigma(s)}\,ds}\right), \quad t\geq T_7(\epsilon)
\]
is well--defined.
Then $x_{L,\epsilon}(t)<\delta$ for all $t\geq T_7(\epsilon)$ and
\[
g(x_{L,\epsilon}(t)) = x_1(\epsilon)e^{-c_1\int_0^t
\frac{1}{\sigma(s)}\,ds}, \quad t\geq T_7(\epsilon).
\]
Then for $t\in[T_7(\epsilon),T_8(\epsilon)]$ we have
\begin{align*}
g(x_{L,\epsilon}(t))e^{c_1\int_0^t \frac{1}{\sigma(s)}\,ds}
&= x_1(\epsilon)\\
&<\min_{T_7(\epsilon)\leq s\leq T_8(\epsilon)} g(x(s))e^{c_1\int_0^s\frac{1}{\sigma(u)}\,du}\\
&\leq g(x(t))e^{c_1\int_0^t\frac{1}{\sigma(u)}\,du}.
\end{align*}
Thus $g(x_{L,\epsilon}(t))<g(x(t))$ for $t\in[T_7(\epsilon),T_8(\epsilon)]$, so therefore $x_{L,\epsilon}(t)<x(t)$ for
$t\in[T_7(\epsilon),T_8(\epsilon)]$. To see this, suppose to the contrary that there exists $t_1\in[T_7(\epsilon),T_8(\epsilon)]$
such that $x_{L,\epsilon}(t_1)\geq x(t_1)$. Since $x_{L,\epsilon}(t_1)<\delta(\epsilon)$ we have  $g(x(t_1))\leq g(x_{L,\epsilon}(t_1))<g(\delta(\epsilon))$.
Since we must also have $g(x_{L,\epsilon}(t_1))<g(x(t_1))$ there is a contradiction; hence $x_{L,\epsilon}(t)<x(t)$ for
$t\in[T_7(\epsilon),T_8(\epsilon)]$.

Since $g$ is in $C^1(0,\delta(\epsilon))$ and $\sigma$ is continuous $x_{L,\epsilon}$ is in $C^1(T_7(\epsilon),\infty)$ and moreover
\[
g'(x_{L,\epsilon}(t))x_{L,\epsilon}'(t)=-\frac{c_1}{\sigma(t)}x_1(\epsilon)e^{-c_1\int_0^t
\frac{1}{\sigma(s)}\,ds}.
\]
Since $x_{L,\epsilon}(t)\in(0,\delta(\epsilon))$ we have $x_{L,\epsilon}'(t)<0$ for all $t>T_7(\epsilon)$. Moreover, because $t-\tau(t)\geq T_7(\epsilon)$ for all $t>T_8(\epsilon)$, we have
\begin{multline*}
x_{L,\epsilon}'(t) + ag(x_{L,\epsilon}(t)) -
bg(x_{L,\epsilon}(t-\tau(t))) \\
=x_1(\epsilon)e^{-c_1\int_0^t
\frac{1}{\sigma(s)}\,ds}\biggl(
-\frac{1}{g'(x_{L,\epsilon}(t))}\frac{c_1}{\sigma(t)}+a
-be^{c_1\int_{t-\tau(t)}^t
\frac{1}{\sigma(s)}\,ds}\biggr).
\end{multline*}
For $t\geq T_8(\epsilon)\geq T_6(\epsilon)$, we have
\[
e^{c_1\int_{t-\tau(t)}^t \frac{1}{\sigma(s)}\,ds}\geq
e^{c_1(1-\epsilon)}.
\]
Thus for $t\geq T_8(\epsilon)$, as $g'(x_{L,\epsilon}(t))>0$ and $\sigma(t)>0$
\[
-\frac{1}{g'(x_{L,\epsilon}(t))}\frac{c_1}{\sigma(t)} + a - b e^{c_1\int^{t}_{t-\tau(t)}
\frac{1}{\sigma(s)}\,ds} < a-be^{c_1(1-\epsilon)}<0.
\]
Thus
\begin{gather*}
x_{L,\epsilon}'(t) < -ag(x_{L,\epsilon}(t)) +
bg(x_{L,\epsilon}(t-\tau(t))), \quad t>T_8(\epsilon), \\
x_{L,\epsilon}(t)<x(t), \quad t\in [T_7(\epsilon),T_8(\epsilon)].
\end{gather*}
As $g$ is increasing on $(0,\delta(\epsilon))$ and $x(t-\tau(t))$ and $x_{U,\epsilon}(t-\tau(t))$ are both in $(0,\delta(\epsilon))$ for $t\geq T_8(\epsilon)$, a standard comparison argument now shows that $x(t)>x_{L,\epsilon}(t)$ for all $t\geq T_7(\epsilon)$, which gives
\[
g(x(t)) > g(x_{L,\epsilon}(t))=x_1(\epsilon)e^{-c_1\int_0^t \frac{1}{\sigma(s)}\,ds},
\quad t\geq T_7(\epsilon).
\]
Since $\int_0^t \sigma(s)^{-1}\,ds \to\infty$ as $t\to\infty$,
\[
\liminf_{t\to\infty} \frac{\log g(x(t))}{\int_0^t
\frac{1}{\sigma(s)}\,ds} \geq -c_1.
\]
Hence, letting $c_1\uparrow c_1(\epsilon)$, we get
\[
\liminf_{t\to\infty} \frac{\log g(x(t))}{\int_0^t
\frac{1}{\sigma(s)}\,ds} \geq -\frac{1}{1-\epsilon}
\log\left(\frac{a}{b}\right).
\]
Letting $\epsilon\downarrow0$ yields
\begin{equation}  \label{eq.a3}
\liminf_{t\to\infty}\frac{\log g(x(t))}{\int_0^t
\frac{1}{\sigma(s)}\,ds} \geq -\log\left(\frac{a}{b}\right).
\end{equation}
Combining \eqref{eq.a3} and \eqref{eq.b3} gives
\[
\lim_{t\to\infty}\frac{\log g(x(t))}{\int_0^t
\frac{1}{\sigma(s)}\,ds} = -\log\left(\frac{a}{b}\right),
\]

Finally, because
\[
\lim_{t\to\infty} \frac{\int_0^t \frac{1}{\sigma(s)}\,ds}{\log t}=\frac{1}{\lambda}
\]
we have
\[
\lim_{t\to\infty} \frac{\log g(x(t))}{\log t}
=
\lim_{t\to\infty} \frac {\log g(x(t))}{\int_0^t \frac{1}{\sigma(s)}\,ds}\cdot\frac{\int_0^t \frac{1}{\sigma(s)}\,ds}{\log t}
=
-\log(a/b) \cdot \lambda.
\]
Since $\lambda=\log(1/(1-q))$ we have
\[
\lim_{t\to\infty} \frac{\log g(x(t))}{\log t}=-\frac{1}{\log(1/(1-q))}\log\left(\frac{a}{b}\right),
\]
as required.

\subsection{Proof of Theorem~\ref{thm3.1gen}}
We first need to prove a preliminary lemma.
\begin{lemma} \label{lemma.intlogsigma0}
Suppose that $\sigma$ obeys \eqref{eq.t1}, \eqref{eq.t4}. Then
\begin{equation} \label{eq.intlogsigma0}
\lim_{t\to\infty} \frac{\int_0^t \frac{1}{\sigma(s)}\,ds}{\log \sigma(t)}=0.
\end{equation}
\end{lemma}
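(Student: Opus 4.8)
The plan is to establish \eqref{eq.intlogsigma0} by a direct two-sided comparison with $\log t$. Hypothesis \eqref{eq.t4}, namely $\sigma(t)/t\to\infty$, will be used in two complementary ways: once to show that $\int_0^t\sigma(s)^{-1}\,ds$ grows no faster than a small multiple of $\log t$, and once to show that $\log\sigma(t)$ grows at least as fast as $\log t$. Dividing the two estimates then pins the ratio between $0$ and an arbitrarily small $\epsilon$.

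In detail, I would fix $\epsilon\in(0,1)$ and invoke \eqref{eq.t4} to choose $T_1(\epsilon)>1$ with $\sigma(s)\ge s/\epsilon$ for all $s\ge T_1(\epsilon)$. I then split $\int_0^t\sigma(s)^{-1}\,ds$ into the fixed contribution over $[0,T_1(\epsilon)]$ — a finite constant $C(\epsilon)$, the integral $\int_0^t\sigma(s)^{-1}\,ds$ being understood finite as it is everywhere it appears in this paper — plus $\int_{T_1(\epsilon)}^t\sigma(s)^{-1}\,ds\le\epsilon\int_{T_1(\epsilon)}^t s^{-1}\,ds=\epsilon\log(t/T_1(\epsilon))$. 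On the denominator side, for $t\ge T_1(\epsilon)$ the same lower bound gives $\sigma(t)\ge t/\epsilon\ge t$ (using $\epsilon<1$), hence $\log\sigma(t)\ge\log t>0$, and moreover $\log\sigma(t)\to\infty$ because $\sigma(t)\to\infty$ by \eqref{eq.t4}. Assembling these, for $t\ge T_1(\epsilon)$,
\[
0\le\frac{\int_0^t\sigma(s)^{-1}\,ds}{\log\sigma(t)}\le\frac{C(\epsilon)-\epsilon\log T_1(\epsilon)}{\log t}+\epsilon ,
\]
so letting $t\to\infty$ yields $\limsup_{t\to\infty}\int_0^t\sigma(s)^{-1}\,ds/\log\sigma(t)\le\epsilon$; since $\epsilon\in(0,1)$ is arbitrary and the ratio is non-negative, the limit is $0$.

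There is essentially no serious obstacle here: the one point to keep straight is that $\log\sigma(t)$ must be eventually positive and tend to infinity for the division and for the vanishing of the error term to be legitimate, and this is supplied directly by the lower bound $\sigma(t)\ge t/\epsilon$ coming from \eqref{eq.t4}. Everything else is a routine integral comparison with $1/s$.
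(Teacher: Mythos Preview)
Your argument is correct and follows essentially the same route as the paper's proof: both split the integral at a threshold $T$ chosen via \eqref{eq.t4}, bound the tail by $\epsilon\log t$ (the paper writes $M^{-1}\log t$ with $M=1/\epsilon$), use $\log\sigma(t)\ge\log t$ on the denominator, and then send $\epsilon\downarrow 0$. The paper additionally disposes of the integrable case $1/\sigma\in L^1(0,\infty)$ separately, but your estimate already covers it; one cosmetic point is that your displayed inequality with $\log t$ in the first fraction's denominator tacitly assumes $C(\epsilon)-\epsilon\log T_1(\epsilon)\ge 0$, though the conclusion is unaffected since that term vanishes either way as $\log\sigma(t)\to\infty$.
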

\begin{proof}
In the case when $1/\sigma\in L^1(0,\infty)$, \eqref{eq.intlogsigma0} is automatically true. On the other hand, when \eqref{eq.t2}
holds, the limit in \eqref{eq.intlogsigma0} (if it exists) is of indeterminate form. By \eqref{eq.t4}, for every $M>1$, there exists
$T(M)>0$ such that $\sigma(t)/t\geq M$ for all $t\geq T(M)$. Therefore for $t\geq T(M)$ we have
\[
\int_{T(M)}^t \frac{1}{\sigma(s)}\,ds \leq \int_{T(M)}^t \frac{1}{Ms}\,ds = \frac{1}{M}\left(\log t - \log T(M)\right).
\]
Also for $t\geq T(M)$ we have $\log \sigma(t)\geq \log M+\log t\geq \log t$. Thus for $t\geq T(M)$ we have
\begin{align*}
\frac{\int_0^t \frac{1}{\sigma(s)}\,ds}{\log \sigma(t)}
&=
\frac{\int_0^{T(M)} \frac{1}{\sigma(s)}\,ds}{\log \sigma(t)} + \frac{\int_{T(M)}^t \frac{1}{\sigma(s)}\,ds}{\log \sigma(t)}\\
&\leq
\frac{\int_0^{T(M)} \frac{1}{\sigma(s)}\,ds}{\log t}+
\frac{1}{M}\cdot \frac{\log t - \log T(M)}{\log t}.
\end{align*}
Hence
\[
\limsup_{t\to\infty}
\frac{\int_0^t \frac{1}{\sigma(s)}\,ds}{\log \sigma(t)}
\leq \frac{1}{M}.
\]
Letting $M>1$, which is arbitrary, tend to $\infty$, we get \eqref{eq.intlogsigma0}.
\end{proof}

\begin{proof}[Proof of Theorem~\ref{thm3.1gen}]
We first get an upper bound. By \eqref{eq:g3}, we note that there is a $\delta_0>0$ such that $g'(x)>0$ for all $x\in(0,\delta_0)$. Hence $g^{-1}:[0,g(\delta_0)]\to [0,\delta_0]$. Clearly there is a $\delta_1>0$ such that $\delta_1<g(\delta_0)$, or $g^{-1}(\delta_1)<\delta_0$. Then for $x\in(0,\delta_1)$, we may define $\Gamma_1:(0,\delta_1)\to(0,\infty)$ by
\[
\Gamma_1(x)=g'(g^{-1}(x)), \quad x\in[0,\delta_1).
\]
Therefore $\Gamma\in \text{RV}_0(1/\gamma)$.
Therefore
\[
\lim_{x\to 0^+} \frac{\log \Gamma_1(x)}{\log x}=\frac{1}{\gamma},
\]
and so there is $0<\delta_2<\delta_0$ such that $g(\delta_2/2)<1$ and
\[
\frac{1}{\gamma}-\frac{1}{2}\frac{1}{\gamma}\leq  \frac{\log \Gamma_1(x)}{\log x} \leq \frac{1}{\gamma}+\frac{1}{2}\frac{1}{\gamma}, \quad x\in(0,g(\delta_2/2)].
\]
Let $\delta_3>0$ be so small that $g(\delta_3/2)<1$. Now, let $\delta=\min(\delta_0,\delta_1,\delta_2,\delta_3)$. Since
$\delta\leq \delta_2$, we have $\delta/2\leq \delta_2/2<\delta_0/2<\delta_0$. Therefore $g(\delta/2)\leq g(\delta_2/2)$ and so we have
\begin{gather}
\label{eq.estdelta2}
0<B_1:=\frac{1}{\gamma}-\frac{1}{2}\frac{1}{\gamma}\leq
\frac{\log \Gamma_1(x)}{\log x} \leq \frac{1}{\gamma}+\frac{1}{2}\frac{1}{\gamma}=:B_2,\,x\in(0,g(\delta/2)], \\
\label{eq.estdelta3}
g(\delta/2)<1, \quad g'(x)>0 \quad x\in(0,\delta).
\end{gather}

Since $x(t)\to 0$ as $t\to\infty$, there exists $T_0>0$ such that $x(t)\leq \delta/2$ for all $t\geq T_0$.

By \eqref{eq.t1} and \eqref{eq.t3} for each $\epsilon\in(0,1)$, there is a
$T_1(\epsilon)>0$ such that
\begin{equation} \label{eq.defT1}
\int_{t-\tau(t)}^t \frac{1}{\sigma(s)}\,ds \leq 1+\epsilon, \quad
t\geq T_1(\epsilon).
\end{equation}

Now let $0<c_2<c_2(\epsilon)$, where
$c_2(\epsilon)\in(0,\log(a/b))$ is the solution of
$g_\epsilon(c)=0$, where the function $g_\epsilon$ is defined by
\[
g_\epsilon(x) = -x\epsilon + a - be^{x(1+\epsilon)}, \quad x\geq
0.
\]
The existence and uniqueness of the solution are guaranteed by the
fact that $g_\epsilon(0)>0$, $g_{\epsilon}(\log(a/b))<0$ and
$g_\epsilon$ is continuous and decreasing on $(0,\infty)$. Note
moreover that as $\epsilon\downarrow 0$, we have that
$c_2(\epsilon)\to \log(a/b)$.

Since $\sigma(t)/t\to\infty$ as $t\to\infty$, by Lemma~\ref{lemma.intlogsigma0} we have that
\[
\lim_{t\to\infty} \frac{\int_0^t \frac{1}{\sigma(s)}\,ds}{\log \sigma(t)}=0.
\]
Hence for every $\epsilon\in(0,1)$, there exists $T_2(\epsilon)>0$ such that
\begin{equation} \label{eq.defT2}
\log \sigma(t) + B_2\log g(\delta/2) -B_2 c_2\int_0^t \frac{1}{\sigma(s)}\,ds > \log(1/\epsilon), \quad t\geq T_2(\epsilon).
\end{equation}

Since $x(t)\to 0$ as $t\to\infty$, there exists a $T_3(\epsilon)>0$ such that
\begin{equation} \label{eq.defT3}
g(x(t))\leq g(\delta/2)e^{-2c_2}, \quad t\geq T_3(\epsilon). 
\end{equation}
Let $T_4(\epsilon)=\max(T_0,T_1(\epsilon),T_2(\epsilon),T_3(\epsilon))$.
Since $t-\tau(t)\to\infty$ as $t\to\infty$, there exists $T_5(\epsilon)>T_4(\epsilon)$ such that $T_5(\epsilon)=\sup\{t>T_4(\epsilon):t-\tau(t)=T_4(\epsilon)\}$. Also $t-\tau(t)\geq T_4(\epsilon)$ for $t\geq T_5(\epsilon)$.

Define $x_2(\epsilon)$ by
\begin{equation} \label{def.x2}
x_2(\epsilon)= g(\delta/2)e^{c_2\int_0^{T_4(\epsilon)} \frac{1}{\sigma(s)}\,ds}.
\end{equation}
Then for $t\geq T_4(\epsilon)$ we have
\[
0<x_2(\epsilon)e^{-c_2\int_0^t \frac{1}{\sigma(s)}\,ds}= g(\delta/2)e^{c_2\int_0^{T_4(\epsilon)} \frac{1}{\sigma(s)}\,ds}e^{-c_2\int_0^t \frac{1}{\sigma(s)}\,ds} \leq g(\delta/2)<1.
\]
Since $\delta_0\leq \delta$, we have that $g^{-1}:[0,g(\delta)]\to[0,\delta]$. Therefore, we can define
\begin{equation} \label{def.xUlongmem}
x_{U,\epsilon}(t)=g^{-1}\left(x_2(\epsilon)e^{-c_2\int_0^t \frac{1}{\sigma(s)}\,ds}\right), \quad t\geq T_4(\epsilon).
\end{equation}
Therefore we have
\begin{equation*}
x_{U,\epsilon}(t)\leq \delta/2, \quad t\geq T_4(\epsilon); \quad g(x_{U,\epsilon}(t))<1, \quad t\geq T_4(\epsilon).
\end{equation*}
Next for $t\in[T_4(\epsilon),T_5(\epsilon)]$, noting that $T_5(\epsilon)-\tau(T_5(\epsilon))=T_4(\epsilon)$ and $T_4(\epsilon)\geq T_3(\epsilon)$, we have by \eqref{eq.defT3}, \eqref{def.x2} that
\begin{align*}
g(x(t))e^{c_2\int_0^t \frac{1}{\sigma(u)}\,du}
&\leq  g(x(t))e^{c_2\int_0^{T_5(\epsilon)} \frac{1}{\sigma(u)}\,du}\\
&\leq g(\delta/2)e^{-2c_2} e^{c_2\int_0^{T_5(\epsilon)} \frac{1}{\sigma(u)}\,du}\\
&=g(\delta/2)e^{-2c_2} e^{c_2\int_{0}^{T_5(\epsilon)-\tau(T_5(\epsilon))}\frac{1}{\sigma(u)}\,du}\cdot e^{c_2\int_{T_5(\epsilon)-\tau(T_5(\epsilon))}^{T_5(\epsilon)} \frac{1}{\sigma(u)}\,du}\\
&=g(\delta/2)e^{-2c_2} e^{c_2\int_{0}^{T_4(\epsilon)}\frac{1}{\sigma(u)}\,du}\cdot e^{c_2\int_{T_5(\epsilon)-\tau(T_5(\epsilon))}^{T_5(\epsilon)} \frac{1}{\sigma(u)}\,du}\\
&=x_2(\epsilon) e^{-2c_2} \cdot e^{c_2\int_{T_5(\epsilon)-\tau(T_5(\epsilon))}^{T_5(\epsilon)} \frac{1}{\sigma(u)}\,du}.
\end{align*}
Since $T_5(\epsilon)> T_4(\epsilon)\geq T_1(\epsilon)$, by\eqref{eq.defT1} for $t\in[T_4(\epsilon),T_5(\epsilon)]$ we have
\[
g(x(t))e^{c_2\int_0^t \frac{1}{\sigma(u)}\,du}\leq
x_2(\epsilon) e^{-2c_2} \cdot e^{c_2(1+\epsilon)}= x_2(\epsilon) e^{-c_2(1-\epsilon)}<x_2(\epsilon).
\]
Hence for $t\in[T_4(\epsilon),T_5(\epsilon)]$ we have
\[
g(x(t)) < x_2(\epsilon)e^{-c_2\int_0^t \frac{1}{\sigma(u)}\,du}=g(x_{U,\epsilon}(t))\leq g(\delta/2)<g(\delta)\leq g(\delta_0).
\]
Therefore we have
\begin{equation} \label{eq.xUlongmemic}
x(t)<x_{U,\epsilon}(t), \quad t\in[T_4(\epsilon),T_5(\epsilon)].
\end{equation}

Since $g\in C^1(0,\delta_0)$, and $\delta_1<g(\delta_0)$,
we have that $g^{-1}\in C^1(0,\delta_1)$. Now for $t\geq T_4(\epsilon)$ we have
$g(x_{U,\epsilon}(t))\leq \delta/2<\delta\leq \delta_1$, so therefore $x_{U,\epsilon}\in C^1[T_5(\epsilon),\infty)$
and
\[
x_{U,\epsilon}'(t)=-\frac{c_2}{\sigma(t)g'(x_{U,\epsilon}(t))}x_2(\epsilon)e^{-c_2\int_0^t \frac{1}{\sigma(s)}\,ds}, \quad t\geq T_5(\epsilon).
\]
For $t>T_5(\varepsilon)$, we have that $t-\tau(t)\geq T_4(\epsilon)$. Therefore we get
\begin{multline} \label{eq.xUdiffeqlongmem1}
x_{U,\epsilon}'(t) + ag(x_{U,\epsilon}(t)) -
bg(x_{U,\epsilon}(t-\tau(t)))
\\=x_2(\epsilon)e^{-c_2\int_{0}^t \frac{1}{\sigma(s)}\,ds}\left(
-\frac{c_2}{g'(x_{U,\epsilon}(t))\sigma(t)} + a - b e^{c_2\int^{t}_{t-\tau(t)}
\frac{1}{\sigma(s)}\,ds} \right), \quad t>T_5(\epsilon).
\end{multline}
Next we estimate $g'(x_{U,\epsilon}(t))\sigma(t)$ for $t\geq T_5(\epsilon)$. Since this quantity is positive, we may consider $\log(g'(x_{U,\epsilon}(t))\sigma(t))$.
Since \eqref{eq.estdelta2} holds
and $g$ is increasing on $(0,\delta/2)$ we have
\[
B_1\leq
\frac{\log \Gamma_1(g(x))}{\log g(x)} \leq B_2, \quad x\in(0,\delta/2],
\]
Since $t\geq T_5(\epsilon)>T_4(\epsilon)$, we have $x_{U,\epsilon}(t)\leq \delta/2$. Therefore
\[
0<B_1\leq
\frac{\log \Gamma_1(g(x_{U,\epsilon}(t)))}{\log g(x_{U,\epsilon}(t))} \leq B_2,\quad t\geq T_5(\epsilon).
\]
For $t\geq T_5(\epsilon)>T_4(\epsilon)$, we have that $g(x_{U,\epsilon}(t))\leq g(\delta/2)<1$, so $\log g(x_{U,\epsilon}(t))<0$.
Hence
\begin{equation} \label{eq.B2loggamma}
B_1\log g(x_{U,\epsilon}(t))\geq \frac{\log \Gamma_1(g(x_{U,\epsilon}(t)))}{\log g(x_{U,\epsilon}(t))}\log g(x_{U,\epsilon}(t))\geq B_2\log g(x_{U,\epsilon}(t)).
\end{equation}
Hence for $t\geq T_5(\epsilon)>T_4(\epsilon)$, by using \eqref{eq.B2loggamma}, \eqref{def.xUlongmem}, \eqref{def.x2} and \eqref{eq.defT2} in turn
we get
\begin{align*}
\log(g'(x_{U,\epsilon}(t))\sigma(t))
&=\log \sigma(t) + \frac{\log \Gamma_1(g(x_{U,\epsilon}(t)))}{\log g(x_{U,\epsilon}(t))}\log g(x_{U,\epsilon}(t))\\
&\geq \log \sigma(t) + B_2\log g(x_{U,\epsilon}(t))\\
&= \log \sigma(t) + B_2\left( \log x_2(\epsilon) -c_2\int_0^t \frac{1}{\sigma(s)}\,ds\right)\\
&=\log \sigma(t) + B_2\log g(\delta/2) -B_2 c_2\int_{T_4(\epsilon)}^t \frac{1}{\sigma(s)}\,ds\\
&>\log \sigma(t) + B_2\log g(\delta/2) -B_2 c_2\int_0^t \frac{1}{\sigma(s)}\,ds\\
&>\log(1/\epsilon).
\end{align*}
Therefore
\begin{equation} \label{eq.gsig}
g'(x_{U,\epsilon}(t))\sigma(t)>1/\epsilon, \quad t\geq T_5(\epsilon).
\end{equation}
Since $t>T_5(\epsilon)$ by inserting \eqref{eq.gsig} and \eqref{eq.defT1} into \eqref{eq.xUdiffeqlongmem1}
\[
x_{U,\epsilon}'(t) + ag(x_{U,\epsilon}(t)) -
bg(x_{U,\epsilon}(t-\tau(t)))
>x_2(\epsilon)e^{-c_2\int_{0}^t \frac{1}{\sigma(s)}\,ds}\left(
-\frac{c_2}{1/\epsilon} + a - b e^{c_2(1+\epsilon)} \right).
\]
The quantity in brackets is nothing other than $g_\epsilon(c_2)>0$. Thus we have that
\begin{align*}
x_{U,\epsilon}'(t)&>-ag(x_{U,\epsilon}(t)) + bg(x_{U,\epsilon}(t-\tau(t))), \quad t>T_5(\epsilon), \\
x_{U,\epsilon}(t)&> x(t)>0, \quad
t\in[T_4(\epsilon),T_5(\epsilon)].
\end{align*}
Therefore $x(t)<x_{U,\epsilon}(t)$ for all $t\geq T_4(\epsilon)$. Hence
\[
g(x(t)) < x_2(\epsilon) e^{-c_2\int_0^t \frac{1}{\sigma(s)}\,ds},
\quad t\geq T_4(\epsilon).
\]
Thus
\[
\limsup_{t\to\infty} \frac{\log g(x(t))}{\int_0^t
\frac{1}{\sigma(s)}\,ds} \leq -c_2.
\]
Letting $c_2\uparrow c_2(\epsilon)$, we get
\[
\limsup_{t\to\infty} \frac{\log g(x(t))}{\int_0^t
\frac{1}{\sigma(s)}\,ds} \leq -c_2(\epsilon).
\]
Finally, because letting $\epsilon\downarrow0$ yields
$c_2(\epsilon)\to\log (a/b)$, by taking the limit as
$\epsilon\to0$, we get
\begin{equation} \label{eq.b2}
\limsup_{t\to\infty} \frac{\log g(x(t))}{\int_0^t
\frac{1}{\sigma(s)}\,ds} \leq -\log\left( \frac{a}{b}\right).
\end{equation}

We now determine a lower bound for $x$.
By \eqref{eq.t1} and \eqref{eq.t3}, for each $\epsilon\in(0,1)$ there is $T_6(\epsilon)>0$ such that
\[
\int_{t-\tau(t)}^t \frac{1}{\sigma(s)}\,ds  \geq 1-\epsilon, \quad
t\geq T_6(\epsilon).
\]
Also, let $c_1>c_1(\epsilon)=(1-\epsilon)^{-1} \log(a/b)>0$. Note that the definition of $T_0$ gives $x(t)\leq \delta/2$ for all $t\geq T_0$.
Define $T_7(\epsilon)=\max(T_0,T_6(\epsilon))$. Since $t-\tau(t)\to\infty$ as $t\to\infty$ we have that there is $T_8(\epsilon)>T_7(\epsilon)$
such that $T_8(\epsilon)=\sup\{t>T_7(\epsilon):t-\tau(t)=T_7(\epsilon)\}$. Then $t-\tau(t)\geq T_7(\epsilon)$ for all $t\geq T_8(\epsilon)$.
Next define
\[
x_1(\epsilon) = \frac{1}{2}\min\left\{\min_{T_7(\epsilon)\leq s\leq T_8(\epsilon)} g(x(s))e^{c_1\int_0^s\frac{1}{\sigma(u)}\,du},g(\delta/2)e^{c_1\int_{0}^{T_7(\epsilon)} \frac{1}{\sigma(u)}\,du}\right\}
\]
so that
\[
x_1(\epsilon)<\min_{T_7(\epsilon)\leq s\leq T_8(\epsilon)} g(x(s))e^{c_1\int_0^s\frac{1}{\sigma(u)}\,du},
\quad x_1(\epsilon)<g(\delta/2)e^{c_1\int_0^{T_7(\epsilon)} \frac{1}{\sigma(u)}\,du}.
\]
For $t\geq T_7(\epsilon)$ we have
\[
0<x_1(\epsilon)e^{-c_1\int_0^t \frac{1}{\sigma(s)}\,ds}\leq x_1(\epsilon)e^{-c_1\int_0^{T_7(\epsilon)} \frac{1}{\sigma(s)}\,ds}
<
g(\delta/2)<g(\delta).
\]
Since $g$ is increasing on $(0,\delta)$, we may define $g^{-1}:[0,g(\delta)]\to[0,\delta]$ and therefore the function $x_{L,\epsilon}$ given by
\[
x_{L,\epsilon}(t) = g^{-1}\left(x_1(\epsilon)e^{-c_1\int_0^t
\frac{1}{\sigma(s)}\,ds}\right), \quad t\geq T_7(\epsilon)
\]
is well--defined.
Then $x_{L,\epsilon}(t)<\delta$ for all $t\geq T_7(\epsilon)$ and
\[
g(x_{L,\epsilon}(t)) = x_1(\epsilon)e^{-c_1\int_0^t
\frac{1}{\sigma(s)}\,ds}, \quad t\geq T_7(\epsilon).
\]
Then for $t\in[T_7(\epsilon),T_8(\epsilon)]$ we have
\begin{align*}
g(x_{L,\epsilon}(t))e^{c_1\int_0^t \frac{1}{\sigma(s)}\,ds}
&= x_1(\epsilon)\\
&<\min_{T_7(\epsilon)\leq s\leq T_8(\epsilon)} g(x(s))e^{c_1\int_0^s\frac{1}{\sigma(u)}\,du}\\
&\leq g(x(t))e^{c_1\int_0^t\frac{1}{\sigma(u)}\,du}.
\end{align*}
Thus $g(x_{L,\epsilon}(t))<g(x(t))$ for $t\in[T_7(\epsilon),T_8(\epsilon)]$, so therefore $x_{L,\epsilon}(t)<x(t)$ for
$t\in[T_7(\epsilon),T_8(\epsilon)]$. To see this, suppose to the contrary that there exists $t_1\in[T_7(\epsilon),T_8(\epsilon)]$
such that $x_{L,\epsilon}(t_1)\geq x(t_1)$. Since $x_{L,\epsilon}(t_1)<\delta$ we have  $g(x(t_1))\leq g(x_{L,\epsilon}(t_1))<g(\delta)$.
Since we must also have $g(x_{L,\epsilon}(t_1))<g(x(t_1))$ there is a contradiction; hence $x_{L,\epsilon}(t)<x(t)$ for
$t\in[T_7(\epsilon),T_8(\epsilon)]$.

Since $g$ is in $C^1(0,\delta)$ and $\sigma$ is continuous $x_{L,\epsilon}$ is in $C^1(T_7(\epsilon),\infty)$ and moreover
\[
g'(x_{L,\epsilon}(t))x_{L,\epsilon}'(t)=-\frac{c_1}{\sigma(t)}x_1(\epsilon)e^{-c_1\int_0^t
\frac{1}{\sigma(s)}\,ds}.
\]
Since $x_{L,\epsilon}(t)\in(0,\delta)$ we have $x_{L,\epsilon}'(t)<0$ for all $t>T_7(\epsilon)$. Moreover, because $t-\tau(t)\geq T_7(\epsilon)$ for all $t>T_8(\epsilon)$, we have
\begin{multline*}
x_{L,\epsilon}'(t) + ag(x_{L,\epsilon}(t)) -
bg(x_{L,\epsilon}(t-\tau(t))) \\
=x_1(\epsilon)e^{-c_1\int_0^t
\frac{1}{\sigma(s)}\,ds}\biggl(
-\frac{1}{g'(x_{L,\epsilon}(t))}\frac{c_1}{\sigma(t)}+a
-be^{c_1\int_{t-\tau(t)}^t
\frac{1}{\sigma(s)}\,ds}\biggr).
\end{multline*}
For $t\geq T_8(\epsilon)\geq T_6(\epsilon)$, we have
\[
e^{c_1\int_{t-\tau(t)}^t \frac{1}{\sigma(s)}\,ds}\geq
e^{c_1(1-\epsilon)}.
\]
Thus for $t\geq T_8(\epsilon)$, as $g'(x_{L,\epsilon}(t))>0$ and $\sigma(t)>0$
\[
-\frac{1}{g'(x_{L,\epsilon}(t))}\frac{c_1}{\sigma(t)} + a - b e^{c_1\int^{t}_{t-\tau(t)}
\frac{1}{\sigma(s)}\,ds} < a-be^{c_1(1-\epsilon)}<0.
\]
Thus
\begin{gather*}
x_{L,\epsilon}'(t) < -ag(x_{L,\epsilon}(t)) +
bg(x_{L,\epsilon}(t-\tau(t))), \quad t>T_8(\epsilon), \\
x_{L,\epsilon}(t)<x(t), \quad t\in [T_7(\epsilon),T_8(\epsilon)].
\end{gather*}
As $g$ is increasing on $(0,\delta)$ and $x(t-\tau(t))$ and $x_{U,\epsilon}(t-\tau(t))$ are both in $(0,\delta)$ for $t\geq T_8(\epsilon)$, a standard comparison argument now shows that $x(t)>x_{L,\epsilon}(t)$ for all $t\geq T_7(\epsilon)$, which gives
\[
g(x(t)) > g(x_{L,\epsilon}(t))=x_1(\epsilon)e^{-c_1\int_0^t \frac{1}{\sigma(s)}\,ds},
\quad t\geq T_7(\epsilon).
\]
Since $\int_0^t \sigma(s)^{-1}\,ds \to\infty$ as $t\to\infty$,
\[
\liminf_{t\to\infty} \frac{\log g(x(t))}{\int_0^t
\frac{1}{\sigma(s)}\,ds} \geq -c_1.
\]
Hence, letting $c_1\uparrow c_1(\epsilon)$, we get
\[
\liminf_{t\to\infty} \frac{\log g(x(t))}{\int_0^t
\frac{1}{\sigma(s)}\,ds} \geq -\frac{1}{1-\epsilon}
\log\left(\frac{a}{b}\right).
\]
Letting $\epsilon\downarrow0$ yields
\begin{equation}  \label{eq.a2}
\liminf_{t\to\infty}\frac{\log g(x(t))}{\int_0^t
\frac{1}{\sigma(s)}\,ds} \geq -\log\left(\frac{a}{b}\right).
\end{equation}
Combining \eqref{eq.a2} and \eqref{eq.b2} gives \eqref{Cgen}.
\end{proof}

\section{Proof of Results for Sublinear Delay} \label{sec.proofs}

\subsection{Proof of Theorem~\ref{thm.odedecay}}
Since $g\in \text{RV}_0(\beta)$ for $\beta>1$, $1/g\in \text{RV}_0(-\beta)$, so $G_0\in\text{RV}_0(1-\beta)$. Since $\beta>1$ we have
$G_0^{-1}\in\text{RV}_\infty(-1/(\beta-1))$. Since $-1/(\beta-1)<0$ and $g\in\text{RV}_0(\beta)$ we have
$\Gamma\in \text{RV}_\infty(-\beta/(\beta-1))$. Therefore $g$ obeys \eqref{eq:g4alt4} with $\gamma=\beta/(\beta-1)>1$, and so  Theorem~\ref{thm.odedecaygen} applies. Therefore \eqref{eq.GxgtLambdat} and \eqref{eq.Gxleqat} hold, so we are left to prove \eqref{eq.odedecay}.

Since $t-\tau(t)\to\infty$ as $t\to\infty$, there exists $T_0>1$
such that $t-\tau(t)\geq 1$ for all $t\geq T_0$. Recalling that $x'(t)\geq -ag(x(t))$ for all $t\geq 0$, for all $t\geq T_0$ we have
\[
G_0(x(t-\tau(t)))-G_0(x(t))=\int_{x(t-\tau(t))}^{x(t)} \frac{1}{g(u)}\,du = \int_{t-\tau(t)}^t \frac{x'(s)}{g(x(s))}\,ds \geq -a\tau(t),
\]
so
\begin{equation} \label{eq.G0tG0ttau}
G_0(x(t))\leq a\tau(t)+G_0(x(t-\tau(t))), \quad t\geq T_0.
\end{equation}
Define $G_1(x)=1/G_0(x)$ for $x>0$. Then $G_1\in \text{RV}_0(\beta-1)$ and $G_1$ is increasing. Since $\beta>1$ we have that  $G_1^{-1}\in\text{RV}_0(1/(\beta-1))$. Rearranging \eqref{eq.G0tG0ttau} we have
\[
\frac{G_1(x(t-\tau(t)))}{G_1(x(t))}\leq 1+a\tau(t)G_1(x(t-\tau(t))), \quad t\geq T_0.
\]
Therefore for $t\geq T_0$ we have
\[
\frac{G_1(x(t-\tau(t)))}{G_1(x(t))}\leq 1+a\frac{\tau(t)}{t}\cdot\frac{t}{t-\tau(t)}\cdot\frac{t-\tau(t)}{G_1(x(t-\tau(t)))}.
\]
Since $\tau(t)/t\to 0$ as $t\to\infty$ and \eqref{eq.GxgtLambdat} holds we have
\begin{equation}\label{eq.G1limsup1}
\limsup_{t\to\infty} \frac{G_1(x(t-\tau(t)))}{G_1(x(t))}\leq 1.
\end{equation}
Therefore for every $\epsilon\in(0,1)$ there is $T_1(\epsilon)>0$ such that
\[
G_1(x(t-\tau(t)))\leq (1+\epsilon)G_1(x(t)), \quad t\geq T_1(\epsilon).
\]
Therefore as $G_1^{-1}$ is increasing, we have
\[
\frac{x(t-\tau(t))}{x(t)}\leq \frac{G_1^{-1}((1+\epsilon)G_1(x(t)))}{G_1^{-1}(G_1(x(t)))}, \quad t\geq T_1(\epsilon).
\]
Now $G_1(x(t))\to 0$ as $t\to\infty$ and $G_1^{-1}\in \text{RV}_0(1/(\beta-1))$, so
\[
\limsup_{t\to\infty}
\frac{x(t-\tau(t))}{x(t)}
\leq
\lim_{t\to\infty} \frac{G_1^{-1}((1+\epsilon)G_1(x(t)))}{G_1^{-1}(G_1(x(t)))}
=
(1+\epsilon)^{1/(\beta-1)}.
\]
Letting $\epsilon\to 0^+$ gives
\[
\limsup_{t\to\infty} \frac{x(t-\tau(t))}{x(t)}\leq 1.
\]
Therefore there exists $T_2(\epsilon)>0$ such that $x(t-\tau(t))\leq (1+\epsilon)x(t)$ for all $t\geq T_2(\epsilon)$.
Since $g$ is increasing $g(x(t-\tau(t)))\leq g((1+\epsilon)x(t))$ for all $t\geq T_2(\epsilon)$. Therefore as $g\in\text{RV}_0(\beta)$ and
$x(t)\to 0$ as $t\to\infty$ we have
\[
\limsup_{t\to\infty} \frac{g(x(t-\tau(t)))}{g(x(t))}\leq \limsup_{t\to\infty}\frac{g((1+\epsilon)x(t))}{g(x(t))}=(1+\epsilon)^\beta.
\]
Letting $\epsilon\to 0^+$ we have
\[
\limsup_{t\to\infty} \frac{g(x(t-\tau(t)))}{g(x(t))}\leq 1,
\]
which implies
\[
\limsup_{t\to\infty} \frac{x'(t)}{g(x(t))}=\limsup_{t\to\infty} -a + b \frac{g(x(t-\tau(t)))}{g(x(t))}\leq -(a-b).
\]
Therefore, for every $\epsilon\in(0,a-b)$ there exists $T_3(\epsilon)>0$ such that
\[
\frac{x'(t)}{g(x(t))}\leq -(a-b)+\epsilon, \quad t\geq T_3(\epsilon).
\]
Since $t-\tau(t)\to\infty$ as $t\to\infty$, there is $T_4(\epsilon)>T_3(\epsilon)$ such that $t-\tau(t)\geq T_3(\epsilon)$
for all $t\geq T_4(\epsilon)$. Therefore for $t\geq T_4(\epsilon)$ we have
\[
G_0(x(t-\tau(t)))-G_0(x(t))=
\int_{t-\tau(t)}^t
\frac{x'(s)}{g(x(s))}\,ds \leq \left\{-(a-b)+\epsilon\right\}\tau(t).
\]
Therefore for $t\geq T_4(\epsilon)$ we have
$G_0(x(t-\tau(t))) +\left\{(a-b)-\epsilon\right\}\tau(t) \leq G_0(x(t))$.
Since $G_1=1/G_0$ we have
\[
\frac{G_1(x(t-\tau(t)))}{G_1(x(t))}
\geq
1 +\left\{(a-b)-\epsilon\right\}\tau(t)G_1(x(t-\tau(t))), \quad t\geq T_4(\epsilon).
\]
Since $G_1(x)>0$ for all $x<x(0)$ and $\tau(t)\geq 0$ for all $t\geq 0$ we have
\begin{equation} \label{eq.G1liminf1}
\liminf_{t\to\infty} \frac{G_1(x(t-\tau(t)))}{G_1(x(t))}\geq 1.
\end{equation}
Combining \eqref{eq.G1limsup1} and \eqref{eq.G1liminf1} we have
\[
\lim_{t\to\infty} \frac{G_1(x(t-\tau(t)))}{G_1(x(t))}=1.
\]
Proceeding as above, we can show that this implies
\[
\lim_{t\to\infty} \frac{g(x(t-\tau(t)))}{g(x(t))}=1.
\]
Therefore we have
\[
\lim_{t\to\infty} \frac{x'(t)}{g(x(t))}=-(a-b).
\]
Hence for every $\epsilon\in(0,a-b)$ there is $T_5(\epsilon)>0$ such that
\[
-(a-b)-\epsilon\leq \frac{x'(t)}{g(x(t))}\leq -(a-b)+\epsilon, \quad t\geq T_5(\epsilon).
\]
Therefore for $t\geq T_5(\epsilon)$ we have
\[
\left\{-(a-b)-\epsilon\right\}(t-T_5(\epsilon))\leq \int_{T_5(\epsilon)}^t \frac{x'(s)}{g(x(s))}\,ds   \leq  \left\{-(a-b)+\epsilon\right\}(t-T_5(\epsilon)),
\]
so for $t\geq T_5(\epsilon)$
\[
\left\{-(a-b)-\epsilon\right\}(t-T_5(\epsilon))\leq G_0(x(T_5(\epsilon)))-G_0(x(t))  \leq  \left\{-(a-b)+\epsilon\right\}(t-T_5(\epsilon)),
\]
from which \eqref{eq.odedecay} follows, by first taking limits as $t\to\infty$, and then letting $\epsilon\to 0^+$.

\section{Proof of Results for Proportional Delay}
\subsection{Proof of Theorem~\ref{thm.tauproptnotlikeode}}
We employ the following result in the proof of Theorem~\ref{thm.tauproptnotlikeode}.
\begin{lemma}  \label{lemma.hfgrv}
Suppose $f(t)/g(t)\to L>0$ as $t\to\infty$
\begin{itemize}
\item[(i)] If $g(t)\to\infty$ as $t\to\infty$ and $h\in \text{RV}_\infty(\gamma)$ is decreasing, then
\begin{equation}  \label{eq.hfgrv}
\lim_{t\to\infty} \frac{h(f(t))}{h(g(t))} = L^\gamma.
\end{equation}
\item[(ii)] If $g(t)\to 0$ as $t\to\infty$ and $h\in \text{RV}_0(\gamma)$ is increasing, then \eqref{eq.hfgrv} still holds.
\end{itemize}
\end{lemma}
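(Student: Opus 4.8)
The plan is to run a straightforward two‑sided squeeze, using only the hypothesis $f(t)/g(t)\to L$ and the defining property of regular variation; no uniform convergence theorem is needed because at the decisive moment the dilation factor is held fixed. Throughout, recall that membership in $\text{RV}_\infty(\cdot)$ or $\text{RV}_0(\cdot)$ carries with it eventual positivity of $h$, and that $f(t)/g(t)\to L>0$ with $g(t)\to\infty$ (resp. $g(t)\to 0^+$) forces $g(t)>0$ and $f(t)>0$ for all large $t$, so all the quantities below are well defined.

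\emph{Part (i).} Fix $\epsilon\in(0,L)$. By hypothesis there is $T(\epsilon)>0$ with
\[
(L-\epsilon)g(t)\le f(t)\le (L+\epsilon)g(t),\qquad t\ge T(\epsilon).
\]
Since $h$ is decreasing, applying $h$ reverses the chain, and dividing by $h(g(t))>0$ gives
\[
\frac{h((L+\epsilon)g(t))}{h(g(t))}\le \frac{h(f(t))}{h(g(t))}\le \frac{h((L-\epsilon)g(t))}{h(g(t))},\qquad t\ge T(\epsilon).
\]
Because $g(t)\to\infty$ and $h\in \text{RV}_\infty(\gamma)$, for each fixed $\lambda>0$ the ratio $h(\lambda g(t))/h(g(t))\to\lambda^\gamma$ as $t\to\infty$ (this is the definition of $\text{RV}_\infty(\gamma)$, composed with the divergence $g(t)\to\infty$). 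Taking $\lambda=L-\epsilon$ and $\lambda=L+\epsilon$, and using that a decreasing function in $\text{RV}_\infty(\gamma)$ necessarily has $\gamma\le 0$ (so $(L+\epsilon)^\gamma\le (L-\epsilon)^\gamma$), we obtain
\[
(L+\epsilon)^\gamma\le \liminf_{t\to\infty}\frac{h(f(t))}{h(g(t))}\le \limsup_{t\to\infty}\frac{h(f(t))}{h(g(t))}\le (L-\epsilon)^\gamma.
\]
Since $\lambda\mapsto\lambda^\gamma$ is continuous at $\lambda=L>0$, letting $\epsilon\downarrow 0$ squeezes both bounds to $L^\gamma$, which is \eqref{eq.hfgrv}.

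\emph{Part (ii).} The argument is identical in structure; the only changes are that $h$ is now increasing, so the chain of inequalities is preserved rather than reversed, and that we use $g(t)\to 0^+$ together with $h\in \text{RV}_0(\gamma)$ to conclude $h(\lambda g(t))/h(g(t))\to\lambda^\gamma$ for each fixed $\lambda>0$. An increasing function in $\text{RV}_0(\gamma)$ has $\gamma\ge 0$, so $(L-\epsilon)^\gamma\le (L+\epsilon)^\gamma$, and the same squeeze followed by $\epsilon\downarrow 0$ delivers \eqref{eq.hfgrv}.

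I do not expect a genuine obstacle here: the only points requiring care are bookkeeping ones, namely (a) checking that $g(t)$ (hence $f(t)$) eventually lies in the domain of $h$ so the substitutions are legitimate, (b) tracking the direction of the inequalities, which depends jointly on the monotonicity of $h$ and the sign of $\gamma$ (both of which are pinned down by the hypotheses), and (c) observing that the pointwise regular‑variation limit may be composed with $g(t)$ without uniform convergence because the factor $L\pm\epsilon$ is fixed before the limit in $t$ is taken. This lemma is a routine technical device supporting the proof of Theorem~\ref{thm.tauproptnotlikeode}.
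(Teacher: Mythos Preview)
Your proof is correct and follows essentially the same sandwich argument as the paper: bound $f(t)$ between $(L\pm\epsilon)g(t)$, apply the monotonicity of $h$, divide by $h(g(t))$, invoke the definition of regular variation with the fixed dilation $L\pm\epsilon$, and let $\epsilon\downarrow 0$. Your write-up is in fact slightly more careful than the paper's, since you explicitly note that the monotonicity hypothesis forces $\gamma\le 0$ in~(i) and $\gamma\ge 0$ in~(ii), which keeps the direction of the squeezed inequalities consistent.
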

\begin{proof}
For part (i) since $h$ is decreasing, for every $\epsilon\in(0,1)$ there is a $T(\epsilon)>0$ such that
\[
h(Lg(t)(1-\epsilon))>h(f(t))>h(Lg(t)(1+\epsilon)), \quad t>T(\epsilon).
\]
Now as $g(t)\to \infty$ as $t\to\infty$ and $h\in \text{RV}_\infty(\gamma)$ we have
\[
\{L(1-\epsilon)\}^\gamma
=
\lim_{t\to\infty}
\frac{h(Lg(t)(1-\epsilon))}{h(g(t))}
\geq
\limsup_{t\to\infty} \frac{h(f(t))}{h(g(t))},
\]
so
\[
\limsup_{t\to\infty} \frac{h(f(t))}{h(g(t))} \leq L^\gamma.
\]
Similarly, we have
\[
\liminf_{t\to\infty} \frac{h(f(t))}{h(g(t))} \geq L^\gamma,
\]
whence \eqref{eq.hfgrv}.

For part (ii) since $h$ is increasing, for every $\epsilon\in(0,1)$ there is a $T(\epsilon)>0$ such that
\[
h(Lg(t)(1-\epsilon))<h(f(t))<h(Lg(t)(1+\epsilon)), \quad t>T(\epsilon).
\]
Now as $g(t)\to 0$ as $t\to\infty$ and $h\in \text{RV}_0(\gamma)$ we have
\[
\{L(1-\epsilon)\}^\gamma
=
\lim_{t\to\infty}
\frac{h(Lg(t)(1-\epsilon))}{h(g(t))}
\geq
\limsup_{t\to\infty} \frac{h(f(t))}{h(g(t))},
\]
so
\[
\limsup_{t\to\infty} \frac{h(f(t))}{h(g(t))} \leq L^\gamma.
\]
Similarly, we have
\[
\liminf_{t\to\infty} \frac{h(f(t))}{h(g(t))} \geq L^\gamma,
\]
whence \eqref{eq.hfgrv}.
\end{proof}

\begin{proof}[Proof of Theorem~\ref{thm.tauproptnotlikeode}]
Suppose that $G(x(t))/t\to a-b$ as $t\to\infty$. Therefore
\[
\lim_{t\to\infty} \frac{G(x(t-\tau(t)))}{t}
=\lim_{t\to\infty}  \frac{G(x(t-\tau(t)))}{t-\tau(t)}\cdot \frac{t-\tau(t)}{t}=(a-b)(1-q).
\]
Since $g\in \text{RV}_0(\beta)$, we have $1/g\in \text{RV}_0(-\beta)$, so $G\in \text{RV}_0(1-\beta)$.
Thus $G^{-1}\in \text{RV}_\infty(-1/(\beta-1))$ and moreover $G^{-1}$ is decreasing.
Therefore by Lemma~\ref{lemma.hfgrv} part (i) we get
\begin{equation} \label{eq.xGinvcontra}
\lim_{t\to\infty} \frac{x(t)}{G^{-1}(t)}=(a-b)^{-1/(\beta-1)},
\end{equation}
and
\[
\lim_{t\to\infty} \frac{x(t-\tau(t))}{G^{-1}(t)}=\left((a-b)(1-q)\right)^{-1/(\beta-1)}.
\]
Since $g$ is increasing, and $g\in\text{RV}_0(\beta)$, by part (ii) of Lemma~\ref{lemma.hfgrv} we have
\[
\lim_{t\to\infty} \frac{g(x(t))}{g(G^{-1}(t))}=(a-b)^{-\beta/(\beta-1)}, \quad \lim_{t\to\infty} \frac{g(x(t-\tau(t)))}{g(G^{-1}(t))}=\left((a-b)(1-q)\right)^{-\beta/(\beta-1)}.
\]
Hence
\begin{equation*}
\lim_{t\to\infty} \frac{-ag(x(t))+bg(x(t-\tau(t)))}{g(G^{-1}(t))}
=-a(a-b)^{-\beta/(\beta-1)}+ b\left((a-b)(1-q)\right)^{-\beta/(\beta-1)}.
\end{equation*}
This implies
\begin{equation} \label{eq.xprgG}
\lim_{t\to\infty} \frac{x'(t)}{g(G^{-1}(t))}
=-(a-b)^{-\beta/(\beta-1)}\left(a - b(1-q)^{-\beta/(\beta-1)}\right).
\end{equation}
Now $g\circ G^{-1} \in \text{RV}_\infty(-\beta/(\beta-1))$. Therefore
\[
\lim_{t\to\infty}
\frac{\log |(g\circ G^{-1})(t)|}{\log t}=-\frac{\beta}{\beta-1}.
\]
Since $\beta>1$, $-\beta/(\beta-1)<-1$. Therefore $g\circ G^{-1}\in L^1(0,\infty)$; since $x(t)\to 0$ as $t\to\infty$, by  L'H\^{o}pital's rule
and \eqref{eq.xprgG} we get
\[
\lim_{t\to\infty} \frac{x(t)}{\int_t^\infty g(G^{-1}(s))\,ds}
=(a-b)^{-\beta/(\beta-1)}\left(a - b(1-q)^{-\beta/(\beta-1)}\right).
\]
Since $G(0)=+\infty$ we have
\[
\int_t^\infty g(G^{-1}(s))\,ds=\int_{G^{-1}(t)}^{G^{-1}(\infty)} g(u) G'(u)\,du  
=\int_0^{G^{-1}(t)} g(u)\frac{1}{g(u)}\,du=G^{-1}(t).
\]
Therefore
\[
\lim_{t\to\infty} \frac{x(t)}{G^{-1}(t)}
=(a-b)^{-\beta/(\beta-1)}\left(a - b(1-q)^{-\beta/(\beta-1)}\right).
\]
Now \eqref{eq.xGinvcontra} implies $x(t)/G^{-1}(t)\to (a-b)^{-1/(\beta-1)}$ as $t\to\infty$, so we must have
\[
(a-b)^{-1/(\beta-1)}
=(a-b)^{-\beta/(\beta-1)}\left(a - b(1-q)^{-\beta/(\beta-1)}\right).
\]
Since $a>b$ this implies
\[
a-b
=a - b(1-q)^{-\beta/(\beta-1)},
\]
and as $b>0$ we have $1= (1-q)^{-\beta/(\beta-1)}$, which implies $1-q=1$ or $q=0$, which contradicts
\eqref{eq.tauasytoqt}.
\end{proof}

\section{Proof of Theorem~\ref{thm.odedecay2}}
We are going to prove this result in two parts: first, we will show that there exists $\Lambda_1>0$ such that
\begin{equation}  \label{eq.GxgtLambdata}
at\geq G_0(x(t))\geq \Lambda_1 t, \quad t\geq1.
\end{equation}
where $G_0$ is defined by \eqref{def.G0}. From this the result
\[
\liminf_{t\to\infty} \frac{x(t)}{G^{-1}(t)}\leq \limsup_{t\to\infty} \frac{x(t)}{G^{-1}(t)}<+\infty
\]
holds. Then we will use this estimate to show that
\begin{equation}  \label{eq.xpropdelayasy1liminf}
\liminf_{t\to\infty} \frac{x(t)}{G^{-1}(t)}\geq \Lambda
\end{equation}
where $\Lambda$ is given by \eqref{def.Lambdalim}.

We first note that \eqref{eq.GxgtLambdata} holds.
Define $G_0$ by \eqref{def.G0}; once again we note that $G_0$ is decreasing on $(0,\infty)$ is therefore invertible. We may also deduce
\eqref{eq.Gxleqat} viz.,
\begin{equation*}
G_0(x(t))\leq at, \quad t\geq 0.
\end{equation*}
As before define $\Gamma:[0,\infty)\to (0,\infty)$ by \eqref{def.Gamma}. Then $\Gamma\in \text{RV}_\infty(-\beta/(\beta-1))$, and
so $g$ obeys \eqref{eq:g4} (and \eqref{eq:g4alt4} in particular) with $\gamma=\beta/(\beta-1)$.  Since $a$, $b$, $\beta$, $q$ obey
\eqref{eq.abqsmall} and $q\in[0,1)$, they obey \eqref{eq.abqsmallsuperhyp}. Since $\tau$ obeys \eqref{eq.tauasytoqt}, all
the hypotheses of Theorem~\ref{thm.odedecaygen} hold, so we have that \eqref{eq.GxgtLambdata}.

\subsection{Proof of \eqref{eq.xpropdelayasy1liminf}}
We show when $a$, $b$, $q$ and $\beta$ obey \eqref{eq.abqsmall}, and $\tau$ obeys \eqref{eq.tauasytoqt} that we can to obtain the limit
\begin{equation*}
\liminf_{t\to\infty} \frac{x(t)}{G^{-1}(t)} \geq \Lambda,
\end{equation*}
where $\Lambda$ is defined by \eqref{def.Lambdalim}.
To do this we first need two preliminary lemmata.

\begin{lemma} \label{lemma.existlambda}
Let $a>b>0$, $\beta>1$, $q\in(0,1)$ obey \eqref{eq.abqsmall}, and let $\Lambda$ be given by \eqref{def.Lambdalim}.
Define $\lambda_1=a^{-1/(\beta-1)}$. Then there is a sequence $(\lambda_n)_{n\geq 1}$
defined by
\begin{equation}  \label{def.lambdaseq}
a\lambda_{n+1}^\beta=\lambda_{n+1}+b\lambda_n^\beta(1-q)^{-\beta/(\beta-1)},  \quad n\geq 1
\end{equation}
such that $\lambda_n\in[a^{-1/(\beta-1)},\Lambda)$ for all $n\geq 1$, $(\lambda_n)_{n\geq 1}$ is increasing, and $\lambda_n\to\Lambda$ as $n\to\infty$.
\end{lemma}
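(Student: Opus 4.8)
The plan is to analyse the scalar recurrence \eqref{def.lambdaseq} by viewing it as the iteration of a fixed-point map and showing that the map has exactly the right monotonicity and fixed-point structure. First I would rewrite \eqref{def.lambdaseq} in the form $\lambda_{n+1}=F(\lambda_n)$, where $F$ is defined implicitly: for $\mu\ge 0$, let $F(\mu)$ be the (unique positive) solution $x$ of
\[
ax^\beta - x - b\mu^\beta(1-q)^{-\beta/(\beta-1)}=0.
\]
The function $x\mapsto ax^\beta-x$ is strictly convex on $(0,\infty)$, vanishes at $x=0$ and tends to $+\infty$, so for each fixed $\mu\ge 0$ the right-hand side $b\mu^\beta(1-q)^{-\beta/(\beta-1)}\ge 0$ is attained at exactly one point $x$ lying on the increasing branch, i.e.\ at $x$ with $a\beta x^{\beta-1}>1$; this defines $F:\ [0,\infty)\to(0,\infty)$ unambiguously (note $F(0)=a^{-1/(\beta-1)}=\lambda_1$, since then $ax^\beta=x$). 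Implicit differentiation shows $F$ is $C^1$ and strictly increasing in $\mu$, because increasing the constant term forces the solution on the increasing branch to move right.

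Next I would identify the fixed points of $F$. A fixed point $\ell$ satisfies $a\ell^\beta=\ell+b\ell^\beta(1-q)^{-\beta/(\beta-1)}$, i.e.\ $\ell^{\beta-1}\bigl(a-b(1-q)^{-\beta/(\beta-1)}\bigr)=1$, that is $\ell^{\beta-1}\Lambda=1$ with $\Lambda$ as in \eqref{def.Lambdalim} — here hypothesis \eqref{eq.abqsmall} is exactly what guarantees $\Lambda>0$, so $\ell=\Lambda^{-1/(\beta-1)}$ is a genuine positive fixed point. Wait — I should double-check the normalisation: from \eqref{def.Lambdalim}, $\Lambda=a-b(1-q)^{-1/(\beta-1)}$ as written, but comparison with \eqref{eq.xpropdelayasy1} shows the intended quantity is $a-b(1-q)^{-\beta/(\beta-1)}$, and the statement of the lemma asserts $\lambda_n\to\Lambda$ directly, so the bookkeeping constant in \eqref{def.lambdaseq} must be arranged so that the fixed point of $F$ equals $\Lambda$ itself (a harmless re-scaling of the recursion variables); I would simply carry the constant symbolically and verify at the end that the fixed point coincides with the $\Lambda$ named in the lemma. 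Granting this, on the interval $[\lambda_1,\Lambda)$ the map $F$ has no other fixed point (the fixed-point equation is equivalent to a single power equation in $\ell$), $F(\lambda_1)=F(a^{-1/(\beta-1)})>\lambda_1$ — because the constant term in the defining equation is strictly positive when $\mu=\lambda_1>0$, pushing the root strictly right of $a^{-1/(\beta-1)}$ — and $F$ is increasing, hence $F$ maps $[\lambda_1,\Lambda)$ into itself with $F(\mu)>\mu$ there.

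From these three facts the conclusion is standard: starting at $\lambda_1$, the sequence $\lambda_{n+1}=F(\lambda_n)$ is increasing (by $F(\mu)>\mu$ and induction using monotonicity of $F$), bounded above by $\Lambda$ (induction: $\lambda_n<\Lambda\Rightarrow\lambda_{n+1}=F(\lambda_n)<F(\Lambda)=\Lambda$), hence convergent; its limit is a fixed point of $F$ in $[\lambda_1,\Lambda]$, and the only such fixed point is $\Lambda$, so $\lambda_n\uparrow\Lambda$. I would also record that $\lambda_n\ge\lambda_1=a^{-1/(\beta-1)}$ for all $n$, which is immediate from monotonicity. The main obstacle I anticipate is purely bookkeeping rather than conceptual: pinning down the exact exponent/constant in \eqref{def.lambdaseq} so that the fixed point is literally the $\Lambda$ of \eqref{def.Lambdalim}, and confirming that $\lambda_1=a^{-1/(\beta-1)}<\Lambda$ (equivalently $\Lambda^{\beta-1}\cdot a^{-1}<1$, i.e.\ $a^{-1}\bigl(a-b(1-q)^{-\beta/(\beta-1)}\bigr)<1$, which holds since $b,(1-q)^{-\beta/(\beta-1)}>0$) so that the invariant interval is nondegenerate; once the convexity picture for $x\mapsto ax^\beta-x$ is in place, everything else is routine.
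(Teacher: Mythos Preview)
Your proposal is correct and is essentially the same argument as the paper's, just repackaged: the paper carries out the induction directly by defining $f_n(\lambda)=a\lambda^\beta-\lambda-b\lambda_n^\beta(1-q)^{-\beta/(\beta-1)}$ and checking $f_n(\lambda_n)<0<f_n(\Lambda)$, while you encode the same intermediate-value-plus-convexity step as iteration of the implicit map $F$. You are also right to flag the bookkeeping: the displayed formula \eqref{def.Lambdalim} in the paper is a typo, and the $\Lambda$ actually used in the proof (and in \eqref{eq.xpropdelayasy1}) is $\Lambda=\bigl(a-b(1-q)^{-\beta/(\beta-1)}\bigr)^{-1/(\beta-1)}$, which is precisely the fixed point of your $F$; your parenthetical check that $\lambda_1<\Lambda$ has the inequality written the wrong way round, but the conclusion is correct since $a-b(1-q)^{-\beta/(\beta-1)}<a$.
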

\begin{proof}
Write the $n$-th level hypothesis as
\[
(\lambda_j)_{j\leq n}\text{ is well--defined by \eqref{def.lambdaseq} and increasing }\quad
\lambda_j\in [a^{-1/(\beta-1)},\Lambda), \quad j\leq n.
\]
We have that the first level hypothesis is true. Suppose the $n$-th level hypothesis holds. Define
\[
f_n(\lambda)=a\lambda^\beta-\lambda -b\lambda_n^\beta(1-q)^{-\beta/(\beta-1)}, \quad \lambda\in[0,\Lambda].
\]
Note that $a\Lambda^\beta-\Lambda -b\Lambda^\beta(1-q)^{-\beta/(\beta-1)}=0$. Therefore as $\lambda_n\in (0,\Lambda)$ by hypothesis
\[
f_n(\Lambda)=a\Lambda^\beta-\Lambda-b\lambda_n^\beta(1-q)^{-\beta/(\beta-1)}=b(1-q)^{-\beta/(\beta-1)}\left(\Lambda^\beta-\lambda_n^\beta\right)>0.
\]
Also
\begin{align*}
f_n(\lambda_n)&=a\lambda_n^\beta-\lambda_n-b\lambda_n(1-q)^{-\beta/(\beta-1)}=\lambda_n\left\{\left(a-b(1-q)^{-\beta/(\beta-1)}\right)\lambda_n^{\beta-1}-1\right\}\\
&=\lambda_n((\lambda_n/\Lambda)^{\beta-1}-1)<0.
\end{align*}
Therefore there exists $\lambda_{n+1}\in(\lambda_n,\Lambda)$ such that $f_n(\lambda_{n+1})=0$. Such a $\lambda_{n+1}$ obeys \eqref{def.lambdaseq}.
We now show that \eqref{def.lambdaseq} uniquely defines $\lambda_{n+1}\in(\lambda_n,\Lambda)$.
Since $f_n'(\lambda)=a\beta\lambda^{\beta-1}-1$ and $f_n''(\lambda)=a\beta(\beta-1)\lambda^{\beta-2}>0$, we have that $f_n'(\lambda)>f_n'(\lambda_n)=a\beta\lambda_n^{\beta-1}-1\geq \beta-1>0$, because
$\lambda_n\geq a^{-1/(\beta-1)}$. Since $f_n$ is increasing on $(\lambda_n,\Lambda)\subseteq(a^{-1/(\beta-1)},\Lambda)$, $\lambda_{n+1}$
is uniquely defined by \eqref{def.lambdaseq}. Therefore the $(n+1)$--th level hypothesis holds. Since $(\lambda_n)_{n\geq 1}$ is increasing
and bounded above by $\Lambda$, we have that $\lambda_n\to \Lambda_\ast\in(a^{-1/(\beta-1)},\Lambda]$. By \eqref{def.lambdaseq} we have
$a\Lambda_\ast^\beta-\Lambda_\ast -b\Lambda_\ast^\beta(1-q)^{-\beta/(\beta-1)}=0$, which implies $\Lambda_\ast=\Lambda$.
\end{proof}

\begin{lemma} \label{lemma.liminfphiGinv}
Let $g$ obey \eqref{eq:galt} and $G$ be given by \eqref{def.G}. Let $x$ be positive and continuous with $x(t)\to 0$ as $t\to\infty$ and suppose that $x$ obeys
\[
\liminf_{t\to\infty} \frac{x(t)}{G^{-1}(t)}\geq \lambda>0.
\]
If $\tau$ is continuous and obeys \eqref{eq.tauasytoqt} for some $q\in(0,1)$ and $\varphi$ is defined by
\begin{equation}  \label{def.varphi}
\varphi(t)=bg(x(t-\tau(t))), \quad t>0.
\end{equation}
If $b>0$, then
\[
\liminf_{t\to\infty} \frac{\varphi(t)}{G^{-1}(t)}\geq b\lambda^\beta (1-q)^{-\beta/(\beta-1)}.
\]
\end{lemma}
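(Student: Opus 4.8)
The plan is to convert the lower bound $\liminf_{t\to\infty} x(t)/G^{-1}(t)\ge\lambda$ into a lower bound for $\varphi(t)=bg(x(t-\tau(t)))$ measured against the natural scale $g(G^{-1}(t))$, via two successive changes of scale: the first governed by the growth rate of the delay (exploiting that $G^{-1}$ is regularly varying at $\infty$), the second by the behaviour of $g$ at $0$. First I would record the standard preliminaries, exactly as at the start of the proof of Theorem~\ref{thm.odedecay}: under \eqref{eq:galt} one has $g\in\text{RV}_0(\beta)$, hence $1/g\in\text{RV}_0(-\beta)$ and $G\in\text{RV}_0(1-\beta)$ with $G(0^+)=+\infty$; consequently $G^{-1}\in\text{RV}_\infty(-1/(\beta-1))$ is strictly decreasing with $G^{-1}(t)\to0$ as $t\to\infty$, and $\Gamma:=g\circ G^{-1}\in\text{RV}_\infty(-\beta/(\beta-1))$. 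Recall also that $g$ is increasing on $(0,\delta_1)$.

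Next I would carry out the scaling argument. Since $\tau$ obeys \eqref{eq.tauasytoqt} with $q\in(0,1)$ we have $t-\tau(t)\to\infty$ and $(t-\tau(t))/t\to1-q>0$. Fix $\epsilon\in(0,1)$. By the hypothesis together with $x(t)\to0$, there is $T(\epsilon)$ with $x(s)\ge(1-\epsilon)\lambda\,G^{-1}(s)$ and $x(s)<\delta_1$ for all $s\ge T(\epsilon)$; evaluating at $s=t-\tau(t)$ gives, for all large $t$, $x(t-\tau(t))\ge(1-\epsilon)\lambda\,G^{-1}(t-\tau(t))$. Applying Lemma~\ref{lemma.hfgrv}(i) with the decreasing function $h=G^{-1}\in\text{RV}_\infty(-1/(\beta-1))$, $f(t)=t-\tau(t)$ and comparison function $t\mapsto t$ (so $L=1-q$), we get $G^{-1}(t-\tau(t))/G^{-1}(t)\to(1-q)^{-1/(\beta-1)}$, hence $x(t-\tau(t))\ge(1-\epsilon)^2\lambda(1-q)^{-1/(\beta-1)}G^{-1}(t)$ for all large $t$. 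Since $G^{-1}(t)\to0$, both $x(t-\tau(t))$ and this lower bound lie in $(0,\delta_1)$ for large $t$, where $g$ is increasing, so $g(x(t-\tau(t)))\ge g\bigl((1-\epsilon)^2\lambda(1-q)^{-1/(\beta-1)}G^{-1}(t)\bigr)$. Now apply Lemma~\ref{lemma.hfgrv}(ii) with the increasing function $h=g\in\text{RV}_0(\beta)$, with $f(t)=(1-\epsilon)^2\lambda(1-q)^{-1/(\beta-1)}G^{-1}(t)$ and comparison function $t\mapsto G^{-1}(t)\to0$ (so $L=(1-\epsilon)^2\lambda(1-q)^{-1/(\beta-1)}$), to conclude $g(f(t))/g(G^{-1}(t))\to\bigl((1-\epsilon)^2\lambda(1-q)^{-1/(\beta-1)}\bigr)^\beta$.

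Combining the last two displays, $\liminf_{t\to\infty}\varphi(t)/g(G^{-1}(t))=\liminf_{t\to\infty} b\,g(x(t-\tau(t)))/\Gamma(t)\ge b(1-\epsilon)^{2\beta}\lambda^\beta(1-q)^{-\beta/(\beta-1)}$, and letting $\epsilon\downarrow0$ yields $\liminf_{t\to\infty}\varphi(t)/g(G^{-1}(t))\ge b\lambda^\beta(1-q)^{-\beta/(\beta-1)}$, which is the assertion (the relevant comparison scale being $g(G^{-1}(t))=\Gamma(t)$). There is no serious difficulty — the argument is the bookkeeping of two nested regular-variation limits — but the point needing care is that $x$ is not assumed monotone, so one must use the separate hypothesis $x(t)\to0$ (rather than the $\liminf$ bound) to guarantee that $x(t-\tau(t))$ and the comparison quantity $(1-\epsilon)^2\lambda(1-q)^{-1/(\beta-1)}G^{-1}(t)$ eventually lie in $(0,\delta_1)$, the interval on which $g$ is monotone and on which its $\text{RV}_0(\beta)$ behaviour is available; and the two halves of Lemma~\ref{lemma.hfgrv} must be invoked in the right order — first the $\text{RV}_\infty$ function $G^{-1}$, where the scaling factor $(1-q)^{-1/(\beta-1)}$ is produced by $\tau(t)/t\to q$, then the $\text{RV}_0$ function $g$, which raises that factor to the power $\beta$.
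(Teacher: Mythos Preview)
Your proof is correct and follows essentially the same route as the paper's: both start from $x(t-\tau(t))\ge(1-\epsilon)\lambda\,G^{-1}(t-\tau(t))$, pass through the regular variation of $G^{-1}$ at infinity to replace $G^{-1}(t-\tau(t))$ by a multiple of $G^{-1}(t)$, apply the monotonicity of $g$ on $(0,\delta_1)$, and finish with the regular variation of $g$ at $0$ before letting $\epsilon\downarrow 0$. The only cosmetic difference is that you package both regular-variation steps as direct applications of Lemma~\ref{lemma.hfgrv}, whereas the paper writes out the intermediate $\epsilon$-inequalities by hand (using $t-\tau(t)\le(1+\epsilon)(1-q)t$ and monotonicity of $G^{-1}$); your version is slightly tidier. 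You have also correctly identified that the comparison scale in the conclusion is $g(G^{-1}(t))$ rather than $G^{-1}(t)$ as printed in the statement --- this is indeed how the lemma is proved and subsequently used in the paper.
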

\begin{proof}
For every $\epsilon\in(0,1)$ there is a $T_1(\epsilon)>0$ such that $x(t)>\lambda(1-\epsilon)G^{-1}(t)$ for $t>T_1(\epsilon)$.
Since $t-\tau(t)\to\infty$ as $t\to\infty$ there exists $T_2(\epsilon)>T_1(\epsilon)$ such that $t-\tau(t)>T_1(\epsilon)$
for all $t>T_2(\epsilon)$. Hence $x(t-\tau(t))>\lambda(1-\epsilon)G^{-1}(t-\tau(t))$ for $t>T_2(\epsilon)$. Since $x(t)\to 0$
as $t\to\infty$, there exists $T_3>0$ such that $x(t-\tau(t))<\delta_1$ for all $t>T_3$. Also for every $\epsilon\in(0,1)$ there is
$T_4(\epsilon)>0$ such that $t-\tau(t)\leq (1+\epsilon)(1-q)t$ for all $t\geq T_4(\epsilon)$.
Since $G^{-1}\in \text{RV}_\infty(-1/(\beta-1))$, for every $\epsilon\in(0,1)$ there exists $T_5(\epsilon)>0$ such that
\[
G^{-1}((1+\epsilon)(1-q)t)>(1-\epsilon)\{(1+\epsilon)(1-q)\}^{-\beta/(\beta-1)}G^{-1}(t), \quad t>T_5(\epsilon).
\]
Let $T_6(\epsilon)=\max(T_2(\epsilon),T_3,T_4(\epsilon),T_5(\epsilon))$. Then for $t>T_6(\epsilon)$ we have
\[
\delta_1>x(t-\tau(t))>\lambda(1-\epsilon)G^{-1}(t-\tau(t))\geq \lambda(1-\epsilon)G^{-1}((1+\epsilon)(1-q)t).
\]
Therefore
\[
\delta_1>x(t-\tau(t))>\lambda(1-\epsilon)^2\{(1+\epsilon)(1-q)\}^{-1/(\beta-1)}G^{-1}(t), \quad t>T_6(\epsilon).
\]
Hence as $g$ is increasing on $[0,\delta_1]$ we have
\[
g(x(t-\tau(t)))>g(\lambda(1-\epsilon)^2\{(1+\epsilon)(1-q)\}^{-1/(\beta-1)}G^{-1}(t)), \quad t>T_6(\epsilon).
\]
Hence
\begin{align*}
\liminf_{t\to\infty} \frac{\varphi(t)}{g(G^{-1}(t))}
&= b
\liminf_{t\to\infty} \frac{g(x(t-\tau(t)))}{g(G^{-1}(t))}\\
&
\geq b
\liminf_{t\to\infty} \frac{g(\lambda(1-\epsilon)^2\{(1+\epsilon)(1-q)\}^{-1/(\beta-1)}G^{-1}(t))}{g(G^{-1}(t))}\\
&=b\lambda^\beta(1-\epsilon)^{2\beta}\{(1+\epsilon)(1-q)\}^{-\beta/(\beta-1)}.
\end{align*}
Therefore
\[
\liminf_{t\to\infty} \frac{\varphi(t)}{g(G^{-1}(t))}
\geq b\lambda^\beta (1-q)^{-\beta/(\beta-1)},
\]
as required.
\end{proof}

We are now in a position to prove \eqref{eq.xpropdelayasy1liminf}. To do this, it is important first to show that $\Lambda_1\in(0,\infty)$ defined by
\begin{equation} \label{eq.xpropdelaylimsup}
\Lambda_1=\limsup_{t\to\infty} \frac{x(t)}{G^{-1}(t)}
\end{equation}
obeys $\Lambda_1\geq \Lambda$. The argument used to prove this can then be adapted easily to prove \eqref{eq.xpropdelayasy1liminf}.
We formulate the desired result in the following proposition.

\begin{proposition} \label{prop.liminfsharpboundpropdelay}
Let $x$ be the solution of \eqref{T1.c} and suppose that all the hypotheses of Theorem~\ref{thm.odedecay2} hold. Then there exists a $\Lambda_1\in(0,\infty)$ such that \eqref{eq.xpropdelaylimsup}, and $\Lambda_1\geq\Lambda$  where $\Lambda$ is given by \eqref{def.Lambdalim}.
Moreover $x$ obeys \eqref{eq.xpropdelayasy1liminf}.
\end{proposition}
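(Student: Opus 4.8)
The plan is to derive \eqref{eq.xpropdelayasy1liminf} by induction along the sequence $(\lambda_n)_{n\ge1}$ furnished by Lemma~\ref{lemma.existlambda}, and then to read off the claims about $\Lambda_1$ at once. The bookkeeping at the two ends is immediate: the estimate $G_0(x(t))\le at$ (which is \eqref{eq.Gxleqat}, and holds because $\varphi(t):=bg(x(t-\tau(t)))\ge 0$) together with $G_0$ decreasing gives $x(t)\ge G_0^{-1}(at)$, whence, using $G_0^{-1}(ct)/G^{-1}(t)\to c^{-1/(\beta-1)}$ as $t\to\infty$ for each fixed $c>0$ (a consequence of $G_0^{-1}/G^{-1}\to1$ at infinity and $G^{-1}\in\mathrm{RV}_\infty(-1/(\beta-1))$), one gets the base case $\liminf_{t\to\infty}x(t)/G^{-1}(t)\ge a^{-1/(\beta-1)}=\lambda_1$. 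Symmetrically, the lower bound $G_0(x(t))\ge\Lambda_\ast t$ for $t\ge1$ provided by \eqref{eq.GxgtLambdata} yields $\limsup_{t\to\infty}x(t)/G^{-1}(t)\le\Lambda_\ast^{-1/(\beta-1)}<\infty$, and this $\limsup$ is clearly positive; hence $\Lambda_1:=\limsup_{t\to\infty}x(t)/G^{-1}(t)$ lies in $(0,\infty)$, proving the first part of \eqref{eq.xpropdelaylimsup}. Everything then follows once we prove $\liminf_{t\to\infty}x(t)/G^{-1}(t)\ge\Lambda$, since this forces $\Lambda_1\ge\liminf\ge\Lambda$ and is itself \eqref{eq.xpropdelayasy1liminf}.

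For the inductive step, suppose $\liminf_{t\to\infty}x(t)/G^{-1}(t)\ge\lambda_n$. Lemma~\ref{lemma.liminfphiGinv} applied to $\varphi(t)=bg(x(t-\tau(t)))$ gives $\liminf_{t\to\infty}\varphi(t)/g(G^{-1}(t))\ge c_n:=b\lambda_n^\beta(1-q)^{-\beta/(\beta-1)}$. We compare $x$, which solves $x'(t)=-ag(x(t))+\varphi(t)$, from below with the time--shifted functions $Z_s(t):=G_0^{-1}\!\big(\mu(t+s)\big)$, which satisfy $Z_s'(t)=-\mu g(Z_s(t))$. On $(0,a)$ the function $h(\mu):=(a-\mu)\mu^{-\beta/(\beta-1)}$ is continuous, strictly decreasing, with $h(0^+)=+\infty$ and $h(a^-)=0$, so there is a unique $\mu$ with $h(\mu)=c_n$; writing $\mu=\nu^{-(\beta-1)}$ one sees this $\mu$ satisfies $a\nu^\beta=\nu+b\lambda_n^\beta(1-q)^{-\beta/(\beta-1)}$, so by \eqref{def.lambdaseq} it corresponds to $\nu=\lambda_{n+1}$, i.e. $\mu^{-1/(\beta-1)}=\lambda_{n+1}$. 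Now fix $\epsilon>0$ and pick $\mu=\mu_n(\epsilon)$ slightly larger, so that $h(\mu_n(\epsilon))<c_n$ while $\mu_n(\epsilon)^{-1/(\beta-1)}\uparrow\lambda_{n+1}$ as $\epsilon\downarrow0$. Since $g\circ G_0^{-1}=:\Gamma_0\in\mathrm{RV}_\infty(-\beta/(\beta-1))$ is eventually decreasing and $\Gamma_0(t)/g(G^{-1}(t))\to1$, for all $s\ge0$ and all $t$ beyond a threshold $T^\ast=T^\ast(\epsilon)$ that is \emph{independent of $s$} one has $(a-\mu_n(\epsilon))g(Z_s(t))\le(a-\mu_n(\epsilon))\Gamma_0(\mu_n(\epsilon)t)<\varphi(t)$, because the middle term is asymptotic to $h(\mu_n(\epsilon))g(G^{-1}(t))$ whereas $\liminf\varphi(t)/g(G^{-1}(t))\ge c_n>h(\mu_n(\epsilon))$. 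Thus $Z_s$ is a strict subsolution of $y'=-ag(y)+\varphi(t)$ on $[T^\ast,\infty)$ for every $s\ge0$. Choosing $s$ so large that $Z_s(T^\ast)=G_0^{-1}(\mu_n(\epsilon)(T^\ast+s))<x(T^\ast)$ (possible since the left side tends to $0$ as $s\to\infty$), the usual one--line contradiction argument (if $Z_s$ first met $x$ at $t_1>T^\ast$ then $Z_s'(t_1)\ge x'(t_1)$, yet $Z_s'(t_1)<-ag(Z_s(t_1))+\varphi(t_1)=x'(t_1)$) shows $x(t)>Z_s(t)$ for all $t\ge T^\ast$. Hence $\liminf_{t\to\infty}x(t)/G^{-1}(t)\ge\lim_{t\to\infty}Z_s(t)/G^{-1}(t)=\mu_n(\epsilon)^{-1/(\beta-1)}$, and letting $\epsilon\downarrow0$ gives $\liminf_{t\to\infty}x(t)/G^{-1}(t)\ge\lambda_{n+1}$.

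Since $(\lambda_n)$ is increasing with $\lambda_n\to\Lambda$ (Lemma~\ref{lemma.existlambda}), the base case and the inductive step yield $\liminf_{t\to\infty}x(t)/G^{-1}(t)\ge\Lambda$, which is \eqref{eq.xpropdelayasy1liminf}; together with $\Lambda_1\in(0,\infty)$ and $\Lambda_1\ge\liminf\ge\Lambda$ established above, this proves Proposition~\ref{prop.liminfsharpboundpropdelay}. The main obstacle is the inductive step, and specifically the requirement that the differential inequality making $Z_s$ a subsolution hold from a time $T^\ast$ that does \emph{not} depend on the shift $s$, so that $s$ remains free to be chosen large enough to dominate the fixed positive number $x(T^\ast)$; this is what dictates estimating $g(Z_s(t))$ by the $s$--free quantity $\Gamma_0(\mu_n(\epsilon)t)$ using monotonicity, and it is here that the uniform convergence theorem for regularly varying functions is needed to control all error terms uniformly in $s\ge0$. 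One also has to check the routine points that $Z_s(t)\in(0,\delta_1)$ for $t\ge T^\ast$ (automatic as $Z_s(t)\to0$), so that $g$ is increasing and $C^1$ there, and that $G_0^{-1}$ is differentiable with $(G_0^{-1})'=-\,g\circ G_0^{-1}$.
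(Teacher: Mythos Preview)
Your argument is correct and takes a genuinely different route from the paper. Both proofs proceed by induction along the sequence $(\lambda_n)$ of Lemma~\ref{lemma.existlambda}, starting from the same base case $\liminf x/G^{-1}\ge a^{-1/(\beta-1)}=\lambda_1$; the essential difference lies in the comparison function used in the inductive step and in how the initial condition for the comparison is arranged.

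The paper takes $x_{L,\epsilon}(t)=\lambda_{n+1}(1-\epsilon)G^{-1}(t)$ as the lower solution. To get $x_{L,\epsilon}(T)<x(T)$ at some starting time $T$ it must know that $x$ actually attains values near $\lambda_{n+1}G^{-1}$, and for this it invokes the limsup $\Lambda_1$. This forces a case split: if $\lambda_{n+1}\le\Lambda_1$ the comparison goes through (statement~(a)); if $\lambda_{n+1}>\Lambda_1$ one shows instead that the limit equals $\Lambda_1$ (statement~(b)), which under the hypothesis $\Lambda_1<\Lambda$ contradicts the earlier uniqueness observation. Having established $\Lambda_1\ge\Lambda$ by contradiction, the paper then reruns statement~(a) indefinitely to reach \eqref{eq.xpropdelayasy1liminf}.

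You instead take $Z_s(t)=G_0^{-1}(\mu_n(\epsilon)(t+s))$, an exact solution of $y'=-\mu_n(\epsilon)g(y)$, and exploit the free shift $s$: after fixing $T^\ast$ so that the differential inequality holds (using the $s$--independent bound $g(Z_s(t))\le\Gamma_0(\mu_n(\epsilon)t)$ via eventual monotonicity of $\Gamma_0$), you send $s\to\infty$ to force $Z_s(T^\ast)<x(T^\ast)$ regardless of the size of $x(T^\ast)$. This decouples the initial condition from $\Lambda_1$ entirely, so no case split and no preliminary contradiction argument are needed; the induction runs straight to $\liminf\ge\Lambda$, from which $\Lambda_1\ge\Lambda$ is immediate. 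Your approach is more economical; the paper's has the minor advantage that its comparison function $\lambda G^{-1}(t)$ is already in the form of the conjectured asymptotic, which makes the connection to \eqref{eq.xpropdelayasy1} slightly more transparent. One small remark: the reference to the uniform convergence theorem is not really needed for the $s$--uniformity of $T^\ast$ (monotonicity of $\Gamma_0$ does that job, as you note); where regular variation is genuinely used is in the final step $\lim_{t\to\infty}Z_s(t)/G^{-1}(t)=\mu_n(\epsilon)^{-1/(\beta-1)}$, to absorb the fixed shift $s$.
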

\begin{proof}
The existence of a $\Lambda_1\in(0,\infty)$ satisfying \eqref{eq.xpropdelaylimsup} is a consequence of \eqref{eq.GxgtLambdata}. We next show that $\Lambda_1\geq \Lambda$: the proof of
this will be by contradiction.

Define $\lambda_1=a^{-1/(\beta-1)}$ and $(\lambda_n)_{n\geq 1}$ by \eqref{def.lambdaseq}. Then $\lambda_n\to\Lambda$ as $n\to\infty$. Since
$G_0(x(t))\leq at$ for all $t\geq 0$, as $G_0^{-1} \in \text{RV}_\infty(-1/(\beta-1)$ and $G_0^{-1}$ is decreasing, we have
$x(t)\geq G_0^{-1}(at)$. Therefore
\[
\liminf_{t\to\infty} \frac{x(t)}{G_0^{-1}(t)}\geq \liminf_{t\to\infty} \frac{ G_0^{-1}(at)}{G_0^{-1}(t)}=a^{-1/(\beta-1)}=\lambda_1.
\]
Clearly we have $\Lambda_1\geq \lambda_1$. If $\Lambda_1=\lambda_1$, then
\[
\lim_{t\to\infty} \frac{x(t)}{G^{-1}(t)}=\lambda_1<\Lambda,
\]
which produces a contradiction. Therefore $\Lambda_1>\lambda_1$.

Suppose that $\Lambda_1<\Lambda$. By Lemma~\ref{lemma.existlambda} there is a minimal $n'>1$ such that $\Lambda>\lambda_{n'+1}>\Lambda_1$ but $\lambda_{n'}\leq \Lambda_1$.

We will show that the following statements are true:
\begin{itemize}
\item[(a)] If $\lambda_{n+1}\leq \Lambda_1$, and
\[
\liminf_{t\to\infty} \frac{x(t)}{G^{-1}(t)}\geq \lambda_n
\]
then
\[
\liminf_{t\to\infty} \frac{x(t)}{G^{-1}(t)}\geq \lambda_{n+1}
\]
\item[(b)] If $\lambda_{n+1}>\Lambda_1$, and
\[
\liminf_{t\to\infty} \frac{x(t)}{G^{-1}(t)}\geq \lambda_n
\]
then
\begin{equation} \label{eq.limlambda1}
\lim_{t\to\infty} \frac{x(t)}{G^{-1}(t)}=\Lambda_1.
\end{equation}
\end{itemize}
The consequence of these statements is that \eqref{eq.limlambda1} holds.

To see this, note that by using statement (a) successively, we have that
\[
\liminf_{t\to\infty} \frac{x(t)}{G^{-1}(t)}\geq \lambda_{n'}>\lambda_{n'-1}>\ldots>\lambda_1.
\]
Since $\lambda_{n'+1}>\Lambda_1$ by applying statement (b) we have \eqref{eq.limlambda1}.

The limit \eqref{eq.limlambda1} now leads to a contradiction, because if $\lim_{t\to\infty} x(t)/G^{-1}(t)$ is finite and positive, it must be $\Lambda$. Therefore $\Lambda_1=\Lambda$. But $\Lambda_1<\Lambda$ by hypothesis, so this contradiction forces $\Lambda_1\geq \Lambda$. It therefore remains to prove the inferences (a), (b).

\textbf{Proof of Statement (a).}
Let $\epsilon\in(0,1)$ be sufficiently small. Since $g\in\text{RV}_0(\beta)$ we have
\[
\lim_{x\to 0^+} \frac{g(\lambda_{n+1}(1-\epsilon)x)}{g(x)}=(\lambda_{n+1}(1-\epsilon))^\beta.
\]
Since $\beta>1$ there exists $x_1(\epsilon)>0$ such that $x\in(0,x_1(\epsilon))$ implies
\[
g(\lambda_{n+1}(1-\epsilon)x)<(1-\epsilon)^{-(\beta-1)/2}(1-\epsilon)^\beta \lambda_{n+1}^\beta g(x), \quad 0<x<x_1(\epsilon).
\]
Since $G^{-1}(t)\to 0$ as $t\to\infty$, there is $T_0(\epsilon)$ such that $G^{-1}(t)<x_1(\epsilon)$ for all $t>T_0(\epsilon)$. Thus
\begin{equation} \label{eq.gest1qliminf}
g(\lambda_{n+1}(1-\epsilon)G^{-1}(t))<(1-\epsilon)^{-(\beta-1)/2}(1-\epsilon)^\beta \lambda_{n+1}^\beta g(G^{-1}(t)), \quad t>T_0(\epsilon).
\end{equation}
With $\varphi$ defined by \eqref{def.varphi}, by Lemma~\ref{lemma.liminfphiGinv} we have
\[
\liminf_{t\to\infty} \frac{\varphi(t)}{g(G^{-1}(t))}\geq b\lambda_n^\beta(1-q)^{-\beta/(\beta-1)}.
\]
Therefore for every $\epsilon\in(0,1)$ there exists $T_1(\varepsilon)>0$ such that
\begin{equation} \label{eq.gest2qliminf}
\varphi(t)>b(1-\epsilon)^{(\beta+1)/2} \lambda_n^\beta(1-q)^{-\beta/(\beta-1)}g(G^{-1}(t)), \quad t>T_1(\epsilon).
\end{equation}
Let $T_2(\epsilon)=\max(T_0(\epsilon),T_1(\epsilon))$. Since $\limsup_{t\to\infty} x(t)/G^{-1}(t)=\Lambda_1$, there exists
$T_3(\epsilon)>T_2(\epsilon)$ such that
\begin{equation} \label{eq.gest3qliminf}
x(T_3(\epsilon))>\Lambda_1(1-\epsilon)G^{-1}(T_3(\epsilon)).
\end{equation}
Since $x$ obeys \eqref{T1.c}, by \eqref{eq.gest2qliminf}, for $t>T_3(\epsilon)$ we have
\begin{equation} \label{eq.gest4qliminf}
x'(t)>-ag(x(t))+b(1-\epsilon)^{(\beta+1)/2}\lambda_n^\beta (1-q)^{-\beta/(\beta-1)}g(G^{-1}(t)), \quad t>T_3(\epsilon).
\end{equation}
We define
\begin{equation} \label{def.xLqliminf}
x_{L,\epsilon}(t)=\lambda_{n+1}(1-\epsilon)G^{-1}(t), \quad t\geq T_3(\epsilon).
\end{equation}
Since $\lambda_{n+1}\leq \Lambda_1$ by \eqref{def.xLqliminf} and \eqref{eq.gest3qliminf} we have
\[
x_{L,\epsilon}(T_3(\epsilon))=\lambda_{n+1}(1-\epsilon)G^{-1}(T_3(\epsilon))\leq \Lambda_1(1-\epsilon)G^{-1}(T_3(\epsilon))<x(T_3(\epsilon)).
\]
By \eqref{def.xLqliminf} for $t>T_3(\epsilon)$ we have $G(x_{L,\epsilon}(t)/\lambda_{n+1}(1-\epsilon))=t$, so
\[
-\frac{x_{L,\epsilon}'(t)}{g(G^{-1}(t))}=G'(G^{-1}(t)) x_{L,\epsilon}'(t) = \lambda_{n+1}(1-\epsilon), \quad t>T_3(\epsilon).
\]
Hence $x_{L,\epsilon}'(t)=- \lambda_{n+1}(1-\epsilon)g(G^{-1}(t))$ for $t>T_3(\epsilon)$. Thus by \eqref{eq.gest1qliminf} and \eqref{def.lambdaseq}
we get
\begin{align*}
\lefteqn{x_{L,\epsilon}'(t)+ag(x_{L,\epsilon}(t))-b(1-\epsilon)^{(\beta+1)/2}\lambda_n^\beta (1-q)^{-\beta/(\beta-1)}g(G^{-1}(t))}\\
&=- \lambda_{n+1}(1-\epsilon)g(G^{-1}(t))+ag(\lambda_{n+1}(1-\epsilon)G^{-1}(t))\\
&\qquad\qquad-b(1-\epsilon)^{(\beta+1)/2}\lambda_n^\beta (1-q)^{-\beta/(\beta-1)}g(G^{-1}(t))\\
&<- \lambda_{n+1}(1-\epsilon)g(G^{-1}(t))
+a
(1-\epsilon)^{-(\beta-1)/2}(1-\epsilon)^\beta \lambda_{n+1}^\beta g(G^{-1}(t))\\
&\qquad\qquad-b(1-\epsilon)^{(\beta+1)/2}\lambda_n^\beta (1-q)^{-\beta/(\beta-1)}g(G^{-1}(t))\\
&=\left\{- \lambda_{n+1}(1-\epsilon)
+
(1-\epsilon)^{(\beta+1)/2} \left(a\lambda_{n+1}^\beta
-b\lambda_n^\beta (1-q)^{-\beta/(\beta-1)}\right)\right\}g(G^{-1}(t))\\
&=\lambda_{n+1}(1-\epsilon)\left\{- 1
+
(1-\epsilon)^{(\beta-1)/2} \right\}g(G^{-1}(t)).
\end{align*}
Since $\beta>1$, we have
\[
x_{L,\epsilon}'(t)+ag(x_{L,\epsilon}(t))-b(1-\epsilon)^{(\beta+1)/2}\lambda_n^\beta (1-q)^{-\beta/(\beta-1)}g(G^{-1}(t))<0, \quad t>T_3(\epsilon).
\]
Using this inequality, \eqref{eq.gest3qliminf} and \eqref{eq.gest4qliminf}, the comparison principle implies that
\[
x(t)>x_{L,\epsilon}(t), \quad t\geq T_3(\epsilon).
\]
Therefore
\[
\liminf_{t\to\infty} \frac{x(t)}{G^{-1}(t)}\geq \liminf_{t\to\infty} \frac{x_{L,\epsilon}(t)}{G^{-1}(t)}=\lambda_{n+1}(1-\epsilon).
\]
Letting $\epsilon\to 0^+$ gives us the required limit.

\textbf{Proof of Statement (b).}
Since $\Lambda_1<\lambda_{n+1}$, we have $f_n(\lambda_{n+1})=0>f_n(\Lambda_1)$, we have
\[
a\Lambda_1^\beta-\Lambda_1+b\lambda_n^\beta(1-q)^{-\beta/(\beta-1)}<0.
\]
Therefore we may find  $\epsilon\in(0,1)$ sufficiently small that
\begin{equation}   \label{eq.lambda1epscondn}
a\Lambda_1^\beta(1-\epsilon)^{(\beta+1)/2}-\Lambda_1(1-\epsilon)
+b\lambda_n^\beta(1-q)^{-\beta/(\beta-1)}(1-\epsilon)^{(\beta+1)/2}<0.
\end{equation}
Arguing as above for every  $\epsilon\in(0,1)$ there exists $T_5(\epsilon)>0$ such that
\begin{equation} \label{eq.gest5qliminf}
g(\Lambda_{1}(1-\epsilon)G^{-1}(t))<(1-\epsilon)^{-(\beta-1)/2}(1-\epsilon)^\beta \Lambda_{1}^\beta g(G^{-1}(t)), \quad t>T_5(\epsilon),
\end{equation}
and there is  $T_6(\epsilon)>0$ such that
\begin{equation} \label{eq.gest6qliminf}
\varphi(t)>b(1-\epsilon)^{(\beta+1)/2} \lambda_n^\beta(1-q)^{-\beta/(\beta-1)}g(G^{-1}(t)), \quad t>T_6(\epsilon).
\end{equation}
Let $T_7(\epsilon)=\max(T_5,T_6)$. Since $\limsup_{t\to\infty} x(t)/G^{-1}(t)=\Lambda_1$, there exists
$T_8(\epsilon)>T_7(\epsilon)$ such that
\begin{equation} \label{eq.gest8qliminf}
x(T_8(\epsilon))>\Lambda_1(1-\epsilon)G^{-1}(T_8(\epsilon)).
\end{equation}
We define
\begin{equation} \label{def.xLqliminfLambda1}
x_{L,\epsilon}(t)=\Lambda_{1}(1-\epsilon)G^{-1}(t), \quad t\geq T_8(\epsilon).
\end{equation}
By \eqref{def.xLqliminfLambda1} and \eqref{eq.gest8qliminf} we have
\begin{equation}\label{eq.xLqliminfic}
x_{L,\epsilon}(T_8(\epsilon))=\Lambda_{1}(1-\epsilon)G^{-1}(T_8(\epsilon))<x(T_8(\epsilon)).
\end{equation}
By \eqref{def.xLqliminfLambda1} for $t>T_8(\epsilon)$ we have $x_{L,\epsilon}'(t)=- \Lambda_{1}(1-\epsilon)g(G^{-1}(t))$ for $t>T_8(\epsilon)$.
Therefore by \eqref{eq.gest5qliminf}, for $t>T_8(\epsilon)$ we have
\begin{align*}
\lefteqn{x_{L,\epsilon}'(t)+ag(x_{L,\epsilon}(t))-b(1-\epsilon)^{(\beta+1)/2}\lambda_n^\beta (1-q)^{-\beta/(\beta-1)}g(G^{-1}(t))}\\
&=- \Lambda_{1}(1-\epsilon)g(G^{-1}(t))+ag(\Lambda_{1}(1-\epsilon)G^{-1}(t))\\
&\qquad\qquad-b(1-\epsilon)^{(\beta+1)/2}\lambda_n^\beta (1-q)^{-\beta/(\beta-1)}g(G^{-1}(t))\\
&<- \Lambda_{1}(1-\epsilon)g(G^{-1}(t))
+a(1-\epsilon)^{-(\beta-1)/2}(1-\epsilon)^\beta \Lambda_{1}^\beta g(G^{-1}(t))\\
&\qquad\qquad-b(1-\epsilon)^{(\beta+1)/2}\lambda_n^\beta (1-q)^{-\beta/(\beta-1)}g(G^{-1}(t))\\
&=\left\{- \Lambda_{1}(1-\epsilon)
+
(1-\epsilon)^{(\beta+1)/2} \left(a\Lambda_{1}^\beta
-b\lambda_n^\beta (1-q)^{-\beta/(\beta-1)}\right)\right\}g(G^{-1}(t))\\
&<0,
\end{align*}
where we have used \eqref{eq.lambda1epscondn} at the last step. Therefore we have
\[
x_{L,\epsilon}'(t)<-ag(x_{L,\epsilon}(t))+b(1-\epsilon)^{(\beta+1)/2}\lambda_n^\beta (1-q)^{-\beta/(\beta-1)}g(G^{-1}(t)), \quad t>T_8(\epsilon).
\]
By \eqref{eq.gest6qliminf} we have
\[
x'(t)>-ag(x(t))+b(1-\epsilon)^{(\beta+1)/2}\lambda_n^\beta (1-q)^{-\beta/(\beta-1)}g(G^{-1}(t)), \quad t>T_8(\epsilon).
\]
Using these two inequalities and \eqref{eq.xLqliminfic} by the comparison principle we get
\[
x(t)>x_{L,\epsilon}(t)=\Lambda_1(1-\epsilon)G^{-1}(t), \quad t\geq T_8(\epsilon).
\]
Therefore we have
\[
\liminf_{t\to\infty} \frac{x(t)}{G^{-1}(t)}\geq \Lambda_1(1-\epsilon).
\]
Letting $\epsilon\in(0,1)$ gives
\[
\liminf_{t\to\infty} \frac{x(t)}{G^{-1}(t)}\geq \Lambda_1.
\]
Since we also have $\limsup_{t\to\infty} x(t)/G^{-1}(t)=\Lambda_1$, we get $\lim_{t\to\infty} x(t)/G^{-1}(t)=\Lambda_1$, proving statement (b).

We now come to the question of determining a lower bound on the liminf. Scrutiny of the proof of statement (a) above reveals that the argument
is still valid in the case when $\Lambda_1\geq \Lambda$. We still have
\[
\liminf_{t\to\infty} \frac{x(t)}{G^{-1}(t)}\geq a^{-1/(\beta-1)}=\lambda_1,
\]
and $\lambda_1<\Lambda\leq \Lambda_1$. Now since $\Lambda_1\geq \Lambda$ we have by Lemma~\ref{lemma.existlambda} that $\lambda_n<\Lambda_1$
for all $n\in\mathbb{N}$, and that $\lambda_n\to \Lambda$ as $n\to\infty$. Therefore, we may apply statement (a) arbitrarily many times:
since $\lambda_2<\Lambda_1$, and
\[
\liminf_{t\to\infty} \frac{x(t)}{G^{-1}(t)}\geq \lambda_1,
\]
by statement (a) we have
\[
\liminf_{t\to\infty} \frac{x(t)}{G^{-1}(t)}\geq \lambda_2.
\]
Since $\lambda_3<\Lambda_1$, by statement (a) we have
\[
\liminf_{t\to\infty} \frac{x(t)}{G^{-1}(t)}\geq \lambda_3.
\]
Continuing in this manner we find that
\[
\liminf_{t\to\infty} \frac{x(t)}{G^{-1}(t)}\geq \lambda_n, \quad \text{for all $n\in\mathbb{N}$}.
\]
Letting $n\to\infty$ we get \eqref{eq.xpropdelayasy1liminf}.
\end{proof}

\section{Proof of Results for Linear Delay}
\subsection{Proof of Lemma~\ref{lemma.sigtauasy}}
We first consider the case when $\lambda$ in \eqref{eq.sigmatlambda} obeys $\lambda\in(0,\infty)$. Then for every $\varepsilon\in(0,1)$
there exists $T_0(\varepsilon)>0$ such that
\[
\frac{1}{\lambda}\frac{1}{1+\varepsilon}\frac{1}{t} < \frac{1}{\sigma(t)} <
\frac{1}{\lambda}\frac{1}{1-\varepsilon}\frac{1}{t}, \quad t>T_0(\varepsilon).
\]
Since $\tau$ obeys \eqref{eq.tminustautoinfty}, there exists $T_1(\varepsilon)>T_0(\varepsilon)$ such that $t-\tau(t)>T_0(\varepsilon)$
for all $t>T_1(\varepsilon)$. Therefore for $t>T_1(\varepsilon)$ we have
\[
\frac{1}{\lambda}\frac{1}{1+\varepsilon}\int_{t-\tau(t)}^t \frac{1}{s}\,ds \leq \int_{t-\tau(t)}^t\frac{1}{\sigma(s)}\,ds \leq 
\frac{1}{\lambda}\frac{1}{1-\varepsilon}\int_{t-\tau(t)}^t \frac{1}{s}\,ds.
\]
Therefore by \eqref{eq.t3} we have
\[
\limsup_{t\to\infty} \frac{1}{\lambda}\frac{1}{1+\varepsilon} \log\left(\frac{t}{t-\tau(t)}\right) \leq 1,
\quad
\liminf_{t\to\infty} \frac{1}{\lambda}\frac{1}{1+\varepsilon} \log\left(\frac{t}{t-\tau(t)}\right) \geq 1.
\]
This implies
\[
\lim_{t\to\infty}  \log\left(\frac{t}{t-\tau(t)}\right) = \lambda.
\]
Rearranging and using the continuity of $\tau$ we have \eqref{eq.tauasytot}.

Next we consider the case when $\lambda=0$. Then for every $\varepsilon\in(0,1)$
there exists $T_0(\varepsilon)>0$ such that $\sigma(t)< \varepsilon t$ for all $t>T_0(\varepsilon)$.
Since $\tau$ obeys \eqref{eq.tminustautoinfty}, there exists $T_1(\varepsilon)>T_0(\varepsilon)$ such that $t-\tau(t)>T_0(\varepsilon)$
for all $t>T_1(\varepsilon)$. Therefore for $t>T_1(\varepsilon)$ we have
\[
\frac{1}{\varepsilon} \log\left(\frac{t}{t-\tau(t)}\right) = \frac{1}{\varepsilon}\int_{t-\tau(t)}^t \frac{1}{s}\,ds \leq \int_{t-\tau(t)}^t\frac{1}{\sigma(s)}\,ds.
\]
Therefore by \eqref{eq.t3} we have
\[
\limsup_{t\to\infty}  \log\left(\frac{t}{t-\tau(t)}\right) \leq \varepsilon.
\]
Since $\tau(t)>0$ for all $t$ sufficiently large we have $t-\tau(t)<t$. Therefore $\log (t/(t-\tau(t)))\geq 0$ for all $t$ sufficiently large.
This implies that
\[
0\leq
\limsup_{t\to\infty}  \log\left(\frac{t}{t-\tau(t)}\right) \leq \varepsilon,
\]
and so
\[
\lim_{t\to\infty} \log\left(\frac{t}{t-\tau(t)}\right) =0.
\]
This rearranges to give $\tau(t)/t\to 0$ as $t\to\infty$, which for $\lambda=0$ implies \eqref{eq.tauasytot}.

Finally, we consider the case when $\lambda=\infty$. Then for every $M>0$
there exists $T_0(M)>0$ such that $\sigma(t)> M t$ for all $t>T_0(M)$.
Since $\tau$ obeys \eqref{eq.tminustautoinfty}, there exists $T_1(M)>T_0(M)$ such that $t-\tau(t)>T_0(M)$
for all $t>T_1(M)$. Therefore for $t>T_1(M)$ we have
\[
\frac{1}{M} \log\left(\frac{t}{t-\tau(t)}\right) = \frac{1}{M}\int_{t-\tau(t)}^t \frac{1}{s}\,ds \geq \int_{t-\tau(t)}^t\frac{1}{\sigma(s)}\,ds.
\]
Therefore by \eqref{eq.t3} we have
\[
\liminf_{t\to\infty}  \log\left(\frac{t}{t-\tau(t)}\right) \geq M.
\]
Since $M>0$ is arbitrary, we have
\[
\lim_{t\to\infty} \log\left(\frac{t}{t-\tau(t)}\right) =\infty.
\]
This rearranges to give $\tau(t)/t\to 1$ as $t\to\infty$, which for $\lambda=\infty$ implies \eqref{eq.tauasytot},
if we interpret $1-e^{-\lambda}=1$ in this case.

\subsection{Proof of Theorem~\ref{thm3.1}}

By \eqref{eq:g3}, we note that there is a $\delta_0>0$ such that $g'(x)>0$ for all $x\in(0,\delta_0)$. Hence $g^{-1}:[0,g(\delta_0)]\to [0,\delta_0]$. Clearly there is a $\delta_1>0$ such that $\delta_1<g(\delta_0)$, or $g^{-1}(\delta_1)<\delta_0$. Then for $x\in(0,\delta_1)$, we may define $\Gamma_1:(0,\delta_1)\to(0,\infty)$ by
\[
\Gamma_1(x)=g'(g^{-1}(x)), \quad x\in[0,\delta_1).
\]
Since $g'\in \text{RV}_0(\beta-1)$ and $g(0)=0$, we have that $g\in \text{RV}_0(\beta)$.
Therefore $g^{-1}\in\text{RV}_0(1/\beta)$.
Hence $\Gamma_1\in \text{RV}_0((\beta-1)/\beta)$. Therefore we may apply Theorem~\ref{thm3.1gen} to get the desired result.

\subsection{Proof of Theorem~\ref{thm3.1q}}
Let $\lambda=\log(1/(1-q))>0$. Define $\sigma(t)=\lambda(t+\bar{\tau}+1)$ for $t\geq -\bar{\tau}$.
Since $\tau(t)/t\to q\in(0,1)$ as $t\to\infty$, we have that $\sigma$ obeys \eqref{eq.t1}, \eqref{eq.t2} and \eqref{eq.t3}.

By \eqref{eq:g3}, we note that there is a $\delta_0>0$ such that $g'(x)>0$ for all $x\in(0,\delta_0)$. Hence $g^{-1}:[0,g(\delta_0)]\to [0,\delta_0]$. Clearly there is a $\delta_1>0$ such that $\delta_1<g(\delta_0)$, or $g^{-1}(\delta_1)<\delta_0$. Then for $x\in(0,\delta_1)$, we may define $\Gamma_1:(0,\delta_1)\to(0,\infty)$ by
\[
\Gamma_1(x)=g'(g^{-1}(x)), \quad x\in[0,\delta_1).
\]
Since $g'\in \text{RV}_0(\beta-1)$ and $g(0)=0$, we have that $g\in \text{RV}_0(\beta)$.
Therefore $g^{-1}\in\text{RV}_0(1/\beta)$. Hence $\Gamma_1\in \text{RV}_0((\beta-1)/\beta)$, which implies
that $g$ obeys \eqref{eq:galt6} with $\gamma=\beta/(\beta-1)$. We note that \eqref{eq.abqbig} implies \eqref{eq.abqbigsuperhyp}, so all the hypotheses of Theorem~\ref{thm3.1superhypq} hold, and so
\[
\lim_{t\to\infty} \frac{\log g(x(t))}{\int_0^t \frac{1}{\sigma(s)}\,ds}=-\log\left( \frac{a}{b}\right).
\]
Since $g\in\text{RV}_0(\beta)$ and $x(t)\to 0$ as $t\to\infty$, and
\[
\lim_{t\to\infty} \frac{\int_0^t \frac{1}{\sigma(s)}\,ds}{\log t}=\frac{1}{\lambda}
\]
we have
\[
\lim_{t\to\infty} \frac{\log x(t)}{\log t}
=
\lim_{t\to\infty} \frac{\log x(t)}{\log g(x(t))}\cdot\frac {\log g(x(t))}{\int_0^t \frac{1}{\sigma(s)}\,ds}\cdot\frac{\int_0^t \frac{1}{\sigma(s)}\,ds}{\log t}
=
\frac{1}{\beta}\cdot -\log(a/b) \cdot \lambda.
\]
Since $\lambda=\log(1/(1-q))$ we have
\[
\lim_{t\to\infty} \frac{\log x(t)}{\log t}=-\frac{1}{\beta}\frac{1}{\log(1/(1-q))}\log(a/b),
\]
as required.

\section{Proofs for Max--Type Equation}
\subsection{Proof of Theorem~\ref{thm.odedecaymax}}
We first need to prove that
\begin{equation}  \label{eq.GxgtLambdatmax}
G(x(t))\geq \Lambda t, \quad t\geq1.
\end{equation}

Since $a>b$ we may fix $\epsilon\in(0,1)$ so small that \eqref{eq.condabepsa} holds.
Define $G_0$ by \eqref{def.G0}. Then $G_0$ is decreasing on $(0,\infty)$ and therefore invertible.
Since $x(t)>0$ for all $t\geq 0$, $b>0$ and $g(x(t-\tau(t)))>0$ for all $t\geq 0$ we have
\[
x'(t)\geq -ag(x(t)), \quad t>0.
\]
This gives \eqref{eq.Gxleqat}, as required.
Define $\Gamma:[0,\infty)\to (0,\infty)$ by \eqref{def.Gamma}.
Therefore for $\epsilon\in(0,1)$, there is a $x_1(\epsilon)>0$ such that $\Gamma$ obeys \eqref{eq.gammarvepsa}.
There also exists $\delta(\epsilon)>0$ defined in terms of $G_0$, $\epsilon$, $x_1(\epsilon)$ which obeys \eqref{eq.epsGdelxa}, and for $\epsilon\in(0,1)$, because $x(t)\to 0$ as $t\to\infty$ and $\tau(t)/t\to 0$ as $\to\infty$, there is a $T_1(\epsilon)>0$ such that
\begin{equation} \label{eq.T1defmax}
x(t)\leq \delta(\epsilon), \quad \tau(t)< \epsilon t, \quad t>T_1(\epsilon).
\end{equation}
Next define
\begin{equation} \label{def.lambdaepsmax}
0<\lambda(\epsilon)=\frac{\epsilon(1-\epsilon)G_0(\delta(\epsilon))}{T_1(\epsilon)}.
\end{equation}
Define $x_{U,\epsilon}$ by
\begin{equation} \label{def.xUodemax}
x_{U,\epsilon}(t)=G_0^{-1}(\lambda(\epsilon)t),\quad t\geq T_1(\epsilon).
\end{equation}
Define $T_2(\epsilon):=T_1(\epsilon)/(1-\epsilon)>T_1(\epsilon)$.
For $t\in[T_1(\epsilon), T_2(\epsilon)]$ we can prove as in Theorem~\ref{thm.odedecay} that
\begin{equation} \label{eq.xUicodemax}
x_{U,\epsilon}(t)>\delta(\epsilon)\geq x(t), \quad t\in[T_1(\epsilon),T_2(\epsilon)].
\end{equation}
and also that
\begin{equation} \label{eq.xUinmonotoneregionmax}
x_{U,\epsilon}(t)<\delta_1, \quad t\geq T_1(\epsilon).
\end{equation}
Proceeding as in the proof of Theorem~\ref{thm.odedecay} it can then be shown that
\begin{equation} \label{eq.xUdiffineq2max}
x_{U,\epsilon}'(t)>-ag(x_{U,\epsilon}(t))+b\max_{t-\tau(t)\leq s\leq t}g(x_{U,\epsilon}(s)), \quad t>T_2(\epsilon),
\end{equation}
using the fact that $x_{U,\epsilon}$ is decreasing on $[T_1(\epsilon),\infty)$ to simplify the righthand side of \eqref{eq.xUdiffineq2max}, because
\[
\max_{t-\tau(t)\leq s\leq t}g(x_{U,\epsilon}(s))=g(x_{U,\epsilon}(t-\tau(t))).
\]
By \eqref{eq.xUicodemax} and \eqref{eq.xUdiffineq2max}, by the comparison principle we have $x(t)<x_{U,\epsilon}(t)$ for all $t\geq T_1(\epsilon)$.
Therefore $x(t)<G_0^{-1}(\lambda(\epsilon)t)$ for $t\geq T_1(\epsilon)$. Hence $G_0(x(t))>\lambda(\epsilon)t$ for $t\geq T_1(\epsilon)$.
Since $\epsilon\in(0,1)$ obeying \eqref{eq.condabepsa} is fixed we have \eqref{eq.GxgtLambdatmax}.

Armed with \eqref{eq.GxgtLambdatmax}, we now prove \eqref{eq.odedecaymax}.

Since $t-\tau(t)\to\infty$ as $t\to\infty$, there exists $T_0>1$
such that $t-\tau(t)\geq 1$ for all $t\geq T_0$. Recalling that $x'(t)\geq -ag(x(t))$ for all $t\geq 0$, for all $t\geq T_0$ and $s\in[t-\tau(t),t]$ we have
\[
G_0(x(s)))-G_0(x(t))=\int_{x(s)}^{x(t)} \frac{1}{g(u)}\,du = \int_{s}^t \frac{x'(v)}{g(x(v))}\,dv \geq -a(t-s),
\]
so
\begin{equation} \label{eq.G0tG0ttaumax}
G_0(x(t))\leq a(t-s)+G_0(x(s))\leq a\tau(t)+G_0(x(s)), \quad t\geq T_0, \quad s\in[t-\tau(t),t].
\end{equation}
Define $G_1(x)=1/G_0(x)$ for $x>0$. Then $G_1\in \text{RV}_0(\beta-1)$ and $G_1$ is increasing. Since $\beta>1$ we have that  $G_1^{-1}\in\text{RV}_0(1/(\beta-1))$. Rearranging \eqref{eq.G0tG0ttaumax} we have
\[
\frac{G_1(x(s))}{G_1(x(t))}\leq 1+a\tau(t)G_1(x(s)), \quad t\geq T_0, \quad s\in[t-\tau(t),t].
\]
Therefore for $t\geq T_0$ we have for $s\in[t-\tau(t),t]$
\begin{align*}
\frac{G_1(x(s))}{G_1(x(t))}&\leq 1+a\frac{\tau(t)}{t}\cdot\frac{t}{s}\cdot\frac{s}{G_0(x(s))}\\
&\leq 1+a\frac{\tau(t)}{t}\cdot\frac{t}{t-\tau(t)}\cdot\sup_{s\geq t-\tau(t)}\frac{s}{G_0(x(s))}.
\end{align*}
Therefore
\[
\frac{\max_{t-\tau(t)\leq s\leq t} G_1(x(s))}{G_1(x(t))}
\leq 1+a\frac{\tau(t)}{t}\cdot\frac{t}{t-\tau(t)}\cdot\sup_{s\geq t-\tau(t)}\frac{s}{G_0(x(s))}.
\]
Since $G_1$ is increasing we have
\[
\max_{t-\tau(t)\leq s\leq t} G_1(x(s))=G_1(\max_{t-\tau(t)\leq s\leq t} x(s)).
\]
Therefore as $\tau(t)/t\to 0$ as $t\to\infty$ and \eqref{eq.GxgtLambdatmax} holds we have
\begin{equation*} 
\limsup_{t\to\infty} \frac{G_1(\max_{t-\tau(t)\leq s\leq t} x(s))}{G_1(x(t))}\leq 1.
\end{equation*}
Therefore for every $\epsilon\in(0,1)$ there is $T_1(\epsilon)>0$ such that
\[
G_1(\max_{t-\tau(t)\leq s\leq t} x(s))\leq (1+\epsilon)G_1(x(t)), \quad t\geq T_1(\epsilon).
\]
Therefore as $G_1^{-1}$ is increasing, we have
\[
\frac{\max_{t-\tau(t)\leq s\leq t} x(s)}{x(t)}\leq \frac{G_1^{-1}((1+\epsilon)G_1(x(t)))}{G_1^{-1}(G_1(x(t)))}, \quad t\geq T_1(\epsilon).
\]
Now $G_1(x(t))\to 0$ as $t\to\infty$ and $G_1^{-1}\in \text{RV}_0(1/(\beta-1))$, so
\[
\limsup_{t\to\infty}
\frac{\max_{t-\tau(t)\leq s\leq t} x(s)}{x(t)}
\leq
\lim_{t\to\infty} \frac{G_1^{-1}((1+\epsilon)G_1(x(t)))}{G_1^{-1}(G_1(x(t)))}
=
(1+\epsilon)^{1/(\beta-1)}.
\]
Letting $\epsilon\to 0^+$ gives
\[
\limsup_{t\to\infty} \frac{\max_{t-\tau(t)\leq s\leq t} x(s)}{x(t)}\leq 1.
\]
Therefore there exists $T_2(\epsilon)>0$ such that $\max_{t-\tau(t)\leq s\leq t} x(s)\leq (1+\epsilon)x(t)$ for all $t\geq T_2(\epsilon)$.
Since $g$ is increasing $g(\max_{t-\tau(t)\leq s\leq t} x(s))\leq g((1+\epsilon)x(t))$ for all $t\geq T_2(\epsilon)$. Therefore as $g\in\text{RV}_0(\beta)$ and
$x(t)\to 0$ as $t\to\infty$ we have
\[
\limsup_{t\to\infty} \frac{g(\max_{t-\tau(t)\leq s\leq t} x(s)))}{g(x(t))}\leq \limsup_{t\to\infty}\frac{g((1+\epsilon)x(t))}{g(x(t))}=(1+\epsilon)^\beta.
\]
Letting $\epsilon\to 0^+$ we have
\[
\limsup_{t\to\infty} \frac{g(\max_{t-\tau(t)\leq s\leq t} x(s))}{g(x(t))}\leq 1.
\]
Since $g$ is increasing, we have $g(\max_{t-\tau(t)\leq s\leq t} x(s))=\max_{t-\tau(t)\leq s\leq t} g(x(s))$ so
\[
\limsup_{t\to\infty} \frac{\max_{t-\tau(t)\leq s\leq t} g(x(s))}{g(x(t))}\leq 1.
\]
On the other hand it is trivially true that $\max_{t-\tau(t)\leq s\leq t} g(x(s))\geq g(x(t))$. Therefore
\[
\liminf_{t\to\infty} \frac{\max_{t-\tau(t)\leq s\leq t} g(x(s))}{g(x(t))}\geq 1.
\]
Combining these limits gives
\[
\lim_{t\to\infty} \frac{\max_{t-\tau(t)\leq s\leq t} g(x(s))}{g(x(t))}=1,
\]
which implies
\[
\lim_{t\to\infty} \frac{x'(t)}{g(x(t))}=\lim_{t\to\infty} -a + b \frac{\max_{t-\tau(t)\leq s\leq t} g(x(s))}{g(x(t))}=-(a-b).
\]
Proceeding as we did at the end of the proof of Theorem~\ref{thm.odedecay}, we
obtain \eqref{eq.odedecaymax}, as required.

\section{Proofs of Lemmata and Justification of Claims from Section~\ref{sec.nonrv}}
\subsection{Proof of Lemma~\ref{lemma.ex1GinvGammaasy}}
Then
\[
\lim_{x\to 0^+} \frac{G(x)}{e^{1/x^\alpha} x^{\alpha+1}}
=\lim_{x\to 0^+} \frac{\int_1^{1/x} \frac{1}{g(1/v)}v^{-2}\,dv}{e^{1/x^\alpha} x^{\alpha+1}}  
=\lim_{y\to\infty} \frac{\int_1^y \frac{1}{g(1/v)}v^{-2}\,dv}{e^{y^\alpha} y^{-(\alpha+1)}}.
\]
Therefore
\[
\lim_{x\to 0^+} \frac{G(x)}{e^{1/x^\alpha} x^{\alpha+1}}
=
\lim_{y\to\infty} \frac{\frac{1}{g(1/y)}y^{-2}}{-(\alpha+1) e^{y^\alpha} y^{-(\alpha+1)-1}+\alpha y^{\alpha-1} e^{y^\alpha} y^{-(\alpha+1)}}.
\]
Hence
\begin{equation} \label{eq.ex1nonrvGasy}
\lim_{x\to 0^+} \frac{G(x)}{e^{1/x^\alpha} x^{\alpha+1}}
=
\lim_{y\to\infty} \frac{y^{-2}}{-(\alpha+1)  y^{-\alpha-2} +\alpha y^{-2}}
=\frac{1}{\alpha}.
\end{equation}
Now this implies
\[
\lim_{y\to\infty} \frac{y}{e^{1/G^{-1}(y)^\alpha} G^{-1}(y)^{\alpha+1}}
=\frac{1}{\alpha},
\]
so
\[
\lim_{y\to\infty} \left\{ \log y - \frac{1}{G^{-1}(y)^\alpha} - (\alpha+1) \log G^{-1}(y)\right\}=\log(1/\alpha).
\]
Since $G^{-1}(y)\to\infty$ as $y\to\infty$ we have
\[
\lim_{y\to\infty} \frac{1}{G^{-1}(y)^\alpha} \left\{G^{-1}(y)^\alpha \log y - 1 - (\alpha+1)G^{-1}(y)^\alpha \log G^{-1}(y)\right\}=\log(1/\alpha),
\]
from which \eqref{eq.ex1nonrvGinvasy} can be inferred.

Now we turn to the asymptotic behaviour of $\Gamma(x)=g(G^{-1}(x))$. By definition $\Gamma(G(x))=g(x)$. Therefore by \eqref{eq.ex1nonrvgeeasy} we
have
\[
\lim_{x\to 0^+} \frac{\Gamma(G(x))}{e^{-1/x^\alpha}}=1.
\]
By \eqref{eq.ex1nonrvGasy} we have
\begin{equation} \label{eq.ex1exalpha}
\lim_{x\to 0^+}
\frac{e^{-1/x^\alpha}}{ x^{\alpha+1}/G(x)}=\frac{1}{\alpha},
\end{equation}
so therefore we have the limit
\[
\lim_{x\to 0^+} \frac{\Gamma(G(x))}{x^{\alpha+1}/G(x)}
=
\lim_{x\to 0^+} \frac{e^{-1/x^\alpha}}{x^{\alpha+1}/G(x)}\cdot \frac{\Gamma(G(x))}{e^{-1/x^\alpha}}
=\frac{1}{\alpha}.
\]
Therefore
\[
\lim_{y\to \infty} \frac{\Gamma(y)}{G^{-1}(y)^{\alpha+1}/y}
=
\lim_{x\to 0^+} \frac{\Gamma(G(x))}{x^{\alpha+1}/G(x)}
=\frac{1}{\alpha}.
\]
Combining this with \eqref{eq.ex1nonrvGinvasy} yields \eqref{eq.ex1nonrvGammaasy}.
%
%

\subsection{Proof of Lemma~\ref{lemma.ex1GinvGamma1asy}}
Since $g(0)=0$, integration yields
\begin{equation} \label{eq.ex1gasy2}
\lim_{x\to 0^+} \frac{g(x)}{e^{-1/x^\alpha}}
=
\lim_{x\to 0^+} \frac{\int_0^x g'(s)\,ds}{\int_0^x \alpha s^{-\alpha-1} e^{-1/s^\alpha}\,ds}=1.
\end{equation}
Therefore
\[
\lim_{x\to 0^+} \frac{x}{e^{-1/g^{-1}(x)^\alpha}}=1.
\]
Hence we have
\[
\lim_{x\to 0^+} -\log (1/x) + \frac{1}{g^{-1}(x)^\alpha}=0.
\]
This implies
\begin{equation} \label{eq.ex1ginvasya}
\lim_{x\to 0^+} \frac{g^{-1}(x)}{\log(1/x)^{-1/\alpha}} = 1.
\end{equation}
We have $\Gamma_1(x)=g'(g^{-1}(x))$, so $\Gamma_1(g(x))=g'(x)$. Therefore
\[
\lim_{x\to 0^+} \frac{\Gamma_1(g(x))}{\alpha x^{-\alpha-1} e^{-1/x^\alpha}}=1.
\]
Therefore by \eqref{eq.ex1gasy2}
\[
\lim_{x\to 0^+} \frac{\Gamma_1(g(x))}{\alpha x^{-(\alpha+1)} g(x)}=1,
\]
which implies
\[
\lim_{y\to 0^+} \frac{\Gamma_1(y)}{\alpha g^{-1}(y)^{-(\alpha+1)} y}=1.
\]
By \eqref{eq.ex1ginvasya} we have \eqref{eq.ex1nonrvGamma1asy}.
Since for any $\mu>0$ we have $\lim_{y\to 0^+} \Gamma_1(\mu y)/\Gamma_1(y)=\mu$, it follows that $\Gamma_1\in \text{RV}_0(1)$ as required.

\subsection{Proof of Lemma~\ref{lemma.ex2GinvGammaasy}}
First we have
\[
\lim_{x\to 0^+} \frac{G(x)}{\exp(e^{1/x})e^{-1/x}x^2}=\lim_{x\to 0^+} \frac{\int_x^1 \frac{1}{g(u)}\,du}{\exp(e^{1/x})e^{-1/x}x^2}
=\lim_{y\to \infty} \frac{\int_1^y \frac{1}{g(1/v)}v^{-2}\,dv}{\exp(e^y)e^{-y}y^{-2}}.
\]
Hence by L'H\^opital's rule, we have
\begin{align*}
\lim_{x\to 0^+} \frac{G(x)}{\exp(e^{1/x})e^{-1/x}x^2}
&=\lim_{y\to \infty} \frac{\frac{1}{g(1/y)}y^{-2}}{\exp(e^y)y^{-2}+\exp(e^y)\left(-2e^{-y}y^{-3} - e^{-y}y^{-2}\right)}\\
&=\lim_{y\to \infty} \frac{1}{1-e^{-y}(2y^{-1} +1)}=1.
\end{align*}
Therefore
\[
\lim_{y\to\infty} \frac{y}{\exp(e^{1/G^{-1}(y)})e^{-1/G^{-1}(y)}G^{-1}(y)^2}=1,
\]
which implies
\[
\lim_{y\to\infty} \log y - \left( e^{1/G^{-1}(y)} -1/G^{-1}(y)+2\log G^{-1}(y)\right)=0.
\]
This implies that there is a function $\theta$ with $\theta(y)\to 0$ as $t\to\infty$ such that
\[
\lim_{y\to\infty} e^{1/G^{-1}(y)}\left(\frac{\log y}{e^{1/G^{-1}(y)}}-1-\theta(y)\right)=0,
\]
from which we infer
\[
\lim_{y\to\infty} \frac{e^{1/G^{-1}(y)}}{\log y}=1.
\]
Taking logarithms and arguing in a similar manner we obtain \eqref{eq.ex2nonrvGinvasy}.

To determine the asymptotic behaviour of $\Gamma$, we note that $\Gamma(G(x))=g(x)$ so
\[
\lim_{x\to 0^+} \frac{\Gamma(G(x))}{\exp(-e^{1/x})}=1.
\]
Since
\[
\lim_{x\to 0^+} \frac{e^{1/x}x^{-2}G(x)}{\exp(e^{1/x})}=1,
\]
we have
\[
1
=\lim_{x\to 0^+} \Gamma(G(x)) e^{1/x}x^{-2}G(x)
=\lim_{y\to \infty} \Gamma(y) e^{1/G^{-1}(y)}G^{-1}(y)^{-2} y.
\]
Therefore
\[
1=\lim_{y\to \infty} \Gamma(y) e^{1/G^{-1}(y)}G^{-1}(y)^{-2} y
=\lim_{y\to \infty} \Gamma(y) \log y (\log_2 y)^{2} y,
\]
which is \eqref{eq.ex2nonrvGammaasy}.

\subsection{Proof of Lemma~\ref{lemma.ex2GinvGamma1asy}}
Since $g(0)=0$, integration yields
\begin{equation} \label{eq.ex2gasy2}
\lim_{x\to 0^+} \frac{g(x)}{\exp(-e^{1/x})}
=
\lim_{x\to 0^+} \frac{\int_0^x g'(s)\,ds}{\int_0^x s^{-2}e^{1/s}\exp(-e^{1/s})\,ds}=1.
\end{equation}
Therefore
\[
\lim_{x\to 0^+} \frac{x}{\exp(-e^{1/g^{-1}(x)})}=1.
\]
Hence we have
\[
\lim_{x\to 0^+} -\log (1/x) + e^{1/g^{-1}(x)}=0.
\]
This implies
\begin{equation} \label{eq.ex2ginvasya}
\lim_{x\to 0^+} \frac{e^{1/g^{-1}(x)}}{\log(1/x)} = 1,
\quad
\lim_{x\to 0^+} \frac{1/g^{-1}(x)}{\log_2(1/x)} = 1.
\end{equation}
We have $\Gamma_1(x)=g'(g^{-1}(x))$, so $\Gamma_1(g(x))=g'(x)$. Therefore
\[
\lim_{x\to 0^+} \frac{\Gamma_1(g(x))}{x^{-2}e^{1/x}\exp(-e^{1/x})}=1.
\]
Therefore by \eqref{eq.ex2gasy2}
\[
\lim_{x\to 0^+} \frac{\Gamma_1(g(x))}{x^{-2}e^{1/x} g(x)}=1,
\]
which implies
\[
\lim_{y\to 0^+} \frac{\Gamma_1(y)}{g^{-1}(y)^{-2} e^{1/g^{-1}(y)} y}=1.
\]
By \eqref{eq.ex2ginvasya} we have \eqref{eq.ex2nonrvGamma1asy}.
Since for any $\mu>0$ we have $\lim_{y\to 0^+} \Gamma_1(\mu y)/\Gamma_1(y)=\mu$, it follows that $\Gamma_1\in \text{RV}_0(1)$ as required.

\subsection{Justification of Example~\ref{example.asygeminuspoly}}
\subsubsection{Justification of part (b).}
Since $\lim_{t\to\infty} g(x(t))/e^{-1/x(t)^\alpha}=1$, we have
so
\[
\lim_{t\to\infty} \left\{\log g(x(t)) +1/x(t)^\alpha\right\}=0,
\]
By Theorem~\ref{thm3.1superhypqgamma1} we have
\[
\lim_{t\to\infty} \frac{\log g(x(t))}{\log t} = -\frac{\log(a/b)}{\log(1/(1-q))},
\]
so
\[
\lim_{t\to\infty} \frac{1}{x(t)^\alpha\log t} = \frac{\log(a/b)}{\log(1/(1-q))},
\]
and the result follows.

\subsubsection{Justification of part (c).}
It can be shown that the function $\sigma(t)=\gamma(t+2\bar{\tau}+e^2)\log_2(t+2\bar{\tau}+e^2)$ for $t\geq -\bar{\tau}$ obeys \eqref{eq.t1}--\eqref{eq.t4} with
\[
\lim_{t\to\infty} \frac{\int_0^t \frac{1}{\sigma(s)}\,ds}{\frac{1}{\gamma}\log t/\log_2 t}=1.
\]
Therefore by Theorem~\ref{thm3.1nonrv} we have
\[
\lim_{t\to\infty} \frac{-1/x(t)^\alpha}{\frac{1}{\gamma}\log t/\log_2 t}=-\log\left(\frac{a}{b}\right),
\]
from which the result follows.

\subsubsection{Justification of part (d).}
It can be shown that the function $\sigma$ defined by $\sigma(t)=\log(1/\gamma)(t+2\bar{\tau}+1)\log(t+2\bar{\tau}+1)$ for $t\geq -\bar{\tau}$ obeys \eqref{eq.t1}--\eqref{eq.t4} with
\[
\lim_{t\to\infty} \frac{\int_0^t \frac{1}{\sigma(s)}\,ds}{\frac{1}{\log(1/\gamma)}\log_2 t}=1.
\]
Therefore by Theorem~\ref{thm3.1nonrv} we have
\[
\lim_{t\to\infty} \frac{-1/x(t)^\alpha}{\frac{1}{\log(1/\gamma)}\log_2 t}=-\log\left(\frac{a}{b}\right),
\]
from which the result follows.

\subsection{Justification of Example~\ref{example.asygeminusexp}}
\subsubsection{Justification of part (b).}
Since $\lim_{t\to\infty} g(x(t))/\exp(-e^{1/x(t)})=1$, we have
so
\[
\lim_{t\to\infty} \left\{\log g(x(t)) +e^{1/x(t)}\right\}=0.
\]
By Theorem~\ref{thm3.1superhypqgamma1} we have
\[
\lim_{t\to\infty} \frac{\log g(x(t))}{\log t} = -\frac{\log(a/b)}{\log(1/(1-q))},
\]
so
\[
\lim_{t\to\infty} \frac{e^{1/x(t)}}{\log t} = \frac{\log(a/b)}{\log(1/(1-q))},
\]
and the result follows.

\subsubsection{Justification of part (c).}
It can be shown that the function $\sigma(t)=\gamma(t+2\bar{\tau}+e^2)\log_2(t+2\bar{\tau}+e^2)$ for $t\geq -\bar{\tau}$ obeys \eqref{eq.t1}--\eqref{eq.t4} with
\[
\lim_{t\to\infty} \frac{\int_0^t \frac{1}{\sigma(s)}\,ds}{\frac{1}{\gamma}\log t/\log_2 t}=1.
\]
Therefore by Theorem~\ref{thm3.1nonrv} we have
\[
\lim_{t\to\infty} \frac{e^{1/x(t)}}{\log t/\log_2 t}=\frac{1}{\gamma}\log\left(\frac{a}{b}\right).
\]
Hence
\[
\lim_{t\to\infty} 1/x(t) - \log \left(\log t/\log_2 t\right)=\log\left(\frac{1}{\gamma}\log\left(\frac{a}{b}\right)\right),
\]
from which the result follows.

\subsubsection{Justification of part (d).}
It can be shown that the function $\sigma$ defined by $\sigma(t)=\log(1/\gamma)(t+2\bar{\tau}+1)\log(t+2\bar{\tau}+1)$ for $t\geq -\bar{\tau}$ obeys \eqref{eq.t1}--\eqref{eq.t4} with
\[
\lim_{t\to\infty} \frac{\int_0^t \frac{1}{\sigma(s)}\,ds}{\frac{1}{\log(1/\gamma)}\log_2 t}=1.
\]
Therefore by Theorem~\ref{thm3.1nonrv} we have
\[
\lim_{t\to\infty} \frac{e^{1/x(t)}}{\log_2 t}=\frac{1}{\log(1/\gamma)}\log\left(\frac{a}{b}\right).
\]
Hence
\[
\lim_{t\to\infty} 1/x(t)- \log_3 t=\log\left(\frac{1}{\log(1/\gamma)}\log\left(\frac{a}{b}\right)\right),
\]
from which the result follows.

\end{document}